\documentclass{article}

\usepackage[final,main]{neurips_2025}

\usepackage[utf8]{inputenc} 
\usepackage[T1]{fontenc}    
\usepackage[hypertexnames=false]{hyperref}
\usepackage{url}            
\usepackage{booktabs}       
\usepackage{amsfonts}       
\usepackage{nicefrac}       
\usepackage{microtype}      
\usepackage{xcolor}         
\usepackage{subcaption}

\usepackage{graphicx}
\usepackage{apptools}
\usepackage[flushleft]{threeparttable}
\usepackage{array,booktabs,makecell}
\usepackage{multirow}
\usepackage{nicefrac}
\usepackage{amsthm}
\usepackage{amsmath,amsfonts,bm}
\usepackage{wrapfig}
\usepackage{caption}
\usepackage{siunitx}
\usepackage{thm-restate}
\usepackage{nccmath}
\usepackage{empheq}
\usepackage{bbm}
\usepackage{tabularx}

\usepackage{algorithmic}
\usepackage{algorithm}

\usepackage{color}
\usepackage{colortbl}
\definecolor{bgcolor}{rgb}{0.76,0.88,0.50}
\definecolor{bgcolor0}{rgb}{0.93,0.99,1}
\definecolor{bgcolor1}{rgb}{0.8,1,1}
\definecolor{bgcolor2}{rgb}{0.8,1,0.8}
\definecolor{bgcolor3}{rgb}{0.50,0.90,0.50}
\usepackage{tcolorbox}
\usepackage{pifont}

\usepackage[scaled=0.86]{helvet}

\newcommand{\norm}[1]{\left\| #1 \right\|}

\newcommand{\abs}[1]{\left| #1 \right|}
\newcommand{\flr}[1]{\left\lfloor #1\right\rfloor} 

\newcommand{\R}{\mathbb{R}} 
\newcommand{\N}{\mathbb{N}} 

\newcommand{\Exp}[1]{{\mathbb{E}}\left[#1\right]}
\newcommand{\ExpSub}[2]{{\mathbb{E}}_{#1}\left[#2\right]}
\newcommand{\ExpCond}[2]{{\mathbb{E}}\left[\left.#1\right\vert#2\right]}

\newcommand{\Prob}[1]{\mathbb{P}\left(#1\right)} 
\newcommand{\ProbCond}[2]{\mathbb{P}\left(#1\middle\vert#2\right)}

\newcommand{\cC}{\mathcal{C}}

\newcommand{\cO}{\mathcal{O}}

\newcommand{\cZ}{\mathcal{Z}}

\theoremstyle{plain}
\newtheorem{theorem}{Theorem}[section]
\newtheorem*{theorem*}{Theorem}

\newtheorem{lemma}[theorem]{Lemma}
\newtheorem{corollary}[theorem]{Corollary}
\theoremstyle{definition}
\newtheorem{definition}[theorem]{Definition}
\newtheorem{assumption}[theorem]{Assumption}

\theoremstyle{remark}

\newcommand{\eqdef}{:=}

\makeatletter
\newcommand{\vast}{\bBigg@{4}}

\usepackage{tcolorbox}
\newtcolorbox{mytheobox}{
  boxrule=0pt,      
  left=1mm,
  right=1mm,
  top=0mm,
  bottom=0mm
}
\usepackage{pifont}
\usepackage{xcolor}
\definecolor{mydarkgreen}{RGB}{39,130,67}
\definecolor{mydarkorange}{RGB}{236,147,14}
\definecolor{mydarkred}{RGB}{236,147,14}
\definecolor{blue}{RGB}{0,0,255}

\DeclareSymbolFont{extraup}{U}{zavm}{m}{n}
\DeclareMathSymbol{\varheart}{\mathalpha}{extraup}{86}
\DeclareMathSymbol{\vardiamond}{\mathalpha}{extraup}{87}

\makeatletter
\newenvironment{protocol}[1][htb]{%
    \renewcommand{\ALG@name}{Protocol}
   \begin{algorithm}[#1]%
  }{\end{algorithm}}
\makeatother



\DeclareSymbolFont{extraup}{U}{zavm}{m}{n}
\DeclareMathSymbol{\varheart}{\mathalpha}{extraup}{86}
\DeclareMathSymbol{\vardiamond}{\mathalpha}{extraup}{87}

\hypersetup{
  colorlinks   = true, 
  urlcolor     = blue, 
  linkcolor    = {red!75!black}, 
  citecolor   = {blue!50!black} 
}

\renewcommand{\eqref}[1]{(\ref{#1})}

\allowdisplaybreaks

\DeclareSymbolFont{extraup}{U}{zavm}{m}{n}
\DeclareMathSymbol{\varheart}{\mathalpha}{extraup}{86}
\DeclareMathSymbol{\vardiamond}{\mathalpha}{extraup}{87}

\title{Optimality in Decentralized Optimization \\ under Bandwidth Constraints}

\author{
    Alexander Tyurin \\ AXXX, Moscow, Russia \\ Applied AI Institute, Moscow, Russia
}

\begin{document}

\maketitle

\begin{abstract}
  We consider a realistic decentralized setup with \emph{bandwidth-constrained communication} and derive \emph{optimal time complexities} for non-convex stochastic parallel and asynchronous optimization (up to logarithmic factors). We develop the corresponding methods, Grace SGD and Leon SGD, for both homogeneous and heterogeneous settings. Unlike previous work, our optimal bounds are characterized in terms of \emph{min-cut/max-flow} quantities and rely on tools from \emph{Gomory--Hu trees} and \emph{Steiner Tree Packing} problems, providing tighter and more practical complexities.
\end{abstract}

\section{Introduction}
\label{sec:intro}
We consider a decentralized distributed optimization setup with $n$ workers, such as GPUs, CPUs, servers, or mobile devices \citep{kairouz2021advances}, that aim to solve a common optimization problem by computing stochastic gradients and sharing this information with each other 
through a communication network. 
We study a smooth nonconvex minimization problem defined as
\begin{align}
\label{eq:main_problem}
\textstyle \min \limits_{x \in \R^d} f(x),
\end{align}
where $f \,:\, \R^d \rightarrow \R$ and $d$ is the dimension of $f$. In this work, we assume that $d$ is large, which is the case in large language models and modern machine learning training \citep{GPT3,grattafiori2024llama}. We begin with the \emph{homogeneous} (i.i.d.) setting, where all workers compute stochastic gradients sampled from the same distribution. But we discuss our implications and also study \emph{heterogeneous} (non-i.i.d.) settings in Sections~\ref{sec:heter} and \ref{sec:heter_sim}.
\begin{assumption}
  \label{ass:lipschitz_constant}
$f$ is differentiable \& $L$--smooth, i.e., $\norm{\nabla f(x) - \nabla f(y)} \leq L \norm{x - y}$, $\forall x, y \in \R^d.$
\end{assumption}
\begin{assumption}
  \label{ass:lower_bound}
  There exist $f^* \in \R$ such that $f(x) \geq f^*$ for all $x \in \R^d$. We define $\Delta \eqdef f(x^0) - f^*,$ where $x^0$ is a starting point.
\end{assumption}
\begin{assumption}[Homogeneous setting]
  \label{ass:stochastic_variance_bounded}
  For all $i \in [n],$ worker $i$ can only calculate $\nabla f(x;\xi)$ and ${\rm \mathbb{E}}_{\xi}[\nabla f(x;\xi)] = \nabla f(x)$ and 
    ${\rm \mathbb{E}}_{\xi}[\|\nabla f(x;\xi) - \nabla f(x)\|^2] \leq \sigma^2$ for all $x \in \R^d,$ where $\sigma^2 \geq 0.$
\end{assumption}
In the nonconvex optimization, the goal is to find an $\varepsilon$--stationary point, a (random) point $\bar{x} \in \R^d$ such that ${\rm \mathbb{E}}[\|\nabla f(\bar{x})\|^2] \leq \varepsilon$ \citep{nemirovskij1983problem}. It is well-known that the optimal \emph{oracle complexity} in this setting is $\Theta(\nicefrac{L \Delta}{\varepsilon} + \nicefrac{\sigma^2 L \Delta}{\varepsilon^2})$ \citep{arjevani2022lower} achieved by the classical SGD method \citep{lan2020first}.

\textbf{Computation times.} 
To present our time complexities and new algorithms, and to compare them with previous results in the distributed setup, we consider the following computation model:
\begin{tcolorbox}[colback=gray!5,colframe=gray,title=Computation Model]
\label{box:computation_model}
We assume that worker $i$ requires $h_i$ seconds\footnotemark to compute a stochastic gradient for all $i \in [n]$.
\end{tcolorbox}
\footnotetext{{We can even assume that the computation times are not fixed to $h_i$ and can vary in the interval $[\bar{c}_{l} h_i, \bar{c}_r h_i],$ where $\bar{c}_{l}$ and $\bar{c}_{r}$ are some constants (e.g., $\bar{c}_{l} = 0.9, \bar{c}_{r} = 1.1$), but this would not change the proved asymptotics.}}
This is a standard and natural assumption in modern optimization \citep{mishchenko2022asynchronous,tyurin2023optimal}, as it allows us to compare parallel and asynchronous methods.

\textbf{Communication times.} 
Our main goal is to derive the optimal time complexity in the setup where communication times cannot be ignored. 
To the best of our knowledge, this work considers a new communication assumption motivated by practical bottlenecks. For example, consider two identical GPUs with the same computation time $h$, connected by a communication link. The main bottleneck is the bandwidth $b$, i.e., the number of bits (or coordinates) per second they can transmit to each other.

\begin{tcolorbox}[colback=gray!5,colframe=gray,title=Graph-Bandwidth Communication Model]
\label{box:comm_model}
Motivated by the previous example, in general, we assume that there exists a directed connected weighted graph $G = (V, E, b)$ with $n = |V|$ vertices and $|E|$ edges. Each vertex $i \in V$ represents a worker, and each ordered edge $(i, j) \in E$ with weight $b_{ij} > 0$ represents a communication link between workers $i$ and $j$. The weight/bandwidth $b_{ij}$ represents the number of coordinates/bits per second that worker $i$ can send to worker $j$ via edge $(i,j).$ We assume that if $(i, j) \in E$, then $(j, i) \in E$ and $b_{ij} = b_{ji}$, meaning that the communication is bidirectional: workers can send and receive messages from each other at the same time. It is also convenient to define an \emph{undirected} version of $G$, i.e., the undirected graph $\bar{G}=(V,\bar E,b)$ where $\{i,j\}\in\bar E$ with weight $b_{ij}$ if and only if $(i,j)\in E$ and $(j,i) \in E$ with weight $b_{ij}$.
\end{tcolorbox}
Without loss of generality (w.l.o.g.), we assume the graph is connected; if not, one should consider each connected component separately. We discuss this model with latencies in Section~\ref{sec:latency}.
\begin{assumption}
\label{ass:time}
The optimization environment satisfies \hyperref[box:computation_model]{\textcolor{gray}{Computation Model}} and \hyperref[box:comm_model]{\textcolor{gray}{Graph-Bandwidth Communication Model}}. All other operations, such as in-node aggregation, vector splitting, and other local computations (except for stochastic gradient computations), are assumed to take negligible time.
\end{assumption}

\emph{We emphasize that our setup allows freedom in how communication is performed and is more flexible than, for instance, the gossip protocol \citep{boyd2006randomized}.} Workers can overlap communication and computation and are allowed to send and receive through all edges asynchronously. 
Examples: \\
\textbf{Split vector:} In Figure~\ref{fig:system_example}, worker $5$ could send a vector of size $d$ to worker $2$ directly in $\nicefrac{d}{b_{25}}$ seconds, but it is faster to split the vector into two parts and route one part through worker $1$, yielding time $\max\{\nicefrac{d}{{\bf 2} b_{25}}, \nicefrac{d}{{\bf 2} \min\{b_{15}, b_{12}\}}\}$, which is two times faster.\\
\textbf{Online in-network aggregation:} Assume that all workers want to \emph{reduce} their vectors at worker $4$. A naive approach that sends the vectors separately is bottlenecked by $b_{45}$, requiring $\nicefrac{{\bf 4}d}{b_{45}}$ seconds. A better strategy aggregates the vectors at worker $5$ and forwards coordinates immediately as they arrive, yielding time $\nicefrac{d}{b_{45}}.$ See another example in Section~\ref{sec:sync}.\\
\textbf{Interleaving:} Worker $4$ needs to send two vectors, one to worker $1$ and one to worker $2$. Instead of sending them sequentially, it may \emph{interleave} transmissions: alternating coordinates of the two vectors. Worker $5$ then forwards each coordinate to the appropriate destination.












\begin{figure}[t]
\centering
\scalebox{0.85}{
\begin{tikzpicture}[
    scale=1,
    every node/.style={font=\small},
    worker/.style={
        circle,
        draw=black,
        thick,
        minimum size=0.4cm,
        fill=blue!8
    },
    link/.style={
        ->,
        thick,
        bend left=6
    },
    tree/.style={
        thick
    }
]

\begin{scope}

\node[worker] (1) at (0,0.6) {$h_1$};
\node[worker] (2) at (2.8,1.3) {$h_2$};
\node[worker] (3) at (5.6,0.6) {$h_3$};
\node[worker] (4) at (4.6,-0.6) {$h_4$};
\node[worker] (5) at (1.4,-0.6) {$h_5$};

\draw[link] (1) to node[midway, above] {$b_{12} = 2$} (2);
\draw[link] (2) to (1);

\draw[link] (2) to node[midway, above] {$b_{23} = 2$} (3);
\draw[link] (3) to (2);

\draw[link] (4) to node[midway, above, yshift=5pt] {$b_{45} = 1$} (5);
\draw[link] (5) to (4);

\draw[link] (5) to node[midway, above, xshift=-15pt,yshift=-10pt] {$b_{15} = 1$} (1);
\draw[link] (1) to (5);

\draw[link] (2) to node[midway, above, xshift=20pt,yshift=-5pt] {$b_{25} = 1$} (5);
\draw[link] (5) to (2);


\end{scope}

\begin{scope}[xshift=7.2cm]

\node[worker] (1t) at (0,0.6) {$h_1$};
\node[worker] (2t) at (2.8,1.3) {$h_2$};
\node[worker] (3t) at (5.6,0.6) {$h_3$};
\node[worker] (4t) at (4.6,-0.6) {$h_4$};
\node[worker] (5t) at (1.4,-0.6) {$h_5$};

\draw[tree] (1t) -- node[midway, above,xshift=-3pt] {$w_{12} = 3$} (2t);
\draw[tree] (2t) -- node[midway, above,xshift=3pt] {$w_{23} = 2$} (3t);
\draw[tree] (2t) -- node[midway, left] {$w_{25} = 2$} (5t);
\draw[tree] (5t) -- node[midway, above] {$w_{45} = 1$} (4t);


\end{scope}

\end{tikzpicture}
}

\caption{
Left: communication graph $G$. Node $i$ has computation time $h_i$; edges denote bidirectional links with bandwidth $b_{ij}=b_{ji}$. Right: a \emph{Gomory--Hu tree} $T$ of the undirected version of $G$ (Definition~\ref{def:gh}), \emph{the central tool for the design of optimal decentralized optimization methods.}
}
\label{fig:system_example}
\end{figure}
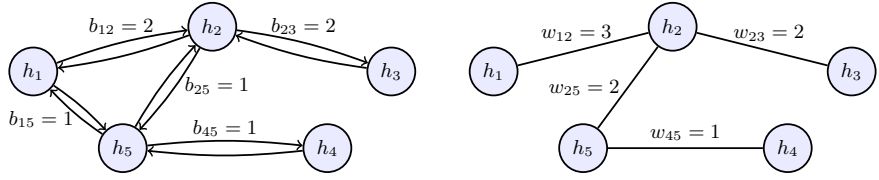


\subsection{Known methods}
\label{sec:known_methods}
We now describe some known methods for decentralized distributed optimization.
Related work under previous setups are discussed in Section~\ref{sec:relatedwork}.\\
\textbf{Synchronous SGD.} One of the most standard ways to solve the decentralized distributed optimization problem is to use Synchronous SGD:
$x^{k+1} = x^k - \frac{\gamma }{n} \sum_{i=1}^{n} \nabla f(x^k;\xi^k_i),$
where $\xi_i^k$ is a random variable sampled from the distribution $\mathcal{D}_{\xi}$ at worker $i$. In this method, all workers compute the gradients in parallel, and then the algorithm averages the gradients and updates the iterate. It is well-known that this method finds an $\varepsilon$--stationary point after $\cO(\nicefrac{L \Delta}{\varepsilon} + \nicefrac{\sigma^2 L \Delta}{n \varepsilon^2})$ \emph{iterations} \citep{lan2020first}. At the same time, under Assumption~\ref{ass:time}, the \emph{time complexity} of this method is
\begin{align}
\label{eq:mFiTJ}
\textstyle \cO\left(\left(\frac{d}{b_{\min}} + h_{\max}\right)\left(\frac{L \Delta}{\varepsilon} + \frac{\sigma^2 L \Delta}{n \varepsilon^2}\right)\right) 
\end{align}
seconds, where $h_{\max} \eqdef \max_{i \in [n]} h_i$ comes from the fact that every worker computes one stochastic gradient, waiting for the slowest one. We define $b_{\min} \eqdef \min_{(i,j) \in E} b_{ij}.$ The term $\nicefrac{d}{b_{\min}}$ is a time to collect all stochastic gradients at a one worker, 
update the iterate, 
and broadcast $x^{k+1}$ to all workers, using the \emph{online in-network aggregation} discussed in Section~\ref{sec:intro} (see details in Section~\ref{sec:sync}) \\
\textbf{Hero SGD.} Another strategy is to simply run SGD on the fastest worker. In this case, the time complexity is
$\cO\big(h_{\min} \left(\nicefrac{L \Delta}{\varepsilon} + \nicefrac{\sigma^2 L \Delta}{\varepsilon^2}\right)\big),$ where $h_{\min} \eqdef \min_{i \in [n]} h_i.$
As expected, this time complexity does not scale with $n$; nevertheless, it can be better than \eqref{eq:mFiTJ} if $\nicefrac{d}{b_{\min}}$ or $h_{\max}$ are large. \\
\textbf{Gossip methods.} Another important family of decentralized methods is gossip-based methods \citep{boyd2006randomized}, where each worker communicates vectors with its neighbors. These methods are typically analyzed by deriving \emph{iteration convergence rates} that depend on the spectral gap of a mixing matrix. 
As for Synchronous SGD and Hero SGD, it is possible to derive the time complexities of these methods; however, as we will show later, we present a \emph{lower bound} (with a matching algorithm) that virtually none of these methods can break. \\
\textbf{Compressed communication.} Instead of sending the full vectors, it is possible to use lossy compression techniques \citep{Seide2014,alistarh2017qsgd}, where the idea is to use different sparsification and quantization methods to reduce the number of coordinates/bits sent through the communication links \citep{beznosikov2020biased}. 

  \textbf{Main problem:} There are many ways, from different centralized and decentralized optimization fields, to construct methods with time complexities that depend on the parameters in Assumptions~\ref{ass:lipschitz_constant}, \ref{ass:lower_bound}, \ref{ass:stochastic_variance_bounded}, and \ref{ass:time}. This naturally raises the central question of the paper: \emph{what is the fastest possible method in this setting, and can we establish a matching lower bound?}
\subsection{Contributions}
$\spadesuit$ Our first main contribution is a new method, Grace SGD (Algorithm~\ref{alg:main}), that achieves an \emph{optimal time complexity}, up to logarithmic factors. To design this method and obtain the discovered complexity, we developed two important new subroutines, Algorithms~\ref{alg:preprocess} and \ref{alg:allreduce}, which use tools from the \emph{Gomory-Hu} trees \citep{gomory1961multi} and the \emph{Steiner Tree Packing} problem \citep{lau2004approximate} fields, and may be of independent interest. We now present the theorem explained in Section~\ref{sec:grace}.
\begin{mytheobox}
\begin{restatable}[Upper Bound in the Homogeneous Setting]{theorem}{THEOREMSGDHOMOG}
    \label{thm:sgd_homog}
    Assume that Assumptions~\ref{ass:lipschitz_constant}, \ref{ass:lower_bound}, and \ref{ass:stochastic_variance_bounded} hold. Then Grace SGD (Algorithm~\ref{alg:main}) finds an $\varepsilon$-stationary point of \eqref{eq:main_problem} after $K = \left\lceil\nicefrac{4 L \Delta}{\varepsilon}\right\rceil$ iterations, and the time complexity under Assumption~\ref{ass:time} is
    \begin{align}
      \label{eq:GvDIFIzv}
      \textstyle \cO\left(\min\limits_{k \in [n]} \left(\frac{d}{\bar{w}_{k}} + \min\limits_{p \in [k]}\min\limits_{m \in [\abs{S_{k,p}}]} \left[\left(\frac{1}{m} \sum\limits_{i=1}^{m} \frac{1}{h_{\pi_{i}(S_{k,p})}}\right)^{-1} \left(1 + \frac{\sigma^2}{m \varepsilon}\right)\right] \right) \frac{L \Delta}{\varepsilon}\right)
    \end{align}
    up to a universal constant factor, where $\{S_{k,p}\}_{k \in [n], p \in [k]}$ and $\{\bar{w}_{k}\}_{k \in [n]}$ are the sequences generated by Algorithm~\ref{alg:preprocess}, and 
    $\pi(S)$ is a permutation that sorts $\{h_i\}_{i \in S}:$ $h_{\pi_1(S)} \leq \dots \leq h_{\pi_{\abs{S}}(S)}.$
\end{restatable}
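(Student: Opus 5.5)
The proof splits into two parts: an iteration bound that ignores time, and a per-iteration time bound.

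\textbf{Iteration bound.} Grace SGD is, at the level of iterates, minibatch SGD: $x^{k+1} = x^k - \gamma g^k$, where $g^k$ averages $B$ i.i.d. stochastic gradients computed at the common point $x^k$ by a selected subset of workers. The choice of $B$ is the crucial one:
\begin{itemize}
\item take $B \geq \max\{1, \nicefrac{\sigma^2}{\varepsilon}\}$ (up to a constant) and $\gamma = \nicefrac{1}{2L}$;
\item then $\Exp{\norm{g^k - \nabla f(x^k)}^2} \leq \nicefrac{\sigma^2}{B} \leq \varepsilon$ (up to a constant);
\item the standard descent lemma under Assumptions~\ref{ass:lipschitz_constant}--\ref{ass:stochastic_variance_bounded} gives $\frac{1}{K}\sum_{k<K} \Exp{\norm{\nabla f(x^k)}^2} \leq \frac{4 L\Delta}{K} + \nicefrac{\varepsilon}{2}$ (after adjusting constants);
\item hence $K = \lceil \nicefrac{4L\Delta}{\varepsilon}\rceil$ iterations suffice, with $\bar x$ drawn uniformly from the iterates.
\end{itemize}
The full factor $\nicefrac{\sigma^2}{\varepsilon}$ sits inside the batch, not in $K$. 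It is then multiplied by $\nicefrac{L\Delta}{\varepsilon}$, which yields the $\nicefrac{\sigma^2 L\Delta}{\varepsilon^2}$ behaviour.

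\textbf{Per-iteration time.} It remains to show that one iteration takes time
\[
\cO\Bigl(\tfrac{d}{\bar w_k} + \bigl(\tfrac1m\textstyle\sum_{i\le m} h_{\pi_i(S_{k,p})}^{-1}\bigr)^{-1}\bigl(1+\tfrac{\sigma^2}{m\varepsilon}\bigr)\Bigr)
\]
for the triple $(k,p,m)$ that the algorithm selects, namely the minimizer of \eqref{eq:GvDIFIzv}. Since Algorithm~\ref{alg:preprocess} outputs all $S_{k,p}$ and $\bar w_k$, this minimizer can be computed offline. Write $\tau_m = (\frac1m\sum_{i\le m} h_{\pi_i}^{-1})^{-1}$. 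The iteration then runs in two phases.

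\emph{Computation phase.} Only the $m$ fastest workers of $S_{k,p}$ compute, each as many gradients as it can within a window of length $t = 2\tau_m(1+\nicefrac{\sigma^2}{m\varepsilon})$. Worker $i$ produces at least $\lfloor t/h_{\pi_i}\rfloor$ gradients. Because $h_{\pi_i}\le h_{\pi_m}\le m\tau_m \le t/2$, each floor is at least half its argument. The total is therefore at least $\frac{t}{2}\sum_i h_{\pi_i}^{-1} = \nicefrac{t m}{2\tau_m} \ge m + \nicefrac{\sigma^2}{\varepsilon}$, which meets the batch requirement.

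\emph{Communication phase.} The local sums are aggregated and the new iterate is broadcast to every worker using Algorithm~\ref{alg:allreduce}. Here I invoke its stated guarantee: for a set $S_{k,p}$ produced by Algorithm~\ref{alg:preprocess}, it reduces $d$-dimensional vectors over $S_{k,p}$ and broadcasts the result in $\cO(\nicefrac{d}{\bar w_k})$ time. The mechanism behind this guarantee is as follows:
\begin{itemize}
\item $\bar w_k$ is a min-cut value read off the Gomory--Hu tree, so every pair in the relevant component has max-flow at least $\bar w_k$;
\item Steiner-tree packing (Lau-type) then gives edge-disjoint fractional trees of total capacity $\Omega(\bar w_k)$;
\item the vector is split across these trees and reduced with online in-network aggregation and pipelining.
\end{itemize}
Workers outside the set only need to receive $x^{k+1}$, and the same packing covers them.

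Adding the two phases and multiplying by $K$ gives \eqref{eq:GvDIFIzv}. Taking the min over $p$ and $m$ inside, and over $k$ outside, is legitimate because the algorithm chooses these parameters.

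\textbf{Main obstacle.} The hard step is the communication bound: verifying that the Steiner packing gives throughput $\Omega(\bar w_k)$ with only a constant loss, rather than a logarithmic one, and that pipelining the reduce and broadcast along the packed trees respects the edge capacities simultaneously. I would first try to obtain it directly from the lemma for Algorithm~\ref{alg:allreduce}, using the factor-$2$ loss in Steiner packing relative to half the min-cut.
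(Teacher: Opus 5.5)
Your decomposition is the paper's. The standard minibatch-SGD descent bound fixes $K$, and one iteration costs the batch-collection time plus the AllReduce time of Theorem~\ref{thm:allreduce_time}. The minimum $S_{k,p}$-cut is at least $\bar w_k$ because every $S_{k,p}$-cut separates two vertices of $S_{k,p}$ whose Gomory--Hu path uses only tree edges of weight at least $\bar w_k$. Your \emph{main obstacle} is resolved as you propose: the paper imports Lau's packing of at least $\lfloor \alpha_{\bar G}(S)/26\rfloor$ edge-disjoint Steiner trees in the unit multigraph $\hat G$, so the loss is a constant (a fractional packing with factor-$2$ loss would also do). Reduce (toward the pivot) and broadcast (away from it) are run one after the other, each taking $\cO(d/\alpha_{\bar G}(S_{k,p}))=\cO(d/\bar w_k)$ time.

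The computation phase is wrong as written: both links of $h_{\pi_m}\le m\tau_m\le t/2$ can fail at the minimizing $m$.
\begin{itemize}
\item For $h_{\pi_1}=1$, $h_{\pi_2}=100$, $\sigma^2/\varepsilon=1000$, the minimizer is $m=2$ and $m\tau_m=400/101<h_{\pi_2}$.
\item For $h_i\equiv h$, $\abs{S_{k,p}}=10$, $\sigma^2/\varepsilon=1$, the minimizer is $m=10$ and $m\tau_m=10h>1.1h=t/2$.
\end{itemize}
The repair is the paper's Lemma~\ref{lem:batch_time}: use $\lfloor x\rfloor\ge x-1$. This gives $\sum_{i\le m}\lfloor t/h_{\pi_i}\rfloor\ge tm/\tau_m-m=m+2\sigma^2/\varepsilon$ for \emph{every} $m$, with no condition on $h_{\pi_m}$. (The inequality $h_{\pi_m}\le t/2$ does hold at a minimizer, since otherwise dropping worker $\pi_m$ strictly lowers the objective, but not for the reason you give.) Restricting computation to the $m$ fastest workers is harmless, since Grace SGD lets all of $S^*$ compute and reaches the batch no later.

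Second, suppose \emph{broadcast to every worker} and \emph{workers outside the set \dots\ the same packing covers them} are meant to include workers outside $S_{k,p}$. Then the claim is false. The Steiner trees only connect $S_{k,p}$. A worker $v\notin S_{k,p}$ is separated from any $u\in S_{k,p}$ by a removed Gomory--Hu edge of weight $\bar w_j$ with $j<k$. So $\alpha_{\bar G}(u,v)$ may be far below $\bar w_k$, and shipping $x^{k+1}$ to $v$ can cost up to $d/\bar w_1\gg d/\bar w_k$ per iteration. For instance, take $S^*$ to be one cluster in the $K$-clusters example with $b_{\textnormal{slow}}\to 0$. Grace SGD never needs this, because only workers in $S^*$ hold and update the iterate; drop the claim rather than rely on it.

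Finally, a constant slip: your descent bound $\frac{4L\Delta}{K}+\frac{\varepsilon}{2}$ only yields $\frac32\varepsilon$ at $K=\lceil 4L\Delta/\varepsilon\rceil$. With $\gamma=1/(2L)$ and variance at most $\varepsilon$, the exact computation gives $\frac{8L\Delta}{3K}+\frac{\varepsilon}{3}\le\varepsilon$, which is what makes the stated $K$ work.
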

\begin{corollary}[Equal computation times]
  \label{cor:main}
  In view of Theorem~\ref{thm:sgd_homog}, if $h_i = h$ for all $i \in [n],$ then the time complexity of Grace SGD is
  \begin{align}
      \label{eq:WOjQOMzoROeoVwJxv}
      \textstyle \cO\left(\min\limits_{k \in [n]} \left(\frac{d}{\bar{w}_{k}} + \min\limits_{p \in [k]} \frac{h \sigma^2}{\abs{S_{k,p}} \varepsilon}\right) \frac{L \Delta}{\varepsilon} + \frac{h L \Delta}{\varepsilon}\right).
  \end{align}
\end{corollary}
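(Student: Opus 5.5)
The corollary follows from Theorem~\ref{thm:sgd_homog} by substituting $h_i = h$ into \eqref{eq:GvDIFIzv} and simplifying the inner minimization over $m$. With equal computation times, for every set $S_{k,p}$ and every $m \in [\abs{S_{k,p}}]$ we get $\left(\frac{1}{m}\sum_{i=1}^{m} \frac{1}{h}\right)^{-1} = h$. The bracket therefore becomes $h\left(1 + \frac{\sigma^2}{m\varepsilon}\right)$. This is decreasing in $m$, so the minimum over $m$ is attained at $m = \abs{S_{k,p}}$ and equals
\begin{align*}
h + \frac{h\sigma^2}{\abs{S_{k,p}}\varepsilon}.
\end{align*}

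Minimizing over $p \in [k]$ only affects the second term, since the first term $h$ does not depend on $p$. This gives $h + \min_{p \in [k]} \frac{h\sigma^2}{\abs{S_{k,p}}\varepsilon}$. Substituting back, the complexity reads
\begin{align*}
\cO\left(\min_{k\in[n]}\left(\frac{d}{\bar w_k} + h + \min_{p\in[k]} \frac{h\sigma^2}{\abs{S_{k,p}}\varepsilon}\right)\frac{L\Delta}{\varepsilon}\right).
\end{align*}
The additive $h$ inside does not depend on $k$, so we can pull it out of the minimum over $k$ and distribute the factor $\frac{L\Delta}{\varepsilon}$. This produces the extra term $\frac{hL\Delta}{\varepsilon}$ and yields exactly \eqref{eq:WOjQOMzoROeoVwJxv}.

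There is no real obstacle here. The only points to check are that the $m$-minimum is attained at the largest admissible value, and that an additive constant commutes with $\min_k$. Both hold because each $S_{k,p}$ is nonempty, so $m = \abs{S_{k,p}}$ is admissible.
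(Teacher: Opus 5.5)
Your proposal is correct and is the intended argument: with $h_i = h$ the harmonic-mean factor equals $h$, the bracket $h\left(1 + \frac{\sigma^2}{m\varepsilon}\right)$ is non-increasing in $m$ (strictly decreasing when $\sigma^2 > 0$), so its minimum is attained at $m = \abs{S_{k,p}}$, and the $k$-independent additive $h$ can be pulled out of $\min_{k}$ to give the $\frac{hL\Delta}{\varepsilon}$ term. The paper offers no separate proof and presents the corollary as an immediate substitution into Theorem~\ref{thm:sgd_homog}, which matches what you did.
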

\end{mytheobox}
The complexities depend on new quantities, $\bar{w}_{k}$ and $S_{k,p}$:
$\bar{w}_k$ is the minimum $s$--$t$ cut value in $G$ for some $s,t \in V$, corresponding to an edge of the Gomory--Hu tree $T$,
and $S_{k,p}$ is a subset of $V$ whose communication bottleneck is at least $\bar{w}_{k}$.
Intuitively, $\bar{w}_k$ reflects communication within $S_{k,p}$, and Grace SGD chooses $k$ to balance $\bar{w}_k$ and the number of workers $\max_{p \in [k]} \abs{S_{k,p}}$: 
the more workers compute stochastic gradients, the longer synchronization takes, and \eqref{eq:GvDIFIzv} and \eqref{eq:WOjQOMzoROeoVwJxv} capture the balance.

$\clubsuit$ While Theorem~\ref{thm:sgd_homog} may appear non-intuitive and semi-explicit, it is in fact \emph{fundamental and optimal up to logarithmic factors}. Despite the flexibility and generality of the setup, it is possible to prove a matching lower bound using new techniques for a broad family of methods (zero-respecting), including standard methods such as SGD, Asynchronous SGD \citep{recht2011hogwild}, Local SGD \citep{zinkevich2010parallelized}, and Adam \citep{kingma2014adam}, as well as gossip methods and compressed methods via random sparsification. See Theorem~\ref{thm:main} in Section~\ref{sec:lowerbound}.

$\vardiamond$ In Section~\ref{sec:heter}, we also consider the \emph{heterogeneous} setting, where workers have access to different distributions, and prove a new \emph{optimal time complexity}, achieved by a new method called Leon SGD: 
\begin{algorithm}[t]
\caption{Grace SGD}
\label{alg:main}
\begin{algorithmic}[1]
\REQUIRE Graph $G=(V,E,b),$ times $\{h_i\}_{i \in [n]},$ ratio $\nicefrac{\sigma^2}{\varepsilon},$ smoothness constant $L,$ point $x^0$
\STATE Find the fastest subset of workers $S^*$ using Algorithm~\ref{alg:preprocess} \hfill (\textbf{crucial new step})
\STATE Workers $S^*$ locally initialize starting point with $x^0$ and step size $\gamma = \nicefrac{1}{2 L}$
\FOR{$k = 0, 1, \ldots$}
\STATE Workers in $S^*$ compute $\sum_{j=1}^{B_i} \nabla f(x^k;\xi^k_{ij})$ in parallel, increasing local batch sizes $B_i$ over time, until $\sum_{i \in S^*} B_i = \max\{\lceil\nicefrac{\sigma^2}{\varepsilon}\rceil, 1\}$
\STATE Run optimal-bandwidth AllReduce (e.g., Algorithm~\ref{alg:allreduce}) so workers \hfill (\textbf{crucial new step}) \\
in $S^*$ obtain $\sum_{i \in S^*} \sum_{j=1}^{B_i} \nabla f(x^k;\xi^k_{ij})$
\STATE Every worker in $S^*$ updates the iterate: $x^{k+1} = x^k - \frac{\gamma}{\sum_{i \in S^*} B_i} \sum_{i \in S^*} \sum_{j=1}^{B_i} \nabla f(x^k;\xi^k_{ij})$
\ENDFOR
\end{algorithmic}
\end{algorithm}
\begin{mytheobox}
\begin{restatable}[Upper Bound in the Heterogeneous Setting]{theorem}{THEOREMSGDHETER}
    \label{thm:sgd_heter}
    Assume that Assumptions~\ref{ass:lipschitz_constant}, \ref{ass:lower_bound}, and \ref{ass:stochastic_variance_bounded_heter} hold. Then Leon SGD (Algorithm~\ref{alg:main_leon}) finds an $\varepsilon$-stationary point of \eqref{eq:main_heter} after $K = \Theta\left(\nicefrac{L \Delta}{\varepsilon}\right)$ iterations, and the time complexity under Assumption~\ref{ass:time} is
    \begin{align}
      \label{eq:GvDIFIzvheter}
      \textstyle \cO\left(\max\left\{\frac{d}{\bar{w}_1}, \max\limits_{i \in [n]} h_i, \frac{\sigma^2}{n \varepsilon}\left(\frac{1}{n} \sum\limits_{i=1}^n h_i\right)\right\} \frac{L \Delta}{\varepsilon}\right)
    \end{align}
    seconds, where 
    $\bar{w}_1 \equiv \min_{\{i,j\} \in F} w_{ij}$ is the minimum edge weight in the Gomory--Hu tree $T = (V, F, w)$ of $\bar{G}$. Equivalently, $\bar{w}_1$ is the global minimum cut value of $G$.
\end{restatable}
\end{mytheobox}
Unlike the homogeneous setting, Theorem~\ref{thm:sgd_heter} always depends on $\bar{w}_1$, the global minimum cut value, reflecting that the heterogeneous setting is inherently more challenging and is bottlenecked by the communication rate through the smallest cut\footnote{For instance, in Figure~\ref{fig:system_example}, the smallest communication bottleneck $\bar{w}_1$ occurs between workers $5$ and $4$.}. This result is tight by our lower bound in Theorem~\ref{thm:main_heter}.

$\varheart$ In Section~\ref{sec:heter_sim}, we demonstrate an important corollary: for \emph{sparse} graphs, Leon SGD and a naive mini-batch version of Synchronous SGD (Section~\ref{sec:known_methods}) with complexity $\tilde{\Theta}(\nicefrac{d L \Delta}{b \varepsilon} + \nicefrac{h \sigma^2 L \Delta}{n \varepsilon^2} + \nicefrac{h L \Delta}{\varepsilon})$ are optimal (up to logarithmic factors) when $\varepsilon$ is small and $n$ is large, even in the homogeneous setting. This means that a fully synchronized method that sends all stochastic vectors to one node is sufficient and \emph{the first communication term does not scale with} $n$ in sparse graphs. The main difficulty here was to show that this holds even in the homogeneous setting. In Corollaries~\ref{cor:cor_sparse_three} and \ref{cor:cor_sparse_two}, we extend this observation and derive a \emph{fundamental trade-off}. In Sections~\ref{sec:examples} and \ref{sec:examples_heter}, we discuss several other important examples and implications of our results for modern distributed optimization tasks.

\section{Grace SGD: A New Algorithm with Near Optimal Time Complexity}
\label{sec:grace}
We now present our new Algorithm~\ref{alg:main}, Grace SGD, designed for the practical \hyperref[box:comm_model]{\textcolor{gray}{Graph-Bandwidth Communication Model}} setup and achieving a near-optimal time complexity. Let us now explain how Grace SGD works. In a nutshell, Grace SGD is very simple: i) it finds the right subset of workers $S^* \subseteq [n]$ (Algorithm~\ref{alg:preprocess}); ii) asks every worker $i \in S^*$ to calculate a mini-batch of stochastic gradients of size $B_i$ such that the total batch size $\sum_{i \in S^*} B_i = \Theta(\nicefrac{\sigma^2}{\varepsilon});$ iii) runs an optimal-bandwidth AllReduce algorithm (e.g. Algorithm~\ref{alg:allreduce}) so that all workers from $S^*$ receive the whole batch $g^k = \sum_{i \in S^*} \sum_{j=1}^{B_i} \nabla f(x^k;\xi^k_{ij}) / \sum_{i \in S^*} B_i$; iv) every worker locally updates the iterate using the standard SGD step $x^{k+1} = x^k - \gamma g^k.$ However, when it comes to the details, the main novelty lies in choosing $S^*,$ implementing AllReduce in graph $G$ in a right way, and proving that this whole scheme is indeed near-optimal. 
For clarity, assume that $h_i = h$ for all $i \in [n]$ (Corollary~\ref{cor:main}).

\textbf{Choosing the best subset $S^*$ (See visualization in Section~\ref{sec:example_alg1}).} Recall the discussion of Synchronous SGD and Hero SGD, two diametrically opposed methods, neither of which has universally best theoretical guarantees. The main advantage of Synchronous SGD is that it utilizes all workers; however, this also introduces its main drawback: communication among all workers can become a significant bottleneck. On the other hand, Hero SGD does not communicate but utilizes only one worker. Here comes our question: \emph{How to choose a subset of workers that is large enough to cope with noises in gradients, but also does not require significant communication times? And, surprisingly, the answer lies in \emph{Gomory--Hu} trees}, which naturally capture the \emph{connectivity structure and bottlenecks} of $G$.

Using our \emph{directed} graph $G,$ we construct the \emph{undirected} version $\bar{G}=(V,\bar E,b).$ 
For undirected graphs, \citet{gomory1961multi} proved that it is always possible to construct a tree $T=(V,F,w)$ (with the same set of vertices, but different sets of edges and weights). The Gomory--Hu tree $T$ has one essential property. Consider any edge $e \in F$ of the tree with weight $w_e.$ Then $e$ separates the vertices of $T$ into two sets, $S_1$ and $S_2$. 
It turns out that the value $w_e$ is an upper bound on the \emph{minimum $s$--$t$ cut value} for all $s \in S_1$ and $t \in S_2$ (Theorem~\ref{thm:gomory_hu}). 
By the max-flow min-cut theorem, $w_e$ is an upper bound on the number of coordinates per second (maximal flow) that $S_1$ and $S_2$ can transmit to each other. This immediately implies that if group $S_1$ or any worker from $S_1$ wants to send a vector of size $\ell$ to group $S_2$ or any worker from $S_2$, then it is necessary to wait at least $\nicefrac{\ell}{w_e}$ seconds, \emph{no matter what routing strategy one chooses.} Moreover, this communication rate is attained by at least one pair of workers $s \in S_1$ and $t \in S_2$, namely the endpoints of the edge $e$ in $T$.
\begin{algorithm}[t]
\caption{Find Fastest Subset of Workers (Preprocessing in Grace SGD)}
\label{alg:preprocess}
\begin{algorithmic}[1]
\REQUIRE Graph $G=(V,E,b),$ times $\{h_i\}_{i \in [n]},$ ratio $\nicefrac{\sigma^2}{\varepsilon}$
\STATE Construct Gomory-Hu tree $T=(V,F,w)$ of the undirected version $\bar{G}$ 
\STATE Sort $\{w_{ij}\}_{\{i,j\} \in F}$ and obtain $\bar{w}_{1} \leq \dots \leq \bar{w}_{n-1};$ also define $\bar{w}_{n} \eqdef \infty$ \hfill ($\abs{F} = n - 1$)
\STATE Init $t^* = \infty,$ $S^* = \textnormal{null},$ and $k^* = \textnormal{null}$
\FOR{$k=1,\dots,n$}
    \STATE Find connected components in $T$ and denote them by $\mathcal{S}_k = (S_{k,1}, \dots, S_{k,k})$
    \STATE Find $\textstyle \bar{S} = \arg\min\limits_{S \in \mathcal{S}_k} \left\{t_k(S) \eqdef \frac{d}{\bar{w}_k} + \min\limits_{m \in [\abs{S}]} \left[\left(\frac{1}{m} \sum\limits_{i=1}^{m} \frac{1}{h_{\pi_{i}(S)}}\right)^{-1} \left(1 + \frac{\sigma^2}{\varepsilon m}\right)\right]\right\},$ \\
    where $\pi(S)$ is a permutation that sorts $\{h_i\}_{i \in S}:$ $h_{\pi_1(S)} \leq \dots \leq h_{\pi_{\abs{S}}(S)}$ 
    \STATE If $t_k(\bar{S}) < t^*,$ then $t^* = t_k(\bar{S}),$ $S^* = \bar{S},$ and $k^* = k$
    \STATE Remove the edge $\{i,j\}$ in $T$ corresponding to $\bar{w}_k$ (except for $k = n$)
\ENDFOR
\RETURN Subset of workers $S^*$ 
\end{algorithmic}
\end{algorithm}

Using this observation, we design the iterative Algorithm~\ref{alg:preprocess}. In the first iteration, we take $T$ and find all its connected components. Since $T$ is a tree, there is only one connected component, $S_{1,1} = [n]$. The idea of the next steps of the loop is to measure the time complexity of the method when all workers in $S_{1,1} = [n]$ solve \eqref{eq:main_problem} and communicate to each other. When $h_i = h$ for all $i \in [n],$ the time complexity is
  $\textstyle t_1(S_{1,1}) \times \nicefrac{L \Delta}{\varepsilon} = \Theta\left(\nicefrac{d}{\bar{w}_1} + h \max\left\{\nicefrac{\sigma^2}{n \varepsilon}, 1\right\}\right) \nicefrac{L \Delta}{\varepsilon},$
where $h \max\left\{\nicefrac{\sigma^2}{n \varepsilon}, 1\right\}$ is the ``statistical'' term, which improves with $n,$ and $\nicefrac{d}{\bar{w}_1}$ is the ``communication'' term. Notice that $\bar{w}_1$ is the smallest value among the weights in $F$ of the tree $T,$ which characterizes the slowest communication bottleneck and communication speed, and appears because all workers participate in the optimization. The term $\nicefrac{d}{\bar{w}_1}$ can be huge and much larger than $\nicefrac{h \sigma^2}{n \varepsilon}.$ 

The main idea of the last step in the loop is to remove the edge $\{i,j\}$ corresponding to $\bar{w}_1$ from $T$, consider the new graph $T - \{i,j\}$, which has two connected components, $S_{2,1}$ and $S_{2,2}$, and consider an optimization process in which only one of these subsets is used. Then, choose the best subset using the values
  $\textstyle t_2(S_{2,1}) = \nicefrac{d}{\bar{w}_2} + h \max\left\{\nicefrac{\sigma^2}{\abs{S_{2,1}} \varepsilon}, 1\right\} \textnormal{ vs. } t_2(S_{2,2}) = \nicefrac{d}{\bar{w}_2} + h \max\left\{\nicefrac{\sigma^2}{\abs{S_{2,2}} \varepsilon}, 1\right\}.$
Basically, when $h_i = h$ for all $i \in [n]$, we have to choose the largest connected component. \emph{Why is that a good strategy?} By removing the edge with weight $\bar{w}_1$, we disconnect the groups $S_{2,1}$ and $S_{2,2}$. However, due to the properties of the Gomory--Hu tree $T$, the communication bottleneck inside each group becomes greater than or equal to $\bar{w}_2$ (instead of $\bar{w}_1$). An important observation is that the ``communication'' term $\nicefrac{d}{\bar{w}_2} \leq \nicefrac{d}{\bar{w}_1}$; thus, the new time complexity $\min\{t_2(S_{2,1}), t_2(S_{2,2})\}$ can be smaller than $t_1(S_{1,1})$. We repeat this loop until there are $n$ connected components (singletons) $\{1\}, \dots, \{n\},$ which have the time complexity of Hero SGD:
  $\textstyle t_n(S_{n,i}) \times \nicefrac{L \Delta}{\varepsilon} = \Theta\left(h \max\left\{\nicefrac{\sigma^2}{\varepsilon}, 1\right\}\right) \nicefrac{L \Delta}{\varepsilon},$
where $\nicefrac{d}{\bar{w}_n} = 0$ because $\bar{w}_n = \infty$, no communication is required. Using the described strategy, we can find the optimal subset of workers that obtains the time complexity
\begin{align}
  \label{eq:XUbneEloAwYnwVNAurq}
  \textstyle \eqref{eq:WOjQOMzoROeoVwJxv} = t_{k^*}(S^*) \times \frac{L \Delta}{\varepsilon} = \Theta\left(\frac{d}{\bar{w}_{k^*}} + h \max\left\{\frac{\sigma^2}{\abs{S^*} \varepsilon}, 1\right\}\right) \frac{L \Delta}{\varepsilon},
\end{align}
where, roughly speaking, the communication and statistical terms are almost equal. It remains to design a method that indeed achieves this complexity.

\textbf{Optimal-bandwidth AllReduce (See visualization in Section~\ref{sec:example_alg2}).} We have selected the subset of workers $S^*.$ Obtaining the statistical term in \eqref{eq:XUbneEloAwYnwVNAurq} can be easily achieved with Minibatch SGD and $\abs{S^*}$ workers. The main difficulty is achieving the communication term $\nicefrac{d}{\bar{w}_{k^*}}.$ 
Intuitively, since the communication bottleneck in $S^*$ is at least $\bar{w}_{k^*}$, formally, the minimum $S^*$-cut value is at least $\bar{w}_{k^*}$ (Definition~\ref{def:min_cut}), there should be a way to develop an efficient synchronization algorithm with complexity $\cO(\nicefrac{d}{\bar{w}_{k^*}}).$
We now implement an optimal-bandwidth AllReduce algorithm, Algorithm~\ref{alg:allreduce}, that achieves this complexity. 

W.l.o.g., assume that 
$\{b_{ij}\}$ 
are integers, and instead of the undirected graph $\bar{G}$, consider the unweighted multigraph $\hat{G}$ (Algorithm~\ref{alg:allreduce}).
That is, instead of considering one edge $\{i,j\}$ with weight $b_{ij}$, we consider an equivalent graph where $\{i,j\}$ is repeated $b_{ij}$ times, each with unit bandwidth. In practice, the system still has a single edge with bandwidth $b_{ij}$; however, the behavior of $b_{ij}$ parallel unit-bandwidth edges can be simulated by multiplexing transmissions using the \emph{interleaving} strategy described in Section~\ref{sec:intro}.
\begin{algorithm}[t]
\caption{Optimal-Bandwidth AllReduce (AllReduce in Grace SGD and Leon SGD)}
\label{alg:allreduce}
\begin{algorithmic}[1]
\REQUIRE Graph $G=(V,E,b),$ with $\{b_{ij}\}_{\{i,j\} \in \bar{E}}$ being integers (w.l.o.g.)\textsuperscript{\color{blue} (a)}, subset of workers $S \subseteq V$, vector $a_i$ stored at worker $i$ for all $i \in S$ \\
\hspace*{-\algorithmicindent} {\textcolor{gray}{\bf (Preprocessing; performed once)}}
\STATE Find undirected unweighted version of $\bar{G}$: $\hat{G}=(V,\hat E)$ where, if $\{i,j\}\in \bar E,$ then $\{i,j\}$ is repeated $b_{ij}$ times in $\hat{E}$
\STATE Solve \emph{Steiner Tree Packing} problem for $S \subseteq V$ in $\hat{G}$ using \citep[Theorem 1.2]{lau2004approximate} and obtain collection of edge-disjoint trees $\mathcal{T} \eqdef (\hat{T}_1, \dots, \hat{T}_p)$ that each
connects $S$ \\ ($p$ equals the value of minimum $S$-cut in $\hat{G}$ up to a constant factor) 
\STATE Fix any pivot worker $v \in S$ \\
\hspace*{-\algorithmicindent} {\textcolor{gray}{\bf (End of preprocessing. Beginning of AllReduce)}}
\STATE Worker $i \in S$ divides $a_i$ into $p$ blocks of size $\lceil\nicefrac{d}{p}\rceil:$ $(a_{i,1}, \dots, a_{i,p})$ (pad with zeros if needed)
\STATE For all $j \in [p]$ (in parallel), \emph{reduce} blocks $\{a_{i,j}\}_{i \in S}$ to pivot worker $v$ along tree $\hat{T}_j$ (use \emph{interleaving} described in Sections~\ref{sec:grace} and \ref{sec:intro}). Worker $v$ receives $(\sum_{i \in S} a_{i,1}, \dots, \sum_{i \in S} a_{i,p})$
\STATE For all $j \in [p]$ (in parallel), \emph{broadcast} $\sum_{i \in S} a_{i,j}$ 
from $v$ to all other workers from $S$ along $\hat{T}_j$
\ENSURE Sum $\sum_{i \in S} a_i$ is stored at worker $i$ for all $i \in S$ \\
\end{algorithmic}
\hrule
\vspace{2pt}
\scriptsize {\color{blue} (a)}: otherwise, $b_{ij}$ are rational numbers (or approximated with arbitrary precision) and can always be renormalized by the common denominator
\end{algorithm}

By construction, for all $i,j \in S^*,$ if we consider the path of edges $p$ from $i$ to $j$ in the tree $T,$ then $\min_{e \in p} w_e \ge \bar{w}_{k^*}.$ Using the Gomory--Hu tree properties, we can conclude that the value of a minimum $S^*$-cut (Definition~\ref{def:min_cut}) is greater than or equal to $\bar{w}_{k^*}.$ Using the result of \citet[Theorem 1.2]{lau2004approximate}, there exists a polynomial-time algorithm that can find a collection of edge-disjoint trees $\mathcal{T} \eqdef (\hat{T}_1, \dots, \hat{T}_p)$ with $p = \Theta(\bar{w}_{k^*})$, each connecting $S^*$ in $\hat{G}.$ 

In terms of our problem, it means that there exist $\Theta(\bar{w}_{k^*})$ edge-disjoint communication pipes that can communicate coordinates/bits between the workers in $S^*$ with unit bandwidth. Now, we ask every worker to split its local vector $a_i$ (local minibatch) into $\Theta(\bar{w}_{k^*})$ blocks. Using the edge-disjoint trees, the workers can first aggregate the blocks at one of the workers (the pivot worker), and then, using the same trees, the pivot worker can broadcast the blocks to all workers in $S^*.$ Importantly, the key idea of this algorithm is to find edge-disjoint trees that enable independent, parallel reduce and broadcast operations, implemented via interleaving without congestion. Since every edge in every tree from $\mathcal{T}$ has unit bandwidth, we can conclude that the time required for reduce and broadcast is $\Theta\big(\nicefrac{d}{\bar{w}_{k^*}}\big)$ seconds for blocks of size $\Theta\big(\nicefrac{d}{\bar{w}_{k^*}}\big).$ See details in the proof of Theorem~\ref{thm:allreduce_time}. 

This AllReduce works with an arbitrary graph $G$. In practice, graphs often have a particular structure (e.g., all-to-all graphs or torus graphs), where implementing an optimal-bandwidth AllReduce is more straightforward. See practical guidelines and numerical experiments in Section~\ref{sec:exp}.
We also note that Grace SGD is \emph{asynchronous-friendly and robust to heterogeneous fluctuations in computation}, since when communication times are negligible, it reduces to Rennala SGD, which was proved to be optimal under arbitrary computational dynamics \citep{tyurin2024tighttimecomplexitiesparallel}.

\section{Lower Bound in the Homogeneous Setting}
\label{sec:lowerbound}
In this section, we show that Grace SGD and the result in Theorem~\ref{thm:sgd_homog} are optimal up to logarithmic factors within a large family of optimization methods. 
The reader will see that we also use Algorithm~\ref{alg:preprocess} to state the lower bound, which is not a coincidence. The ``communication side'' of the lower bound proof is constructive and enables the design of Grace SGD.

Obtaining lower bounds in the homogeneous setting is significantly more technically challenging, since the standard trick of placing different blocks of a hard function on distant nodes cannot be used. We consider the standard class of zero-respecting algorithms \citep{nesterov2018lectures,arjevani2022lower}, which includes SGD-like methods, Adam, gossip methods (e.g., \citep{nedic2017achieving}), and even compressed decentralized methods (e.g., Choco-SGD \citep{Koloskova2019-DecentralizedEC-2019}, BEER \citep{zhao2022beer}).
The optimization protocol is flexible and presented in Protocol~\ref{alg:simplified_time_multiple_oracle_protocol}. Every worker runs two parallel loops:
i) in the first computation loop, worker $i$ computes stochastic gradients locally, adds these vectors to local information $I_i$, and the algorithm is allowed to generate the next query point using $I_i$;
ii) in the second communication loop, worker $i$, using $I_i$, prepares a set of coordinates, which can be random and not necessarily of size $d$, and the algorithm sends them using \emph{any routing strategy} that obeys Assumption~\ref{ass:time}.
This way, the protocol allows the use of local steps, minibatching, gossip communication, and compressed communication. The formal description of the allowed ways to prepare coordinates is presented in Assumption~\ref{ass:compressors}, where we allow algorithms to use mappings that do not take into account local information and do not depend on $\{I_i\}_{i \in [n]}$ when choosing the indices in the sparsifiers. Assumption~\ref{ass:compressors} covers sending the full vector, sending a predefined block of the vector as is done in AllReduce algorithms, or even a random subset of coordinates, supporting Rand$K$ or Perm$K$ compressors \citep{szlendak2021permutation}.

\begin{definition}
  We define $\mathcal{F}_{\Delta, L}$ as the set of functions $f : \R^p \to \R$ such that $p \geq 1$, $f$ is $L$-smooth, i.e., $\norm{\nabla f(x) - \nabla f(y)} \leq L \norm{x - y}$ for all $x, y \in \R^p$, and $f(0) - \inf_{x \in \R^p} f(x) \leq \Delta$.
\end{definition}
\begin{mytheobox}
\begin{restatable}[Lower Bound in the Homogeneous Setting]{theorem}{MAINTHEOREM}
  \label{thm:main}
  We consider any computational and communication environment, any graph $G$ satisfying Assumption~\ref{ass:time}. 
  Let $L, \Delta, \varepsilon, n, \sigma^2, d > 0$ be any numbers such that $\nicefrac{L \Delta}{\varepsilon} \geq \bar{c}_1 \log^{10}(n + 1)$ and dimension $d \geq \nicefrac{\bar{c}_3 L \Delta}{\log^3(n + 1) \varepsilon}.$ 
  Consider Protocol~\ref{alg:simplified_time_multiple_oracle_protocol}. 
  For all $i \in [n]$ and $k \geq 0,$ mappings $\cC^k_i$ satisfy Assumption~\ref{ass:compressors}. Then, for any algorithm
  $A,$ 
  there exists a function $f \in \mathcal{F}_{\Delta, L},$ 
  exists an oracle that satisfies Assumption~\ref{ass:stochastic_variance_bounded}, and 
  $\Exp{\inf_{y \in G_t}\norm{\nabla f(y)}^2} > \varepsilon$ 
  for all 
  \begin{align}
  \label{eq:BNjTPNhGYfaixu}
  \textstyle t \leq \frac{\bar{c}_2}{\log^{14} (n + 1)} \min\limits_{k \in [n]} \left(\frac{d}{\bar{w}_{k}} + \min\limits_{p \in [k]}\min\limits_{m \in [\abs{S_{k,p}}]} \left[\left(\frac{1}{m} \sum\limits_{i=1}^{m} \frac{1}{h_{\pi_{i}(S_{k,p})}}\right)^{-1} \left(1 + \frac{\sigma^2}{m \varepsilon}\right)\right] \right) \frac{L \Delta}{\varepsilon},
  \end{align}
  where $G_t$ is the set of all possible points that can be constructed by $A$ up to time $t$ based on $\{I_i\}_{i \in [n]}.$ The quantities $\pi(S),$ $\{S_{k,p}\}_{k \in [n], p \in [k]},$ and $\{\bar{w}_{k}\}_{k \in [n]}$ are defined in Algorithm~\ref{alg:preprocess}, and $\bar{c}_1,$ $\bar{c}_2,$ and $\bar{c}_3$ are universal constants.
\end{restatable}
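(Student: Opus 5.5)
We will build on the standard worst-case construction of \citet{arjevani2022lower}. Take the scaled zero-chain $f(x)=\frac{L\lambda^2}{\ell_1}F_T(x/\lambda)$ with $T\approx \frac{c\,L\Delta}{\varepsilon}$ blocks, so that every coordinate of $\nabla f$ stays $\gtrsim\sqrt{\varepsilon}$ until the last coordinate is discovered. Pair it with the Bernoulli-masked oracle: it reveals the next ``progress'' coordinate only with probability $p\approx \min\{1,\varepsilon/\sigma^2\}$. Because $d\gtrsim L\Delta/(\varepsilon\log^3(n+1))$, we can embed the chain so that a single progress coordinate is represented by a block of $\Theta(d/T)$ coordinates (up to logs). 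Hence a worker that did not produce a discovery must receive this whole block before it can make further progress. This conversion of $T$ sequential progress steps into ``$d$ coordinates per step'' is what will produce the communication term $d/\bar{w}_k$.

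For zero-respecting algorithms under Protocol~\ref{alg:simplified_time_multiple_oracle_protocol} and Assumption~\ref{ass:compressors}, progress can only spread in two ways. Either some worker that already knows the current prefix discovers the next coordinate, which takes a geometric number of its own computations, each costing $h_i$. Or the known block is transmitted over the network. Since the sparsifiers' index choices do not depend on $\{I_i\}$, useful coordinates cannot be routed adaptively, so moving a block of $\Theta(d/T)$ coordinates from a set $A$ to its complement costs at least $\frac{d/T}{\text{mincut}(A)}$ seconds. The plan is to bound the expected time per progress step from below via a potential argument over the Gomory--Hu tree. Call the set of workers that know the current coordinate the active region. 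Whenever the active region must cross a tree edge of weight $\bar{w}_k$, Theorem~\ref{thm:gomory_hu} and max-flow/min-cut force a cost of $\gtrsim d/(T\bar w_k)$. While it stays inside a component $S_{k,p}$ of $T$ with all edges of weight below $\bar w_k$ removed, the discovery speed is that of $|S_{k,p}|$ workers. For fixed $S$, the time to collect $\approx 1/p$ samples among the fastest $m$ workers is the Rennala-style quantity $\min_m(\frac1m\sum_{i\le m}h_{\pi_i}^{-1})^{-1}(1+\frac{\sigma^2}{m\varepsilon})$. This lower bound is already known from \citet{tyurin2023optimal}, and we reuse its concentration argument.

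Next we combine the two regimes into a per-step lower bound. Fix a step; let $\tau$ be the step duration and consider the largest $k$ with $d/(T\bar w_k)\le\tau$. In time $\tau$, the information cannot cross any tree edge of weight $<\bar w_k$. So the only workers contributing to this step lie in one component $S_{k,p}$, and the discovery probability within time $\tau$ is controlled by that component's statistical term. Summing over $T$ steps and using concentration over the geometric delays (this is where the $\log^{14}(n+1)$ loss and the condition $L\Delta/\varepsilon\ge \bar c_1\log^{10}(n+1)$ come in, via union bounds over the $n-1$ thresholds and the $\le n$ components) gives $t\gtrsim T\min_k(\dots)$. To make this rigorous with only logarithmic loss, we will group the thresholds $\bar w_k$ into dyadic scales, which produces $\log n$ factors.

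The main obstacle is the homogeneous setting. All workers hold the same function, so partial progress can be rediscovered independently in several far-apart components simultaneously. We must show that parallel discovery across different components $S_{k,p}$ is no better than the best single component, up to logs. Each component is a separate race, so the step ends at the minimum over components of their discovery times. A union bound over at most $n$ components in each race costs only a $\log n$ factor in the probability tail. That is the estimate we expect to need the most care, together with handling the information transfer when the discovery happens in a different component than the one holding the prefix.
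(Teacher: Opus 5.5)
There is a genuine gap, and it sits in the mechanism meant to produce the communication term. With each chain link spread over a block of $\Theta(d/T)$ coordinates at fixed positions, crossing a Gomory--Hu edge costs $\frac{d/T}{\bar w_k}$ per link. Summing over the $T$ links gives only $\frac{d}{\bar w_k}$ in total. The theorem asserts $\frac{d}{\bar w_k}\cdot\frac{L\Delta}{\varepsilon}$ (up to $\log^{14}(n+1)$), so you lose a factor $T\approx L\Delta/(\varepsilon\,\mathrm{polylog}(n))$. No per-step balancing of $d/(T\bar w_k)$ against the statistical term can recover it. Your text also slides between ``$\Theta(d/T)$ coordinates'' and ``$d$ coordinates per step''; only the latter gives the theorem.

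The missing idea is to place the $T$ chain coordinates at a uniformly random subset $R_T\subseteq[d]$ and leave the other $d-T$ coordinates as dummies. Assumption~\ref{ass:compressors} makes the transmitted indices independent of this placement. Hence each coordinate pushed across a cut is useful with conditional probability only $\cO(K/d)$ (Lemma~\ref{lemma:prob}); this is where $T\le d/2$, i.e.\ the dimension hypothesis, is used. So every useful coordinate that crosses a removed Gomory--Hu edge of weight at most $\bar w_{\bar k}$ costs $\Omega\bigl(d/(K\bar w_{\bar k})\bigr)$ seconds. That is $\tilde\Omega(d/\bar w)$ per chain coordinate, matching one AllReduce of Grace SGD. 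Without the random placement the sparsifier assumption has no force, since a sender can transmit exactly the freshly discovered coordinates.

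The concentration step also fails as described. You need a bound at every step over up to $n$ competing components, because the frontier can migrate between components via communication. With a single geometric delay per step, $\Prob{\min_p X_p\le t}\approx nt/\mu$, so a per-step union bound costs a factor $n$ in time, not $\log n$. The paper avoids this with the chain $F_{T,K,a}$, where entering a new block requires all $K=\Theta(\log^2(n+1))$ coordinates of the previous block. Each block's delay then has Laplace transform at most $2^{-K}$ (Lemmas~\ref{lemma:prob_two} and \ref{lemma:prob_three}), which absorbs polynomial union-bound factors.

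Your ``active region'' picture also misses a second problem. A meta-node with many incident meta-tree edges can receive through all of them in parallel, at aggregate rate up to its degree times $\bar w_{\bar k}$. The pigeonhole step ``some source supplies at least $K/3$ coordinates of the block'' then breaks once the degree exceeds $K$. The paper handles this with leaf-branch peeling of the meta tree. Each node at level $p+1$ has at most one edge to unprocessed nodes, and contributions from processed nodes are controlled by the induction hypothesis. There are only $\cO(\log n)$ levels (Lemma~\ref{lem:leaf_branch_peeling_depth}); this yields the $n^{\cO(\log n)}$ factor that forces $K=\Theta(\log^2 n)$ and ultimately the $\log^{14}(n+1)$. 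Grouping the thresholds $\bar w_k$ into dyadic scales addresses neither issue.
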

\end{mytheobox}
Hence, Grace SGD matches \eqref{eq:BNjTPNhGYfaixu} up to the multiplicative factor $\nicefrac{\bar{c}_2}{\log^{14}(n + 1)}$. Asymptotically, ignoring the polylogarithmic factor, Grace SGD is optimal. We believe that the $\log^{14}(n + 1)$ term is an artifact of our proof techniques, and we conjecture that with a slightly more careful analysis around \eqref{eq:hCvVLgndsboCzRbvZrp}, it should be possible to improve this term to $\log^{6}(n + 1)$ with more technical steps. Nevertheless, eliminating the logarithmic factor completely would likely require a different proof strategy. Moreover, following \citet{tyurin2025proving}, Assumption~\ref{ass:compressors} covers only a subfamily of unbiased compressors, including Rand$K$ for all $K \in [d].$ As in \citep{huang2022lower,he2023unbiased}, the lower bound remains valid for the full class of compressed methods if the adversary is allowed to choose a ``worst-case'' compressor. Extending this result to all compressors and non-zero-respecting algorithms is an important future direction. The proof sketch and the full proof are in Sections~\ref{sec:proof_sketch} and~\ref{sec:proof_full}.

\section{Examples with Different Graph Structures in the Homogeneous Setting}
\label{sec:examples}
While Theorem~\ref{thm:sgd_homog} applies to any graph, we illustrate the time complexities for typical graphs and give explicit formulas. For clarity, we assume $h_i = h$ for all $i \in [n]$ (Corollary~\ref{cor:main}).

\textbf{Example: One Worker.} Assume that $n = 1,$ i.e., $G$ is a graph with one node. In this case, Gomory-Hu $T$ is the same as $G,$ $\bar{w}_1 = \infty$ and $S_{1,1} = \{1\}$ in Algorithm~\ref{alg:preprocess}, and $\eqref{eq:WOjQOMzoROeoVwJxv} = \cO\left(\nicefrac{h \sigma^2 L \Delta}{\varepsilon^2} + \nicefrac{h L \Delta}{\varepsilon}\right),$ which restores the classical result of \citet{ghadimi2013stochastic,arjevani2022lower}.

\textbf{Example: Star Graph (Centralized Setting).} Consider another example where workers communicate through a server. It can equivalently be represented by the graph in Figure~\ref{fig:star_graph_gomory_hu}. One can show that a Gomory-Hu tree $T$ of $G$ is shown in Figure~\ref{fig:star_graph_gomory_hu}. In the tree, sorting the weights, we get $\bar{w}_1 = b,$ \dots, $\bar{w}_{n-1} = b,$ $\bar{w}_n = \infty.$ Using this, one can show that the total time complexity is
  $\textstyle \cO\left(\min\left\{\left(\frac{d}{b} + \frac{h \sigma^2}{n \varepsilon}\right) \frac{L \Delta}{\varepsilon} + \frac{h  L \Delta}{\varepsilon}, \frac{h \sigma^2 L \Delta}{\varepsilon^2} + \frac{h  L \Delta}{\varepsilon}\right\}\right)$
(see Section~\ref{sec:star_graph}). Notice that the first term corresponds to the complexity of Synchronous SGD with mini-batching (and Grace SGD), method that communicates the full vectors to and from the server, and the second term corresponds to Hero SGD. This optimality of this result was also proven in \citep{tyurin2025proving}.

\textbf{Example: $p$-Torus.} We now assume that $G$ is a $p$-Torus (Figure~\ref{fig:torus}), a popular architecture in large-scale model training \citep{jouppi2020domain}. A Gomory-Hu tree $T$ presented in Figure~\ref{fig:torus_gomory_hu}. Sorting the weights, we get $\bar{w}_1 = 2 p b,$ \dots, $\bar{w}_{n-1} = 2 p b,$ $\bar{w}_n = \infty.$ Using the same steps as in the previous example, the time complexity of Grace SGD is
  $\textstyle \cO\left(\min\left\{\left(\frac{d}{{\color{orange} p} b} + \frac{h \sigma^2}{n \varepsilon}\right) \frac{L \Delta}{\varepsilon} + \frac{h  L \Delta}{\varepsilon}, \frac{h \sigma^2 L \Delta}{\varepsilon^2} + \frac{h  L \Delta}{\varepsilon}\right\}\right).$
Note that the first term improves with $p$, formalizing why a $p$-Torus is preferable in practice.

\textbf{Example: all-to-all.} In the all-to-all graph $G$, where all nodes are connected to each other, one can similarly derive the time complexity
  $\textstyle \cO\left(\min\left\{\left(\frac{d}{{\color{orange} (n - 1)} b} + \frac{h \sigma^2}{n \varepsilon}\right) \frac{L \Delta}{\varepsilon} + \frac{h L \Delta}{\varepsilon}, \frac{h \sigma^2 L \Delta}{\varepsilon^2} + \frac{h L \Delta}{\varepsilon}\right\}\right),$
where the communication term has the best scaling among the previous examples, which is expected since the all-to-all graph has the best connectivity.

\textbf{Example: $K$ clusters.}
Consider a practical setup (Figure~\ref{fig:k_clusters_ring}), where we have $K$ clusters of $n / K$ workers. The \emph{intra}-communication is fast within each cluster, is all-to-all, and has bandwidth $\infty$. However, the \emph{inter}-communication between the clusters is relatively slow, with bandwidth $b_{\textnormal{slow}}$ (since, for instance, the clusters may be located in different cities). In this case, Theorem~\ref{thm:sgd_homog} yields the time complexity
  $\textstyle \cO\left(\min\left\{\frac{h \sigma^2 L \Delta}{{\color{orange}(n / K)}\varepsilon^2} + \frac{h L \Delta}{\varepsilon}, \frac{d L \Delta}{{\color{orange} b_{\textnormal{slow}}} \varepsilon} + \frac{h \sigma^2 L \Delta}{{\color{orange} n} \varepsilon^2} + \frac{h L \Delta}{\varepsilon}\right\}\right),$
which formalizes the simple idea that either we use one cluster $S^* = [n / K]$ with $n/K$ workers, or we use all clusters ($S^* = [n]$) together but pay the communication cost $\nicefrac{d L \Delta}{\color{orange} b_{\textnormal{slow}}}$ (which may or may not dominate). Notice that we can even consider the case where clusters use different GPUs. In this case, the complexity becomes
  $\textstyle \cO\left(\min\left\{\min\limits_{i \in [K]} h_i\left(1 + \frac{\sigma^2}{(n / K)\varepsilon}\right), \frac{d}{b_{\textnormal{slow}}} + \min\limits_{m \in [K]}\left(\frac{1}{m} \sum\limits_{i=1}^{m} \frac{1}{h_{i}}\right)^{-1} \left(1 + \frac{\sigma^2}{\varepsilon m (n / K)}\right)\right\} \frac{L \Delta}{\varepsilon}\right),$
where $h_1 \leq \dots \leq h_{K}$ are the computation times of the clusters.

\textbf{Example: optimization with switches.} In practice, workers are not directly connected; instead, there are intermediate nodes, switches, that do not compute stochastic gradients and only transmit data between the workers. Our theory even supports this setting by simply setting $h_i < \infty$ if node $i$ is a worker, and $h_i = \infty$ if it is a switch. Then, one can apply Theorem~\ref{thm:sgd_homog} with any graph topology.


\section{Heterogeneous Setting: Optimal Method and Lower Bound}
\label{sec:heter}
\begin{algorithm}[t]
\caption{Leon SGD}
\label{alg:main_leon}
\begin{algorithmic}[1]
\REQUIRE Graph $G=(V,E,b),$ ratio $\nicefrac{\sigma^2}{\varepsilon},$ smoothness constant $L,$ point $x^0$
\STATE All workers $[n]$ locally initialize starting point with $x^0$ and step size $\gamma = \nicefrac{1}{2 L}$
\FOR{$k = 0, 1, \ldots$}
\STATE \label{line:leon_batch} All workers compute $\frac{1}{B_i}\sum_{j=1}^{B_i} \nabla f_i(x^k;\xi^k_{ij}),$ increasing local batch sizes $B_i$ over time,\\ until $\textstyle \left(\frac{1}{n} \sum_{i=1}^n \frac{1}{B_i}\right)^{-1} \geq \max\{\lceil\nicefrac{\sigma^2}{\varepsilon}\rceil, n\} / n$
\STATE Run optimal-bandwidth AllReduce (e.g., Algorithm~\ref{alg:allreduce}) so workers \hfill (\textbf{crucial new step}) \\
in $[n]$ obtain $\frac{1}{n}\sum_{i=1}^n \frac{1}{B_i} \sum_{j=1}^{B_i} \nabla f_i(x^k;\xi^k_{ij})$
\STATE Every worker in $[n]$ updates the iterate: $x^{k+1} = x^k - \frac{\gamma}{n}\sum_{i=1}^n \frac{1}{B_i} \sum_{j=1}^{B_i} \nabla f_i(x^k;\xi^k_{ij})$
\ENDFOR
\end{algorithmic}
\end{algorithm}

We also consider and analyze the \emph{heterogeneous} problem of minimizing
\begin{align}
  \label{eq:main_heter}
  \textstyle \min \limits_{x \in \R^d} \Big\{f(x) \eqdef \frac{1}{n} \sum\limits_{i=1}^n f_i(x)\Big\},
\end{align}
where $f_i\,:\,\R^d \rightarrow \R$ for all $i \in [n].$ Unlike the homogeneous setup, for all $i \in [n],$ worker $i$ can only access stochastic gradients of $f_i$.

\begin{assumption}[Heterogeneous setting]
  \label{ass:stochastic_variance_bounded_heter}
  For all $i \in [n],$ worker $i$ can only calculate $\nabla f_i(x;\xi)$ and ${\rm \mathbb{E}}_{\xi}[\nabla f_i(x;\xi)] = \nabla f_i(x)$ and 
    ${\rm \mathbb{E}}_{\xi}[\|\nabla f_i(x;\xi) - \nabla f_i(x)\|^2] \leq \sigma^2$ for all $x \in \R^d,$ where $\sigma^2 \geq 0.$
\end{assumption}

In Theorem~\ref{thm:sgd_heter}, we show that Leon SGD (Algorithm~\ref{alg:main_leon}) is an optimal method, where the first term \eqref{eq:GvDIFIzvheter} arises from performing AllReduce across all workers, and the second and third terms are the same as in Malenia SGD \citep{tyurin2023optimal}, which is not a coincidence, since the aggregation phase in Line~\ref{line:leon_batch} is the same as there.

Similarly to Section~\ref{sec:lowerbound}, we can provide a lower bound, Theorem~\ref{thm:main_heter}, for the heterogeneous setting. 
\begin{mytheobox}
\begin{restatable}[Lower Bound in the Heterogeneous Setting]{theorem}{MAINTHEOREMHETER}
  \label{thm:main_heter}
  We consider any computational and communication environment satisfying Assumption~\ref{ass:time}. 
  Let $L, \Delta, \varepsilon, n, \sigma^2, d > 0$ be any numbers such that $\nicefrac{L \Delta}{\varepsilon} \geq \bar{c}_1$ and dimension $d \geq \nicefrac{\bar{c}_3 L \Delta}{\varepsilon}.$ 
  Consider Protocol~\ref{alg:simplified_time_multiple_oracle_protocol}. 
  For all $i \in [n]$ and $k \geq 0,$ mappings $\cC^k_i$ satisfy Assumption~\ref{ass:compressors}. Then, for any algorithm $A,$ there exists a function $f \in \mathcal{F}_{\Delta, L},$ 
  exists oracles that satisfy Assumption~\ref{ass:stochastic_variance_bounded_heter}, and 
  $\Exp{\inf_{y \in G_t}\norm{\nabla f(y)}^2} > \varepsilon$ 
  for all 
  \begin{align}
  \label{eq:kLWINMGFknhpudhwgO}
  \textstyle t \leq \bar{c}_2 \times \max\left\{\frac{d}{\bar{w}_1}, \max\limits_{i \in [n]} h_i, \frac{\sigma^2}{n \varepsilon}\left(\frac{1}{n} \sum\limits_{i=1}^n h_i\right)\right\} \frac{L \Delta}{\varepsilon} ,
  \end{align}
  where $G_t$ is the set of all possible points that can be constructed by $A$ up to time $t$ based on $\{I_i\}_{i \in [n]},$ and $\bar{w}_1 \equiv \min_{\{i,j\} \in F} w_{ij}$ is the smallest weight in a Gomory-Hu tree $T = (V, F, w)$ of $\bar{G}.$ The quantities $\bar{c}_1,$ $\bar{c}_2,$ and $\bar{c}_3$ are universal constants.
\end{restatable}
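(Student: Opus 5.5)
Since the bound \eqref{eq:kLWINMGFknhpudhwgO} is a maximum of three terms (up to a constant), I would prove three separate lower bounds and combine them, each via its own hard instance. The right-hand side is at most three times the max, so it suffices to show that for each term $T_j$ there is a function and oracles with $\Exp{\inf_{y \in G_t}\norm{\nabla f(y)}^2} > \varepsilon$ for all $t \le c\, T_j$. The base object throughout is the standard zero-chain function $F_T$ of \citet{arjevani2022lower} (scaled so that $f \in \mathcal{F}_{\Delta,L}$ with $T = \Theta(\nicefrac{L\Delta}{\varepsilon})$ coordinates), together with the usual ``progress'' argument: while fewer than $T$ coordinates are discovered, $\norm{\nabla f}^2 > \varepsilon$. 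The assumptions $\nicefrac{L\Delta}{\varepsilon}\ge \bar c_1$ and $d \ge \bar c_3 \nicefrac{L\Delta}{\varepsilon}$ are exactly what make this construction fit in dimension $d$.

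\textbf{Computation terms.} For $\max_i h_i$, let $j$ be the slowest worker. Take $f_j = n F_T$ and $f_i = 0$ for $i \ne j$, with deterministic oracles, so $\sigma^2 = 0$ suffices; rescale so that $f = \frac1n\sum_i f_i = F_T$ stays in the class. Only worker $j$ can discover new coordinates. Under zero-respecting Protocol~\ref{alg:simplified_time_multiple_oracle_protocol}, each new coordinate requires one gradient computation at $j$, which takes $h_j$ seconds, so progress $T$ requires time $\ge T h_j$. For the $\frac{\sigma^2}{n\varepsilon}\cdot\frac1n\sum h_i$ term, I would reuse the Malenia SGD lower bound of \citet{tyurin2023optimal}. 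Set $f_i = F_T$ for all $i$ with the probabilistic-zero oracle that reveals the next coordinate with probability $p = \Theta(\varepsilon/\sigma^2)$, with $f_i$ split across workers so that all workers must contribute. Their argument, which is about computation only, carries over verbatim, since communication can only slow things down and our protocol is a restriction of theirs. The communication mappings of Assumption~\ref{ass:compressors} do not depend on $I_i$, so they cannot leak extra progress.

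\textbf{Communication term $\nicefrac{d}{\bar w_1}$.} Let $e$ be the minimum-weight edge of the Gomory--Hu tree, which splits $V$ into $A$ and $B$. By Theorem~\ref{thm:gomory_hu} and max-flow/min-cut, the total bandwidth across the cut $(A,B)$ in $G$ is $\bar w_1$, and this holds for any routing. Choose $f_i$ so that odd-indexed chain links live only in workers of $A$ and even-indexed links only in workers of $B$: for example, $f_i = \frac{n}{|A|}F^{\mathrm{odd}}$ on $A$ and $f_i = \frac{n}{|B|}F^{\mathrm{even}}$ on $B$. Each progression step then requires information to cross the cut. The key claim is that to advance from coordinate $k$ to $k+1$, the receiving side must have a vector whose $k$-th coordinate is nonzero. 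Because the compressor/sparsifier choices are oblivious to $I_i$ (Assumption~\ref{ass:compressors}), I expect a randomization argument (random permutation of coordinates hidden from the algorithm) to show that $\Omega(d)$ coordinates must be transmitted in expectation per crossing. That gives time $\Omega(d/\bar w_1)$ per step and $\Omega(\frac{d}{\bar w_1} T)$ overall. Smoothness and $\Delta$ scaling need care because $n/|A|$ can be large; I would handle this by rescaling $L$ and $\Delta$ by constants, or by placing the links on single endpoint workers $s\in A$, $t\in B$.

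\textbf{Main obstacle.} The main obstacle is this last step: turning the oblivious-sparsifier assumption into a rigorous bound that each crossing costs $\Omega(d)$ coordinates through bandwidth $\bar w_1$, uniformly over routing strategies, interleaving, and asynchrony. I would try a random hidden permutation of the $d$ coordinates, combined with a concentration bound over the $T$ steps, so that the time for $T$ sequential crossings is $\Omega(Td/\bar w_1)$ with constant probability.
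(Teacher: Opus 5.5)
Your plan is the paper's plan. The paper takes both computation terms from Theorem~A.2 of \citet{tyurin2023optimal}, since communication can only slow an algorithm down. For $\nicefrac{d}{\bar w_1}$ it places the odd links of the chain $F_T$ of \citet{carmon2020lower} on one endpoint $\bar i$ of a minimum-weight Gomory--Hu edge and the even links on the other endpoint $\bar j$. The oracles are deterministic and the chain is hidden in $T \le \nicefrac{d}{2}$ random coordinates. Your $A$/$B$ variant works equally well, since the crossing is then through $\delta(A)$, whose value is exactly $\bar w_1$.

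Two side remarks:
\begin{itemize}
\item The factor $\nicefrac{n}{|A|}$ needs no care. Only $f=\frac1n\sum_i f_i$ must lie in $\mathcal{F}_{\Delta,L}$, and the paper simply multiplies the two halves by $n$.
\item The literal instance ``$f_i=F_T$ for all $i$, $p=\Theta(\nicefrac{\varepsilon}{\sigma^2})$'' yields only the homogeneous bound. SGD on the fastest worker alone needs about $h_{\min}\frac{\sigma^2}{\varepsilon}\frac{L\Delta}{\varepsilon}$ there, which is below the target whenever $h_{\min}\ll\frac{1}{n^2}\sum_i h_i$. So that term has to come from the cited theorem used as a black box, as in the paper.
\end{itemize}

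The real gap is the step you flag yourself. You only conjecture that each crossing costs $\Omega(d)$ coordinates and that this concentrates, and that is the entire content of the communication lower bound. The paper carries it out as follows.
\begin{itemize}
\item Let $\mu_k$ be the number of (index, value) pairs that cross the cut between the moment the $k$-th chain coordinate becomes discoverable on the sending side and the arrival of a pair carrying that coordinate.
\item The cut capacity is $\bar w_1$ whatever the routing, interleaving or timing. Hence reaching progress $T$ takes at least $\sum_{k<T}\nicefrac{\mu_k}{\bar w_1}$ seconds.
\item The transmitted indices are independent of the hidden placement. Given the first $k-1$ chain coordinates, the $k$-th is uniform over at least $d-T\ge\nicefrac d2$ positions; this is where $d\ge\nicefrac{\bar c_3L\Delta}{\varepsilon}$ enters. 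Together these give $\ProbCond{\mu_k\le t}{\mu_1,\dots,\mu_{k-1}}\le\nicefrac{2t}{d}$.
\item It follows that $\ExpCond{e^{-s\mu_k/\bar w_1}}{\mu_1,\dots,\mu_{k-1}}\le e^{-st}+\nicefrac{2\bar w_1t}{d}$. Take $t=\nicefrac1s$ and $s=\nicefrac{2e\bar w_1}{d}$, then apply the tower rule and Chernoff's bound. This gives $\Prob{\sum_{k<T}\nicefrac{\mu_k}{\bar w_1}\le\bar t}\le\exp\bigl(\nicefrac{2e\bar w_1\bar t}{d}-\frac{T-1}{4}\bigr)$.
\item Hence the time is $\Omega(\nicefrac{Td}{\bar w_1})$ with probability at least $\nicefrac12$, and averaging over the placement yields a deterministic $f$.
\end{itemize}

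When you write this up, make explicit two facts the estimate relies on.
\begin{itemize}
\item The chain must follow a uniformly random \emph{ordering} of coordinates, as your permutation does. Suppose instead it ran through a random subset in increasing index order, as the sorted notation $R_{T,1}<\dots<R_{T,T}$ in the written proof suggests. Then an oblivious increasing scan across the cut would traverse the whole chain in about one pass of $d$ coordinates when computations are fast.
\item The independence must hold for the index stream that actually crosses the cut, not merely for the sparsifier outputs. With a known deterministic schedule, an algorithm that chooses destinations adaptively could send the useless messages to same-side nodes and let only the useful index cross. The paper reads this independence directly off Assumption~\ref{ass:compressors}, so you should state it in that form.
\end{itemize}
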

\end{mytheobox}
This result matches Theorem~\ref{thm:sgd_heter} up to a constant factor. Unlike the homogeneous setting, Theorems~\ref{thm:sgd_heter} and \ref{thm:main_heter} are much more pessimistic: i) the time complexity depends on the worst bottleneck $\nicefrac{d}{\bar{w}_1},$ where $\bar{w}_1$ is the smallest weight in the Gomory–Hu tree $T$, or equivalently the minimum value over all cuts in the graph $G$; ii) similarly to \citep{tyurin2023optimal}, the dependence on $\{h_i\}_{i \in [n]}$ is arithmetic-like in \eqref{eq:GvDIFIzvheter} compared to the harmonic-like dependence in \eqref{eq:BNjTPNhGYfaixu}.


\section{Fundamental Trade-offs in Optimization over Graphs}
\label{sec:heter_sim}
We now want to explain that the optimal time complexity \eqref{eq:GvDIFIzvheter}, derived for the heterogeneous setting, cannot be significantly improved when the graph is \emph{sparse}, even in the homogeneous setting. Sparse graphs include graphs with a ring structure and other low-connectivity topologies commonly used in decentralized optimization (see examples in \citep{Koloskova2019-DecentralizedEC-2019}).
\begin{restatable}[Sparse Graphs; Proof in Section~\ref{sec:cor_sparse}]{corollary}{CORSPARSE}
  \label{cor:cor_sparse}
  In view of Theorems~\ref{thm:main} and \ref{thm:sgd_heter}, if i) $h_i = h$ for all $i \in [n];$ ii) the degree of every node in $G$ is bounded by a small number $p = \Theta(1);$ iii) and all edges have the same bandwidth $b.$ When
  $\nicefrac{\sigma^2}{\varepsilon} \geq \Omega(\nicefrac{d}{b h}),$ the optimal time complexity for solving both \eqref{eq:main_problem} and \eqref{eq:main_heter} is 
  $\textstyle \tilde{\Theta}\left(\textstyle \nicefrac{d L \Delta}{b \varepsilon} +  \nicefrac{h \sigma^2 L \Delta}{n \varepsilon^2} + \nicefrac{h L \Delta}{\varepsilon}\right)$
  and it is achieved by Leon SGD.
\end{restatable}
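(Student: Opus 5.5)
The plan is to prove matching upper and lower bounds. The upper bound comes from Theorem~\ref{thm:sgd_heter}, and since a homogeneous instance is a special heterogeneous one (take $f_i = f$), it applies to both \eqref{eq:main_problem} and \eqref{eq:main_heter}. The lower bound for both problems comes from Theorem~\ref{thm:main} alone: the homogeneous problem is a special case of the heterogeneous one, so a homogeneous lower bound transfers.

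\textbf{Upper bound.} With $h_i = h$, \eqref{eq:GvDIFIzvheter} becomes $\cO\big((\nicefrac{d}{\bar{w}_1} + h + \nicefrac{h\sigma^2}{n\varepsilon})\nicefrac{L\Delta}{\varepsilon}\big)$. It remains to show $\bar{w}_1 = \Omega(b)$. Since $G$ is connected and all edges have bandwidth $b$, every cut is crossed by at least one edge, so the global min cut satisfies $\bar{w}_1 \geq b$. This gives the claimed $\cO(\nicefrac{dL\Delta}{b\varepsilon} + \nicefrac{h\sigma^2 L\Delta}{n\varepsilon^2} + \nicefrac{hL\Delta}{\varepsilon})$.

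\textbf{Lower bound.} By Theorem~\ref{thm:main}, up to polylog factors the time is at least
\[
\min_{k\in[n]} \Big(\tfrac{d}{\bar{w}_k} + h\big(1 + \tfrac{\sigma^2}{\max_p |S_{k,p}|\,\varepsilon}\big)\Big)\tfrac{L\Delta}{\varepsilon}.
\]
Here I used that with equal $h_i$ the inner minimum over $m$ is attained at $m = |S_{k,p}|$. The key structural facts come from bounded degree:
\begin{itemize}
\item Every Gomory--Hu weight is at most $pb$, because a singleton cut $\{v\}$ has value at most $pb$. Hence $\bar{w}_k \leq pb = \cO(b)$ for all $k \leq n-1$, while $\bar{w}_n = \infty$.
\item Therefore any $k \leq n-1$ pays the communication term $\Omega(\nicefrac{d}{b})$, and this branch contributes at least $\Omega\big(\nicefrac{d}{b} + h + \nicefrac{h\sigma^2}{n\varepsilon}\big)$, using $|S_{k,p}| \leq n$.
\item For $k = n$ all components are singletons, so that branch contributes $h(1 + \nicefrac{\sigma^2}{\varepsilon})$.
\end{itemize}
Under the hypothesis $\nicefrac{\sigma^2}{\varepsilon} \geq \Omega(\nicefrac{d}{bh})$ we have $\nicefrac{h\sigma^2}{\varepsilon} \gtrsim \nicefrac{d}{b}$. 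Also $\nicefrac{h\sigma^2}{\varepsilon} \geq \nicefrac{h\sigma^2}{n\varepsilon}$. So the $k = n$ branch is at least a constant times the $k \leq n-1$ lower bound, and the minimum is $\Omega\big(\nicefrac{d}{b} + h + \nicefrac{h\sigma^2}{n\varepsilon}\big)\nicefrac{L\Delta}{\varepsilon}$ up to polylog factors. This matches the upper bound.

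\textbf{Expected obstacle.} The delicate step is the bound $\bar{w}_k \leq pb$ for every $k < n$. I would justify it by recalling that each Gomory--Hu edge weight equals some min $s$--$t$ cut value, and that this is at most the degree cut of $s$. The remaining work is checking that the hypotheses of Theorem~\ref{thm:main} (large $\nicefrac{L\Delta}{\varepsilon}$ and large $d$) are absorbed into the $\tilde{\Theta}$ statement.
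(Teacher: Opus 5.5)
Your proposal is correct and follows essentially the same route as the paper. The upper bound is Leon SGD with $\bar{w}_1 \geq b$; the lower bound is Theorem~\ref{thm:main} with every Gomory--Hu weight at most $pb$, so the $k \leq n-1$ branches give $\Omega\big((d/b + h + h\sigma^2/(n\varepsilon))L\Delta/\varepsilon\big)$, while the Hero-SGD branch $k=n$ is no smaller up to a constant under $\sigma^2/\varepsilon \geq \Omega(d/(bh))$.

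One remark: the conditions on $L\Delta/\varepsilon$ and $d$ in Theorem~\ref{thm:main} are not absorbed by the $\tilde{\Theta}$. They are regime assumptions that the corollary silently inherits, exactly as in the paper's ``in view of'' phrasing.
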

Note that this complexity can also be achieved with a naive mini-batch version of Synchronous SGD: $x^{k+1} = x^k - \frac{\gamma}{n B} \sum_{i=1}^{n} \sum_{j=1}^{B} \nabla f_i(x^k;\xi^k_{ij})$ with the simple communication approach from Section~\ref{sec:sync}. One of the main challenges was to show that this holds even in the homogeneous setting. The following corollary naturally generalizes Corollary~\ref{cor:cor_sparse} and provides additional insight:
\begin{restatable}[Proof in Section~\ref{sec:cor_sparse_two}; see also dual Corollary~\ref{cor:cor_sparse_two}]{corollary}{CORSPARSETHREE}
  \label{cor:cor_sparse_three}
  In view of Theorems~\ref{thm:main}, if $h_i = h$ for all $i \in [n]$ and $b_{ij} = b$ for all $(i,j) \in E,$ then the lower bound in all settings is 
  $\textstyle \tilde{\Omega}\left(\min\left\{\textstyle \min_{m \in \{2, \dots, n\}}\left[\frac{d L \Delta}{{\color{orange} k(m)} b \varepsilon} + \frac{h \sigma^2 L \Delta}{{\color{orange} m} \varepsilon^2}\right] + \frac{h L \Delta}{\varepsilon}, \frac{h L \Delta}{\varepsilon} + \frac{h \sigma^2 L \Delta}{\varepsilon^2}\right\}\right),$ where ${\color{orange} k(m)}$ is the ${\color{orange} m}$-th largest edge degree.
\end{restatable}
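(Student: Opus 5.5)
The plan is to specialize the lower bound of Theorem~\ref{thm:main} to $h_i=h$ and $b_{ij}=b$, and then bound each term of the minimum over $k\in[n]$ from below by one of the two expressions in the corollary. Throughout, $k(m)$ is read as the $m$-th largest vertex degree. The polylogarithmic factor $\nicefrac{\bar c_2}{\log^{14}(n+1)}$ is absorbed into $\tilde\Omega$.

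\textbf{Step 1: the inner minimum.} With equal computation times, $\big(\frac1m\sum_{i=1}^m \nicefrac{1}{h}\big)^{-1}=h$. Since $h(1+\nicefrac{\sigma^2}{m\varepsilon})$ is decreasing in $m$, the minimum over $m\in[\abs{S_{k,p}}]$ is attained at $m=\abs{S_{k,p}}$. Taking the minimum over $p$ then selects the largest component. Writing $M_k\eqdef\max_{p\in[k]}\abs{S_{k,p}}$, the bracket in \eqref{eq:BNjTPNhGYfaixu} equals
\begin{align*}
\min_{k\in[n]}\Big(\tfrac{d}{\bar w_k}+h+\tfrac{h\sigma^2}{M_k\varepsilon}\Big).
\end{align*}
We now split into two cases.

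\textbf{Case $M_k=1$.} This covers $k=n$, where $\bar w_n=\infty$. Here the $k$-th term is at least $h+\nicefrac{h\sigma^2}{\varepsilon}$. Multiplied by $\nicefrac{L\Delta}{\varepsilon}$, this is exactly the Hero-type expression in the corollary.

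\textbf{Case $M_k\ge 2$ (the main step).} Let $S$ be a component of size $M_k$ in the forest obtained from $T$ by deleting the edges of weights $\bar w_1,\dots,\bar w_{k-1}$. Every tree edge remaining inside $S$ has weight at least $\bar w_k$, because the deleted edges are exactly the smaller ones in sorted order. Since $\abs{S}\ge2$ and $S$ is connected in $T$, every $v\in S$ is an endpoint of some remaining edge $\{v,u\}$. By the Gomory--Hu property (Theorem~\ref{thm:gomory_hu}), $w_{vu}$ equals the minimum $v$--$u$ cut value in $\bar G$. That value is at most the value of the cut $\{v\}$, which is $b\deg(v)$. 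Therefore
\begin{align*}
\bar w_k\le b\min_{v\in S}\deg(v).
\end{align*}
Among any $M_k$ distinct vertices, the smallest degree is at most the $M_k$-th largest degree in the graph, so $\bar w_k\le b\,k(M_k)$. This gives
\begin{align*}
\tfrac{d}{\bar w_k}+\tfrac{h\sigma^2}{M_k\varepsilon}\ge \tfrac{d}{k(M_k)b}+\tfrac{h\sigma^2}{M_k\varepsilon}\ge\min_{m\in\{2,\dots,n\}}\Big[\tfrac{d}{k(m)b}+\tfrac{h\sigma^2}{m\varepsilon}\Big].
\end{align*}
The remaining $+h$ is carried along unchanged.

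\textbf{Combining.} Taking the minimum over $k$ yields the minimum of the two expressions in the corollary, up to the logarithmic factor. For ``all settings'', the homogeneous instance is a special case of \eqref{eq:main_heter} with $f_i=f$. Hence the same lower bound applies in the heterogeneous setting, which can also be checked directly via Theorem~\ref{thm:main_heter}.

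The only nontrivial point is the $M_k\ge2$ step: the degree bound must be applied to a tree edge incident to each vertex of $S$. We then need to compare $\min_{v\in S}\deg(v)$ with the order statistic $k(M_k)$. The remaining steps are bookkeeping.
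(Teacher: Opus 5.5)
Your proof is correct and follows the paper's argument: you handle the singleton case through the Hero-type term, and otherwise bound $\bar w_k \le b \min_{v \in S}\deg(v) \le b\,k(\abs{S})$ for the largest component $S$, exactly as in Section~\ref{sec:cor_sparse_two}. You justify the first inequality through a surviving tree edge $\{v,u\}$ inside $S$, with $\bar w_k \le w_{vu} = \alpha_{\bar G}(v,u) \le b\deg(v)$; this is a more direct phrasing of the paper's ``$v$ would have been separated at an earlier iteration'' argument.
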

Corollaries~\ref{cor:cor_sparse_three} and \ref{cor:cor_sparse_two} are not necessarily tight (the tight bound is given in Theorem~\ref{thm:main}), but they provide important intuition about learning on graphs: \emph{in both homogeneous and heterogeneous settings, the more workers that compute stochastic gradients, the better the scaling in the stochastic term $\nicefrac{h \sigma^2 L \Delta}{{\color{orange} m} \varepsilon^2}$; however, the scaling in the communication term $\nicefrac{d L \Delta}{{\color{orange} k(m)} b \varepsilon}$ may become worse.} Having more workers compute stochastic gradients may lead to slower overall communication due to the low connectivity of some workers, and there exists a golden mean that balances the number of active workers, depending on the structure of the graph and the $d$ vs.\ $\nicefrac{\sigma^2}{\varepsilon}$ regime.

\section{Related Work}
\label{sec:relatedwork}
\textbf{Classical theory.} Starting with one of the seminal works \citep{nemirovskij1983problem}, the optimization field began investigating the optimality of methods. Initially, the focus was mainly on the oracle complexity, e.g., the number of function and gradient evaluations needed to achieve a given accuracy. One of the first results was obtained in the convex setting where it was shown that the optimal complexity for finding an $\varepsilon$-approximate solution is $\Theta(\sqrt{L R^2 / \varepsilon})$ \citep{nesterov1983method,nesterov2018lectures}, where $R$ is the distance between the initial point and an optimal point. For the nonconvex setting, the optimal complexity $\Theta(\nicefrac{L \Delta}{\varepsilon})$ was shown by \citet{carmon2020lower} in the deterministic setting and $\Theta(\nicefrac{L \Delta}{\varepsilon} + \nicefrac{\sigma^2 L \Delta}{\varepsilon^2})$ by \citet{arjevani2022lower} in the stochastic setting.

\textbf{Gossip protocol.} 
One way to extend the classical results to the decentralized setting is to use the gossip protocol. As in \hyperref[box:comm_model]{\textcolor{gray}{Graph-Bandwidth Communication Model}}, workers are associated with nodes and communication links with edges of the graph. Then, a mixing matrix $W \in \R^{n \times n}$ is constructed such that $w_{ij} > 0$ if nodes $i$ and $j$ are connected, and $w_{ij} = 0$ otherwise. The upper and lower bounds are then constructed in terms of the number of communication rounds required to find an $\varepsilon$-stationary point \citep{boyd2006randomized,scaman2017optimal,Koloskova2019-DecentralizedEC-2019,lu2021optimal}. However, this approach arguably has the following drawbacks: (i) this protocol allows methods to communicate only with neighbors and to use a single operation $y_i \leftarrow \sum_j w_{ij} x_j,$ where $x_j$ is the sent vector and $y_i$ is the received vector. Such a protocol restricts many practical ways to disseminate information in the graph. For instance, the split vector and online in-network aggregation strategies, discussed in Section~\ref{sec:intro} and commonly used in practice, are ignored; (ii) the obtained complexities are typically defined in terms of \emph{iteration complexity} and depend on the spectral gap of a mixing matrix, whereas our approach uses \emph{time complexities} with explicit dependence on computation and communication time parameters $\{h_i\}$ and $\{b_{ij}\}$; (iii) the obtained results use ``one-zero'' encoding, assuming that all communication links are the same, whereas our approach uses bandwidths, a practical way of defining connectivity between two workers or switches.

\textbf{Time complexities.} The classical oracle complexity is a valid and intuitive metric for comparing methods with one worker/GPU/CPU/server. However, modern optimization requires hundreds, thousands, or even millions of workers, requiring the analysis of \emph{parallel and asynchronous algorithms}, where a conceptually different metric is needed. Let us consider a recent paper by \citet{mishchenko2022asynchronous}, where the authors compare Synchronous SGD and Asynchronous SGD. The \emph{oracle complexity} of both methods is the same; moreover, the \emph{iteration complexity} of Synchronous SGD is better. 
However, under the \emph{time complexity}, a more appropriate metric for parallel methods, Asynchronous SGD is provably better. In particular, assuming the \hyperref[box:computation_model]{\textcolor{gray}{Computation Model}} and using the analysis by \citet{cohen2021asynchronous,koloskova2022sharper,mishchenko2022asynchronous}, it is possible to show that Asynchronous SGD has provably better time complexity than Synchronous SGD. Further, \citet{tyurin2023optimal} formalized the notion of time complexity and proved that the optimal time complexity is
$\Theta(\textstyle \min_{m \in [n]} [(\nicefrac{1}{m} \sum_{i=1}^{m} \nicefrac{1}{h_{\pi_{i}}})^{-1} (1 + \nicefrac{\sigma^2}{m \varepsilon})]\nicefrac{L \Delta}{\varepsilon}),$
achieved\footnote{It was also achieved by another method called Ringmaster ASGD \citep{maranjyan2025ringmaster}. Moreover, somewhat surprisingly, it was later shown that this complexity, up to a logarithmic factor, can also be achieved by a synchronous method \citep{begunov2026we}.} by a method called Rennala SGD, where $\pi$ is a permutation that sorts $\{h_i\}_{i \in [n]}$.

\textbf{Time complexities with communication times.} A natural question is to determine the optimal time complexities when communication between workers cannot be ignored. One of the early works studying such complexities in the decentralized setting is \citet{scaman2017optimal}, where the complexity was established for heterogeneous objectives, without stochastic gradients, and under the assumption that workers exchange full vectors with their neighbors at every step (gossip communication). At the same time, our \hyperref[box:comm_model]{\textcolor{gray}{Graph-Bandwidth Communication Model}} is much more flexible, while also supporting the homogeneous setup and stochastic gradients. The idea of using bandwidths on the communication links was considered in \citep{tyurin2024shadowheart,tyurin2025proving} (in fact, they define $\tau$s instead as the time to send one coordinate, which are the inverses of the bandwidths). However, an important limitation of these works is that they only consider the centralized setup (star graphs), while ours considers arbitrary graphs. The analysis of communication in star graphs is much easier due to the lack of congestion on the communication links and the uniqueness of the paths between the workers. A work by \citet{tyurin2024optimalgraph} is closest to our setting, since it also proves optimal time complexities in the decentralized setup with both homogeneous and heterogeneous functions and stochastic gradients. However, it assumes that algorithms are only allowed to send full vectors to neighbors, an important limitation that does not capture, for instance, optimal-bandwidth AllReduce strategies.


\bibliography{example_paper}
\bibliographystyle{apalike}

\appendix
\newpage

\tableofcontents

\newpage
\section{Notations}

\begin{table}[h]
\centering
\caption{List of notations used throughout the paper.}
\begin{tabular}{cl}
\toprule
\textbf{Notation} & \textbf{Meaning} \\
\midrule
$[n]$ & Denotes the finite set $\{1, \dots, n\}$. \\
$\norm{x}$ & Euclidean norm of a vector $x$. \\
$g = \cO(f)$ & There exists $C > 0$ such that $g(z) \le C \, f(z)$ for all $z \in \cZ$. \\
$g = \Omega(f)$ & There exists $C > 0$ such that $g(z) \ge C \, f(z)$ for all $z \in \cZ$. \\
$g = \Theta(f)$ & There exist $C_1, C_2 > 0$ such that $C_1 f(z) \le g(z) \le C_2 f(z)$ for all $z \in \cZ$. \\
$\tilde{\cO},\tilde{\Omega},$ and $\tilde{\Theta}$ & The same as $\cO$, $\Omega,$ and $\Theta,$ but up to logarithmic factors. \\
$\Delta$ & Initial optimality gap, $\Delta \eqdef f(x^0) - f^*$. \\
$\Exp{\cdot}$ & Full expectation. \\
$\ExpSub{\xi}{\cdot}$ & Expectation w.r.t. random variable $\xi$. \\
$h_i$ & Time required by worker $i$ to compute one stochastic gradient. \\
$G = (V,E,b)$ & Directed weighted communication graph. \\
$\bar{G} = (V,\bar E,b)$ & Undirected version of the communication graph $G$. \\
$T = (V,F,w)$ & Gomory--Hu tree of the undirected graph $\bar G$. \\
$\bar{w}_1 \le \dots \le \bar{w}_{n-1}$ & Sorted edge weights of the Gomory--Hu tree. \\
$\bar{w}_n$ & Auxiliary value defined as $\infty$. \\
$\pi(S)$ & Permutation that sorts $\{h_i\}_{i \in S}$ increasingly. \\
$\hat{G} = (V,\hat E)$ & Unweighted multigraph obtained from $\bar G$ by repeating $\{i,j\}$ edge $b_{ij}$ times. \\
$\mathcal{T} = (\hat T_1,\dots,\hat T_p)$ & Collection of edge-disjoint Steiner trees connecting a subset $S \subseteq V$ in $\hat G$. \\
\bottomrule
\end{tabular}
\end{table}

\newpage
\section{Visualization of Algorithm~\ref{alg:preprocess} on Gomory-Hu Tree from Figure~\ref{fig:system_example}}
\label{sec:example_alg1}
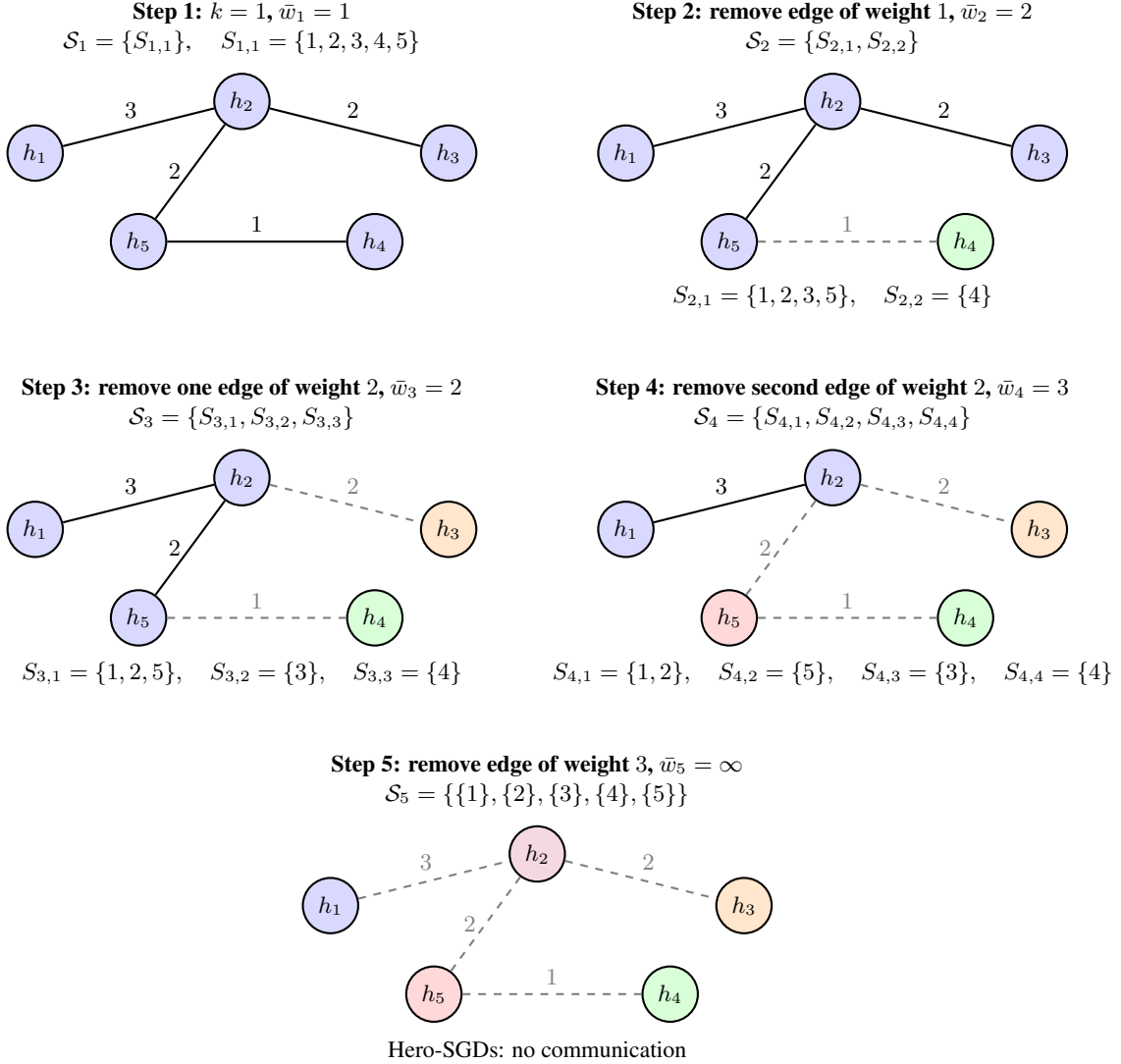
\begin{figure*}[h]
\centering
\begin{tikzpicture}[
    scale=1,
    every node/.style={font=\small},
    worker/.style={
        circle,
        draw=black,
        thick,
        minimum size=0.55cm,
        fill=blue!8
    },
    activeedge/.style={
        thick
    },
    removededge/.style={
        thick,
        dashed,
        gray
    },
    compA/.style={fill=blue!15},
    compB/.style={fill=green!15},
    compC/.style={fill=orange!20},
    compD/.style={fill=red!15},
    compE/.style={fill=purple!15}
]

\begin{scope}[xshift=0cm, yshift=0cm]
\node at (2.8,2.5) {\textbf{Step 1: \(k=1\), \(\bar w_1 = 1\)}};
\node at (2.8,2.1) {$\mathcal S_1 = \{S_{1,1}\}$, \quad $S_{1,1} = \{1,2,3,4,5\}$};

\node[worker, compA] (a1) at (0,0.6) {$h_1$};
\node[worker, compA] (a2) at (2.8,1.3) {$h_2$};
\node[worker, compA] (a3) at (5.6,0.6) {$h_3$};
\node[worker, compA] (a4) at (4.6,-0.6) {$h_4$};
\node[worker, compA] (a5) at (1.4,-0.6) {$h_5$};

\draw[activeedge] (a1) -- node[midway, above,xshift=-3pt] {$3$} (a2);
\draw[activeedge] (a2) -- node[midway, above,xshift=3pt] {$2$} (a3);
\draw[activeedge] (a2) -- node[midway, left] {$2$} (a5);
\draw[activeedge] (a5) -- node[midway, above] {$1$} (a4);
\end{scope}

\begin{scope}[xshift=8.0cm, yshift=0cm]
\node at (2.8,2.5) {\textbf{Step 2: remove edge of weight \(1\), \(\bar w_2 = 2\)}};
\node at (2.8,2.1) {$\mathcal S_2 = \{S_{2,1}, S_{2,2}\}$};

\node[worker, compA] (b1) at (0,0.6) {$h_1$};
\node[worker, compA] (b2) at (2.8,1.3) {$h_2$};
\node[worker, compA] (b3) at (5.6,0.6) {$h_3$};
\node[worker, compB] (b4) at (4.6,-0.6) {$h_4$};
\node[worker, compA] (b5) at (1.4,-0.6) {$h_5$};

\draw[activeedge] (b1) -- node[midway, above,xshift=-3pt] {$3$} (b2);
\draw[activeedge] (b2) -- node[midway, above,xshift=3pt] {$2$} (b3);
\draw[activeedge] (b2) -- node[midway, left] {$2$} (b5);
\draw[removededge] (b5) -- node[midway, above, gray] {$1$} (b4);

\node at (2.8,-1.35) {$S_{2,1}=\{1,2,3,5\}, \quad S_{2,2}=\{4\}$};
\end{scope}

\begin{scope}[xshift=0cm, yshift=-5.1cm]
\node at (2.8,2.5) {\textbf{Step 3: remove one edge of weight \(2\), \(\bar w_3 = 2\)}};
\node at (2.8,2.1) {$\mathcal S_3 = \{S_{3,1}, S_{3,2}, S_{3,3}\}$};

\node[worker, compA] (c1) at (0,0.6) {$h_1$};
\node[worker, compA] (c2) at (2.8,1.3) {$h_2$};
\node[worker, compC] (c3) at (5.6,0.6) {$h_3$};
\node[worker, compB] (c4) at (4.6,-0.6) {$h_4$};
\node[worker, compA] (c5) at (1.4,-0.6) {$h_5$};

\draw[activeedge] (c1) -- node[midway, above,xshift=-3pt] {$3$} (c2);
\draw[removededge] (c2) -- node[midway, above,xshift=3pt, gray] {$2$} (c3);
\draw[activeedge] (c2) -- node[midway, left] {$2$} (c5);
\draw[removededge] (c5) -- node[midway, above, gray] {$1$} (c4);

\node at (2.8,-1.35) {$S_{3,1}=\{1,2,5\}, \quad S_{3,2}=\{3\}, \quad S_{3,3}=\{4\}$};
\end{scope}

\begin{scope}[xshift=8.0cm, yshift=-5.1cm]
\node at (2.8,2.5) {\textbf{Step 4: remove second edge of weight \(2\), \(\bar w_4 = 3\)}};
\node at (2.8,2.1) {$\mathcal S_4 = \{S_{4,1}, S_{4,2}, S_{4,3}, S_{4,4}\}$};

\node[worker, compA] (d1) at (0,0.6) {$h_1$};
\node[worker, compA] (d2) at (2.8,1.3) {$h_2$};
\node[worker, compC] (d3) at (5.6,0.6) {$h_3$};
\node[worker, compB] (d4) at (4.6,-0.6) {$h_4$};
\node[worker, compD] (d5) at (1.4,-0.6) {$h_5$};

\draw[activeedge] (d1) -- node[midway, above,xshift=-3pt] {$3$} (d2);
\draw[removededge] (d2) -- node[midway, above,xshift=3pt, gray] {$2$} (d3);
\draw[removededge] (d2) -- node[midway, left, gray] {$2$} (d5);
\draw[removededge] (d5) -- node[midway, above, gray] {$1$} (d4);

\node at (2.8,-1.35) {$S_{4,1}=\{1,2\}, \quad S_{4,2}=\{5\}, \quad S_{4,3}=\{3\}, \quad S_{4,4}=\{4\}$};
\end{scope}

\begin{scope}[xshift=4.0cm, yshift=-10.2cm]
\node at (2.8,2.5) {\textbf{Step 5: remove edge of weight \(3\), \(\bar w_5 = \infty\)}};
\node at (2.8,2.1) {$\mathcal S_5 = \{\{1\},\{2\},\{3\},\{4\},\{5\}\}$};

\node[worker, compA] (e1) at (0,0.6) {$h_1$};   
\node[worker, compE] (e2) at (2.8,1.3) {$h_2$}; 
\node[worker, compC] (e3) at (5.6,0.6) {$h_3$}; 
\node[worker, compB] (e4) at (4.6,-0.6) {$h_4$}; 
\node[worker, compD] (e5) at (1.4,-0.6) {$h_5$}; 

\draw[removededge] (e1) -- node[midway, above,xshift=-3pt, gray] {$3$} (e2);
\draw[removededge] (e2) -- node[midway, above,xshift=3pt, gray] {$2$} (e3);
\draw[removededge] (e2) -- node[midway, left, gray] {$2$} (e5);
\draw[removededge] (e5) -- node[midway, above, gray] {$1$} (e4);

\node at (2.8,-1.35) {Hero-SGDs: no communication};
\end{scope}

\end{tikzpicture}
\caption{Step-by-step visualization of Algorithm~\ref{alg:preprocess} on the Gomory--Hu tree $T.$}
\label{fig:gh_step_by_step}
\end{figure*}
In Step 1, we assume that all workers communicate to each other. In this case, if we implement Grace SGD (with $S^* \to S_{1,1}$), then the guaranteed complexity would be
\begin{align*}
  \cO\left(\frac{d}{\bar{w}_1} \times \frac{L \Delta}{\varepsilon} + \min\limits_{m \in [\abs{S_{1,1}}]} \left[\left(\frac{1}{m} \sum\limits_{i=1}^{m} \frac{1}{h_{\pi_{i}(S_{1,1})}}\right)^{-1} \left(1 + \frac{\sigma^2}{\varepsilon m}\right)\right] \frac{L \Delta}{\varepsilon}\right).
\end{align*}
In Step 2, we split worker $4$ from the other workers in $T$ and evaluate the performance of Grace SGD (with $S^* \to S_{2,1}$ and $S^* \to S_{2,2}$), and record the best performance $t^*$ found so far (Algorithm~\ref{alg:preprocess}). We repeat these steps until all workers are split.

\textbf{Important observation:} Assume that $S^* = S_{4,1} = \{1,2\}$ and Grace SGD decides to use only these workers in the optimization process. This does not mean that, for instance, node $5$ would not participate. On the contrary, node $5$ should participate, since part of the information would flow through it (see Figure~\ref{fig:system_example}, where node $5$ is needed to achieve the total max flow from node $1$ to node $2$). However, node $5$ would not perform any computations or produce any gradients. It would act as a switch. See also Section~\ref{sec:example_alg2}.

\section{Visualization of Algorithm~\ref{alg:allreduce}}
\label{sec:example_alg2}
In this section, we consider the graph from Figure~\ref{fig:allreduce_step_by_step_S126} and visualize the behavior of Algorithm~\ref{alg:allreduce}.
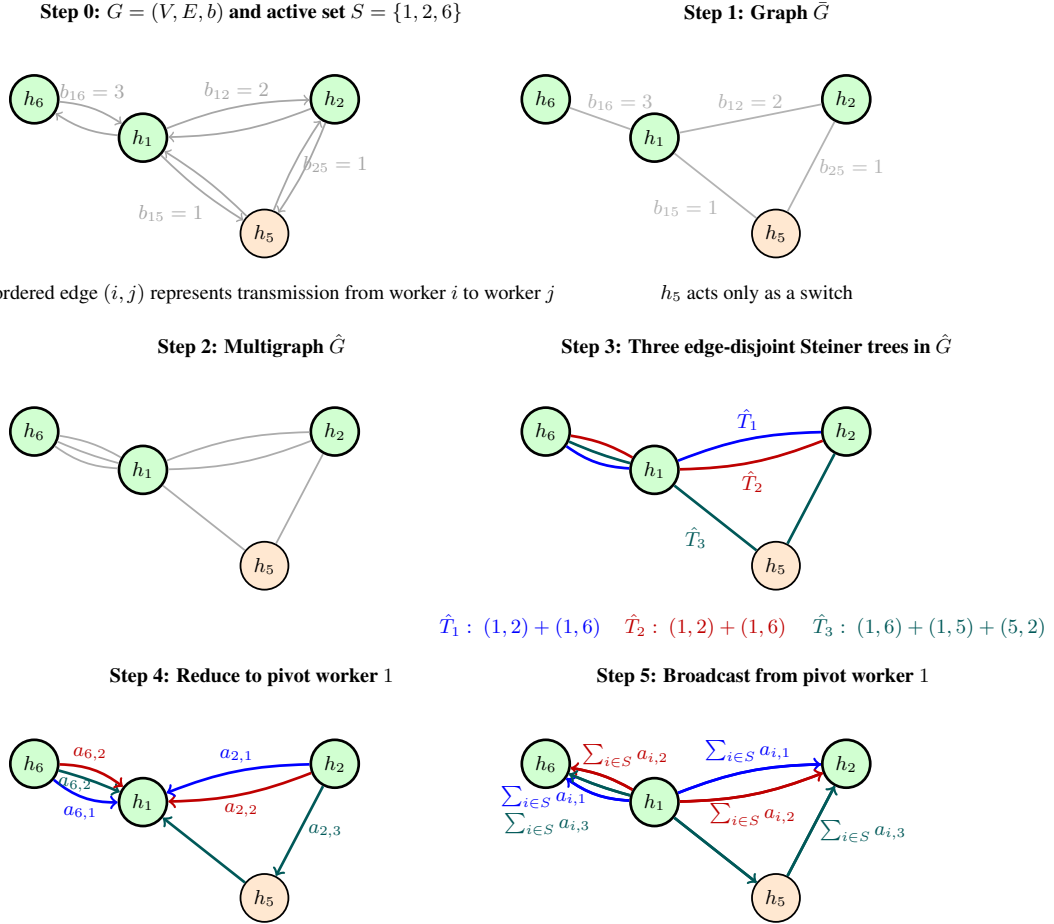
\begin{figure*}[h]
\centering
\scalebox{0.85}{
\begin{tikzpicture}[
    scale=1,
    every node/.style={font=\small},
    worker/.style={
        circle,
        draw=black,
        thick,
        minimum size=0.54cm,
        fill=blue!8
    },
    active/.style={
        circle,
        draw=black,
        very thick,
        minimum size=0.54cm,
        fill=green!18
    },
    switch/.style={
        circle,
        draw=black,
        thick,
        minimum size=0.54cm,
        fill=orange!18
    },
    edgebase/.style={
        gray!60,
        thick
    },
    multiedge/.style={
        gray!65,
        thick
    },
    treeA/.style={
        very thick,
        blue
    },
    treeB/.style={
        very thick,
        red!75!black
    },
    treeC/.style={
        very thick,
        teal!70!black
    },
    msgA/.style={
        ->,
        very thick,
        blue
    },
    msgB/.style={
        ->,
        very thick,
        red!75!black
    },
    msgC/.style={
        ->,
        very thick,
        teal!70!black
    },
    dirlink/.style={
        ->,
        thick,
        gray!70
    }
]

\begin{scope}[xshift=0cm, yshift=0cm]
\node at (1.7,2.45) {\textbf{Step 0: \(G=(V,E,b)\) and active set \(S=\{1,2,6\}\)}};

\node[active] (z1) at (0,0.5) {$h_1$};
\node[active] (z2) at (3.0,1.1) {$h_2$};
\node[switch] (z5) at (1.9,-1.0) {$h_5$};
\node[active] (z6) at (-1.7,1.1) {$h_6$};

\draw[dirlink] (z1) to[bend left=10] node[midway, above] {$b_{12}=2$} (z2);
\draw[dirlink] (z2) to[bend left=10] (z1);

\draw[dirlink] (z1) to[bend left=14] (z6);
\draw[dirlink] (z6) to[bend left=14] node[midway, above] {$b_{16}=3$} (z1);

\draw[dirlink] (z1) to[bend right=6] node[midway, left, yshift=-10pt, xshift=5pt] {$b_{15}=1$} (z5);
\draw[dirlink] (z5) to[bend right=6] (z1);

\draw[dirlink] (z5) to[bend left=6] node[midway, right] {$b_{25}=1$} (z2);
\draw[dirlink] (z2) to[bend left=6] (z5);

\node at (1.7,-1.95) {Each ordered edge \((i,j)\) represents transmission from worker \(i\) to worker \(j\)};
\end{scope}

\begin{scope}[xshift=8.0cm, yshift=0cm]
\node at (1.6,2.45) {\textbf{Step 1: Graph \(\bar{G}\)}};

\node[active] (a1) at (0,0.5) {$h_1$};
\node[active] (a2) at (3.0,1.1) {$h_2$};
\node[switch] (a5) at (1.9,-1.0) {$h_5$};
\node[active] (a6) at (-1.7,1.1) {$h_6$};

\draw[edgebase] (a1) -- node[midway, above] {$b_{12}=2$} (a2);
\draw[edgebase] (a1) -- node[midway, above left, xshift=27pt] {$b_{16}=3$} (a6);
\draw[edgebase] (a1) -- node[midway, left, yshift=-10pt, xshift=5pt] {$b_{15}=1$} (a5);
\draw[edgebase] (a5) -- node[midway, right] {$b_{25}=1$} (a2);

\node at (1.6,-1.95) {\(h_5\) acts only as a switch};
\end{scope}

\begin{scope}[xshift=0cm, yshift=-5.2cm]
\node at (1.7,2.45) {\textbf{Step 2: Multigraph \(\hat{G}\)}};

\node[active] (b1) at (0,0.5) {$h_1$};
\node[active] (b2) at (3.0,1.1) {$h_2$};
\node[switch] (b5) at (1.9,-1.0) {$h_5$};
\node[active] (b6) at (-1.7,1.1) {$h_6$};

\draw[multiedge] (b1) to[bend left=10] (b2);
\draw[multiedge] (b1) to[bend right=10] (b2);

\draw[multiedge] (b1) to[bend left=16] (b6);
\draw[multiedge] (b1) to[bend left=4] (b6);
\draw[multiedge] (b1) to[bend right=10] (b6);

\draw[multiedge] (b1) -- (b5);
\draw[multiedge] (b5) -- (b2);

\end{scope}

\begin{scope}[xshift=8.0cm, yshift=-5.2cm]
\node at (1.6,2.45) {\textbf{Step 3: Three edge-disjoint Steiner trees in \(\hat{G}\)}};

\node[active] (c1) at (0,0.5) {$h_1$};
\node[active] (c2) at (3.0,1.1) {$h_2$};
\node[switch] (c5) at (1.9,-1.0) {$h_5$};
\node[active] (c6) at (-1.7,1.1) {$h_6$};

\draw[edgebase] (c1) to[bend left=10] (c2);
\draw[edgebase] (c1) to[bend right=10] (c2);

\draw[edgebase] (c1) to[bend left=16] (c6);
\draw[edgebase] (c1) to[bend left=4] (c6);

\draw[edgebase] (c1) -- (c5);
\draw[edgebase] (c5) -- (c2);

\draw[treeA] (c1) to[bend left=10] node[midway, above] {\(\hat T_1\)} (c2);
\draw[treeA] (c1) to[bend left=16] (c6);

\draw[treeB] (c1) to[bend right=10] node[midway, below] {\(\hat T_2\)} (c2);
\draw[treeB] (c1) to[bend right=10] (c6);

\draw[treeC] (c1) to[bend left=4] (c6);
\draw[treeC] (c1) -- node[midway, below left] {\(\hat T_3\)} (c5);
\draw[treeC] (c5) -- (c2);

\node[blue] at (-2.1,-1.95) {\(\hat T_1:\ (1,2) + (1,6)\)};
\node[red!75!black] at (0.8,-1.95) {\(\hat T_2:\ (1,2) + (1,6)\)};
\node[teal!70!black] at (4.3,-1.95) {\(\hat T_3:\ (1,6)+(1,5)+(5,2)\)};
\end{scope}

\begin{scope}[xshift=0cm, yshift=-10.4cm]
\node at (1.7,2.45) {\textbf{Step 4: Reduce to pivot worker \(1\)}};

\node[active] (d1) at (0,0.5) {$h_1$};
\node[active] (d2) at (3.0,1.1) {$h_2$};
\node[switch] (d5) at (1.9,-1.0) {$h_5$};
\node[active] (d6) at (-1.7,1.1) {$h_6$};

\draw[msgB] (d2) to[bend left=10] node[midway, below] {$a_{2,2}$} (d1);
\draw[msgA] (d6) to[bend right=20] node[midway, above left,xshift=10pt,yshift=-14pt] {$a_{6,1}$} (d1);

\draw[msgA] (d2) to[bend right=10] node[midway, above] {$a_{2,1}$} (d1);
\draw[msgC] (d6) to[bend left=4] node[midway, below left,xshift=5pt,yshift=5pt] {$a_{6,2}$} (d1);

\draw[msgC] (d2) to node[midway, right] {$a_{2,3}$} (d5);
\draw[msgC] (d5) to node[midway, below] {} (d1);
\draw[msgB] (d6) to[bend left=18] node[midway, above left,xshift=10pt] {$a_{6,2}$} (d1);

\end{scope}

\begin{scope}[xshift=8.0cm, yshift=-10.4cm]
\node at (1.7,2.45) {\textbf{Step 5: Broadcast from pivot worker \(1\)}};

\node[active] (e1) at (0,0.5) {$h_1$};
\node[active] (e2) at (3.0,1.1) {$h_2$};
\node[switch] (e5) at (1.9,-1.0) {$h_5$};
\node[active] (e6) at (-1.7,1.1) {$h_6$};

\draw[edgebase] (e1) to[bend left=10] (e2);
\draw[edgebase] (e1) to[bend right=10] (e2);

\draw[edgebase] (e1) to[bend left=16] (e6);
\draw[edgebase] (e1) to[bend left=4] (e6);
\draw[edgebase] (e1) to[bend right=10] (e6);

\draw[edgebase] (e1) -- (e5);
\draw[edgebase] (e5) -- (e2);

\draw[treeA] (e1) to[bend left=10] (e2);
\draw[treeA] (e1) to[bend left=16] (e6);

\draw[treeB] (e1) to[bend right=10] (e2);
\draw[treeB] (e1) to[bend right=10] (e6);

\draw[treeC] (e1) to[bend left=4] (e6);
\draw[treeC] (e1) -- (e5);
\draw[treeC] (e5) -- (e2);

\draw[msgA] (e1) to[bend left=10] node[midway, above] {$\sum_{i\in S} a_{i,1}$} (e2);
\draw[msgA] (e1) to[bend left=16] node[midway, above left,yshift=-10pt] {$\sum_{i\in S} a_{i,1}$} (e6);

\draw[msgB] (e1) to[bend right=10] node[midway, below] {$\sum_{i\in S} a_{i,2}$} (e2);
\draw[msgB] (e1) to[bend right=10] node[midway, above,xshift=10pt] {$\sum_{i\in S} a_{i,2}$} (e6);

\draw[msgC] (e1) to node[midway, below] {} (e5);
\draw[msgC] (e5) to node[midway, right] {$\sum_{i\in S} a_{i,3}$} (e2);
\draw[msgC] (e1) to[bend left=4] node[midway, left, yshift=-17pt] {$\sum_{i\in S} a_{i,3}$} (e6);

\end{scope}

\end{tikzpicture}
}
\caption{
Step-by-step visualization of the optimal-bandwidth AllReduce for \(S=\{1,2,6\}\).
Step 0 shows the initial directed graph \(G=(V,E,b)\), where each ordered edge \((i,j)\) represents a communication link from worker \(i\) to worker \(j\) with bandwidth \(b_{ij}\).
Step 1 shows the corresponding undirected graph \(\bar G\).
Step 2 shows the multigraph \(\hat G\), where each edge of bandwidth \(b_{ij}\) is replaced by \(b_{ij}\) parallel unit-bandwidth edges.
Step 3 illustrates three edge-disjoint Steiner trees in \(\hat G\).
Step 4 shows the reduce phase: each worker sends one block through each tree to the pivot worker \(1\).
Step 5 shows the broadcast phase: the pivot sends the aggregated blocks back through the same trees. \textbf{Important observation:} Node $5$ is still required in the optimization process, but only as a switch.
}
\label{fig:allreduce_step_by_step_S126}
\end{figure*}

\section{Examples with Different Graph Structures in the Heterogeneous Setting}
\label{sec:examples_heter}
Similarly to Section~\ref{sec:examples}, we consider the same examples in the heterogeneous setting. We also assume that $h_i = h$ for all $i \in [n].$

\textbf{Example: One Worker.} Assume that $n = 1,$ i.e., $G$ is a graph with one node. In this case, Gomory-Hu $T$ is the same as $G,$ $\bar{w}_1 = \infty$ and $S_{1,1} = \{1\}$ in Algorithm~\ref{alg:preprocess}, and $\eqref{eq:GvDIFIzvheter} = \cO\left(\nicefrac{h \sigma^2 L \Delta}{\varepsilon^2} + \nicefrac{h L \Delta}{\varepsilon}\right),$ and we get the same result as in the homogeneous setup.

\textbf{Example: Star Graph (Centralized Setting; Figure~\ref{fig:star_graph_gomory_hu}).} In this case, $\bar{w}_1 = b.$  Thus,
\begin{align*}
  \textstyle \eqref{eq:GvDIFIzvheter} = \cO\left(\left(\frac{d}{b} + \frac{h \sigma^2}{n \varepsilon}\right) \frac{L \Delta}{\varepsilon} + \frac{h L \Delta}{\varepsilon}\right).
\end{align*}

\textbf{Example: $p$-Torus.} Since $\bar{w}_1 = 2 p b$ in Figure~\ref{fig:torus_gomory_hu},
\begin{align*}
  \textstyle \eqref{eq:GvDIFIzvheter} = \cO\left(\left(\frac{d}{p b} + \frac{h \sigma^2}{n \varepsilon}\right) \frac{L \Delta}{\varepsilon} + \frac{h L \Delta}{\varepsilon}\right).
\end{align*}

\textbf{Example: all-to-all.} In the all-to-all graph $G$, 
\begin{align*}
  \textstyle \eqref{eq:GvDIFIzvheter} = \cO\left(\left(\frac{d}{(n - 1) b} + \frac{h \sigma^2}{n \varepsilon}\right) \frac{L \Delta}{\varepsilon} + \frac{h L \Delta}{\varepsilon}\right),
\end{align*}
for all $n \geq 2.$

\textbf{Example: $K$ clusters.}
Using Figure~\ref{fig:k_clusters_ring}, one can show that $\bar{w}_1$ and Theorem~\ref{thm:sgd_heter} yields the time complexity
\begin{align*}
  \textstyle \cO\left(\frac{d L \Delta}{b_{\textnormal{slow}} \varepsilon} + \frac{h \sigma^2 L \Delta}{n \varepsilon^2} + \frac{h L \Delta}{\varepsilon}\right).
\end{align*}
Unlike the homogeneous setting (Section~\ref{sec:examples}), this complexity tends to $\infty$ if $b_{\textnormal{slow}} \to 0.$

\textbf{Example: optimization with switches.} The heterogeneous setting can also support switches, but this would require extending the setup described in Section~\ref{sec:heter}. Briefly, under Assumption~\ref{ass:time}, we assume that there are two subsets, $S_{w}$ and $S_{s},$ such that $S_{w} \cup S_{s} = [n]$ and $S_{w} \cap S_{s} = \emptyset,$ where $S_{w}$ is the set of real workers and $S_{s}$ is the set of switches. Then, we solve
\begin{align}
  \label{eq:main_heter_switch}
  \textstyle \min \limits_{x \in \R^d} \left\{f(x) \eqdef \frac{1}{\abs{S_{w}}} \sum_{i \in S_{w}} f_i(x)\right\}.
\end{align}
Using a modified version of Algorithm~\ref{alg:main_leon}, where only $\abs{S_{w}}$ workers compute stochastic gradients and run AllReduce, one can extend Theorem~\ref{thm:sgd_heter} and prove the time complexity
\begin{align*}
  \textstyle \cO\left(\max\left\{\frac{d}{\alpha_{\bar{G}}(S_w)}, \max\limits_{i \in S_{w}} h_i, \frac{\sigma^2}{\abs{S_{w}} \varepsilon}\left(\frac{1}{\abs{S_{w}}} \sum\limits_{i \in S_{w}} h_i\right)\right\} \frac{L \Delta}{\varepsilon}\right)
\end{align*}
seconds, where $\alpha_{\bar{G}}(S_w)$ is the minimum value of an $S_{w}$-cut in $\bar{G}$ (Definition~\ref{def:min_cut} and Theorem~\ref{thm:allreduce_time}). When $S_w = [n]$ and $S_s = \emptyset,$ this complexity reduces to \eqref{eq:GvDIFIzvheter}.

\section{Definitions and Results from Graph Theory}
In this section, we list the standard definition and results from graph theory.
\begin{definition}
Given an undirected weighted graph $\bar{G} = (V, \bar E, b)$ and a vertex set $U \subseteq V$, we define
\[
\delta(U) \;\coloneqq\; \{\, e = \{x,y\} \in \bar E \;:\; x \in U,\; y \in V \setminus U \,\},
\]
that is, the set of edges with exactly one endpoint in $U$.
\end{definition}

\begin{definition}
Given an undirected weighted graph $\bar{G} = (V, \bar E, b)$ and two vertices $u, v \in V$, a \emph{$u$--$v$ cut} is a partition $(U, V \setminus U)$ of $V$ such that $u \in U$ and $v \in V \setminus U$. The value of the cut is defined as
\[
\sum_{e \in \delta(U)} b_e.
\]
\end{definition}

\begin{definition}
Given an undirected weighted graph $\bar{G} = (V, \bar E, b)$ and two vertices $u, v \in V$, we define $\alpha_{\bar{G}}(u, v)$ as the minimum value of a $u$--$v$ cut in $\bar{G}$. Moreover, any $u$--$v$ cut that attains this minimum value is called a \emph{minimum $u$--$v$ cut}.
\end{definition}

\begin{definition}
Given an undirected weighted graph $\bar{G} = (V, \bar E, b)$ and a vertex set $S \subseteq V$, an \emph{$S$-cut} is a partition $(U, V \setminus U)$ of $V$ such that 
\[
U \cap S \neq \emptyset
\quad\text{and}\quad
(V \setminus U) \cap S \neq \emptyset.
\]
The value of the cut is defined as
\[
\sum_{e \in \delta(U)} b_e.
\]
\end{definition}

\begin{definition}
\label{def:min_cut}
Given an undirected weighted graph $\bar{G} = (V, \bar E, b)$ and a vertex set $S \subseteq V$, we define $\alpha_{\bar{G}}(S)$ as the minimum value of an $S$-cut in $\bar{G}$. Moreover, any $S$-cut that attains this minimum value is called a \emph{minimum $S$-cut}.
\end{definition}

\begin{definition}[\citet{gomory1961multi}]
  \label{def:gh}
  A Gomory--Hu tree $T = (V, F, w)$ of an undirected weighted graph $\bar{G} = (V, \bar E, b)$ is a tree $T = (V, F, w),$ which consists of a tree edge set $F$ and capacities $w$ such that, for every edge $\{i, j\} \in F$, $\delta(W)$ is a minimum $i-j$ cut in $\bar{G}$ and $w_{ij}$ is the value of this cut, where $W$ is one component\footnote{Removing the edge $\{i, j\}$ from the tree $T$ yields two sets of nodes $W_1$ and $W_2$, and $W$ can be either of them, without loss of generality, since $\delta(W_1) = \delta(W_2).$} of $T - \{i, j\}$.
\end{definition}
Notice that, by definition, we have $w_{ij} = \alpha_{\bar{G}}(i, j)$ for all $\{i, j\} \in F$. Let us also recall the following important theorem.
\begin{theorem}[e.g. \citep{schrijver2003combinatorial}]
\label{thm:gomory_hu}
Let $T = (V, F, w)$ be a Gomory--Hu tree of $\bar{G}$. For any two vertices $s, t \in V$, consider the unique $s$--$t$ path in $T$, and let $\{u, v\} \in F$ be an edge on this path minimizing the weight $w_{uv}$. Then,
\[
\alpha_{\bar{G}}(s, t) = \alpha_{\bar{G}}(u, v),
\]
and for any component $K$ of $T - \{u, v\}$, the cut $\delta(K)$ is a minimum $s$--$t$ cut in $\bar{G}$.
\end{theorem}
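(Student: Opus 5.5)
The plan is to prove the two inequalities $\alpha_{\bar G}(s,t) \ge w_{uv}$ and $\alpha_{\bar G}(s,t) \le w_{uv}$ separately. The second one should come with an explicit cut that attains the value, and that cut will be $\delta(K)$. Throughout I use only Definition~\ref{def:gh}, which gives $w_{ij} = \alpha_{\bar G}(i,j)$ for every tree edge $\{i,j\}\in F$ and says that $\delta(W)$ is a minimum $i$--$j$ cut. If $s = t$ there is nothing to prove, so assume $s \neq t$ and let $s = v_0, v_1, \dots, v_r = t$ be the unique $s$--$t$ path in $T$.

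\textbf{Lower bound.} First I establish the ``ultrametric'' triangle inequality: for any three vertices $a,b,c$,
\[
\alpha_{\bar G}(a,c) \ge \min\{\alpha_{\bar G}(a,b), \alpha_{\bar G}(b,c)\}.
\]
Take a minimum $a$--$c$ cut $(U, V\setminus U)$ with $a \in U$ and $c \notin U$. The vertex $b$ lies on one side of it.
\begin{itemize}
\item If $b \notin U$, the same partition is an $a$--$b$ cut, so $\alpha_{\bar G}(a,b) \le \alpha_{\bar G}(a,c)$.
\item If $b \in U$, the partition is a $b$--$c$ cut, so $\alpha_{\bar G}(b,c) \le \alpha_{\bar G}(a,c)$.
\end{itemize}
Applying this inequality inductively along the path gives
\[
\alpha_{\bar G}(s,t) \ge \min_{1\le \ell\le r} \alpha_{\bar G}(v_{\ell-1}, v_\ell) = \min_{1 \le \ell \le r} w_{v_{\ell-1}v_\ell} = w_{uv} = \alpha_{\bar G}(u,v).
\]

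\textbf{Upper bound and the cut.} The edge $\{u,v\}$ lies on the unique $s$--$t$ path of the tree $T$. Removing it therefore splits $T$ into two components, one containing $s$ and the other containing $t$. Hence, for either component $K$ of $T-\{u,v\}$, the partition $(K, V\setminus K)$ is an $s$--$t$ cut in $\bar G$. By Definition~\ref{def:gh} (together with its footnote, which says either component may be used), $\delta(K)$ is a minimum $u$--$v$ cut with value $w_{uv} = \alpha_{\bar G}(u,v)$. This gives $\alpha_{\bar G}(s,t) \le \alpha_{\bar G}(u,v)$.

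Combining the two bounds yields $\alpha_{\bar G}(s,t) = \alpha_{\bar G}(u,v)$. Moreover, $\delta(K)$ is an $s$--$t$ cut whose value equals $\alpha_{\bar G}(s,t)$, so it is a minimum $s$--$t$ cut.

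The only step requiring care is the triangle inequality and its induction along the path. Everything else follows directly from the definitions.
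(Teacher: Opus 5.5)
Your proof is correct. The inequality $\alpha_{\bar G}(a,c) \ge \min\{\alpha_{\bar G}(a,b),\alpha_{\bar G}(b,c)\}$, iterated along the tree path, gives $\alpha_{\bar G}(s,t)\ge w_{uv}$, and the tree-edge cut $\delta(K)$ attains the matching upper bound. The paper gives no proof of its own and only cites Schrijver's textbook, whose argument is this same standard one.
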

It turns out that the Gomory--Hu tree naturally captures the \emph{connectivity structure and bottlenecks} of both graphs $\bar{G}$ and $G$. Indeed, consider any tree edge $\{u, v\} \in F$ and the corresponding components $W_1$ and $W_2$ obtained by removing the edge $\{u, v\}$ from $T$. Then, $\alpha_{\bar{G}}(s, t) \le \alpha_{\bar{G}}(u, v) = w_{uv}$ for all $s \in W_1$ and $t \in W_2$ by Theorem~\ref{thm:gomory_hu}. By the max-flow min-cut theorem, the \emph{maximum $s$--$t$ flow} in $\bar{G}$ is upper bounded by $w_{uv}$.

\section{Upper Bounds}
\subsection{Proof of Theorem~\ref{thm:sgd_homog}}
\THEOREMSGDHOMOG*

\begin{proof}
(\textbf{Iteration rate}). The proof of the iteration is standard and we prove it for completeness.
Fix any subset of workers \(S\subseteq [n]\), and let
\[
B \;=\; \max\!\left\{\left\lceil \frac{\sigma^2}{\varepsilon}\right\rceil,1\right\}.
\]
At iteration \(k\), Grace SGD forms the mini-batch estimator
\[
g^k \;=\; \frac{1}{B}\sum_{i\in S}\sum_{j=1}^{B_i^k} \nabla f(x^k;\xi_{i,j}^k),
\qquad \sum_{i\in S} B_i^k = B.
\]
By Assumption~\ref{ass:stochastic_variance_bounded},
$\ExpCond{g^k}{x^k} = \nabla f(x^k)$ and $\ExpCond{\|g^k-\nabla f(x^k)\|^2}{x^k} \le \frac{\sigma^2}{B}\le \varepsilon.$

Using \(L\)-smoothness of \(f\) and the update
\[
x^{k+1}=x^k-\gamma g^k,
\qquad \gamma=\frac{1}{2L},
\]
we get
\begin{align*}
\ExpCond{f(x^{k+1})}{x^k}
&\le
f(x^k)
-\gamma \|\nabla f(x^k)\|^2
+\frac{L\gamma^2}{2}\ExpCond{\|g^k\|^2}{x^k}.
\end{align*}
Since
\[
\ExpCond{\|g^k\|^2}{x^k}
=
\|\nabla f(x^k)\|^2+\ExpCond{\|g^k-\nabla f(x^k)\|^2}{x^k}
\le
\|\nabla f(x^k)\|^2+\varepsilon,
\]
it follows that
\begin{align*}
\ExpCond{f(x^{k+1})}{x^k}
&\le
f(x^k)
-\left(\gamma-\frac{L\gamma^2}{2}\right)\|\nabla f(x^k)\|^2
+\frac{L\gamma^2}{2}\varepsilon \\
&=
f(x^k)
-\frac{3}{8L}\|\nabla f(x^k)\|^2
+\frac{\varepsilon}{8L}.
\end{align*}
Rearranging and taking full expectation,
\[
\frac{3}{8L}\Exp{\norm{\nabla f(x^k)}^2}
\le
\Exp{f(x^k)}-\Exp{f(x^{k+1})}+\frac{\varepsilon}{8L}.
\]
Summing over \(k=0,\dots,K-1\) gives
\[
\frac{3}{8L}\sum_{k=0}^{K-1}\Exp{\|\nabla f(x^k)\|^2}
\le
f(x^0)-\inf_{x \in \R^d} f(x)+\frac{K\varepsilon}{8L}
=
\Delta+\frac{K\varepsilon}{8L}.
\]
Hence
\[
\frac{1}{K}\sum_{k=0}^{K-1}\Exp{\norm{\nabla f(x^k)}^2}
\le
\frac{8L\Delta}{3K}+\frac{\varepsilon}{3}.
\]
So it is enough to choose \(K\) such that the right-hand side is at most \(\varepsilon\). In particular,
\[
K=\left\lceil \frac{4 L\Delta}{\varepsilon}\right\rceil
\]
is sufficient, which proves the iteration complexity.

(\textbf{Time complexity}). It remains to upper bound the time complexity of one iteration. Consider the chosen subset of workers $S^* \subseteq [n]$ in Algorithm~\ref{alg:preprocess}. Under Assumption~\ref{ass:time} (\hyperref[box:computation_model]{\textcolor{gray}{Computation Model}}), the time required to collect a batch of size $B = \max\!\left\{\left\lceil \frac{\sigma^2}{\varepsilon}\right\rceil,1\right\}$ by the workers from $S^*$ is at most
\begin{align*}
  \min_{m \in [\abs{S^*}]} \left[\left(\frac{1}{m}\sum_{i=1}^m \frac{1}{h_{\pi_i(S^*)}}\right)^{-1}\right]
\left(1+\frac{\sigma^2}{m\varepsilon}\right)
\end{align*}
seconds (Lemma~\ref{lem:batch_time}). Moreover, by our construction in Algorithm~\ref{alg:preprocess}, we know that the value of a minimum $S^*$-cut (Definition~\ref{def:min_cut}) is greater than or equal to $\bar{w}_{k^*}.$ Thus, there exists an optimal-bandwidth AllReduce algorithm (Algorithm~\ref{alg:allreduce}, Theorem~\ref{thm:allreduce_time}) such that the allreduce operation is bounded by $\cO\left(\nicefrac{d}{\bar{w}_{k^*}}\right)$ seconds. Hence one iteration on this subset costs at most
\[
\cO\left(\frac{d}{\bar w_{k^*}} + \min_{m \in [\abs{S^*}]} \left[\left(\frac{1}{m}\sum_{i=1}^m \frac{1}{h_{\pi_i(S^*)}}\right)^{-1}\right]
\left(1+\frac{\sigma^2}{m\varepsilon}\right)\right).
\]
Algorithm~\ref{alg:preprocess} chooses the subset \(S^*\) minimizing this quantity over all candidates
\(\{S_{k,p}\}\). Therefore, the per-iteration time is at most
\[
\min_{k \in [n]} \left(\frac{d}{\bar{w}_{k}} + \min\limits_{p \in [k]}\min\limits_{m \in [\abs{S_{k,p}}]} \left[\left(\frac{1}{m} \sum\limits_{i=1}^{m} \frac{1}{h_{\pi_{i}(S_{k,p})}}\right)^{-1} \left(1 + \frac{\sigma^2}{m \varepsilon}\right)\right] \right).
\]
Multiplying by the number of iterations
\(
K=\left\lceil \frac{4L\Delta}{\varepsilon}\right\rceil,
\)
we get \eqref{eq:GvDIFIzv}.
\end{proof}

\begin{theorem}
\label{thm:allreduce_time}
Let $\bar{G}=(V,\bar E,b)$ be the undirected version of $G:$ $\{i,j\} \in \bar E$ with weight $b_{ij}$ iff $(i,j)\in E$ with weight $b_{ij}.$ Under Assumption~\ref{ass:time} (\hyperref[box:comm_model]{\textcolor{gray}{Graph-Bandwidth Communication Model}}), the time complexity of the AllReduce algorithm in Algorithm~\ref{alg:allreduce} is
\begin{align*}
    \mathcal{O}\!\left(\frac{d}{\alpha_{\bar{G}}(S)}\right),
\end{align*}
where $\alpha_{\bar{G}}(S)$ denotes the value of a minimum $S$-cut in the graph $\bar{G}$ (Definition~\ref{def:min_cut}).
\end{theorem}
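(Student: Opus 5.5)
We follow Algorithm~\ref{alg:allreduce} step by step and bound each phase separately. First, we reduce to integer bandwidths (after rescaling by a common denominator, which rescales $\alpha_{\bar{G}}(S)$ and time by the same factor) and pass to the unweighted multigraph $\hat{G}$. Every $S$-cut in $\hat{G}$ has exactly as many edges as the weight of the same cut in $\bar{G}$, so the minimum $S$-cut of $\hat{G}$ equals $k \eqdef \alpha_{\bar{G}}(S)$. Steiner tree packing \citep[Theorem 1.2]{lau2004approximate} then gives $p \geq c\, k$ edge-disjoint trees $\hat{T}_1,\dots,\hat{T}_p$, each spanning $S$, for a universal constant $c>0$. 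We assume this theorem, together with its guarantee that $p$ is within a constant factor of the minimum $S$-cut in $\hat{G}$. If $|S|=1$, then $\alpha_{\bar{G}}(S)=\infty$ and no communication is needed, so we assume $|S|\ge 2$, which makes $k$ finite and positive.

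Next, we describe the schedule on a single tree. Each block has length $\ell=\lceil d/p\rceil$. Along $\hat{T}_j$ the reduce is done in a pipelined, online fashion. A node streams coordinate $r$ of its partial sum toward the pivot $v$ as soon as it has received coordinate $r$ from all its children in $\hat{T}_j$ and added its own block entry. Non-$S$ nodes (switches) in $\hat{T}_j$ simply add and forward. Since each tree edge is a unit-bandwidth copy, its link is used in the child-to-parent direction at rate $1$, carrying exactly $\ell$ coordinates. Because aggregation is free by Assumption~\ref{ass:time}, the streams pipeline. The broadcast is symmetric, running parent-to-child along the same edges and streaming coordinates as they arrive.

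The key point is non-interference. The trees are edge-disjoint in $\hat{G}$, so each physical edge $\{i,j\}$ of $\bar{G}$ carries at most $b_{ij}$ tree edges across all trees. By interleaving, the link $(i,j)$ with bandwidth $b_{ij}$ simulates $b_{ij}$ independent unit-rate channels, one per multi-edge, and the same holds in direction $(j,i)$. Hence all $p$ reduce processes, and likewise all $p$ broadcast processes, run simultaneously, each with the rates above.

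For the timing, the main obstacle is accounting for pipeline latency: the last coordinate must traverse a path of depth up to $D\le n-1$. To handle this, we make coordinate-level pipelining rigorous as follows. In a tree of depth $D$ with unit-rate edges, coordinate $r$ arrives at the pivot by time $r + D\cdot\delta$, where $\delta$ is the per-hop delay of a single coordinate. In the pure bandwidth model $\delta = 1$, one coordinate-time, so the reduce finishes in at most $\ell + D \le \lceil d/p\rceil + n$. To keep the bound $\cO(d/k)$, we use the fluid view: a coordinate is an infinitely divisible amount of data at rate $1$, so the reduce completes in time $\ell$ with negligible startup. Alternatively, we note $d \ge n$ in the regime of interest, or use the same pipelining argument with an additive lower-order term.

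Either way, each phase costs $\cO(\lceil d/p\rceil)=\cO(d/(c k))$, and the total is
\[
\cO\!\left(\frac{d}{\alpha_{\bar{G}}(S)}\right),
\]
which proves Theorem~\ref{thm:allreduce_time}. Correctness holds because $\hat{T}_j$ spans all of $S$: the pivot obtains $\sum_{i\in S}a_{i,j}$ for every block $j$, and after the broadcast every worker in $S$ holds all blocks of $\sum_{i\in S}a_i$.
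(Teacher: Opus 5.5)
Your proof is correct and follows the paper's argument essentially step for step. Both pass to the unit-capacity multigraph $\hat G$ (so $\alpha_{\hat G}(S)=\alpha_{\bar G}(S)$), invoke the Steiner tree packing theorem of \citet{lau2004approximate} to get $p=\Theta(\alpha_{\bar G}(S))$ edge-disjoint trees spanning $S$, and run pipelined reduce and broadcast with online in-network aggregation on each tree, using interleaving to realize the unit channels without congestion. Your explicit treatment of the pipeline depth is more careful than the paper, which silently adopts the same fluid, rate-based view you settle on. One caution: your fallback ``$d\ge n$'' would not suffice on its own, since you would need the tree depth (up to $n-1$) to be $\cO(d/p)$, i.e., $n=\cO(d/\alpha_{\bar G}(S))$.
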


\begin{proof}
Consider the unweighted multigraph $\hat G=(V,\hat E)$ obtained from $\bar G=(V,\bar E,b)$ by replacing every edge $\{i,j\}\in \bar E$ of weight $b_{ij}$ with $b_{ij}$ parallel edges of unit bandwidth. Clearly, any cut in $\bar G$ and the corresponding cut in $\hat G$ have the same value. Therefore,
\[
\alpha_{\bar G}(S)=\alpha_{\hat G}(S).
\]
We explain in the main part that in practice, the system still has a single edge with bandwidth $b_{ij}$; however, the behavior of $b_{ij}$ parallel unit-bandwidth edges can be simulated by multiplexing transmissions using the \emph{interleaving} strategy described in Section~\ref{sec:intro}.

By the Steiner tree packing result of \citet[Theorem 1.2]{lau2004approximate}, there exists a polynomial-time algorithm that finds a collection of
\[
p=\Theta\!\big(\alpha_{\hat G}(S)\big)=\Theta\!\big(\alpha_{\bar G}(S)\big)
\]
edge-disjoint trees
\[
\mathcal T=(\hat T_1,\dots,\hat T_p)
\]
in $\hat G$, each of which connects all vertices of $S$.

We use these trees as parallel communication pipes. Split every local vector $a_i\in\R^d$ stored at worker $i\in S$ into $p$ disjoint blocks,
\[
a_i=\big(a_{i,1},\dots,a_{i,p}\big),
\]
where each block has size $\cO(d/p)$ coordinates (w.l.o.g., we assume that $d$ is divisible by $p;$ otherwise, we can pad with zero values). Assign block $\ell$ to tree $\hat T_\ell$.

Fix any pivot worker $r\in S$. We first perform a reduce operation. 
For every $\ell \in [p]$ and any worker $i \in S,$ find the unique path from $i$ to the root $r.$ 
Each worker $i \in S$ starts streaming the coordinates of its block 
$a_{i,\ell}$ along the unique path in $\hat T_\ell$ toward $r$. 
Every intermediate node waits until it receives the next coordinate from all of its children, i.e., the neighbors of the intermediate node that send coordinates to the root through this intermediate node. Importantly, the intermediate node does not wait to receive the entire block of size $\Theta(d/p)$. As soon as the first coordinate is received from all children, the node aggregates these values (adding its own coordinate if it belongs to $S$) and immediately forwards the result to its parent, i.e., the next node on the path towards the root $r.$ The same procedure is then applied to subsequent coordinates.

Since the trees are edge-disjoint, these communications do not interfere with one another. Moreover, every edge in $\hat G$ has unit bandwidth, so the time needed to transmit one block through one tree is proportional to the block size, that is,
\[
\cO(d/p).
\]
Thus, after $\cO(d/p)$ seconds, the pivot worker $r$ has obtained the sum of block $\ell$ for every $\ell\in[p]$, and hence the full sum $\sum_{i\in S} a_i$. In practice, of course, in addition to sending the values of the coordinates, the workers might also transfer the indices of these coordinates and other meta-information; nevertheless, this would increase the cost by at most a multiplicative constant factor $\Theta(1)$.

Next, we perform a broadcast operation. Using the same collection of trees, the pivot worker sends block $\ell$ of the aggregated vector through $\hat T_\ell$, and intermediate nodes forward the received information further. Importantly, the intermediate node does not wait to receive the entire block and immediately broadcasts a new coordinate upon receiving it, without waiting for the next ones. Again, because the trees are edge-disjoint, all broadcasts proceed in parallel without congestion, and the required time is
\[
\cO(d/p).
\]

Combining the reduce and broadcast phases, the total communication time is
\[
\cO(d/p)+\cO(d/p)=\cO(d/p).
\]
Finally, since
\[
p=\Theta\!\big(\alpha_{\bar G}(S)\big),
\]
we obtain that the AllReduce time complexity is
\[
\cO\!\left(\frac{d}{\alpha_{\bar G}(S)}\right).
\]
\end{proof}

\begin{lemma}
\label{lem:batch_time}
Let $S \subseteq [n]$ be a subset of workers. The time required to collect a batch of size $B$ using workers in $S$ under the \hyperref[box:computation_model]{\textcolor{gray}{Computation Model}} is at most
\begin{align}
\label{eq:batch_time}
\min_{m \in [\abs{S}]} 
\left[
\left(\frac{1}{m}\sum_{i=1}^{m} \frac{1}{h_{\pi_i(S)}}\right)^{-1}
\left(1 + \frac{B}{m}\right)
\right]
\end{align}
seconds, $\pi(S)$ is a permutation that sorts $\{h_i\}_{i \in S}:$ $h_{\pi_1(S)} \leq \dots \leq h_{\pi_{\abs{S}}(S)}.$ 
\end{lemma}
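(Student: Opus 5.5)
Fix any $m \in [\abs{S}]$ and show the batch is collected within $t_m \eqdef \left(\frac{1}{m}\sum_{i=1}^{m} \frac{1}{h_{\pi_i(S)}}\right)^{-1}\left(1 + \frac{B}{m}\right)$ seconds; taking the minimum over $m$ then gives \eqref{eq:batch_time}. The strategy is to let only the $m$ fastest workers $\pi_1(S),\dots,\pi_m(S)$ compute, each continuously and in parallel, until together they have produced at least $B$ stochastic gradients. Slower workers are simply ignored. Ignoring them can only help the upper bound; if they also compute, they only add gradients.

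Under the \hyperref[box:computation_model]{\textcolor{gray}{Computation Model}}, by time $t$ worker $\pi_i(S)$ has finished $\lfloor t / h_{\pi_i(S)} \rfloor \geq t/h_{\pi_i(S)} - 1$ gradients. The total after time $t$ is therefore at least $t \sum_{i=1}^m \frac{1}{h_{\pi_i(S)}} - m$.

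It suffices that this quantity is at least $B$, i.e.
\[
t \geq (B + m)\left(\sum_{i=1}^m \frac{1}{h_{\pi_i(S)}}\right)^{-1} = \left(\frac{1}{m}\sum_{i=1}^m \frac{1}{h_{\pi_i(S)}}\right)^{-1}\frac{B+m}{m},
\]
which is exactly $t_m$. So at time $t_m$ at least $B$ gradients are available, and the local batch sizes $B_i$ can be truncated so that their sum equals $B$ exactly.

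The only delicate point is the floor: we must lose at most one gradient per worker rather than anything multiplicative. The bound $\lfloor x\rfloor \ge x-1$ handles this, and the resulting $-m$ is absorbed by the additive $1$ in $(1+B/m)$.

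The footnoted variant, with times in $[\bar c_l h_i, \bar c_r h_i]$, needs no new idea: replacing $h_i$ by $\bar c_r h_i$ changes the bound only by a constant factor.
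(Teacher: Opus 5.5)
Your proposal is correct and is essentially the paper's own proof. The paper fixes the minimizing $m^*$ instead of an arbitrary $m$, lower-bounds the total by the $m^*$ fastest workers' counts $\lfloor t/h_{\pi_i(S)}\rfloor \ge t/h_{\pi_i(S)} - 1$, and uses the identity $\bigl(\frac{1}{m}\sum_{i=1}^m \frac{1}{h_{\pi_i(S)}}\bigr)^{-1}\bigl(1+\frac{B}{m}\bigr) = \bigl(\sum_{i=1}^m \frac{1}{h_{\pi_i(S)}}\bigr)^{-1}(B+m)$ to conclude that at least $B$ gradients are available.
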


\newcommand{\parens}[1]{\left(#1\right)}

\begin{proof}
Let
\begin{align*}
t
=
\min_{m \in [\abs{S}]}
\parens{
\parens{\sum_{i=1}^{m} \frac{1}{h_{\pi_i(S)}}}^{-1}
(B + m)
}.
\end{align*}

As soon as a worker finishes computing a stochastic gradient, it immediately starts computing the next one. Hence, by time $t$, worker $i$ will have computed at least
\[
\left\lfloor \frac{t}{h_i} \right\rfloor
\]
stochastic gradients. Therefore the total number of gradients computed by workers in $S$ by time $t$ is at least
\begin{align*}
\sum_{i \in S} \left\lfloor \frac{t}{h_i} \right\rfloor
\ge
\sum_{i=1}^{m^*} \left\lfloor \frac{t}{h_{\pi_i(S)}} \right\rfloor,
\end{align*}
where
\begin{align*}
m^* =
\arg\min_{m \in [\abs{S}]}
\parens{
\parens{\sum_{i=1}^{m} \frac{1}{h_{\pi_i(S)}}}^{-1}
(B + m)
}.
\end{align*}

Since $\lfloor x \rfloor \ge x - 1$ for all $x \ge 0$, we obtain
\begin{align*}
&\sum_{i=1}^{m^*}
\left\lfloor
\frac{t}{h_{\pi_i(S)}}
\right\rfloor
\ge
\sum_{i=1}^{m^*}
\left(
\frac{t}{h_{\pi_i(S)}} - 1
\right)\\
&=
t \sum_{i=1}^{m^*} \frac{1}{h_{\pi_i(S)}} - m^*
=
\left(\sum_{i=1}^{m^*} \frac{1}{h_{\pi_i(S)}}\right)
\left(
\left(\sum_{i=1}^{m^*} \frac{1}{h_{\pi_i(S)}}\right)^{-1}
(B + m^*)
\right)
- m^*
=
B.
\end{align*}

Thus, by time $t$, at least $B$ stochastic gradients have been computed.
\end{proof}

\subsection{Proof of Theorem~\ref{thm:sgd_heter}}

\THEOREMSGDHETER*

\begin{proof}
  Notice that Leon SGD (Algorithm~\ref{alg:main_leon}) is a Minibatch SGD method with steps
  \begin{align}
    \label{eq:bdljpmibzuStiLIITiRA}
    x^{k+1} = x^{k} - \frac{\gamma}{n}\sum_{i=1}^n \frac{1}{B_i} \sum_{j=1}^{B_i} \nabla f_i(x^k;\xi^k_{ij}).
  \end{align}
  Similarly to \citep{tyurin2023optimal,tyurin2024optimalgraph}, using the standard SGD analysis \citep{lan2020first}, one can show that this method converges after $\cO\left(\frac{L \Delta}{\varepsilon}\right)$ iterations due to the fact that $\left(\frac{1}{n} \sum_{i=1}^n \frac{1}{B_i}\right)^{-1} \geq \max\{\lceil\nicefrac{\sigma^2}{\varepsilon}\rceil, n\} / n.$ It is left to bound the time of one iteration under Assumption~\ref{ass:time}. The time to collect the minibatch in \eqref{eq:bdljpmibzuStiLIITiRA} is the same as in \citep{tyurin2023optimal,tyurin2024optimalgraph} and can be bounded by
  \begin{align}
    \label{eq:ImunmOozIc}
    \cO\left(\max\left\{\max\limits_{i \in [n]} h_i, \frac{\sigma^2}{n \varepsilon}\left(\frac{1}{n} \sum\limits_{i=1}^n h_i\right)\right\}\right)
  \end{align}
  (e.g., see Theorem~A.4 in \citep{tyurin2023optimal}). The time to run AllReduce can be upper bounded by 
  \begin{align}
    \label{eq:DdmpIeQVMaaycgCcH}
    \cO\left(\frac{d}{\min\limits_{\{i,j\} \in F} w_{ij}}\right)
  \end{align}
  due to Theorem~\ref{thm:allreduce_time}, since $\bar{w}_1 \eqdef \min\limits_{\{i,j\} \in F} w_{ij}$ is the smallest value of a min-cut in the graph $\bar{G}$, which is the same as the value of a minimum $[n]$-cut in the graph $\bar{G}.$ It remains to sum \eqref{eq:ImunmOozIc} and \eqref{eq:DdmpIeQVMaaycgCcH} and multiply them by $\cO\left(\frac{L \Delta}{\varepsilon}\right)$.
\end{proof}

\subsection{Proof of Corollary~\ref{cor:cor_sparse}}
\label{sec:cor_sparse}
\CORSPARSE*
\begin{proof}
  Under the new assumptions, the lower bound in Theorem~\ref{thm:main}, proved for both the \emph{homogeneous} and \emph{heterogeneous} settings, is greater than or equal to
  \begin{align}
    \textstyle \tilde{\Omega}\left(\min\left\{\underbrace{\textstyle \frac{d L \Delta}{{\color{orange} p} b \varepsilon} + \frac{h L \Delta}{\varepsilon} +  \frac{h \sigma^2 L \Delta}{n \varepsilon^2}}_{L_1 \eqdef }, \underbrace{\textstyle \frac{h L \Delta}{\varepsilon} + \frac{h \sigma^2 L \Delta}{\varepsilon^2}}_{L_2 \eqdef }\right\}\right)
  \end{align}
  since the weights in the corresponding Gomory-Hu tree are less than or equal to $p b.$ On the other hand, the upper bound by Leon SGD in Theorem~\ref{thm:sgd_heter} is less than or equal $U \eqdef \cO\big(\nicefrac{d L \Delta}{b \varepsilon} + \nicefrac{h L \Delta}{\varepsilon} + \nicefrac{h \sigma^2 L \Delta}{n \varepsilon^2}\big)$ since all edges have the same bandwidth $b.$ Hero SGD with complexity $L_2$ can potentially improve $U$ when $\varepsilon$ is not too large and $n$ is small. However, in the regime when $\varepsilon$ is small and $\sigma$ is large ($\nicefrac{\sigma^2}{\varepsilon} \geq \Omega(\nicefrac{d}{b h})$), the term $L_1$ can be smaller than or equal to $L_2,$ and in this case, comparing $L_1$ and $U,$ one can see that $L_1$ can improve the communication term by at most $p$ times. If $p = \Theta(1),$ i.e., the graph is sparse, then it is infeasible to improve the time complexity $U$ even in the homogeneous setting. Note that $U$ can also be achieved with a naive mini-batch version of Synchronous SGD: $x^{k+1} = x^k - \frac{\gamma}{n B} \sum_{i=1}^{n} \sum_{j=1}^{B} \nabla f_i(x^k;\xi^k_{ij})$ with the simple communication approach from Section~\ref{sec:sync}.
\end{proof}

\subsection{Proof of Corollaries~\ref{cor:cor_sparse_two} and \ref{cor:cor_sparse_three}}
\label{sec:cor_sparse_two}

\begin{restatable}[Proof in Section~\ref{sec:cor_sparse_two}]{corollary}{CORSPARSETWO}
  \label{cor:cor_sparse_two}
  In view of Theorems~\ref{thm:main}, if $h_i = h$ for all $i \in [n]$ and $b_{ij} = b$ for all $(i,j) \in E,$ then the lower bound is 
  $\textstyle \tilde{\Omega}\left(\min\left\{\textstyle \min_{m \in [n - 1]}\left[\frac{d L \Delta}{{\color{orange} m} b \varepsilon} +  \frac{h \sigma^2 L \Delta}{{\color{orange} n(m)} \varepsilon^2}\right] + \frac{h L \Delta}{\varepsilon}, \frac{h L \Delta}{\varepsilon} + \frac{h \sigma^2 L \Delta}{\varepsilon^2}\right\}\right),$ where ${\color{orange} n(m)}$ is the number of nodes having the number of incident edges greater than or equal ${\color{orange} m}.$
\end{restatable}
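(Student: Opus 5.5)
We start from the lower bound of Theorem~\ref{thm:main} and specialize it to $h_i = h$ and $b_{ij} = b$. With equal computation times, the inner minimum over $m \in [\abs{S_{k,p}}]$ of $h(1 + \nicefrac{\sigma^2}{m\varepsilon})$ is attained at $m = \abs{S_{k,p}}$. The minimum over $p \in [k]$ then selects the largest component. So the bound in \eqref{eq:BNjTPNhGYfaixu} becomes, up to the polylogarithmic factor,
\[
\min_{k \in [n]} \left(\frac{d}{\bar{w}_k} + \frac{h\sigma^2}{M_k \varepsilon} + h\right)\frac{L\Delta}{\varepsilon},
\qquad M_k \eqdef \max_{p \in [k]} \abs{S_{k,p}}.
\]
We split into $k = n$ and $k < n$. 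For $k = n$, $\bar{w}_n = \infty$ and $M_n = 1$, so this term is exactly the Hero SGD term $\nicefrac{hL\Delta}{\varepsilon} + \nicefrac{h\sigma^2 L\Delta}{\varepsilon^2}$. For $k < n$, we will lower bound each summand by a bracket in the claimed formula.

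\textbf{Key step: relating $\bar{w}_k$ and $M_k$ to degrees.} Fix $k < n$ and set $m \eqdef \lfloor \bar{w}_k / b \rfloor$; we claim $n(m) \ge M_k$. Take any component $S = S_{k,p}$ with $\abs{S} \ge 2$. By construction of Algorithm~\ref{alg:preprocess}, every tree edge inside $S$ has weight at least $\bar{w}_k$. Theorem~\ref{thm:gomory_hu} then gives $\alpha_{\bar G}(u,v) \ge \bar{w}_k$ for all $u,v \in S$. Any vertex $v \in S$ has the trivial cut $\delta(\{v\})$, whose value is $b\deg(v)$, so $b\deg(v) \ge \bar{w}_k$. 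Thus every vertex of $S$ has degree at least $\bar{w}_k/b \ge m$, and hence $n(m) \ge \abs{S}$; taking the largest component gives $n(m) \ge M_k$. We also need $m \ge 1$: since $\bar{w}_k$ is a Gomory--Hu weight, i.e., a cut value in a graph whose edges all have weight $b$, it is a positive integer multiple of $b$, and indeed $\bar{w}_k/b$ is itself an integer. Moreover $m \le \min_{v \in S}\deg(v) \le n-1$, so $m \in [n-1]$. With this choice, $\nicefrac{d}{\bar{w}_k} = \nicefrac{d}{mb}$ and $\nicefrac{h\sigma^2}{M_k\varepsilon} \ge \nicefrac{h\sigma^2}{n(m)\varepsilon}$.

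\textbf{Degenerate case.} If $M_k = 1$ (all components are singletons, which can happen for $k < n$ only when $n = 1$, a trivial case), the term for $k$ is at least the Hero term. So the case $\abs{S} \ge 2$ above covers everything needed.

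\textbf{Conclusion.} Each term with $k < n$ is therefore at least $\min_{m \in [n-1]}\bigl[\nicefrac{d}{mb} + \nicefrac{h\sigma^2}{n(m)\varepsilon}\bigr] + h$. Multiplying by $\nicefrac{L\Delta}{\varepsilon}$ and taking the minimum with the $k=n$ term gives Corollary~\ref{cor:cor_sparse_two}. The homogeneous bound also covers the heterogeneous setting, since homogeneous instances are a special case.

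For Corollary~\ref{cor:cor_sparse_three}, we reparametrize by the number of workers instead. Given a count $m' \ge 2$, we need the degree threshold $m$ with $n(m) \ge m'$. The largest such $m$ is exactly the $m'$-th largest degree $k(m')$, and this substitution gives the dual form. The main point needing care throughout is the step $\bar{w}_k \le b\min_{v\in S}\deg(v)$ combined with integrality of $\bar{w}_k/b$, which is what the argument above establishes.
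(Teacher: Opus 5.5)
Your proof is correct and follows the same route as the paper's: you specialize Theorem~\ref{thm:main} to equal $h_i$, note that $\bar{w}_k = mb$ for an integer $m \in [n-1]$, and show that every node of the relevant (largest) component $S_{k,p}$ has at least $m$ incident edges, so $\abs{S_{k,p}} \le n(m)$. Your degree bound, obtained from the singleton cut $\delta(\{v\})$ together with Theorem~\ref{thm:gomory_hu}, is a cleaner direct version of the paper's contradiction argument that such a node ``would have been disconnected at an earlier iteration.''
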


\begin{proof}
  Using \eqref{eq:BNjTPNhGYfaixu}, the lower bound is
  \begin{align*}
    LB \eqdef \tilde{\Omega}\left(\min\limits_{k \in [n]} \left(\frac{d}{\bar{w}_{k}} + \min\limits_{p \in [k]} \frac{h \sigma^2}{\abs{S_{k,p}} \varepsilon}\right) \frac{L \Delta}{\varepsilon} + \frac{h L \Delta}{\varepsilon}\right)
  \end{align*}
  in both homogeneous and heterogeneous settings when $h_i = h$ for all $i \in [n].$ Clearly,
  \begin{align*}
    LB = \tilde{\Omega}\left(\left(\frac{d}{\bar{w}_{k^*}} + \frac{h \sigma^2}{\abs{S_{k^*,p^*}} \varepsilon}\right) \frac{L \Delta}{\varepsilon} + \frac{h L \Delta}{\varepsilon}\right)
  \end{align*}
  for some $k^* \in [n]$ and $p^* \in [k^*].$ If $\abs{S_{k^*,p^*}} = 1,$ then Corollary~\ref{cor:cor_sparse_two} is true. Let $\abs{S_{k^*,p^*}} \geq 2,$ then necessarily $k^* \in [n - 1].$ Notice that $\bar{w}_{k^*} = i_{k^*} b$ for some $i_{k^*} \in [n - 1]$ since all bandwidths equal to $b.$ Thus
  \begin{align*}
    LB = \tilde{\Omega}\left(\left(\frac{d}{b i_{k^*}} + \frac{h \sigma^2}{\abs{S_{k^*,p^*}} \varepsilon}\right) \frac{L \Delta}{\varepsilon} + \frac{h L \Delta}{\varepsilon}\right).
  \end{align*}
  It is left to show that $\abs{S_{k^*,p^*}} \leq n(i_{k^*}).$ By construction, $S_{k^*,p^*}$ is a set of nodes in which each node has at least $i_{k^*}$ incident edges (if one of them had fewer than $i_{k^*}$ incident edges, then the nodes in $S_{k^*,p^*}$ would have been disconnected in Algorithm~\ref{alg:preprocess} at some iteration $j \in [k^* - 1]$, because there would exist an edge $e$ in $T$ with $w_e < b i_{k^*}$ that separates two nodes in $S_{k^*,p^*}$; this leads to a contradiction). Thus, $\abs{S_{k^*,p^*}} \leq n(i_{k^*})$ and 
  \begin{align*}
    LB 
    &\geq \tilde{\Omega}\left(\left(\frac{d}{b i_{k^*}} + \frac{h \sigma^2}{n(i_{k^*}) \varepsilon}\right) \frac{L \Delta}{\varepsilon} + \frac{h L \Delta}{\varepsilon}\right) \geq \tilde{\Omega}\left(\min_{m \in [n - 1]} \left(\frac{d}{b m} + \frac{h \sigma^2}{n(m) \varepsilon}\right) \frac{L \Delta}{\varepsilon} + \frac{h L \Delta}{\varepsilon}\right).
  \end{align*}
\end{proof}

\CORSPARSETHREE*
\begin{proof}
  Similarly to the previous proof, a lower bound is
  \begin{align*}
    LB = \tilde{\Omega}\left(\left(\frac{d}{\bar{w}_{k^*}} + \frac{h \sigma^2}{\abs{S_{k^*,p^*}} \varepsilon}\right) \frac{L \Delta}{\varepsilon} + \frac{h L \Delta}{\varepsilon}\right)
  \end{align*}
  for some $k^* \in [n]$ and $p^* \in [k^*].$ If $\abs{S_{k^*,p^*}} = 1,$ then the lower bound is true. Let $\abs{S_{k^*,p^*}} \geq 2$ and $\bar{d}_1 \geq \dots \geq \bar{d}_{\abs{S_{k^*,p^*}}}$ be the node degrees of the nodes in set $S_{k^*,p^*}.$ Notice that $\bar{w}_{k^*} \leq b \times \bar{d}_{\abs{S_{k^*,p^*}}}$ (the case $\bar{w}_{k^*} > b \times \bar{d}_{\abs{S_{k^*,p^*}}}$ is impossible since there exists a node in $S_{k^*,p^*}$ with degree $\bar{d}_{\abs{S_{k^*,p^*}}},$ which would be separated by Algorithm~\ref{alg:preprocess} in some iteration $j \in [k^* - 1]$ from another node in $S_{k^*,p^*}$). Thus,
  \begin{align*}
    LB 
    &\geq \tilde{\Omega}\left(\left(\frac{d}{\bar{d}_{\abs{S_{k^*,p^*}}} b} + \frac{h \sigma^2}{\abs{S_{k^*,p^*}} \varepsilon}\right) \frac{L \Delta}{\varepsilon} + \frac{h L \Delta}{\varepsilon}\right) \\
    &\geq \tilde{\Omega}\left(\left(\frac{d}{k(\abs{S_{k^*,p^*}}) b} + \frac{h \sigma^2}{\abs{S_{k^*,p^*}} \varepsilon}\right) \frac{L \Delta}{\varepsilon} + \frac{h L \Delta}{\varepsilon}\right) \\
    &\geq \tilde{\Omega}\left(\min_{m \in \{2, \dots, n\}} \left(\frac{d}{k(m) b} + \frac{h \sigma^2}{m \varepsilon}\right) \frac{L \Delta}{\varepsilon} + \frac{h L \Delta}{\varepsilon}\right)
  \end{align*}
  where we use that $\bar{d}_{\abs{S_{k^*,p^*}}} \leq k(\abs{S_{k^*,p^*}}).$
\end{proof}

\section{Lower Bounds}
\subsection{Proof sketch}
\label{sec:proof_sketch}
In this section, we give a proof sketch of Theorem~\ref{thm:sgd_homog}. 

\textbf{(``Worst-case'' function and stochastic oracle).} In the first step of the proof, we follow \citep{carmon2020lower,arjevani2022lower,huang2022lower,tyurin2023optimal} and construct a ``worst-case'' function. However, one crucial detail is that we use the function $F_{T,K,a}$ from \citep{tyurin2025proving} instead of the function from \citep{carmon2020lower}. Recall the function $F_{T}$ from \citep{carmon2020lower}, which has two important properties: i) if an algorithm wants to find an $\varepsilon$--stationary point, then it is necessary to discover\footnote{In the paper, when we say that a worker $i$ \emph{discovers} a coordinate with index $j$, it means that it adds a vector to $I_i$ in which the corresponding value of that coordinate is non-zero.} the last $T$\textsuperscript{th} coordinate; ii) an algorithm can discover the new coordinate $j + 1$ only if the $j$\textsuperscript{th} coordinate is discovered. The new construction $F_{T,K,a}$ by \citet{tyurin2025proving} generalizes the last property: an algorithm can discover the new coordinate $j + 1$ only if the $j$\textsuperscript{th}, $(j - 1)$\textsuperscript{th}, \dots, $(j - K + 1)$\textsuperscript{th} coordinates are discovered. The stochastic oracle is the same as in \citep{arjevani2022lower}: it simply zeros out the newly discovered coordinate, the one with the largest index among all discovered coordinates, with probability $1 - p_{\sigma}$, where $p_{\sigma} = \Theta\left(\nicefrac{\varepsilon}{\sigma^2}\right)$. 

\textbf{(Time complexity).} Recall Algorithm~\ref{alg:preprocess} that generates the sequences $\{\bar{w}_k\}_{k \in [n]}$ and $\{S_{k,p}\}_{k \in [n], p \in [k]}$ (see the description in Section~\ref{sec:grace}). Now, consider the lower bound \eqref{eq:BNjTPNhGYfaixu}, which is equivalent to 
\begin{align}
  \label{eq:HQRYFCvv}
  t^* \eqdef \min_{k \in [n]} \bar{t}(k).
\end{align}
where
\newcommand{\tktk}{\frac{1}{c_1 \log^{c_2} (n + 1)} \times \max\left\{\frac{d}{\bar{w}_{k}}, \min_{p \in [k]}B_{h}(\nicefrac{\sigma^2}{\varepsilon},S_{k,p})\right\} \frac{L \Delta}{\varepsilon}}
\begin{align}
  \label{eq:XoNhojP}
  \bar{t}(k) \eqdef \tktk,
\end{align}
$c_1 \eqdef 2^{43} 3^7 5^{14} \pi^2 e^{10},$ $c_2 \eqdef 14,$ $B_{h}(\nicefrac{\sigma^2}{\varepsilon},S) \eqdef \min_{m \in [\abs{S}]} \left[\left(\frac{1}{m} \sum_{i=1}^{m} \frac{1}{h_{\pi_{i}(S)}}\right)^{-1} \left(1 + \frac{\sigma^2}{\varepsilon m}\right)\right],$
and $\pi(S)$ is a permutation that sorts $\{h_i\}_{i \in S}:$ $h_{\pi_1(S)} \leq \dots \leq h_{\pi_{\abs{S}}(S)}.$ 

Our goal now is to show that \eqref{eq:HQRYFCvv} is a valid lower bound. Notice that $\bar{w}_1 \leq \dots \leq \bar{w}_n$, and $\{S_{k + 1,p}\}_{p \in [k + 1]}$ is constructed from $\{S_{k,p}\}_{p \in [k]}$ by splitting one of the sets.

\textbf{High-level intuition of why \eqref{eq:XoNhojP} is valid lower bound:} Assume that an algorithm decides to use all workers ($k = 1$). In this case, all workers are allowed to work together to obtain the best possible computation term $B_{h}(\nicefrac{\sigma^2}{\varepsilon},S_{1,1}) = B_{h}(\nicefrac{\sigma^2}{\varepsilon},[n])$. However, intuitively, the algorithm must pay the communication price $\nicefrac{\cdot}{\bar{w}_1}$ because $\bar{w}_1$ is the smallest possible value of a min-cut in the graph $G$, and there exist two workers such that the maximum flow between them is bounded by $\bar{w}_1$. \textbf{No matter what routing strategy the algorithm chooses, if it wants to transfer a vector of size $\ell$ between these two workers, it is necessarily required to wait $\nicefrac{\ell}{\bar{w}_1}$ seconds. Therefore, the only way to remove the dependence on $\nicefrac{\cdot}{\bar{w}_1}$ is for the algorithm to ``disconnect'' the workers separated by the bottleneck min-cut $\bar{w}_1$.} This way, we get the sets $S_{2,1}$ and $S_{2,2}.$ Notice that $\min\{B_{h}(\nicefrac{\sigma^2}{\varepsilon},S_{2,1}), B_{h}(\nicefrac{\sigma^2}{\varepsilon},S_{2,2})\} \leq B_{h}(\nicefrac{\sigma^2}{\varepsilon},S_{1,1}).$

Recall that $\{\bar{w}_k\}_{k \in [n]}$ and $\{\min_{p \in [k]}B_{h}(\nicefrac{\sigma^2}{\varepsilon},S_{k,p})\}_{k \in [n]}$ are non-decreasing. Thus, expect one corner case when $\bar{w}_1$ is large, it means that there exists $\bar{k}$ such that $\nicefrac{d}{\bar{w}_{\bar{k}}} \approx \min_{p \in [\bar{k}]}B_{h}(\nicefrac{\sigma^2}{\varepsilon},S_{\bar{k},p}).$ In particular, we can show that there exists $\bar{k}$ such that
\begin{align}
  \label{eq:YFHbZlt}
  t^* = \frac{1}{c_1 \log^{c_2} (n + 1)} \times \min\left\{\frac{d}{\bar{w}_{\bar{k}}}, \min_{p \in [\bar{k} + 1]}B_{h}(\nicefrac{\sigma^2}{\varepsilon},S_{\bar{k} + 1,p})\right\} \frac{L \Delta}{\varepsilon}.
\end{align} 
In other words, either $\bar{k}$ or $\bar{k}+1$ is the optimal index that balances the two sequences, and the partition $\{S_{\bar{k}+1,p}\}_{p \in [\bar{k}+1]}$ is one that an optimal algorithm would choose to identify the best subset of workers (see Figure~\ref{fig:meta_tree_partition}). Thus, it remains to use this partition of workers and show that, with this partition, the workers would require \eqref{eq:YFHbZlt} seconds with high probability.
\begin{figure}[t]
\centering
\scalebox{0.8}{
\begin{tikzpicture}[
    scale=1,
    every node/.style={font=\small},
    meta/.style={
        circle,
        draw=black,
        thick,
        minimum size=1.5cm,
        fill=blue!8
    },
    edge/.style={
        thick
    }
]

\node[meta] (s1) at (0,3.6) {$S_{\bar{k}+1,1}$};

\node[meta] (s2) at (-5,1.6) {$S_{\bar{k}+1,2}$};
\node[meta] (s3) at (0,1.6) {$S_{\bar{k}+1,3}$};
\node[meta] (s4) at (5,1.6) {$S_{\bar{k}+1,4}$};

\node[meta] (s9) at (-6,-0.2) {$S_{\bar{k}+1,9}$};
\node[meta] (s8) at (-4,-0.2) {$S_{\bar{k}+1,8}$};

\node[meta] (s7) at (-6.5,-2.0) {$S_{\bar{k}+1,7}$};

\node[meta] (s5) at (-7,-3.8) {$S_{\bar{k}+1,5}$};
\node[meta] (s6) at (-3,-3.8) {$S_{\bar{k}+1,6}$};
\node[meta] (sk) at (3,-3.8) {$S_{\bar{k}+1,\bar{k}}$};
\node[meta] (sk1) at (7,-3.8) {$S_{\bar{k}+1,\bar{k}+1}$};

\node (d1) at (-0.9,-3.8) {$\ldots$};
\node (d2) at (0.9,-3.8) {$\ldots$};

\draw[edge] (s1) -- node[midway,left,yshift=10pt] {$\leq \bar w_{\bar{k}}$} (s2);
\draw[edge] (s1) -- node[midway,right] {$\leq \bar w_{\bar{k}}$} (s3);
\draw[edge] (s1) -- node[midway,right,yshift=10pt] {$\leq \bar w_{\bar{k}}$} (s4);

\draw[edge] (s2) -- node[midway,left,yshift=10pt] {$\leq \bar w_{\bar{k}}$} (s9);
\draw[edge] (s2) -- node[midway,right,yshift=10pt] {$\leq \bar w_{\bar{k}}$} (s8);

\draw[edge] (s9) -- node[midway,left,yshift=5pt] {$\leq \bar w_{\bar{k}}$} (s7);

\draw[edge] (s7) -- node[midway,left,yshift=5pt] {$\leq \bar w_{\bar{k}}$} (s5);
\draw[edge] (s8) -- node[midway,right,yshift=10pt] {$\leq \bar w_{\bar{k}}$} (s6);

\draw[edge] (s3) -- node[midway,left,yshift=10pt] {$\leq \bar w_{\bar{k}}$} (d1);
\draw[edge] (s3) -- node[midway,right,yshift=10pt] {$\leq \bar w_{\bar{k}}$} (d2);

\draw[edge] (s4) -- node[midway,left,yshift=10pt] {$\leq \bar w_{\bar{k}}$} (sk);
\draw[edge] (s4) -- node[midway,right,yshift=10pt] {$\leq \bar w_{\bar{k}}$} (sk1);

\end{tikzpicture}
}

\caption{
Tree of metanodes corresponding to the partition $\{S_{\bar{k}+1,p}\}_{p\in[\bar{k}+1]}$. Each node represents a subset of workers, and every edge corresponds to a communication link whose bandwidth is at most $\bar w_{\bar{k}}$.
}
\label{fig:meta_tree_partition}
\end{figure}
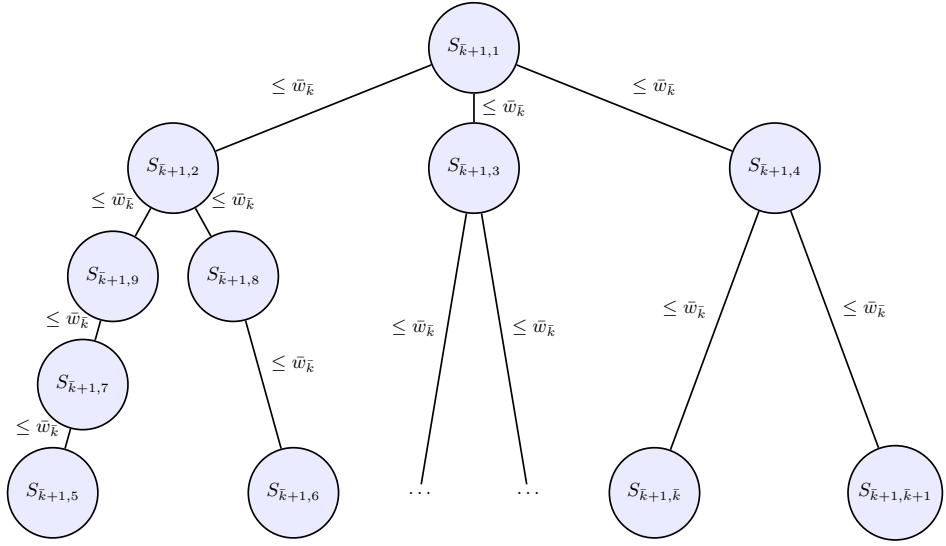

(\textbf{Analysis of the partition}.) In Figure~\ref{fig:meta_tree_partition}, consider the (meta) node $S_{\bar{k}+1,5}$. For simplicity, assume that $h_i = h$ for all $i \in [n]$.
There are $\abs{S_{\bar{k}+1,5}}$ workers in $S_{\bar{k}+1,5}$. If $S_{\bar{k}+1,5}$ were on its own and isolated, then the required time to find an $\varepsilon$--stationary point would be $$\Theta\left(B_{h}(\nicefrac{\sigma^2}{\varepsilon},S_{\bar{k}+1,p})\right) \frac{L \Delta}{\varepsilon} = h \times \left(\frac{L \Delta}{\varepsilon} + \frac{\sigma^2 L \Delta}{\abs{S_{\bar{k}+1,p}} \varepsilon^2}\right)$$ \citep{arjevani2022lower,tyurin2023optimal}. However, in the tree, $S_{\bar{k}+1,5}$ is connected to other workers through \emph{one} edge with weight $\leq \bar{w}_k.$ It means that the maximal number of coordinates per second received from $[n] \setminus S_{\bar{k}+1,5}$ is bounded by $\bar{w}_k.$

In the construction of the ``worst-case'' function, we randomly permute the coordinates, meaning that when workers $[n] \setminus S_{\bar{k}+1,5}$ send a sequence of coordinates to $S_{\bar{k}+1,5}$, the probability of sending the ``right'' coordinate is less than or equal to $\tilde{\cO}\left(\nicefrac{1}{d}\right)$. Thus, on average, they have to send $d$ coordinates to provide a ``useful'' coordinate, which would take at least $\nicefrac{d}{\bar{w}_{\bar{k}}}$ seconds. In this way, we can show that it would require at least $\approx \min\left\{\frac{d}{\bar{w}_{\bar{k}}}, B_{h}(\nicefrac{\sigma^2}{\varepsilon}, S_{\bar{k}+1,5})\right\} \frac{L \Delta}{\varepsilon}$ seconds to solve the problem by one of the workers from $S_{\bar{k}+1,5}.$

(\textbf{Recursive analysis}.) The final main challenge was to extend this idea not only to the leaves ($\bar{\mathcal{L}}_1 = \{S_{\bar{k}+1,5}, S_{\bar{k}+1,6}, S_{\bar{k}+1,\bar{k}}, S_{\bar{k}+1,\bar{k} + 1}, \dots\}$), but to all nodes in Figure~\ref{fig:meta_tree_partition}. Moreover, it should hold for all nodes at the same time. In the full proof, we discovered a recursive technique and the \emph{leaf-branch peeling} procedure that, starting from the leaves, recursively proves a similar result for all nodes. Roughly speaking, using the Chernoff's method, we show 
\begin{align*}
  \approx \min\left\{\frac{d}{\bar{w}_{\bar{k}}}, B_{h}(\nicefrac{\sigma^2}{\varepsilon}, S)\right\} \frac{L \Delta}{\varepsilon}
\end{align*}
is a lower bound for all leaves $S \in \bar{\mathcal{L}}_1,$ including $S_{\bar{k}+1,5}.$ Then, we show that this is true for all $S \in \bar{\mathcal{B}}_1,$ where $\bar{\mathcal{B}}_1$ is the set of ``line nodes'' incident to $\bar{\mathcal{L}}_1.$ In Figure~\ref{fig:meta_tree_partition}, $\bar{\mathcal{B}}_1 = \{S_{\bar{k}+1,7}, S_{\bar{k}+1,9}, S_{\bar{k}+1,8}\}.$ Then, we ``remove'' $\bar{\mathcal{L}}_1 \cup \bar{\mathcal{B}}_1$ and repeat the steps again. In the next step, the new ``leaves'' $\bar{\mathcal{L}}_2$ are $S_{\bar{k}+1,2}$ and $S_{\bar{k}+1,4}.$ Notice that $S_{\bar{k}+1,2}$ has \emph{one edge} connected to $S_{\bar{k}+1,1}$ and two edges connected to $\bar{\mathcal{L}}_1 \cup \bar{\mathcal{B}}_1.$ Thus, it has many more sources from which to discover a new coordinate. Nevertheless, recursively, we have already shown that $\bar{\mathcal{L}}_1 \cup \bar{\mathcal{B}}_1$ cannot provide much useful information to $S_{\bar{k}+1,2};$ thus, the only possible direction is the edge corresponding to $S_{\bar{k}+1,1}.$ Controlling the ``information leak'' from only one edge is possible and easier.

In the proof, due to the union bounds, Chernoff's method ``accumulates probabilities,'' and we obtain an exponential dependence on the number of steps $\bar{d}$ in the \emph{leaf-branch peeling} procedure. Luckily, we can show that $\bar{d} = \cO(\log n),$ which is sufficient to obtain an optimal lower bound up to logarithmic factors.

The proof of Theorem~\ref{thm:sgd_heter} is arguably more straightforward and self-contained, and should be clear once the proof of Theorem~\ref{thm:sgd_homog} is understood.

\subsection{Full proof}
In this section, we provide the full proof of the lower bound in the homogeneous setting. We consider Protocol~\ref{alg:simplified_time_multiple_oracle_protocol} and Assumption~\ref{ass:compressors}.
\label{sec:proof_full}
\begin{protocol}[h]
  \caption{}
  \label{alg:simplified_time_multiple_oracle_protocol}
  \begin{algorithmic}[1]
  \STATE \textbf{Input:} Algorithm $A$
  \STATE Init $I_i = \emptyset$ (all available information) on worker $i$ for all $i \in [n]$
  \STATE Run the following two loops in parallel on the workers.
  \FOR{$i = 1, \dots, n$ (in parallel on the workers)}
    \WHILE{true}
    \STATE Algorithm $A$ calculates a new point $x$ based on local information $I_i$: \hfill (takes $0$ seconds)\\
    any vector $x \in \R^d$ such that $\textnormal{supp}(x) \subseteq \cup_{y \in I_i} \textnormal{supp}(y)$ \hfill ($\textnormal{supp}(v) \eqdef \{\, i \in [d] : v_i \neq 0 \,\}$)
    \STATE Calculate one stochastic gradient $\nabla f(x;\xi),$ $\quad \xi \sim \mathcal{D}_{\xi}$ \, ($\xi$ are i.i.d.) \hfill (takes $h_i$ seconds)
    \STATE Add $\nabla f(x;\xi)$ to $I_i$ \hfill (takes $0$ seconds)
    \STATE Optionally wait before starting the next calculation
    \ENDWHILE
  \ENDFOR
  \FOR{$i = 1, \dots, n$ (in parallel on the workers)}
  \FOR{$k = 0, 1, \dots$}
    \STATE Algorithm $A$ calculates a new point $s$ based on local information $I_i$: \hfill (takes $0$ seconds) \\
    any vector $s \in \R^d$ such that $\textnormal{supp}(s) \subseteq \cup_{y \in I_i} \textnormal{supp}(y)$
    \STATE Algorithm $A$ chooses any destination worker $j$ \hfill (takes $0$ seconds)
    \STATE Compute the transformation $\cC^k_i(s;\zeta),$ $\quad \zeta \sim \mathcal{D}_{\zeta}$ \hfill (takes $0$ seconds; Assumption~\ref{ass:compressors}) 
    \STATE Asynchronously send $\cC^k_i(s;\zeta)$ to worker $j$ through $G$ using any feasible routing strategy (the protocol does not wait for the send to complete); add\textsuperscript{\color{blue} (a)} $\cC^k_i(s;\zeta)$ to $I_j$ once it arrives \\
    \hfill (the time depends on $G$, the chosen routing strategy, and the current network load)
    \STATE Optionally wait before starting the next send
  \ENDFOR
  \ENDFOR

  (a vector may be added to $I_j$ while the algorithm computes a new point; in this case, the protocol first adds the vector, incurring no delay since the operation takes $0$ seconds) \\
  {\color{blue} (a)}: The worker that receives the compressed vector decompresses using the operation $\sum_{(\nu, v) \in \cC^k_i(s;\zeta)} v \cdot e_{\nu},$ where $e_{\nu}$ is the $\nu$\textsuperscript{th} vector of the standard basis.
  \end{algorithmic}
\end{protocol}
\begin{assumption}[Predefined and Random Sparsifiers]
  \label{ass:compressors}
  Assume that the optimized function $f \,:\, \R^d \to \R$ is a randomly generated function such that $f(x) = \hat{f}(P x),$ where $\hat{f}$ is a deterministic function and $P$ is a random permutation drawn from a distribution $\mathcal{D}_P.$ For all $i \in [n]$ and $k \geq 0,$ an algorithm $A$ is allowed to use any mapping $\cC^k_i\,:\,\R^d \times \mathbb{S}_{\zeta} \to [d]^{p^k_i} \times \R^{p^k_i}$ with any $p^k_i \geq 1$ such that 
  $[\cC^k_i(s;\zeta)]_j = (\nu^k_{i,j}, c^k_{i,j} \cdot [s]_{\nu^k_{i,j}})$ for all $s \in \R^d,$ $\zeta \in \mathbb{S}_{\zeta},$ and $j \in [p^k_i],$ where $c^k_{i,j} \equiv c^k_{i,j}(\zeta) \in \R$ is an arbitrary random value, and $\nu^k_{i,j} = \nu^k_{i,j}(\zeta) \in [d]$ is an arbitrary random coordinate such that the distribution of $\{\nu^k_{i,j}\}_{k \geq 0, i \in [n], j \in [p^k_i]}$
  is independent of $P \sim \mathcal{D}_{P}.$
\end{assumption}

In other words, if we change the optimized function or permute the coordinates in $f$, then the generated indices do not change. Assumption~\ref{ass:compressors} is one way of expressing that the workers and the algorithm \emph{do not} take into account the local information and do not depend on $\{I_i\}_{i \in [n]}$ or $s$ when choosing the indices in the sparsifiers. It is general enough to support sending the full vector, sending a predefined block of the vector as is done in AllReduce algorithms, or even a random subset of coordinates, supporting Rand$K$ or Perm$K$ compressed communication. For instance, for all $K \in [d]$, Rand$K$, which sends a random subset of coordinates (scaled by $\nicefrac{d}{K}$) satisfies this assumption with $p = K$ because it samples subsets independently.

\MAINTHEOREM*
\begin{proof}
  \mbox{}\\
  \textbf{(Step 1: ``Worst-case'' function).}
  In our proof, we use a slightly modified function by \citet{tyurin2025proving}, which is based on the ``worst-case'' function by \citet{carmon2020lower}. For any $T, K \in \N,$ and $e \geq a > 1,$ \citet{tyurin2025proving} defined the function $F_{T,K,a} \,:\, \R^T \to \R$ such that
\begin{align}
  \label{eq:worst_case_copy}
  F_{T,K,a}(x) = -\sum_{i=1}^T \Psi_a(x_{i-K}) \dots \Psi_a(x_{i-2}) \Psi_a(x_{i-1})\Phi(x_i) + \sum_{i=1}^T \Gamma(x_i),
\end{align}
where $x_i$ is the $i$\textsuperscript{th} coordinate of a vector $x \in \R^T$ and
\begin{align*}
    \Psi_a(x) = \begin{cases}
        0, & x \leq 1/2, \\
        \exp\left(\log a \cdot \left(1 - \frac{1}{(2x - 1)^2}\right)\right), & x > 1/2,
    \end{cases}
    \qquad
    \Phi(x) = \sqrt{e} \int_{-\infty}^{x}e^{-\frac{1}{2}t^2}dt,
\end{align*}
and
\begin{align*}
  \Gamma(x) = \begin{cases}
    -x e^{1/x + 1},& x < 0, \\
    0, & x \geq 0.
\end{cases}
\end{align*}
We assume that $x_{0} = \dots = x_{-K + 1} \equiv 1.$ Throughout the lower bound analysis, we assume that $e \geq a > 1$ in $\Psi_a.$ We also define 
\begin{align*}
  \textnormal{supp}(v) \eqdef \{\, i \in [d] : v_i \neq 0 \,\} \qquad \forall v \in \R^d
\end{align*}
and
\begin{align*}
  &\textnormal{prog}^K(x) \eqdef \max \{i \geq 0\,|\,x_i \neq 0, x_{i - 1} \neq 0, \dots, x_{i - K + 1} \neq 0\} \\
  &(x_{0} = \dots = x_{-K + 1} \equiv 1),
\end{align*}
which extends the standard progress operator $\textnormal{prog}(x) \equiv \textnormal{prog}^1(x) \eqdef \max \{i \geq~0\,|\,x_i \neq 0\} \,\, (x_0 \equiv 1).$
This function has the following properties proved in \citep{tyurin2025proving}:
\begin{restatable}{lemma}{LEMMATHREE}
  \label{lemma:prog}
  For all $x \in \R^T,$ $\textnormal{supp}(\nabla F_{T,K,a}(x)) \subseteq \{1, \dots, \textnormal{prog}^K(x) + 1\} \cup \textnormal{supp}(x),$ where $\textnormal{supp}(v) \eqdef \{\, i \in [d] : v_i \neq 0 \,\}$ for all $d \geq 1$ and $v \in \R^d$ 
\end{restatable}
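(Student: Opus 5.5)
The plan is to compute the partial derivatives of $F_{T,K,a}$ coordinate by coordinate. Then I fix an index $j$ with $j \notin \textnormal{supp}(x)$ and $j > \textnormal{prog}^K(x) + 1$, and show that $[\nabla F_{T,K,a}(x)]_j = 0$. This is exactly the contrapositive of the claimed inclusion.

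\textbf{Step 1: the derivative.} The coordinate $x_j$ enters \eqref{eq:worst_case_copy} in three places:
\begin{itemize}
\item the $\Phi(x_j)$ factor of the $i=j$ summand;
\item the $\Psi_a(x_j)$ factors of the summands $i=j+1,\dots,\min\{j+K,T\}$;
\item the term $\Gamma(x_j)$.
\end{itemize}
Hence
\begin{align*}
[\nabla F_{T,K,a}(x)]_j = -\Phi'(x_j)\prod_{l=j-K}^{j-1}\Psi_a(x_l) - \sum_{i=j+1}^{\min\{j+K,T\}} \Psi_a'(x_j)\,\Phi(x_i)\prod_{l=i-K,\, l\neq j}^{i-1}\Psi_a(x_l) + \Gamma'(x_j).
\end{align*}

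\textbf{Step 2: the terms that vanish because $x_j = 0$.}
\begin{itemize}
\item Since $\Psi_a \equiv 0$ on $(-\infty,1/2]$ and is $C^\infty$ there, $\Psi_a'(0)=0$. This kills the whole middle sum.
\item $\Gamma$ is $C^1$ at $0$ with $\Gamma'(0)=0$. Indeed, for $x<0$ we have $\Gamma'(x) = -e^{1/x+1}(1 - 1/x)$, which tends to $0$ as $x\to 0^-$, because $e^{1/x}$ decays faster than any power of $1/|x|$. On the right side $\Gamma \equiv 0$.
\end{itemize}
This regularity of $\Psi_a$ and $\Gamma$ at the origin is the only analytic input. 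It is the step I would check most carefully, although it is routine.

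\textbf{Step 3: the first term, using the definition of $\textnormal{prog}^K$.} Since $j \ge \textnormal{prog}^K(x)+2$, the index $j-1 \ge \textnormal{prog}^K(x)+1 \ge 1$ is a genuine coordinate. It exceeds $\textnormal{prog}^K(x)$. By maximality in the definition of $\textnormal{prog}^K$, the window $x_{j-K},\dots,x_{j-1}$ cannot consist entirely of nonzero entries. The window may reach indices $\le 0$, but those are fixed to $1$, so the zero entry must be some genuine $x_l=0$ with $l \in \{j-K,\dots,j-1\}$. Then $\Psi_a(x_l)=\Psi_a(0)=0$, and the product in the first term vanishes. Since $\Phi'$ is finite, the whole first term is $0$.

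Combining Steps 2 and 3 gives $[\nabla F_{T,K,a}(x)]_j=0$, which proves the inclusion.
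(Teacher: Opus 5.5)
Your proof is correct and complete. You identify all three places where $x_j$ enters, and when $x_j=0$ the identities $\Psi_a'(0)=\Gamma'(0)=0$ kill both the middle sum and the $\Gamma$ term. The key observation is that the product window $\{j-K,\dots,j-1\}$ of the $j$-th summand is exactly the $\textnormal{prog}^K$-window at index $j-1>\textnormal{prog}^K(x)$, which forces a genuine zero coordinate in that window and hence a vanishing product.

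The paper gives no proof of this lemma and simply imports it from \citet{tyurin2025proving}. Your direct coordinatewise computation is the natural self-contained argument for it.
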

\begin{restatable}{lemma}{LEMMAFOUR}
  \label{lemma:worst_function_general}
  For all $x \in \R^T,$ if $\textnormal{prog}^K(x) < T,$ then $\norm{\nabla F_{T,K,a}(x)} > 1.$
\end{restatable}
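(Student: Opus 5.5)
The plan is to follow the classical argument of \citet{carmon2020lower} for $F_T$, adapted to the $K$-fold product structure and the extra barrier term $\Gamma$. I exhibit a single coordinate $j$ with $|\partial_j F_{T,K,a}(x)| > 1$, which gives $\norm{\nabla F_{T,K,a}(x)} > 1$.

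\textbf{Choice of the coordinate.} Let $j$ be the smallest index in $[T]$ with $x_j < 1$.

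Such a $j$ exists. If $x_i \geq 1$ for all $i \in [T]$, then all of $x_1, \dots, x_T$ are nonzero. Together with the convention $x_0 = \dots = x_{-K+1} \equiv 1$, this gives $\textnormal{prog}^K(x) = T$, contradicting the hypothesis.

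By minimality, $x_i \geq 1$ for every $i < j$, including the padded indices $i \le 0$.

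\textbf{The partial derivative.} Differentiating \eqref{eq:worst_case_copy} with respect to $x_j$ gives
\begin{align*}
\partial_j F_{T,K,a}(x) = -\Big(\prod_{l=1}^{K}\Psi_a(x_{j-l})\Big)\Phi'(x_j) - \sum_{i=j+1}^{\min\{j+K,T\}} \Psi_a'(x_j)\Big(\prod_{l \in [K],\, i-l\neq j}\Psi_a(x_{i-l})\Big)\Phi(x_i) + \Gamma'(x_j).
\end{align*}
I will check that every summand is nonpositive. This uses four facts:
\begin{itemize}
\item $\Psi_a \ge 0$ and $\Psi_a' \ge 0$, since $\Psi_a$ is nondecreasing;
\item $\Phi > 0$ and $\Phi' > 0$;
\item $\Gamma' \le 0$;
\item in particular $\Gamma'(x) = e^{1/x+1}\big(\tfrac{1}{x}-1\big) < 0$ for $x<0$, and $\Gamma' = 0$ on $[0,\infty)$.
\end{itemize}
Hence $|\partial_j F_{T,K,a}(x)|$ is at least the absolute value of any single summand. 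I then split into two cases according to the value of $x_j$.

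\textbf{Case $x_j \in (-1,1)$.} All $x_{j-l}$ with $l\in[K]$ satisfy $x_{j-l}\geq 1$. Since $\Psi_a$ is nondecreasing and $\Psi_a(1) = \exp(\log a \cdot 0) = 1$, the product is at least $1$. Moreover $\Phi'(x_j) = \sqrt{e}\, e^{-x_j^2/2} > \sqrt{e}\,e^{-1/2} = 1$, because $|x_j|<1$. Therefore the first summand alone gives $|\partial_j F_{T,K,a}(x)| > 1$.

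\textbf{Case $x_j \le -1$.} I use only the barrier term. Substituting $u = -1/x_j \in (0,1]$ gives $|\Gamma'(x_j)| = e^{1-u}(1+u)$. Its derivative in $u$ is $-u e^{1-u} < 0$, so the minimum on $(0,1]$ is attained at $u=1$ and equals $2$. Hence $|\partial_j F_{T,K,a}(x)| \geq 2 > 1$.

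The only delicate points are (i) the sign bookkeeping, which lets the nonpositive contributions add rather than cancel, and (ii) the boundary handling when $j \le K$ or $j+K > T$. For (ii), the padding convention $x_{\le 0} \equiv 1$ makes the first product still at least $1$, and truncating the middle sum at $T$ only removes nonpositive terms.

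I expect the main (mild) obstacle to be verifying monotonicity of $\Psi_a$ and the sign of $\Gamma'$ cleanly. This is a one-line derivative computation in each case, using $a > 1$ so that $\log a > 0$.
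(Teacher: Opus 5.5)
Your proof is correct. Note that the paper does not prove this lemma at all: it imports it, with the other properties of $F_{T,K,a}$, from \citet{tyurin2025proving}. So your argument replaces a citation rather than reconstructing an in-paper proof.

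Your route is the natural adaptation of the classical argument of \citet{carmon2020lower}:
\begin{itemize}
\item take the first index $j$ with $x_j<1$;
\item observe that every summand of $\partial_j F_{T,K,a}(x)$ is nonpositive, so nothing can cancel;
\item if $x_j\in(-1,1)$, use the $\Phi'$ term: minimality and the padding give $\prod_{l}\Psi_a(x_{j-l})\ge \Psi_a(1)^K=1$, and $\Phi'(x_j)>1$;
\item if $x_j\le -1$, use the barrier term, which gives $|\Gamma'(x_j)|\ge 2$.
\end{itemize}

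The only structural departure from Carmon et al.\ is the negative case. There, the symmetric terms $\Psi(-x_{i-1})\Phi(-x_i)$ control negative coordinates. Here that job falls to $\Gamma$, and your computation of $|\Gamma'|=e^{1-u}(1+u)$ with $u=-1/x_j\in(0,1]$ correctly shows that it suffices.

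What the citation buys is brevity. What your version buys is transparency:
\begin{itemize}
\item the conclusion already holds whenever some coordinate is below $1$; the hypothesis $\textnormal{prog}^K(x)<T$ is used only to produce such a coordinate;
\item a single partial derivative already exceeds $1$ in absolute value;
\item only $a>1$ is needed, and the upper restriction $a\le e$ plays no role in this lemma.
\end{itemize}
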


\begin{restatable}{lemma}{LEMMAFIVE}
  \label{lemma:delta}
  Function $F_{T,K,a}$ satisfies
  \begin{align*}
    F_{T,K,a}(0) - \inf_{x \in \R^T} F_{T,K,a}(x) \leq \Delta^0(K, a) \cdot T,
  \end{align*}
  where $\Delta^0(K, a) \eqdef \sqrt{2 \pi e} \cdot a^K.$
\end{restatable}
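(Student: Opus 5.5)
The plan is to bound $F_{T,K,a}(0)$ from above by $0$ and $\inf_{x} F_{T,K,a}(x)$ from below by $-\sqrt{2\pi e}\, a^K T$. Subtracting the two bounds gives the claim directly. Everything follows from elementary pointwise bounds on $\Psi_a$, $\Phi$, and $\Gamma$; no result beyond the definition \eqref{eq:worst_case_copy} is needed.

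\textbf{Step 1: pointwise bounds on the building blocks.}
\begin{itemize}
\item For $\Psi_a$: we have $\Psi_a(x) \geq 0$ everywhere. For $x > 1/2$ the exponent satisfies $\log a \cdot (1 - (2x-1)^{-2}) < \log a$ because $a > 1$, so $0 \leq \Psi_a(x) \leq a$ for all $x$. At the boundary value, $\Psi_a(1) = \exp(\log a \cdot 0) = 1 \leq a$.
\item For $\Phi$: it is positive and increasing, with $0 < \Phi(x) \leq \sqrt{e}\int_{-\infty}^{\infty} e^{-t^2/2}\,dt = \sqrt{2\pi e}$.
\item For $\Gamma$: we have $\Gamma(x) \geq 0$, since $-x e^{1/x+1} > 0$ for $x<0$ and $\Gamma = 0$ otherwise.
\end{itemize}

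\textbf{Step 2: upper bound on $F_{T,K,a}(0)$.} Take $x = 0$, keeping the convention $x_0 = \dots = x_{-K+1} = 1$. All $\Gamma$ terms vanish. For $i \geq 2$, the factor $\Psi_a(x_{i-1}) = \Psi_a(0) = 0$ kills the $i$-th product term. For $i = 1$, all $\Psi_a$ factors are evaluated at the boundary value $1$ and equal $1$. Hence $F_{T,K,a}(0) = -\Phi(0) = -\sqrt{2\pi e}/2 \leq 0$.

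\textbf{Step 3: lower bound on the infimum.} For arbitrary $x \in \R^T$, drop the nonnegative $\Gamma$ sum. Each of the $T$ product terms is a product of $K$ factors $\Psi_a(\cdot)$, each in $[0,a]$ (boundary factors equal $1 \leq a$), times $\Phi(x_i) \in (0, \sqrt{2\pi e}]$. So each term lies in $[0, a^K\sqrt{2\pi e}]$, and therefore $F_{T,K,a}(x) \geq -T a^K \sqrt{2\pi e}$. Taking the infimum and combining with Step 2 yields
\[
F_{T,K,a}(0) - \inf_x F_{T,K,a}(x) \leq 0 + \sqrt{2\pi e}\, a^K T = \Delta^0(K,a)\, T.
\]

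There is no real obstacle here. The only point requiring care is the bookkeeping of the boundary coordinates $x_{0},\dots,x_{-K+1}$. This is exactly why the bound $\Psi_a(1) = 1 \leq a$, which uses $a > 1$, is needed: it lets every product be bounded uniformly by $a^K$ regardless of how many boundary factors it contains.
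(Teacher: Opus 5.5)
Your proof is correct. Since $\Psi_a(0)=0$ kills every product term with $i\ge 2$, you get $F_{T,K,a}(0)=-\Phi(0)=-\sqrt{2\pi e}/2\le 0$, and dropping the nonnegative $\Gamma$-sum and bounding each of the $T$ products by $a^K\sqrt{2\pi e}$ (using $0\le\Psi_a\le a$ and $0<\Phi<\sqrt{2\pi e}$) gives the required lower bound on the infimum. The paper does not reprove this lemma but imports it from \citet{tyurin2025proving}; your argument is the standard one behind such bounds (as in the bound $F_T(0)-\inf F_T\le 12T$ for the construction of \citet{carmon2020lower}), with $\sup\Psi_a\le a$ in place of $\sup\Psi\le e$.
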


\begin{restatable}{lemma}{LEMMASIX}
  \label{lemma:bound_grad}
  For all $x \in \R^T,$ $\norm{\nabla F_{T,K,a}(x)}_{\infty} \leq \gamma_{\infty}(K,a),$ where $\gamma_{\infty}(K,a) \eqdef 6 \sqrt{2 \pi} e^{3/2} \cdot \frac{K a^K}{\sqrt{\log a}}.$
\end{restatable}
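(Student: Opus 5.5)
The approach is a direct coordinate-wise computation: bound every partial derivative $\partial_j F_{T,K,a}(x)$ uniformly in $x$ and $j$, using elementary sup-bounds on $\Psi_a$, $\Psi_a'$, $\Phi$, $\Phi'$, and $\Gamma'$. Fix $j\in[T]$. The variable $x_j$ enters \eqref{eq:worst_case_copy} in three places:
\begin{itemize}
\item the summand $i=j$, through $\Phi(x_j)$;
\item the summands $i=j+1,\dots,\min\{j+K,T\}$, through exactly one factor $\Psi_a(x_j)$ each;
\item the regularizer $\Gamma(x_j)$.
\end{itemize}
Hence
\begin{align*}
\partial_j F_{T,K,a}(x) = -\Big(\prod_{l=1}^{K}\Psi_a(x_{j-l})\Big)\Phi'(x_j) - \sum_{i=j+1}^{\min\{j+K,T\}} \Psi_a'(x_j)\Big(\prod_{\substack{l=1\\ i-l\neq j}}^{K}\Psi_a(x_{i-l})\Big)\Phi(x_i) + \Gamma'(x_j),
\end{align*}
with the convention $x_0=\dots=x_{-K+1}\equiv 1$. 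This convention only means $\Psi_a(1)=a$ may appear as a factor, which is covered by the bounds below.

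Next I collect the scalar bounds.
\begin{itemize}
\item Since $1-(2x-1)^{-2}\le 1$, we have $0\le \Psi_a\le a$.
\item From the definition of $\Phi$, $0\le\Phi'(x)=\sqrt e\,e^{-x^2/2}\le\sqrt e$ and $0\le \Phi\le \sqrt{2\pi e}$.
\item For $x<0$, put $u=-1/x>0$. Then $\Gamma'(x)=-e^{1-u}(1+u)$, so $|\Gamma'|\le e$, because $(1+u)e^{-u}\le1$; and $\Gamma'=0$ for $x\ge0$.
\item The derivative of $\Psi_a$ is the only delicate estimate, and it is where the factor $1/\sqrt{\log a}$ comes from. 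For $x>1/2$ write $y=(2x-1)^{-1}>0$. Then
$\Psi_a'(x)=4\log a\cdot y^3\cdot a\,e^{-y^2\log a}$.
Maximizing $y^3e^{-cy^2}$ with $c=\log a$ gives the optimum at $y^2=3/(2c)$, so
$0\le \Psi_a'\le 4(3/2)^{3/2}e^{-3/2}\,a/\sqrt{\log a}$.
Also $\Psi_a'\equiv 0$ for $x\le 1/2$, and $\Psi_a$ is smooth at $1/2$.
\end{itemize}

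Plugging in, the first term is at most $a^K\sqrt e$. Each of the at most $K$ middle summands is at most $a^{K-1}\cdot 4(3/2)^{3/2}e^{-3/2}\frac{a}{\sqrt{\log a}}\cdot\sqrt{2\pi e}$. The last term is at most $e$. Since $1<a\le e$, we have $\log a\le 1$, so $1/\sqrt{\log a}\ge1$; also $K\ge1$ and $a^K\ge1$. Therefore the first and last terms can be absorbed into $O\big(K a^K/\sqrt{\log a}\big)$.

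The final step is a routine numerical check that the total constant, roughly $4(3/2)^{3/2}e^{-3/2}\sqrt{2\pi e}+\sqrt e+e\approx 11.3$, is below $6\sqrt{2\pi}e^{3/2}\approx 67$. This gives $\norm{\nabla F_{T,K,a}(x)}_\infty\le\gamma_\infty(K,a)$. The main point needing care is the maximization for $\Psi_a'$; all the other steps are bookkeeping of which factors depend on $x_j$.
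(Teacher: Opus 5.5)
Your proof is correct. The paper gives no proof of Lemma~\ref{lemma:bound_grad}. It imports this lemma, together with Lemmas~\ref{lemma:prog}--\ref{lemma:smooth}, from \citet{tyurin2025proving}. Your coordinate-wise computation is therefore a self-contained replacement for that citation rather than a competing argument, namely the standard direct verification in the spirit of \citet{carmon2020lower}.

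The individual steps all check out:
\begin{itemize}
\item You correctly identify the three places where $x_j$ enters: the $i=j$ summand, at most $K$ later summands each containing exactly one factor $\Psi_a(x_j)$, and $\Gamma(x_j)$.
\item The sup bounds are right: $0\le\Psi_a\le a$, $0\le\Phi\le\sqrt{2\pi e}$, $0\le\Phi'\le\sqrt e$, and $|\Gamma'|\le e$ via $(1+u)e^{-u}\le 1$.
\item The maximization of $y^3e^{-y^2\log a}$ gives $\Psi_a'\le 4(3/2)^{3/2}e^{-3/2}\,a/\sqrt{\log a}$, as you state.
\item The absorption step correctly uses the paper's standing assumption $1<a\le e$, so that $1/\sqrt{\log a}\ge 1$.
\end{itemize}

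Compared with citing, your version buys transparency and a sharper constant. The sum is about $6.78+1.65+2.72\approx 11.1$, against $6\sqrt{2\pi}e^{3/2}\approx 67.4$. This sharper constant does not matter downstream, since the lower-bound proof only uses the order $Ka^K/\sqrt{\log a}$ through $p_\sigma$.

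One harmless slip: $\Psi_a(1)=\exp(\log a\cdot 0)=1$, not $a$. The padded coordinates $x_0=\dots=x_{-K+1}=1$ therefore contribute factors equal to $1$, which are of course still covered by $\Psi_a\le a$.
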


\begin{restatable}{lemma}{LEMMASEVEN}
  \label{lemma:smooth}
  The function $F_{T,K,a}$ is $\ell_1(K, a)$--smooth, i.e., $\norm{\nabla^2 F_{T,K,a}(x)} \leq \ell_1(K, a)$
  for all $x \in \R^T,$ where $\ell_1(K, a) \eqdef 12 \sqrt{2 \pi} e^{5/2} \cdot \frac{K^2 a^{K}}{\log a}.$
\end{restatable}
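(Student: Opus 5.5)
The plan is to bound the operator norm of the Hessian by its maximum absolute row sum. Because $\nabla^2 F_{T,K,a}(x)$ is symmetric, $\norm{\nabla^2 F_{T,K,a}(x)} \le \max_j \sum_k \abs{[\nabla^2 F_{T,K,a}(x)]_{jk}}$. This follows, for instance, by interpolating between the $\ell_1$ and $\ell_\infty$ operator norms, which coincide for a symmetric matrix.

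\textbf{Structure of the Hessian.} The $i$-th summand $-\Psi_a(x_{i-K})\cdots\Psi_a(x_{i-1})\Phi(x_i)$ depends only on the $K+1$ consecutive coordinates $x_{i-K},\dots,x_i$. The term $\Gamma(x_i)$ depends only on $x_i$. Hence the Hessian is banded. First, $[\nabla^2 F_{T,K,a}]_{jk} = 0$ whenever $\abs{j-k} > K$, so each row has at most $2K+1$ nonzero entries. Second, each entry $(j,k)$ receives contributions from at most $K+1$ summands, namely those indices $i$ whose window contains both $j$ and $k$. It therefore suffices to show that every second partial derivative of a single product term, and $\Gamma''$, is bounded by $C\, a^K/\log a$ for a universal constant $C$. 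Multiplying by the count $(2K+1)(K+1) \le 3K\cdot 2K$ (for $K\ge1$) then gives a bound of order $K^2 a^K/\log a$. Keeping track of the constants should recover $12\sqrt{2\pi}e^{5/2}$.

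\textbf{One-dimensional bounds.} I would record the following elementary estimates, mostly as in \citet{carmon2020lower} and \citet{tyurin2025proving}.
\begin{itemize}
\item $0\le\Phi\le\sqrt{2\pi e}$, $0\le\Phi'\le\sqrt e$, and $\abs{\Phi''}\le 1$, using $\abs{t}e^{-t^2/2}\le e^{-1/2}$.
\item $0\le\Psi_a\le a$.
\item For $\Psi_a'$ and $\Psi_a''$, write $y = (2x-1)^{-1}$, so that $\Psi_a = a\exp(-\log a\cdot y^2)$. Then $\Psi_a' = 4\log a\cdot y^3\,\Psi_a$ and $\Psi_a''$ is a combination of $(\log a)^2 y^6\Psi_a$ and $\log a\, y^4\Psi_a$.
\item Maximize $y^{m}e^{-\log a\, y^2}$ over $y>0$; this gives $(m/(2e\log a))^{m/2}$. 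As a result, $\abs{\Psi_a'} \lesssim a/\sqrt{\log a}$ and $\abs{\Psi_a''}\lesssim a/\log a$. Here I use $\log a\le 1$, which follows from $a\le e$, so all powers of $\log a$ can be bounded by the worst one, $1/\log a$.
\item $\Gamma''$ is bounded by a universal constant, which is also at most $C a^K/\log a$ since $a > 1$ and $\log a \le 1$.
\end{itemize}

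\textbf{Assembling the bound.} A second partial derivative of a product $\Psi_a(x_{i-K})\cdots\Psi_a(x_{i-1})\Phi(x_i)$ differentiates at most two factors. The remaining at most $K$ factors each contribute at most $a$ or $\sqrt{2\pi e}$. Here one must be careful to get $a^K$ rather than $a^{K+1}$: the factor losing its $\Psi_a\le a$ bound is replaced by a derivative bound, which also carries a factor $a$. So the total power of $a$ stays at $K$, while $\Phi$ contributes the $\sqrt{2\pi e}$ factor. Each such derivative is thus bounded by $C\,a^K/\log a$.

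The main obstacle is purely bookkeeping: tracking the exact constant, and making sure the powers of $a$ and $\log a$ do not accumulate when two $\Psi_a$ factors are both differentiated. In that case the bound is $(a/\sqrt{\log a})^2 a^{K-2}\sqrt{2\pi e}$, which is still of order $a^K/\log a$.
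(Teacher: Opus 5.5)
Your argument is correct. There is no in-paper proof to compare it with: the paper lists this lemma among the properties of $F_{T,K,a}$ ``proved in'' \citet{tyurin2025proving} and does not reprove it.

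What you give is a self-contained verification along the standard banded-Hessian route, as for the $K=1$ chain of \citet{carmon2020lower}. The ingredients all check out:
\begin{itemize}
\item $\Psi_a'=4\log a\,y^3\Psi_a$ and $\Psi_a''=(16(\log a)^2y^6-24\log a\,y^4)\Psi_a$, hence $\abs{\Psi_a'}\le\sqrt{54/e^3}\,a/\sqrt{\log a}$ and $\abs{\Psi_a''}=O(a/\log a)$.
\item Each second partial of a product term keeps the power of $a$ at $K$.
\item $\abs{\Gamma''(x)}=\abs{x}^{-3}e^{1+1/x}\le 27/e^2$ for $x<0$.
\item $\log a\le 1$ lets every term be dominated by $K^2a^K/\log a$.
\end{itemize}

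The one thing you leave unverified is the constant, and it does not come for free with the count you describe. Using $(2K+1)(K+1)\le 6K^2$ times a uniform entry bound $C\,a^K/\log a$, you need $C$ below about $60$ to stay under $12\sqrt{2\pi}e^{5/2}\approx 366$. The triangle-inequality bound $\abs{\Psi_a''}\le\bigl(16(3/e)^3+24(2/e)^2\bigr)a/\log a\approx 34.5\,a/\log a$ only bounds the $\Psi_a''\Phi$ entries by about $142.6\,a^K/\log a$, so the crude count overshoots.

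There are two ways to fix this.
\begin{itemize}
\item Count by type in row $j$: one $\Phi''$ entry, at most $2K$ entries of type $\Psi_a'\Phi'$, at most $K$ of type $\Psi_a''\Phi$, and at most $K(K-1)$ of type $\Psi_a'\Psi_a'\Phi$, plus $\Gamma''$. This gives roughly $164\,K^2a^K/\log a$.
\item Bound $\Psi_a''$ sharply. With $z=\log a\,y^2$ one has $\abs{\Psi_a''}=\frac{8a}{\log a}z^2\abs{2z-3}e^{-z}\le 11.92\,a/\log a$. After that, even the crude count gives roughly $299\,K^2a^K/\log a$.
\end{itemize}
Either way the stated $\ell_1(K,a)$ follows.
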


Using this construction, we define a scaled version with random coordinates. We first sample a uniformly random subset $R_T = [R_{T,1}, \dots, R_{T,T}]$ without repetitions from set $[d]$ ($R_{T,1} < \dots < R_{T,T}$). Let us take any $\lambda > 0,$ $d, T \in \N,$ $d \geq T,$ and take the function $f\,:\,\R^d \to \R$ such that
  \begin{align}
    \label{eq:SypbeSSj}
    f(x) \eqdef \frac{L \lambda^2}{\ell_1(K, a)} F_{T,K,a}\left(\frac{x_{[R_T]}}{\lambda}\right),
  \end{align}
  where $\ell_1(K, a)$ is defined in Lemma~\ref{lemma:smooth} and 
  $x_{[R_T]} \in \R^T$ is the subvector of size $T$ of vector $x \in \R^d$ such that $[x_{[R_T]}]_{i} = [x]_{R_{T,i}}$ for all $i \in [T].$ Notice that the $d - T$ coordinates are artificial. We have to ensure that $f$ is $L$-smooth and $f(0) - \inf_{x \in \R^d} f(x) \leq \Delta,$ Using Lemma~\ref{lemma:smooth},
  \begin{align*}
      \norm{\nabla f(x) - \nabla f(y)} 
      &= \frac{L \lambda}{\ell_1(K, a)} \norm{\nabla F_{T,K,a}\left(\frac{x_{[R_T]}}{\lambda}\right) - \nabla F_{T,K,a}\left(\frac{y_{[R_T]}}{\lambda}\right)} \leq L \lambda \norm{\frac{x_{[R_T]}}{\lambda} - \frac{y_{[R_T]}}{\lambda}} \\
      &= L \norm{x_{[R_T]} - y_{[R_T]}} \leq L \norm{x - y} \quad \forall x, y \in \R^d.
  \end{align*}
  We choose $$T = \left\lfloor\frac{\Delta \cdot \ell_1(K, a)}{L \lambda^2 \cdot \Delta^0(K, a)}\right\rfloor.$$ Due to Lemma~\ref{lemma:delta},
  \begin{align*}
      f(0) - \inf_{x \in \R^d} f(x) = \frac{L \lambda^2}{\ell_1(K, a)} (F_{T,K,a}\left(0\right) - \inf_{x \in \R^T} F_{T,K,a}(x)) \leq \frac{L \lambda^2 \Delta^0(K, a) T}{\ell_1(K, a)} \leq \Delta,
  \end{align*}
  where $\Delta^0(K, a)$ is defined in Lemma~\ref{lemma:delta}.
  We also take
  \begin{align}
  \label{eq:lambda}
  \lambda = \frac{\sqrt{2 \varepsilon} \ell_1(K, a)}{L}
  \end{align}
  to get
  \begin{align}
    \label{eq:ElKnuvDAUgPNjjaLAn}
    \norm{\nabla f(x)}^2 = \frac{L^2 \lambda^2}{\ell_1^2(K, a)}\norm{\nabla F_{T,K,a} \left(\frac{x_{[R_T]}}{\lambda}\right)}^2 = 2 \varepsilon \norm{\nabla F_{T,K,a} \left(\frac{x_{[R_T]}}{\lambda}\right)}^2 > 2 \varepsilon \cdot \mathbbm{1}\left[\textnormal{prog}^K(x_{[R_T]}) < T\right],
  \end{align} 
  where the last inequality due to Lemma~\ref{lemma:worst_function_general}. Note that
  \begin{align}
    \label{eq:WzHRTJlPowGgA}
    T = \left\lfloor\frac{L \Delta}{2 \Delta^0(K, a) \cdot \ell_1(K, a) \cdot \varepsilon}\right\rfloor.
  \end{align}
\textbf{(Step 2: Stochastic Oracle).}
We consider a stochastic oracle similar to \citep{arjevani2022lower}.
We define
  \begin{align}
    \label{eq:stoch_constr}
    [\nabla f(x; \xi)]_j \eqdef 
    \begin{cases}
      [\nabla f(x)]_j \times \left(\frac{\xi}{p_{\sigma}}\right), & \textnormal{for } j = R_{T,i} \textnormal{ if $i \in [T],$} \textnormal{ where } i = \textnormal{prog}^K\left(x_{[R_T]}\right) + 1, \,\\
      [\nabla f(x)]_j, & \textnormal{for all other $j \in [d]$},
    \end{cases}
  \end{align} for all $\forall x \in \R^d$ and take $\mathcal{D}_{\xi} = \textnormal{Bernouilli}(p_{\sigma}),$ where $p_{\sigma} \in (0, 1].$ 
  For all $x \in \R^d,$ $[x]_j$ is the $j$\textsuperscript{th} coordinate of $x.$ Similarly to \citep{arjevani2022lower}, we now show this oracle is unbiased and $\sigma^2$-variance-bounded. Clearly, for $j \neq R_{T,i},$ $\ExpSub{\xi}{[\nabla f(x, \xi)]_j}  = [\nabla f(x)]_j.$ Otherwise, 
  $$\ExpSub{\xi}{[\nabla f(x, \xi)]_j} = [\nabla f(x)]_j \frac{\Exp{\xi}}{p_{\sigma}} = [\nabla f(x)]_j,$$ and
  \begin{align*}
      \ExpSub{\xi}{\norm{\nabla f(x; \xi) - \nabla f(x)}^2} \leq \max_{j \in [d]} \left|[\nabla f(x)]_j\right|^2\Exp{\left(\frac{\xi}{p_{\sigma}} - 1\right)^2}
  \end{align*}
  because the difference is non-zero only in one coordinate. Thus
  \begin{align*}
      \ExpSub{\xi}{\norm{\nabla f(x, \xi) - \nabla f(x)}^2} &\leq \frac{\norm{\nabla f(x)}_{\infty}^2(1 - p_{\sigma})}{p_{\sigma}} = \frac{L^2 \lambda^2 \norm{\nabla F_{T,K,a}\left(\frac{x_{[R_T]}}{\lambda}\right)}_{\infty}^2(1 - p_{\sigma})}{\ell^2_1(K, a) p_{\sigma}} \\
      &\leq \frac{L^2 \lambda^2 \gamma_{\infty}^2(K, a) (1 - p_{\sigma})}{\ell^2_1(K, a) p_{\sigma}},
  \end{align*}
  where we apply Lemma~\ref{lemma:bound_grad}. Taking
  \begin{align}
  p_{\sigma} = \min\left\{\frac{L^2 \lambda^2 \gamma_{\infty}^2(K, a)}{\sigma^2 \ell_1^2(K, a)}, 1\right\} \overset{\eqref{eq:lambda}}{=} \min\left\{\frac{2 \varepsilon \gamma_{\infty}^2(K, a)}{\sigma^2}, 1\right\},
  \label{eq:JvLFTLPbL}
  \end{align}
  we get $\ExpSub{\xi}{\norm{\nabla f(x, \xi) - \nabla f(x)}^2} \leq \sigma^2.$

\textbf{(Step 3: Graph Analysis).}
  We now consider the defined graph $G = (V, E, b).$ The first step is to construct an undirected version $\bar{G}$ of the graph $G$, where we keep the same set of nodes and retain only one edge from $\{(i,j),(j,i)\}$ for each $(i,j)\in E$ with the same weight $b_{ij}$, making this edge unordered. Thus, $\bar{G}=(V,\bar{E},b)$, where $\{i,j\}\in\bar{E}$ with weight $b_{ij}>0$ if and only if $(i,j)\in E$ with weight $b_{ij}>0$. 
  
  The second substep is to construct a Gomory--Hu tree $T = (V, F, w)$ of $\bar{G}$, which in general is not unique but always exists. Notice that, by definition, we have $w_{ij} = \alpha_{\bar{G}}(i, j)$ for all $\{i, j\} \in F$. Recall the important Theorem~\ref{thm:gomory_hu}.
  By the max-flow min-cut theorem, the \emph{maximum $s$--$t$ flow} in $\bar{G}$ is upper bounded by $w_{uv}$. The \emph{maximum $s$--$t$ flow} in $G$ is equal to that in $\bar{G}$ and is therefore also upper bounded by $w_{uv}$, since $G$ is obtained from $\bar{G}$ by replacing each undirected edge $\{i,j\}$ of capacity $b_{ij}$ with two directed edges $(i,j)$ and $(j,i)$, each of capacity $b_{ij} = b_{ji}$. In terms of our problem, the maximum number of coordinates per second that nodes $s$ and $t$ can send to each other is bounded by $w_{uv}$. 

  Next, we take the values $\{w_{ij}\}_{ij \in F} \cup \{\infty\}$, sort them in ascending order, and define this list as $\bar{w} \eqdef (\bar{w}_1, \dots, \bar{w}_n).$ The idea is to take all bandwidth thresholds in $T$; we also include $\infty$ to capture the case when the nodes do not communicate.

  Consider the Gomory--Hu tree $T$ and Algorithm~\ref{alg:preprocess}. At the beginning, we define the triple $(T_1, (S_{1,1}), \bar{w}_1),$ where $T_1 = T$ and $S_{1,1} = [n]$ is the only connected component of $T_1$ (since $G$ is a connected graph; see Section~\ref{sec:intro}).
  Now, inductively, given a triple $(T_k, (S_{k,1}, \dots, S_{k,k}), \bar{w}_k),$ we remove an edge with value $\bar{w}_k$ in $T_k$ and define the corresponding graph as $T_{k+1}.$ Then, we define the triple $(T_{k+1}, (S_{k+1,1}, \dots, S_{k + 1,k+1}), \bar{w}_{k+1}),$ where $(S_{k+1,1}, \dots, S_{k + 1,k+1})$ are the connected components of $T_{k+1}.$ We repeat this procedure until $k = \abs{F} + 1 = n,$ when the corresponding triple is $(T_k, (S_{k,1}, \dots, S_{k,k}), \bar{w}_k)$ with $\bar{w}_k = \infty$ and $(S_{k,1}, \dots, S_{k,k}) = (\{1\}, \dots, \{n\}),$ which is a collection of singletons.

  The idea of this procedure is to take the initial Gomory-Hu tree, and one by one remove the edges, from the smallest to the largest value. This way, we can construct a sequence of connected components/partitions $(S_{k,1}, \dots, S_{k,k}),$ where the number of partitions increases by one since we remove only one edge.

  \textbf{(Step 4: Proposed lower bound).}
  Once the necessary constructions are defined, we now consider
  \begin{align}
    \label{eq:lfLRoptNasIqCWquDC}
    t^* \eqdef \min_{k \in [n]} \bar{t}(k).
  \end{align}
  where
  \begin{align*}
    \bar{t}(k) \eqdef \tktk,
  \end{align*}
  $c_1 \eqdef 2^{43} 3^7 5^{14} \pi^2 e^{10},$ $c_2 \eqdef 14,$
  \begin{align}
  \label{eq:moquqcDcUoBnatBybKQ}
  B_{h}(\nicefrac{\sigma^2}{\varepsilon},S) \eqdef \min_{m \in [\abs{S}]} \left[\left(\frac{1}{m} \sum_{i=1}^{m} \frac{1}{h_{\pi_{i}(S)}}\right)^{-1} \left(1 + \frac{\sigma^2}{\varepsilon m}\right)\right],
  \end{align}
  and $\pi(S)$ is a permutation that sorts $\{h_i\}_{i \in S}:$ $h_{\pi_1(S)} \leq \dots \leq h_{\pi_{\abs{S}}(S)}.$ We will show that \eqref{eq:lfLRoptNasIqCWquDC} is a valid lower bound.
  
  \textbf{(Corner case).} 
  If $\min_{p \in [k]}B_{h}(\nicefrac{\sigma^2}{\varepsilon},S_{k,p}) \geq \frac{d}{\bar{w}_{k}}$ for all $k \in [n],$ then $t^* = \bar{t}(k)$ with $k = 1$ because $B_{h}(\nicefrac{\sigma^2}{\varepsilon},S_1) \geq B_{h}(\nicefrac{\sigma^2}{\varepsilon},S_2)$ if $S_1 \subseteq S_2.$ In this case, 
  \begin{align}
    t^* 
    &= \frac{1}{c_1 \log^{c_2} (n + 1)} \times \min_{p \in [1]}B_{h}(\nicefrac{\sigma^2}{\varepsilon},S_{1,p}) \frac{L \Delta}{\varepsilon} = \frac{1}{c_1 \log^{c_2} (n + 1)} \times B_{h}(\nicefrac{\sigma^2}{\varepsilon},S_{1,1}) \frac{L \Delta}{\varepsilon} \nonumber \\
    &= \frac{1}{c_1 \log^{c_2} (n + 1)} \times \min_{m \in [n]} \left[\left(\frac{1}{m} \sum_{i=1}^{m} \frac{1}{h_{\pi_{i}}}\right)^{-1} \left(1 + \frac{\sigma^2}{\varepsilon m}\right)\right] \frac{L \Delta}{\varepsilon}, \label{eq:geXYzIhwJNB}
  \end{align}
  since $S_{1,1} = [n],$ where $\pi$ is a permutation that sorts $\{h_i\}_{i \in [n]}.$ In this corner case, \eqref{eq:geXYzIhwJNB} is a lower bound due to \citep{tyurin2023optimal}, where the authors consider the setting without communication times. If one starts taking communication times into account, the time complexity of algorithms can only increase.

  \textbf{(General cases).} Starting from this point, we only consider general cases when there exists the largest index $\bar{k}$ such that $\min_{p \in [\bar{k}]}B_{h}(\nicefrac{\sigma^2}{\varepsilon},S_{\bar{k},p}) < \frac{d}{\bar{w}_{\bar{k}}}.$ Notice that $\bar{k} < \abs{F} + 1,$ since $\bar{w}_{\abs{F} + 1} = \infty,$ and $\min_{p \in [\bar{k} + 1]}B_{h}(\nicefrac{\sigma^2}{\varepsilon},S_{\bar{k} + 1,p}) \geq \frac{d}{\bar{w}_{\bar{k} + 1}}.$ Moreover, the sequence $\left\{\frac{d}{\bar{w}_{\bar{k}}}\right\}_k$ is non-increasing and $\left\{\min_{p \in [k]}B_{h}(\nicefrac{\sigma^2}{\varepsilon},S_{k,p})\right\}_k$ is non-decreasing, where the latter follows from the definition of the partitions $\{(S_{k,1}, \dots, S_{k,k})\}_{k}$ and the fact that $B_{h}(\nicefrac{\sigma^2}{\varepsilon},S_1) \geq B_{h}(\nicefrac{\sigma^2}{\varepsilon},S_2)$ if $S_1 \subseteq S_2.$ In total, we can conclude that
  
  \begin{align}
    \label{eq:KFdeMzpCTPTvM}
    t^* = \min\left\{\bar{t}(\bar{k}), \bar{t}(\bar{k} + 1)\right\} = \frac{1}{c_1 \log^{c_2} (n + 1)} \times \min\left\{\frac{d}{\bar{w}_{\bar{k}}}, \min_{p \in [\bar{k} + 1]}B_{h}(\nicefrac{\sigma^2}{\varepsilon},S_{\bar{k} + 1,p})\right\} \frac{L \Delta}{\varepsilon}.
  \end{align}

  \textbf{(Step 5: Concentration analysis).}
  Recall the construction \eqref{eq:SypbeSSj}, where the function depends only on the coordinates indexed by $R_T.$ We now split $R_T$ into $B = \flr{T / K}$ blocks of size $K$ with a possible residue of size $T - B K.$ The coordinates $B_1 \eqdef [R_{T,1}, \dots, R_{T,K}]$ belong to block $1,$ $B_2 \eqdef [R_{T,K+1}, \dots, R_{T,2 K}]$ belong to block $2,$ and so forth. Let us define $y^k_i$ as the first time moment when worker $i$ can start discovering\footnote{In the paper, when we say that a worker $i$ \emph{discovers} a coordinate with index $j$, it means that it adds a vector to $I_i$ in which the corresponding value of that coordinate is non-zero.} a new coordinate in the $k + 1$\textsuperscript{th} block, and $y^{k} \eqdef \min_{i \in [n]} y^{k}_i$ as the time when any of them. Notice that $y^0_i = 0$ for all $i \in [n].$ 

  Consider the partition $(S_{\bar{k} + 1,1}, \dots, S_{\bar{k} + 1,\bar{k} + 1})$ of the workers. Recall that this partition is constructed from the tree $T$ by removing $\bar{k}$ edges such that all removed edges $\{u, v\}$ satisfy $w_{uv} \leq \bar{w}_{\bar{k}}.$ Notice that this partition decomposes the tree $T$ into subtrees $\bar{T}_1, \dots, \bar{T}_{\bar{k} + 1}$, which are connected to each other by the removed edges. Thus, we can consider a new meta tree $\bar{T}$ whose nodes are $\bar{V} \eqdef (S_{\bar{k} + 1,1}, \dots, S_{\bar{k} + 1,\bar{k} + 1})$ and whose edges are the removed edges. For all $S \in \bar{V},$ we define $\bar{E}(S)$ as the set of edges incident to $S.$

  In this meta tree $\bar{T},$ we run the following \emph{leaf-branch peeling} procedure. For all $p \geq 1$, let 
  $\bar{\mathcal{L}}_p$ 
  be the (non-empty) set of leaves of $\bar{T}_p \eqdef \bar{T} \setminus \bigcup_{i < p} \left(\bar{\mathcal{L}}_i \bigcup \bar{\mathcal{B}}_i\right)$ of size $\bar{n}_p > 0.$ Define $\bar{\mathcal{B}}_p = \emptyset,$ and recursively add to $\bar{\mathcal{B}}_p$ all nodes from $\bar{T}_p$ with $2$ edges that have a neighbor in $\bar{\mathcal{L}}_p$ or $\bar{\mathcal{B}}_p:$ first, add all nodes with $2$ edges that have a neighboring node in $\bar{\mathcal{L}}_p;$ then add all nodes with $2$ edges that have a neighboring node in $\bar{\mathcal{B}}_p,$ and repeat the last step until no candidate nodes remain. 
  
  Stop the procedure when $\bar{T}_{\bar{d} + 1}$ is empty, where $\bar{d}$ is the number of steps in the procedure. 
  It is possible to prove the following logarithmic bound on the number of steps $\bar{d}$:
  \begin{restatable}[Proof in Section~\ref{proof:leaf}]{lemma}{LEMMABOUNDD}
  \label{lem:leaf_branch_peeling_depth}
  The number of steps $\bar d$ in the leaf-branch peeling procedure satisfies
  \[
  \bar{d} \le \flr{\log_2(n + 2)}.
  \]
  \end{restatable}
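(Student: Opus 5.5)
The plan is to show that each peeling step at least halves the number of leaves, and then to count nodes rather than leaves so that the bound comes out as $\log_2(n+2)$ instead of $\log_2(2n)$. Throughout, write $N \eqdef \bar{k}+1 \le n$ for the number of metanodes in $\bar{T}$, and $\ell_p \eqdef \bar{n}_p = \abs{\bar{\mathcal{L}}_p}$ for the number of leaves of $\bar{T}_p$. We use the convention that a single isolated node counts as a leaf.

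\textbf{Structure of one peeling step.} Fix a step $p$ and consider $\bar{\mathcal{L}}_p \cup \bar{\mathcal{B}}_p$. By construction, $\bar{\mathcal{B}}_p$ only absorbs nodes that have exactly two edges in $\bar{T}_p$ and a neighbor already removed. Hence the removed set is a disjoint union of \emph{hanging paths}. Each such path starts at a leaf of $\bar{T}_p$ and climbs through degree-$2$ nodes. It stops just before a node of degree $\ge 3$ in $\bar{T}_p$, or it exhausts $\bar{T}_p$ if $\bar{T}_p$ is itself a path. In particular, every hanging path contains exactly one leaf of $\bar{T}_p$, except in the degenerate path case, which can only be the last step.

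\textbf{Halving of leaves.} Let $v$ be a leaf of $\bar{T}_{p+1}$, including the case where $\bar{T}_{p+1}=\{v\}$. Let $h_v$ be the number of hanging paths attached to $v$. Then $\deg_{\bar{T}_p}(v) = \deg_{\bar{T}_{p+1}}(v) + h_v \le 1 + h_v$.
\begin{itemize}
\item If $h_v = 0$, then $v$ was already a leaf of $\bar{T}_p$, so it would lie in $\bar{\mathcal{L}}_p$.
\item If $h_v = 1$, then $v$ has at most two edges and a removed neighbor, so it would have been absorbed into $\bar{\mathcal{L}}_p \cup \bar{\mathcal{B}}_p$.
\end{itemize}
Both cases contradict $v \in \bar{T}_{p+1}$, so $h_v \ge 2$. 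Distinct leaves of $\bar{T}_{p+1}$ own disjoint hanging paths, and each path contains a distinct leaf of $\bar{T}_p$. Therefore $\ell_p \ge 2\ell_{p+1}$ for all $p < \bar{d}$.

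\textbf{Counting nodes.} Since $\ell_{\bar{d}} \ge 1$, iterating the halving bound gives $\ell_p \ge 2^{\bar{d}-p}$. The leaf sets $\bar{\mathcal{L}}_1, \dots, \bar{\mathcal{L}}_{\bar{d}}$ are disjoint subsets of the $N$ metanodes. Hence
\[
n \ge N \ge \sum_{p=1}^{\bar{d}} \ell_p \ge \sum_{p=1}^{\bar{d}} 2^{\bar{d}-p} = 2^{\bar{d}} - 1 .
\]
This gives $2^{\bar{d}} \le n+1 < n+2$, so $\bar{d} \le \flr{\log_2(n+1)} \le \flr{\log_2(n+2)}$.

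The main obstacle is the structural claim that the removed set consists exactly of disjoint hanging paths, each carrying exactly one leaf. This is what justifies the case analysis on $h_v$, so I would verify it carefully against the recursive definition of $\bar{\mathcal{B}}_p$. The degenerate configurations also need explicit handling: $\bar{T}_p$ being a single node or a path, which can only happen at the final step $p=\bar{d}$ and affects neither inequality.
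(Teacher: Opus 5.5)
Your proof is correct, and it takes a different route from the paper.

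\textbf{The paper's route.} It works with the tree $\widetilde T_p$ obtained from $\bar T_p$ by suppressing all degree-$2$ nodes. The degree-sum identity shows that the set $\widetilde{\mathcal I}_p$ of non-leaf nodes of $\widetilde T_p$ has size at most $(\widetilde m_p-2)/2$. The paper then argues that every node of $\bar T_{p+1}$ outside $\widetilde{\mathcal I}_p$ had degree $2$ in $\bar T_p$ and still has degree $2$ in $\bar T_{p+1}$, since otherwise it would have been absorbed into $\bar{\mathcal B}_p$. Such nodes are therefore suppressed again, which gives $\widetilde m_{p+1}\le(\widetilde m_p-2)/2$. Unrolling yields $2^{\bar d}\le n+2$. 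In this version, the quantity that halves is the size of the branching skeleton.

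\textbf{Your route.} You track leaves instead. A local case analysis on $h_v$ shows that every leaf of $\bar T_{p+1}$ must have had at least two pendant paths removed, so $\ell_p\ge 2\ell_{p+1}$. The disjointness of $\bar{\mathcal L}_1,\dots,\bar{\mathcal L}_{\bar d}$ then turns this into the count $2^{\bar d}-1\le n$.

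\textbf{Comparison.} Your version needs neither the degree-sum identity nor the suppression bookkeeping, and it gives the slightly sharper bound $2^{\bar d}\le n+1$. It also handles the single-node and path configurations explicitly; the paper's degree-sum step silently breaks down at a one-node $\widetilde T_p$, though this happens only at the last round. The cost is that you must describe each round's removed set as disjoint pendant paths, one leaf each, which the suppressed-tree formulation sidesteps.

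That structural claim does hold. If $\bar T_{p+1}\neq\emptyset$, then $\bar T_p$ is not a path, since a path is removed entirely. In a non-path tree, a chain of degree-$2$ nodes started at a leaf cannot reach another leaf. Hence the chains are disjoint, and each one stops just before a node of degree at least $3$, which survives. Both proofs rely on reading ``2 edges'' as degree $2$ in the current tree $\bar T_p$, as you do. Under the reading ``degree $2$ in the original $\bar T$'', the logarithmic bound would fail, for example on a caterpillar.
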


  In the meta graph $\bar{T}$, consider any node $\bar{S}$ in $\bar{V}.$ 
  Also, consider any $i \in \bar{S}.$ There are \textbf{two ways} to discover one of the $K$ coordinates in block $k$: (1) either one of the workers (potentially worker $i$ itself, but not necessarily) from the group $\bar{S}$ discovers a coordinate by computing a stochastic gradient with $\xi = 1$, or (2) group $\bar{S}$ discovers a coordinate through one of the communication channels from one of other groups in $\bar{T}.$

  \textbf{First way to discover coordinates.} Before time $y^{k},$ for all $i \in [n],$ worker $i$ has discovered at most coordinates $[R_{T,1}, \dots, R_{T,(k - 1) K}] \cup [j_1, \dots, j_p],$ where $[j_1, \dots, j_p] \subset B_k \eqdef [R_{T,1 + (k-1) K}, \dots, R_{T,k K}]$ and $p \geq 0.$ Due to Lemma~\ref{lemma:prog}, worker $i$ can discover at most one coordinate in $B_k$ when it calculates a stochastic gradient, at position $\bar{j}$ such that $\bar{j}$ is the smallest index in $B_k \setminus (j_1, \dots, j_p).$ However, due to the construction \eqref{eq:stoch_constr}, worker $i$ can discover $\bar{j}$ only if it receives a ``lucky'' random Bernoulli variable with value $1.$ The time required to calculate one stochastic gradient is $h_i.$ Therefore, worker $i$ requires at least $h_i \eta_{k,i,1}$ seconds to discover a new coordinate in $B_k,$ where $\eta_{k,i,1} \sim \textnormal{Geom}(p_{\sigma}).$
  
  Similarly, group $\bar{S}$ can discover the first coordinate, that was not discovered by any other node from this group and by computing stochastic gradients, after at least $\min_{i \in \bar{S}} h_i \eta_{k,i,1}$ seconds, where $\{\eta_{k,i,1}\}_{k \geq 1, i \in [n]}$ are i.i.d random variables from $\textnormal{Geom}(p_{\sigma}),$ and the $m$\textsuperscript{th} coordinate after at least $\sum_{j=1}^{m}\min_{i \in \bar{S}} h_i \eta_{k,i,j}$ seconds, where $\{\eta_{k,i,j}\}_{k \geq 1, i \in [n],j \geq 1}$ are i.i.d random variables from $\textnormal{Geom}(p_{\sigma}).$ The $\min$ comes from the fact that the workers can calculate in parallel.

  \textbf{Second way to discover coordinates (see Figure~\ref{fig:meta_tree_partition}).} At the same time, workers from $\bar{V} \setminus \bar{S}$ can share a new coordinate with the workers from $\bar{S}$ via communication through the graph $G$. Consider any $\hat{S} \in \bar{V} \setminus \bar{S}.$ Recall that the workers in $\bar{S}$ are separated from the workers in $\hat{S}$ in the Gomory--Hu tree $T$ by the $\bar{S}$'edge of weight $w_{uv}$ such that $w_{uv} \leq \bar{w}_{\bar{k}}$, meaning that worker $i \in \bar{S}$ is separated from the workers in $\hat{S}$ by a cut of value less than $\bar{w}_{\bar{k}}$ (Theorem~\ref{thm:gomory_hu}). Thus, the maximal flow (number of coordinates per second) that worker $i \in \bar{S}$ can receive from $\hat{S}$ is less than or equal $\bar{w}_{\bar{k}}$. After at least time 
  $y^{k},$ 
  workers $\hat{S}$ can start sending a sequence of coordinates with indices $(\nu_1, \nu_2, \dots),$ where worker $i \in \bar{S}$ can discover a new coordinate. In order to discover a new coordinate, worker $i \in \bar{S}$ should receive some $\nu_j$ such that $\nu_j \in B_k \eqdef [R_{T,1 + (k-1) K}, \dots, R_{T,k K}].$ 
  
  Every edge $e$ of $\bar{S}$ in $\bar{T}$ separates $\bar{S}$ from the set of groups $\mathcal{S}$ on the other side of the edge. For all $k \geq 1,$ let $\{\nu_{k, \bar{S}, e,j}\}_{j \geq 1}$ be the sequence of coordinates sent by $\mathcal{S}$ to $\bar{S}$ after the moment when one of the workers from $\mathcal{S}$ can start discovering $k$\textsuperscript{th} block. We define $\mu_{k,\bar{S},e,1}$ as the number of received coordinates $\{\nu_{k,\bar{S}, e,j}\}_{j \geq 1}$ from $\mathcal{S}$ until the moment when a received coordinate belongs to block $k$ in any worker $i$ from $\bar{S}.$ Similarly, let $\mu_{k,\bar{S},e,p}$ be the number of received coordinates until the moment when a received coordinate belongs to block $k$, after the $(p - 1)$\textsuperscript{th} time this has happened in any worker $i$ from $\bar{S},$ and this coordinate does not equal to the previous $(p - 1)$ coordinates belonging to block $k.$ In total, starting from $y^k,$ group $\bar{S}$ can discover $m$ coordinates from $\mathcal{S}$ in block $k$ after at least 
  \begin{align*}
    \sum_{j=1}^{m} \frac{\mu_{k, \bar{S}, e, j}}{\bar{w}_{\bar{k}}}
  \end{align*}
  seconds since it takes $\frac{\ell}{\bar{w}_{\bar{k}}}$ seconds to send $\ell$ coordinates through the communication channel.

  \textbf{Properties of the random variables.} For all $k \geq 0,$ we define $\mathcal{G}_{k}$ as the sigma-algebra generated by $\{\eta_{k',i,j}\}_{1 \leq k' \leq k, i \in [n],j \geq 1},$ $\{\mu_{k',S,e,p}\}_{p \geq 1, 1 \leq k' \leq k, S \in \bar{V}, e \in \bar{E}(S)},$ and $\{\nu_{k,S,e,j}\}_{k \geq 1, j \geq 1, S \in \bar{V}, e \in \bar{E}(S)}.$ 
  
  In Section~\ref{sec:aux}, we prove the following bounds on the probabilities.
  \begin{restatable}[Proof in Section~\ref{sec:aux}]{lemma}{LEMMAPROB}
  \label{lemma:prob}
  For all $i \in [n],$
  \begin{align*}
    \ProbCond{\eta_{k,i,j} \leq t}{\{\eta_{k,i,j'}\}_{i \in [n], 1 \leq j' < j}, \mathcal{G}_{k-1}} \leq \flr{t} p_{\sigma}
  \end{align*}
  for all $t \geq 0,$ $k \geq 1$ and $j \geq 1.$ Moreover, consider any $\bar{S} \in \bar{V}$ and $e \in \bar{E}(\bar{S}),$ then
  \begin{align*}
    \ProbCond{\mu_{k,\bar{S}, e,m} \leq t}{\mu_{k,\bar{S}, e,m - 1}, \dots, \mu_{k,\bar{S}, e,1}, \mathcal{G}_{k-1}} \leq \frac{K t}{\max\{\frac{d}{2} - \sum_{p = 1}^{m - 1}\mu_{k,\bar{S}, e,p}, 0\}}
  \end{align*}
  for all $t \geq 0,$ $k \geq 1,$ and $m \geq 1.$
  \end{restatable}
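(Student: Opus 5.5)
The plan is to prove the two inequalities separately. Both follow from a union bound, once the conditioning is shown to leave the relevant randomness "fresh". The only structural facts used are: (i) the Bernoulli variables $\xi$ in \eqref{eq:stoch_constr} are i.i.d.\ and drawn independently of everything that happened before; and (ii) by Assumption~\ref{ass:compressors}, the coordinate indices $\nu$ chosen by any algorithm have a distribution independent of the random subset $R_T$, i.e., of the permutation.

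\emph{Geometric variables.} I will define $\eta_{k,i,j}$ through the fresh Bernoulli draws that worker $i$ uses after it becomes able to discover the $j$-th new coordinate of block $k$. By Lemma~\ref{lemma:prog} and \eqref{eq:stoch_constr}, each such draw succeeds independently with probability $p_\sigma$. These draws are disjoint from the draws that generate $\mathcal{G}_{k-1}$ and from those defining $\eta_{k,i,j'}$ with $j'<j$. Conditionally on that information, $\eta_{k,i,j}$ is therefore still $\textnormal{Geom}(p_\sigma)$. Then
\begin{align*}
\ProbCond{\eta_{k,i,j} \le t}{\cdot} = 1-(1-p_\sigma)^{\flr{t}} \le \flr{t}\,p_\sigma,
\end{align*}
by Bernoulli's inequality, equivalently a union bound over the first $\flr{t}$ trials.

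\emph{Communication variables.} First I reveal $\mathcal{G}_{k-1}$. This fixes the positions of blocks $B_1,\dots,B_{k-1}$ and nothing more about $B_k$ beyond the fact that it avoids those positions. Since $R_T$ is a uniformly random $T$-subset and the sent indices $\nu_{k,\bar S,e,j}$ are independent of $R_T$, the set $B_k$ is, conditionally, a uniformly random $K$-subset of the remaining $d-(k-1)K$ coordinates. The dimension assumption $d \ge \bar{c}_3 L\Delta/(\log^3(n+1)\varepsilon)$, together with \eqref{eq:WzHRTJlPowGgA}, gives $T \le d/2$, so at least $d/2$ coordinates remain.

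Next I condition additionally on $\mu_{k,\bar S,e,1},\dots,\mu_{k,\bar S,e,m-1}$. This reveals at most $\sum_{p<m}\mu_{k,\bar S,e,p}$ sent coordinates, each now known to be in or out of $B_k$. The still-unrevealed elements of $B_k$ (at most $K$ of them) are then uniformly spread over a pool of size at least $\frac d2-\sum_{p<m}\mu_{k,\bar S,e,p}$.

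Each subsequently sent coordinate is either an already-revealed index or a fresh one. Repeats cannot count toward a new hit, by the definition of $\mu$. A fresh index lies in $B_k$ with conditional probability at most $K/(\text{pool size})$, and this remains true sequentially because each miss only shrinks the pool. A union bound over the at most $t$ coordinates received before the stopping time then gives the stated bound
\begin{align*}
\frac{Kt}{\max\{\frac d2-\sum_{p<m}\mu_{k,\bar S,e,p},\,0\}}.
\end{align*}
When the denominator is $0$ the bound is vacuous.

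\emph{Main obstacle.} The hardest step is making the conditional-uniformity claim rigorous. The algorithm chooses indices adaptively, and I must ensure that no information about $B_k$ leaks through $\mathcal{G}_{k-1}$ or through the stopping times. For example, learning which coordinates have nonzero gradient values could reveal block membership. I would handle this by noting two things. First, before block $k$ is unlocked, every gradient and every $\cC_i^k$ output depends on $R_T$ only through $B_1,\dots,B_{k-1}$ (Lemma~\ref{lemma:prog}). Second, the $\nu$'s are generated independently of $P$. A standard exchangeability argument over permutations of the unrevealed coordinates then yields the uniform conditional law.
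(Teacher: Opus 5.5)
Your proposal is correct and follows essentially the same route as the paper. For $\eta_{k,i,j}$ you use a union bound over the first $\lfloor t\rfloor$ Bernoulli draws. For $\mu_{k,\bar S,e,m}$ you use a union bound over the at most $t$ received indices, with each term bounded by $K/\max\{d-(k-1)K-\sum_{p<m}\mu_{k,\bar S,e,p},0\}\le K/\max\{\frac d2-\sum_{p<m}\mu_{k,\bar S,e,p},0\}$, using independence of the $\nu$'s from $R_T$ (Assumption~\ref{ass:compressors}) and $T\le d/2$.

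One fix: your remark that the bound ``remains true sequentially because each miss only shrinks the pool'' points the wrong way, since a miss shrinks the pool and so raises the next conditional hit probability. It is also unnecessary. Each union-bound term only needs to be conditioned on $\mathcal{G}_{k-1}$ and $\mu_{k,\bar S,e,1},\dots,\mu_{k,\bar S,e,m-1}$, which is exactly how the paper discards the in-round miss events.
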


  \textbf{Auxiliary bounds.} 
  Before continuing with the proof of Theorem~\ref{thm:main}, we now consider two important lemmas, which follow from Lemma~\ref{lemma:prob}.
  \begin{restatable}{lemma}{LEMMAPROBTWO}
  \label{lemma:prob_two}
  For all $S \in \bar{V},$ $k \geq 1,$$K \geq 1,$ and $s \geq \frac{128}{B_{h}(\nicefrac{1}{p_{\sigma}},S)},$
  \begin{align}
    \label{eq:ZJwoGhKhTZNlf}
    \ExpCond{\exp\left(- s \sum_{j=1}^K \min_{i \in S} h_i \eta_{k,i,j}\right)}{\mathcal{G}_{k - 1}} \leq \frac{1}{8^K}.
  \end{align}
  \end{restatable}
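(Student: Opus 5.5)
The plan is to peel off the $K$ factors one at a time using the tower property, and to show that each conditional factor is at most $\nicefrac{1}{8}$. Write $X_j \eqdef \min_{i \in S} h_i \eta_{k,i,j}$ and let $\mathcal{H}_{j-1}$ be the sigma-algebra generated by $\{\eta_{k,i,j'}\}_{i \in [n], 1 \le j' < j}$ and $\mathcal{G}_{k-1}$. Since $\exp(-s\sum_{j<K} X_j)$ is $\mathcal{H}_{K-1}$-measurable,
\begin{align*}
\ExpCond{e^{-s\sum_{j=1}^K X_j}}{\mathcal{G}_{k-1}} = \ExpCond{e^{-s\sum_{j=1}^{K-1} X_j}\,\ExpCond{e^{-sX_K}}{\mathcal{H}_{K-1}}}{\mathcal{G}_{k-1}}.
\end{align*}
It therefore suffices to prove $\ExpCond{e^{-sX_j}}{\mathcal{H}_{j-1}} \le \nicefrac{1}{8}$ for every $j$, and then induct on $K$.

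For a single factor, I will use the layer-cake identity $e^{-sX} = \int_X^\infty s e^{-st}\,dt$ for $X \ge 0$. This gives $\ExpCond{e^{-sX_j}}{\mathcal{H}_{j-1}} = \int_0^\infty \ProbCond{X_j \le t}{\mathcal{H}_{j-1}}\, s e^{-st}\,dt$. A union bound over $i \in S$, combined with the first part of Lemma~\ref{lemma:prob}, yields
\begin{align*}
\ProbCond{X_j \le t}{\mathcal{H}_{j-1}} \le \min\Big\{1,\; p_\sigma \sum_{i \in S} \flr{t/h_i}\Big\}.
\end{align*}

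The key step, and the only nontrivial one, is the deterministic bound $\min\{1, p_\sigma \sum_{i\in S}\flr{t/h_i}\} \le \nicefrac{2t}{B}$ with $B \eqdef B_h(\nicefrac{1}{p_\sigma}, S)$. If $t \ge B/2$, the bound holds trivially because the left side is at most $1$. If $t < B/2$, let $m$ be the number of $i \in S$ with $h_i \le t$; only these workers contribute nonzero floors. If $m = 0$ the probability bound is $0$. Otherwise, let $H_m \eqdef (\frac1m\sum_{i\le m} 1/h_{\pi_i(S)})^{-1}$, and note $H_m \le t$ because every $h_{\pi_i(S)} \le t$. By the definition of $B$ as a minimum over $m$,
\begin{align*}
B \le H_m\Big(1 + \frac{1}{p_\sigma m}\Big) \le t + \frac{H_m}{p_\sigma m}.
\end{align*}
Since $t < B/2$, this forces $\nicefrac{H_m}{p_\sigma m} \ge B/2$. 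Consequently
\begin{align*}
p_\sigma \sum_{i\in S}\flr{t/h_i} \le p_\sigma \frac{tm}{H_m} \le \frac{2t}{B}.
\end{align*}
I expect this to be the main obstacle: one has to choose the right $m$, namely the count of workers fast enough to finish a gradient by time $t$, so that the harmonic-mean definition of $B_h$ can be compared with the floor sum.

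Plugging the key bound into the integral gives
\begin{align*}
\ExpCond{e^{-sX_j}}{\mathcal{H}_{j-1}} \le \int_0^\infty \frac{2t}{B} s e^{-st}\,dt = \frac{2}{sB} \le \frac{2}{128} \le \frac18,
\end{align*}
using $s \ge \nicefrac{128}{B}$. Iterating this bound through the tower-property decomposition above, for $j = K, K-1, \dots, 1$, yields \eqref{eq:ZJwoGhKhTZNlf}.
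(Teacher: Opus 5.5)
Your proof is correct, and its outer structure matches the paper's: you peel off the $K$ factors with the tower property and bound each conditional factor using the union bound over $i\in S$ together with the first part of Lemma~\ref{lemma:prob}.

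The difference is in the single-factor estimate. The paper uses the one-threshold split $\mathbb{E}[e^{-sX_j}\mid\cdot]\le e^{-st}+\mathbb{P}(X_j\le t\mid\cdot)$ at $t=\frac{1}{32}B_h(\nicefrac{1}{p_\sigma},S)$. It bounds the probability by $\nicefrac{1}{16}$ via Lemma~\ref{lemma:flr}, which takes the same $m$ as you (the number of workers with $h_i\le t$) and shows $p_\sigma m<1$ by contradiction. It bounds the exponential by $e^{-4}\le\nicefrac{1}{16}$ using $s\ge 128/B$.

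You instead prove the uniform linear CDF bound $\min\{1,p_\sigma\sum_{i\in S}\lfloor t/h_i\rfloor\}\le 2t/B$ for every $t\ge 0$, and integrate it against $se^{-st}\,dt$. At $t=B/32$ your bound gives exactly the $\nicefrac{1}{16}$ of Lemma~\ref{lemma:flr}, so it contains that lemma as a special case.

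Your route gives the sharper factor $2/(sB)\le\nicefrac{1}{64}$. It also shows the factor $\nicefrac{1}{8}$ already holds for $s\ge 16/B$, so the constant $128$ is loose. The paper's route is slightly more elementary: it needs only one evaluation point and no interchange of the conditional expectation with the $dt$-integral. In your version that interchange is justified by conditional Tonelli for the nonnegative integrand, e.g. by working with rational $t$ and using monotonicity in $t$.
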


  \begin{restatable}{lemma}{LEMMAPROBTHREE}
  \label{lemma:prob_three}
  For all $S \in \bar{V},$ $e \in \bar{E}(S),$ $k \geq 1,$$K \geq 1,$ $w > 0,$ and $s \geq \frac{1024 K w}{d},$
  \begin{align}
    \label{eq:CtQGxjjECmFTmdsN}
    \ExpCond{\exp\left(- s \sum_{j=1}^{K} \frac{\mu_{k, S, e, j}}{w}\right)}{\mathcal{G}_{k - 1}} \leq \frac{1}{8^K}.
  \end{align}
  \end{restatable}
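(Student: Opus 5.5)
The plan is to reduce \eqref{eq:CtQGxjjECmFTmdsN} to a one-step Laplace-transform bound for each $\mu_{k,S,e,j}$, which follows from the conditional distribution bound in Lemma~\ref{lemma:prob}, and then chain these bounds with the tower property. Fix $S,e,k$. Write $X_j \eqdef \mu_{k,S,e,j}$ and $S_m \eqdef \sum_{j=1}^m X_j$, with $S_0 = 0$. Set $\lambda \eqdef s/w$, so the assumption on $s$ reads $\lambda \ge 1024K/d$. We must bound $\ExpCond{e^{-\lambda S_K}}{\mathcal{G}_{k-1}}$.

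\textbf{One-step bound.} Condition on $X_1,\dots,X_{j-1}$ and $\mathcal{G}_{k-1}$, and let $D_j \eqdef \max\{d/2 - S_{j-1},0\}$. For a nonnegative random variable $X$,
\begin{align*}
\Exp{e^{-\lambda X}} = \int_0^\infty \lambda e^{-\lambda t}\,\Prob{X \le t}\,dt .
\end{align*}
Lemma~\ref{lemma:prob} gives $\Prob{X_j\le t \mid \cdot}\le Kt/D_j$. Plugging this in yields
\begin{align*}
\ExpCond{e^{-\lambda X_j}}{X_1,\dots,X_{j-1},\mathcal{G}_{k-1}} \le \frac{K}{\lambda D_j}.
\end{align*}
On the event $\{S_{j-1}\le d/4\}$ we have $D_j\ge d/4$. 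Hence the right-hand side is at most $4K/(\lambda d)\le 4/1024 = 1/256$.

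\textbf{Handling the random denominator.} This is the main obstacle. The bound from Lemma~\ref{lemma:prob} degrades once the earlier $\mu$'s have consumed a large part of the $d/2$ budget. On the complementary event, however, the exponential itself is already tiny. We split
\begin{align*}
e^{-\lambda S_K} \le e^{-\lambda S_K}\mathbbm{1}[S_{K-1}\le d/4] + e^{-\lambda d/4}.
\end{align*}
This holds because $S_K\ge S_{K-1}>d/4$ on the second event. Moreover $e^{-\lambda d/4}\le e^{-256K}$.

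Define $a_m \eqdef \ExpCond{e^{-\lambda S_m}\mathbbm{1}[S_{m-1}\le d/4]}{\mathcal{G}_{k-1}}$. The indicator and $e^{-\lambda S_{m-1}}$ are measurable with respect to the conditioning, so the one-step bound gives
\begin{align*}
a_m \le \tfrac{1}{256}\,\ExpCond{e^{-\lambda S_{m-1}}\mathbbm{1}[S_{m-1}\le d/4]}{\mathcal{G}_{k-1}}.
\end{align*}
Since $X_j\ge 0$, the events $\{S_{m-1}\le d/4\}$ are nested: $\{S_{m-1}\le d/4\}\subseteq\{S_{m-2}\le d/4\}$. Therefore the last expectation is at most $a_{m-1}$, and $a_1\le 1/256$. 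By induction, $a_K\le 256^{-K}$.

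\textbf{Conclusion.} Combining the two pieces,
\begin{align*}
\ExpCond{e^{-\lambda S_K}}{\mathcal{G}_{k-1}}\le 256^{-K}+e^{-256K}\le 8^{-K},
\end{align*}
which is \eqref{eq:CtQGxjjECmFTmdsN}. The only delicate points are the nested-event induction and checking that each conditioning set matches the one in Lemma~\ref{lemma:prob}; the constants have ample slack.
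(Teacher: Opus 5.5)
Your proof is correct and follows essentially the paper's route: chain one-step conditional Laplace bounds for $\mu_{k,S,e,j}$ via the tower rule, use Lemma~\ref{lemma:prob} on the event that the partial sum stays below $d/4$, and absorb the complementary event into an additive $e^{-sd/(4w)}$ term. The differences are cosmetic. You use the exact identity $\mathbb{E}[e^{-\lambda X}]=\int_0^\infty \lambda e^{-\lambda t}\,\mathbb{P}(X\le t)\,dt$ where the paper splits as $e^{-st}+\mathbb{P}(\mu\le tw)$ at $t=d/(128Kw)$. You also truncate once with nested indicators, whereas the paper case-splits at every step and ends with $16^{-K}+2e^{-sd/(4w)}$.
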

  We now continue with the proof of Theorem~\ref{thm:main}.

  \textbf{(Step 6: Concentration bound).} 
  Recall that $\bar{\mathcal{L}}_1$ is the set of leaves in $\bar{V},$ where each leave has at most one neighbor. Moreover, recall that we have two  ways to discover $K$ coordinates in block $k.$ Since one of the two discovery methods must discover at least $\frac{K}{3}$ coordinates, we can conclude that
  \begin{align*}
    \min_{i \in S} y^{k+1}_i \geq \min\left\{y^{k} + \sum_{j=1}^{\frac{K}{3}} \min_{i \in S} h_i \eta_{k,i,j}, y^{k} + \sum_{j=1}^{\frac{K}{3}} \frac{\mu_{k, S, e, j}}{\bar{w}_{\bar{k}}}\right\}
  \end{align*}
  for all $S \in \bar{\mathcal{L}}_1,$ where $e$ is the only edge of $S$ that separates $S$ from all other groups (there may be no other groups, then the second term under the $\min$ does not appear). 
  Similarly, for all $p \in [\bar{d}]$ and $S \in \bar{\mathcal{B}}_p,$ using the same reasoning,
  \begin{align*}
    \min_{i \in S} y^{k+1}_i \geq \min\left\{y^{k} + \sum_{j=1}^{\frac{K}{3}} \min_{i \in S} h_i \eta_{k,i,j}, y^{k} + \sum_{j=1}^{\frac{K}{3}} \frac{\mu_{k, S, e_1, j}}{\bar{w}_{\bar{k}}}, y^{k} + \sum_{j=1}^{\frac{K}{3}} \frac{\mu_{k, S, e_2, j}}{\bar{w}_{\bar{k}}}\right\}
  \end{align*}
  for all $S \in \bar{\mathcal{B}}_p$ and $p \in [\bar{d}],$ where $e_1$ and $e_2$ are the only edges of $S.$ It is left to get a bound for $\bar{\mathcal{L}}_{p + 1}$ with $p \geq 1.$ Unlike $\bar{\mathcal{L}}_{1},$ $\bar{\mathcal{L}}_{p + 1}$ with $p \geq 1$ is not a set of leaves and each $S \in \bar{\mathcal{L}}_{p + 1}$ might have potentially a large number of edges. Consider any $S \in \bar{\mathcal{L}}_{p + 1}.$ It has at most one outgoing edge $e$ that connects $S$ with the nodes from $\bar{V} \setminus \bigcup_{j=1}^{p} \left(\bar{\mathcal{L}}_j \cup \bar{\mathcal{B}}_j\right),$ and all other edges connect it directly to $\bigcup_{j=1}^{p} \left(\bar{\mathcal{L}}_j \cup \bar{\mathcal{B}}_j\right)$ by the construction. Thus, for all $S \in \bar{\mathcal{L}}_{p+1},$
  \begin{align*}
    \min_{i \in S} y^{k+1}_i \geq \min\left\{y^{k} + \sum_{j=1}^{\frac{K}{3}} \min_{i \in S} h_i \eta_{k,i,j}, y^{k} + \sum_{j=1}^{\frac{K}{3}} \frac{\mu_{k, S, e, j}}{\bar{w}_{\bar{k}}}, \min_{S \in \cup_{j=1}^p \left(\bar{\mathcal{L}}_j \cup \bar{\mathcal{B}}_j\right)} \min_{i \in S} y^{k}_i\right\},
  \end{align*}
  where the last term in $\min$ comes from the fact that $\min_{S \in \cup_{j=1}^p \left(\bar{\mathcal{L}}_j \cup \bar{\mathcal{B}}_j\right)} \min_{i \in S} y^{k}_i$ is the earliest time when nodes from $\cup_{j=1}^p \left(\bar{\mathcal{L}}_j \cup \bar{\mathcal{B}}_j\right)$ can start sharing coordinates from block $k.$

  Since 
  \begin{align*}
    y^{k} = \min_{S \in \cup_{j=1}^{\bar{d}} \left(\bar{\mathcal{L}}_j \cup \bar{\mathcal{B}}_j\right)} \min_{i \in S} y^k_i
  \end{align*}
  for all $k \geq 0,$ it is sufficient to analyze the sequences 
  \begin{align}
    \label{eq:y_l_1}
    \bar{y}^{k+1}_S \eqdef \min\left\{\bar{y}^{k} + \sum_{j=1}^{\frac{K}{3}} \min_{i \in S} h_i \eta_{k,i,j}, \bar{y}^{k} + \sum_{j=1}^{\frac{K}{3}} \frac{\mu_{k, S, e, j}}{\bar{w}_{\bar{k}}}\right\}
  \end{align}
  for all $S \in \bar{\mathcal{L}}_1,$
  \begin{align}
    \label{eq:y_b_p}
    \bar{y}^{k+1}_S \eqdef \min\left\{\bar{y}^{k} + \sum_{j=1}^{\frac{K}{3}} \min_{i \in S} h_i \eta_{k,i,j}, \bar{y}^{k} + \sum_{j=1}^{\frac{K}{3}} \frac{\mu_{k, S, e_1, j}}{\bar{w}_{\bar{k}}}, \bar{y}^{k} + \sum_{j=1}^{\frac{K}{3}} \frac{\mu_{k, S, e_2, j}}{\bar{w}_{\bar{k}}}\right\}
  \end{align}
  for all $S \in \bar{\mathcal{B}}_p$ and $p \in [\bar{d}],$
  \begin{align}
    \label{eq:y_l_p}
    \bar{y}^{k+1}_S \eqdef \min\left\{\bar{y}^{k} + \sum_{j=1}^{\frac{K}{3}} \min_{i \in S} h_i \eta_{k,i,j}, \bar{y}^{k} + \sum_{j=1}^{\frac{K}{3}} \frac{\mu_{k, S, e, j}}{\bar{w}_{\bar{k}}}, \min_{S \in \cup_{j=1}^p \left(\bar{\mathcal{L}}_j \cup \bar{\mathcal{B}}_j\right)} \bar{y}^k_S\right\},
  \end{align}
  for all $p \in \{1, \dots, \bar{d} - 1\}$ and $S \in \bar{\mathcal{L}}_{p+1},$ and 
  \begin{align*}
    \bar{y}^{k+1} \eqdef \min_{S \in \cup_{j=1}^{\bar{d}} \left(\bar{\mathcal{L}}_j \cup \bar{\mathcal{B}}_j\right)} \bar{y}^{k+1}_S
  \end{align*}
  with $\bar{y}^k = 0$ and $\bar{y}^k_S = 0$ for $k = 0$ and $S \in \cup_{j=1}^{\bar{d}} \left(\bar{\mathcal{L}}_j \cup \bar{\mathcal{B}}_j\right).$
  Inductively, one can easily show that $y^k \geq \bar{y}^k$ for all $k \geq 0.$

  Using mathematical induction, taking 
  \begin{align}
    \label{eq:HYqffdQkytmukaW}
    s = 1024 \max\left\{\frac{1}{\min\limits_{S \in \bar{V}} B_{h}(\nicefrac{1}{p_{\sigma}},S)}, \frac{K \bar{w}_{\bar{k}}}{d}\right\},
  \end{align}
  we now prove that 
  \begin{align}
    \label{eq:kdULIbZc}
    \Exp{\exp\left(- s \bar{y}^{k} \right)} \leq e^{-(k + 1 - K)}
  \end{align}
  and 
  \begin{align}
    \label{eq:hCvVLgndsboCzRbvZrp}
    \Exp{\exp\left(- s \min_{S \in \bar{\mathcal{L}}_p \cup \bar{\mathcal{B}}_p} \bar{y}^{k}_{S}\right)} \leq \frac{3 \cdot 2^{p-1} e^{p-1} n^p p^p}{2^K} e^{-(k-K)}
  \end{align}
  for all $k \geq 0$ and $p \in [\bar{d}].$ 
  Notice that it is true for $k = 0.$ 
  Consider the inequalities
  \begin{equation}
    \label{eq:BgTYvCQv}
  \begin{aligned}
    \Exp{\exp\left(- s \min_{S \in \bar{\mathcal{L}}_1 \cup \bar{\mathcal{B}}_1} \bar{y}^{k+1}_{S}\right)} 
    &\leq \Exp{\exp\left(- s \min_{S \in \bar{\mathcal{L}}_1} \bar{y}^{k+1}_{S}\right)} + \Exp{\exp\left(- s \min_{S \in \bar{\mathcal{B}}_1} \bar{y}^{k+1}_{S}\right)} \\
    &\leq \sum_{S \in \bar{\mathcal{L}}_1}  \Exp{\exp\left(- s \bar{y}^{k+1}_{S}\right)} + \sum_{S \in \bar{\mathcal{B}}_1} \Exp{\exp\left(- s \bar{y}^{k+1}_{S}\right)}.
  \end{aligned}
  \end{equation}
  Using \eqref{eq:y_l_1},
  \begin{align*}
    &\Exp{\exp\left(- s \bar{y}^{k+1}_{S}\right)} \\
    &\leq \Exp{\exp\left(- s \min\left\{\bar{y}^{k} + \sum_{j=1}^{\frac{K}{3}} \min_{i \in S} h_i \eta_{k,i,j}, \bar{y}^{k} + \sum_{j=1}^{\frac{K}{3}} \frac{\mu_{k, S, e, j}}{\bar{w}_{\bar{k}}}\right\}\right)} \\
    &\leq \Exp{\exp\left(- s \left(\bar{y}^{k} + \sum_{j=1}^{\frac{K}{3}} \min_{i \in S} h_i \eta_{k,i,j}\right)\right)} + \Exp{\exp\left(- s \left(\bar{y}^{k} + \sum_{j=1}^{\frac{K}{3}} \frac{\mu_{k, S, e, j}}{\bar{w}_{\bar{k}}}\right)\right)} \\
    &= \Exp{\ExpCond{\exp\left(- s \left(\sum_{j=1}^{\frac{K}{3}} \min_{i \in S} h_i \eta_{k,i,j}\right)\right)}{\mathcal{G}_{k-1}} \exp\left(- s \bar{y}^{k}\right)} \\
    &\quad + \Exp{\ExpCond{\exp\left(- s \left(\sum_{j=1}^{\frac{K}{3}} \frac{\mu_{k, S, e, j}}{\bar{w}_{\bar{k}}}\right)\right)}{\mathcal{G}_{k-1}} \exp\left(- s \bar{y}^{k}\right)}
  \end{align*}
  for all $S \in \bar{\mathcal{L}}_1.$ Using \eqref{eq:ZJwoGhKhTZNlf}, \eqref{eq:CtQGxjjECmFTmdsN}, and \eqref{eq:kdULIbZc}, 
  \begin{align*}
    &\Exp{\exp\left(- s \bar{y}^{k+1}_{S}\right)} \leq \frac{2}{2^K} e^{-(k + 1 - K)} = \frac{2}{2^K} e^{-(k + 1 - K)}
  \end{align*}
  for all $S \in \bar{\mathcal{L}}_{1}$ and our choice of $s$ in \eqref{eq:HYqffdQkytmukaW}.
  For all $S \in \bar{\mathcal{B}}_1,$ using the same derivations, we can show that  
  \begin{align*}
    &\Exp{\exp\left(- s \bar{y}^{k+1}_{S}\right)} \\
    &\leq \Exp{\exp\left(- s \left(\bar{y}^{k} + \sum_{j=1}^{\frac{K}{3}} \min_{i \in S} h_i \eta_{k,i,j}\right)\right)} + \Exp{\exp\left(- s \left(\bar{y}^{k} + \sum_{j=1}^{\frac{K}{3}} \frac{\mu_{k, S, e_1, j}}{\bar{w}_{\bar{k}}}\right)\right)} \\
    &\quad + \Exp{\exp\left(- s \left(\bar{y}^{k} + \sum_{j=1}^{\frac{K}{3}} \frac{\mu_{k, S, e_2, j}}{\bar{w}_{\bar{k}}}\right)\right)} \leq \frac{3}{2^K} e^{-(k + 1 - K)} = \frac{3}{2^K} e^{-(k + 1 - K)}.
  \end{align*}
  Substituting to \eqref{eq:BgTYvCQv},
  \begin{align*}
    \Exp{\exp\left(- s \min_{S \in \bar{\mathcal{L}}_1 \cup \bar{\mathcal{B}}_1} \bar{y}^{k+1}_{S}\right)} \leq \frac{3 \left(\abs{\bar{\mathcal{L}}_1} + \abs{\bar{\mathcal{B}}_1}\right)}{2^K} e^{-(k + 1 - K)} \leq \frac{3 \cdot n}{2^K} e^{-(k + 1 - K)}.
  \end{align*}
  We have proved \eqref{eq:hCvVLgndsboCzRbvZrp} for $p = 1$ and $k \to k + 1.$
  For $p \geq 1,$ let us consider
  \begin{align*}
    \Exp{\exp\left(- s \min_{S \in \bar{\mathcal{L}}_{p+1} \cup \bar{\mathcal{B}}_{p+1}} \bar{y}^{k+1}_{S}\right)} \leq \sum_{S \in \bar{\mathcal{L}}_{p+1}}  \Exp{\exp\left(- s \bar{y}^{k+1}_{S}\right)} + \sum_{S \in \bar{\mathcal{B}}_{p+1}} \Exp{\exp\left(- s \bar{y}^{k+1}_{S}\right)}.
  \end{align*}
  Similarly, to the base case, $\bar{\mathcal{B}}_{p+1}$ is a set of nodes with 2 edges. Thus, using \eqref{eq:y_b_p}, \eqref{eq:ZJwoGhKhTZNlf}, \eqref{eq:CtQGxjjECmFTmdsN}, and \eqref{eq:kdULIbZc},
  \begin{align*}
    \Exp{\exp\left(- s \bar{y}^{k+1}_{S}\right)} \leq \frac{3}{2^K} e^{-(k + 1 - K)}.
  \end{align*}
  for all $S \in \bar{\mathcal{B}}_{p+1}.$ For all $S \in \bar{\mathcal{L}}_{p+1},$ using \eqref{eq:y_l_p},
  \begin{align*}
    &\Exp{\exp\left(- s \bar{y}^{k+1}_{S}\right)} \\
    &\leq \Exp{\exp\left(- s \min\left\{\bar{y}^{k} + \sum_{j=1}^{\frac{K}{3}} \min_{i \in S} h_i \eta_{k,i,j}, \bar{y}^{k} + \sum_{j=1}^{\frac{K}{3}} \frac{\mu_{k, S, e, j}}{\bar{w}_{\bar{k}}}, \min_{S \in \cup_{j=1}^p \left(\bar{\mathcal{L}}_j \cup \bar{\mathcal{B}}_j\right)} \bar{y}^{k}_{S}\right\}\right)} \\
    &\leq \Exp{\exp\left(- s \left(\bar{y}^{k} + \sum_{j=1}^{\frac{K}{3}} \min_{i \in S} h_i \eta_{k,i,j}\right)\right)} + \Exp{\exp\left(- s \left(\bar{y}^{k} + \sum_{j=1}^{\frac{K}{3}} \frac{\mu_{k, S, e, j}}{\bar{w}_{\bar{k}}}\right)\right)} \\
    &\quad+ \Exp{\exp\left(- s \min\left(\min_{S \in \cup_{j=1}^p \left(\bar{\mathcal{L}}_j \cup \bar{\mathcal{B}}_j\right)} \bar{y}^{k}_{S}\right)\right)}.
  \end{align*} 
  Using \eqref{eq:ZJwoGhKhTZNlf}, \eqref{eq:CtQGxjjECmFTmdsN}, and \eqref{eq:kdULIbZc},
  \begin{align*}
    \Exp{\exp\left(- s \bar{y}^{k+1}_{S}\right)} \leq \frac{2}{2^K} e^{-(k + 1 - K)} + \Exp{\exp\left(- s \min_{S \in \cup_{j=1}^p \left(\bar{\mathcal{L}}_j \cup \bar{\mathcal{B}}_j\right)}\bar{y}^{k}_{S}\right)},
  \end{align*}
  where we apply \eqref{eq:CtQGxjjECmFTmdsN} with $K \to \frac{K}{3}.$
  Notice that
  \begin{align}
    \label{eq:XGHSoeL}
    &\Exp{\exp\left(- s \min_{S \in \cup_{j=1}^{p} \left(\bar{\mathcal{L}}_j \cup \bar{\mathcal{B}}_j\right)} \bar{y}^{k}_{S}\right)} 
    \leq p \times \frac{3 \cdot 2^{p-1} e^{p-1} n^p p^p}{2^K} e^{-(k - K)} = \frac{3 \cdot 2^{p-1} e^{p-1} n^p p^{p+1}}{2^K} e^{-(k - K)}.
  \end{align}
  due to \eqref{eq:hCvVLgndsboCzRbvZrp}. 
  Using \eqref{eq:XGHSoeL},
  \begin{align*}
    \sum_{S \in \bar{\mathcal{L}}_{p+1}} \Exp{\exp\left(- s \bar{y}^{k+1}_{S}\right)} 
    &\leq \frac{2 \abs{\bar{\mathcal{L}}_{p+1}}}{2^K} e^{-(k + 1 - K)} + \frac{3 \cdot 2^{p-1} e^{p-1} n^p p^{p+1}}{2^K} e^{-(k-K)} \abs{\bar{\mathcal{L}}_{p+1}}  \\
    &\leq \frac{2 \abs{\bar{\mathcal{L}}_{p+1}}}{2^K} e^{-(k + 1 - K)} + \frac{3 \cdot 2^{p-1} e^{p-1} n^{p+1} p^{p+1}}{2^K} e^{-(k-K)}.
  \end{align*}
  since $\abs{\bar{\mathcal{L}}_{p+1}} \leq n.$
  Therefore,
  \begin{align}
    \Exp{\exp\left(- s \min_{S \in \bar{\mathcal{L}}_{p+1} \cup \bar{\mathcal{B}}_{p+1}} \bar{y}^{k+1}_{S}\right)} 
    &\leq \frac{3 \cdot 2^{p-1} e^{p-1} n^{p+1} p^{p+1}}{2^K} e^{-(k-K)} + \frac{2 \abs{\bar{\mathcal{L}}_{p+1}}}{2^K} e^{-(k + 1 - K)} + \frac{3 \abs{\bar{\mathcal{B}}_{p+1}}}{2^K} e^{-(k + 1 - K)} \nonumber \\
    &\leq \frac{3 \cdot 2^{p-1} e^{p} n^{p+1} p^{p+1}}{2^K} e^{-(k + 1 - K)} + \frac{3 n}{2^K} e^{-(k + 1 - K)} \nonumber \\
    &\leq \frac{3 \cdot 2^{p} e^p n^{p+1} (p+1)^{p+1}}{2^K} e^{-(k + 1 - K)}. \label{eq:WAwILgMO}
  \end{align}
  for all $p \geq 1.$ 
  We have proved the next step of \eqref{eq:hCvVLgndsboCzRbvZrp} for $k \to k + 1.$ It left to prove \eqref{eq:kdULIbZc} for $k \rightarrow k + 1.$ Since $\bar{V} = \cup_{j=1}^{\bar{d}} \left(\bar{\mathcal{L}}_j \cup \bar{\mathcal{B}}_j\right),$
  \begin{align}
    \label{eq:XASaQOmshCMPBRbat}
    \Exp{\exp\left(- s \bar{y}^{k+1} \right)} \overset{\eqref{eq:WAwILgMO}}{\leq} \bar{d} \times \frac{3 \cdot 2^{\bar{d}-1} e^{\bar{d}-1} n^{\bar{d}} \bar{d}^{\bar{d}}}{2^K} e^{-(k + 1 - K)} = \frac{3 \cdot 2^{\bar{d}-1} e^{\bar{d}-1} n^{\bar{d}} \bar{d}^{\bar{d} + 1}}{2^K} e^{-(k + 1 - K)}.
  \end{align}
  Notice that
  \begin{align*}
    \log_2\left(3 \cdot 2^{\bar{d}-1} e^{\bar{d}-1} n^{\bar{d}} \bar{d}^{\bar{d} + 1}\right) 
    &= \log_2(3) + (1 + \log_2e)(\bar{d}-1) + \bar{d} \log_2(n) + (\bar{d} + 1) \log_2(\bar{d})
  \end{align*}
  Due to Lemma~\ref{lem:leaf_branch_peeling_depth}, $\bar{d} \leq \log_2(n + 2).$ Thus,
  \begin{align*}
    \log_2\left(3 \cdot 2^{\bar{d}-1} n^{\bar{d}} \bar{d}^{\bar{d} + 1}\right) \leq 8 \log^2_2(n + 2).
  \end{align*}
  It is sufficient to take 
  \begin{align}
    \label{eq:CFwENhAkiqpnohakVd}
    K = 300 \left\lceil\log^2(n + 1)\right\rceil \geq 24 \log^2_2(n + 2)
  \end{align}
  in \eqref{eq:XASaQOmshCMPBRbat} to ensure that \eqref{eq:kdULIbZc} holds for $k \rightarrow k + 1:$
  \begin{align*}
    \Exp{\exp\left(- s \bar{y}^{k + 1} \right)} \leq e^{-(k + 2 - K)}.
  \end{align*}

  \textbf{(Step 7: Endgame).}
  Using Chernoff's method, \eqref{eq:kdULIbZc}, and $y^B \geq \bar{y}^B,$
  \begin{align*}
    \Prob{y^B \leq \bar{t}} 
    &\leq \Prob{\bar{y}^B \leq \bar{t}} \leq \Prob{\exp\left(-s \bar{y}^B\right) \geq \exp\left(-s \bar{t}\right)} \leq \exp\left(s \bar{t}\right) \Exp{\exp\left(- s \bar{y}^B\right)} \\
    &\leq \exp\left(s \bar{t} - (B + 1 - K)\right) = \delta
  \end{align*}
  for all $\delta \in (0, 1]$ and $k \geq 0,$ and fixing
  \begin{align}
    \label{eq:FcmHkQMCPYv}
    \bar{t} = \frac{1}{s} \left(\log(\delta) + (B + 1 - K)\right).
  \end{align}
  Using \eqref{eq:ElKnuvDAUgPNjjaLAn}, for all $t \geq 0,$
  \begin{align*}
  \inf_{x \in G_t} \norm{\nabla f(x)}^2 > 2 \varepsilon \inf_{x \in G_t} \mathbbm{1}\left[\textnormal{prog}^K(x_{[R_T]}) < T\right],
  \end{align*}
  where $G_t$ is the set of points computed by the algorithm $A$ up to time $t.$ By construction, if $y^B > t,$ then $\inf_{x \in G_t} \mathbbm{1}\left[\textnormal{prog}^K(x_{[R_T]}) < T\right] = 1;$ thus,
  \begin{align*}
    \Exp{\inf_{y \in G_t} \norm{\nabla f(y)}^2} > 2 \varepsilon \times \Prob{y^B > t} \geq 2 \varepsilon (1 - \delta).
  \end{align*}
  Choosing $\delta = \frac{1}{2}$ and $t = \bar{t},$
  \begin{align*}
    \Exp{\inf_{y \in G_t} \norm{\nabla f(y)}^2} > \varepsilon.
  \end{align*}
  By construction, notice that $f$ is random. Nevertheless,
  \begin{align*}
    \Exp{\ExpCond{\inf_{y \in G_t} \norm{\nabla f(y)}^2}{f}} > \varepsilon.
  \end{align*}
  Thus, there exists a deterministic $\bar{f}$ such that 
  \begin{align*}
    \Exp{\inf_{y \in G_t(\bar{f})} \norm{\nabla \bar{f}(y)}^2} = \ExpCond{\inf_{y \in G_t} \norm{\nabla f(y)}^2}{f = \bar{f}} > \varepsilon,
  \end{align*}
  where $G_t(\bar{f})$ are outputs of the algorithm given $\bar{f}$ (in the statement of the theorem, we rename $\bar{f}$ to $f$).

  It is left to find the asymptotic of $t = \bar{t}$ using \eqref{eq:FcmHkQMCPYv}:
  \begin{align}
    \label{eq:PDZnKBCYG}
    \bar{t} 
    &\geq \frac{1}{s} \left(B - K\right) \overset{\eqref{eq:HYqffdQkytmukaW}}{=} \frac{1}{2^{10}} \min\left\{\min\limits_{S \in \bar{V}} B_{h}(\nicefrac{1}{p_{\sigma}},S), \frac{d}{K \bar{w}_{\bar{k}}}\right\} \left(\flr{\frac{T}{K}} - K\right).
  \end{align}
  Consider \eqref{eq:WzHRTJlPowGgA}:
  \begin{align*}
    T = \left\lfloor\frac{L \Delta}{2 \Delta^0(K, a) \cdot \ell_1(K, a) \cdot \varepsilon}\right\rfloor.
  \end{align*}
  Using the definitions of $\Delta^0(K, a)$ and $\ell_1(K, a),$
  \begin{align*}
    T = \left\lfloor\frac{L \Delta \log a}{48 \pi e^{3} K^2 a^{2 K} \varepsilon}\right\rfloor.
  \end{align*}
  Choosing $a = 1 + \frac{1}{K},$
  \begin{align*}
    T \geq \left\lfloor\frac{L \Delta}{96 \pi e^{5} K^3 \varepsilon}\right\rfloor
  \end{align*}
  since $\log\left(1 + \frac{1}{K}\right) \geq \frac{1}{2 K}$ and $\left(1 + \frac{1}{K}\right)^{2 K} \leq e^2$ for all $K \geq 1.$ Recall the definition of $K$ in \eqref{eq:CFwENhAkiqpnohakVd}.
  Since in the theorem, we assume that $\frac{L \Delta}{\varepsilon} \geq \bar{c}_1 \log^{10}(n + 1)$ for some universal constant $\bar{c}_1,$
  \begin{align*}
    T \geq \frac{L \Delta}{2 \cdot 96 \pi e^{5} K^3 \varepsilon},
  \end{align*}
  \begin{align*}
    \flr{\frac{T}{K}} \geq \frac{L \Delta}{4 \cdot 96 \pi e^{5} K^4 \varepsilon},
  \end{align*}
  and 
  \begin{align*}
    \flr{\frac{T}{K}} - K \geq \frac{L \Delta}{4 \cdot 96 \pi e^{5} K^4 \varepsilon} - K \geq \frac{L \Delta}{8 \cdot 96 \pi e^{5} K^4 \varepsilon}.
  \end{align*}
  Substituting to \eqref{eq:PDZnKBCYG},
  \begin{align}
    \label{eq:wxcmQVe}
    \bar{t} 
    &\geq \frac{1}{s} \left(B - K\right) \geq \frac{1}{2^{20} \pi e^{5} K^4} \times \min\left\{\min\limits_{S \in \bar{V}} B_{h}\left(\frac{1}{p_{\sigma}},S\right), \frac{d}{K \bar{w}_{\bar{k}}}\right\} \frac{L \Delta}{\varepsilon}.
  \end{align}
  Using the definition of $\gamma_{\infty}(K,a),$
  \begin{align*}
    &\frac{\sigma^2}{2 \varepsilon \gamma_{\infty}^2(K, a)} = \frac{\sigma^2 \log a}{144 \pi e^{3} K^2 a^{2K} \varepsilon} \geq \frac{\sigma^2}{288 \pi e^{5} K^3 \varepsilon}
  \end{align*}
  since $\log\left(1 + \frac{1}{K}\right) \geq \frac{1}{2 K}$ and $\left(1 + \frac{1}{K}\right)^{2 K} \leq e^2$ for all $K \geq 1.$ Due to \eqref{eq:moquqcDcUoBnatBybKQ}, 
  \begin{align*}
    B_{h}(\nicefrac{1}{p_{\sigma}},S) 
    &= B_{h}\left(\max\left\{\frac{\sigma^2}{2 \varepsilon \gamma_{\infty}^2(K, a)}, 1\right\},S\right) \\
    &= \min_{m \in [\abs{S}]} \left[\left(\frac{1}{m} \sum_{i=1}^{m} \frac{1}{h_{\pi_{i}(S)}}\right)^{-1} \left(1 + \max\left\{\frac{\sigma^2}{2 \varepsilon \gamma_{\infty}^2(K, a)}, 1\right\} \times \frac{1}{m}\right)\right] \\
    &\geq \frac{1}{288 \pi e^{5} K^3} \times \min_{m \in [\abs{S}]} \left[\left(\frac{1}{m} \sum_{i=1}^{m} \frac{1}{h_{\pi_{i}(S)}}\right)^{-1} \left(1 + \frac{\sigma^2}{ \varepsilon m}\right)\right] = \frac{1}{288 \pi e^{5} K^3} \times B_{h}\left(\nicefrac{\sigma^2}{\varepsilon},S\right).
  \end{align*}
  Substituting to \eqref{eq:wxcmQVe},
  \begin{align*}
    \bar{t} 
    &\geq \frac{1}{2^{20} \pi e^{5} K^4} \times \min\left\{\frac{1}{288 \pi e^{5} K^3} \times \min\limits_{S \in \bar{V}} B_{h}\left(\nicefrac{\sigma^2}{\varepsilon},S\right), \frac{d}{K \bar{w}_{\bar{k}}}\right\} \frac{L \Delta}{\varepsilon} \\
    &\geq \frac{1}{2^{29} \pi^2 e^{10} K^7} \times \min\left\{\min\limits_{S \in \bar{V}} B_{h}\left(\nicefrac{\sigma^2}{\varepsilon},S\right), \frac{d}{\bar{w}_{\bar{k}}}\right\} \frac{L \Delta}{\varepsilon} \\
    &\geq \frac{1}{2^{43} 3^7 5^{14} \pi^2 e^{10} \cdot \log^{14}(n + 1)} \times \min\left\{\min\limits_{S \in \bar{V}} B_{h}\left(\nicefrac{\sigma^2}{\varepsilon},S\right), \frac{d}{\bar{w}_{\bar{k}}}\right\} \frac{L \Delta}{\varepsilon},
  \end{align*}
  which matches \eqref{eq:KFdeMzpCTPTvM} because $\bar{V} \eqdef (S_{\bar{k} + 1,1}, \dots, S_{\bar{k} + 1,\bar{k} + 1}).$
  \end{proof}

  \subsection{Auxiliary lemmas}
  \label{sec:aux}

  \LEMMAPROB*

  \begin{proof}
  Since the oracle draws $\{\xi\}$ are i.i.d in \eqref{eq:stoch_constr} and the generated $\{\xi\}$ are independent of $\{\mu_{k',S,e,p}\}_{p \geq 1, 1 \leq k' \leq k, S \in \bar{V}, e \in \bar{E}(S)}$ and $\{\nu_{k,S,e,j}\}_{k \geq 1, j \geq 1, S \in \bar{V}, e \in \bar{E}(S)},$ we can conclude that 
  \begin{equation}
  \label{eq:MUNbEyiRMIiLxhGmNuH}
  \begin{aligned}
    &\ProbCond{\eta_{k,i,j} \leq t}{\{\eta_{k,i,j'}\}_{i \in [n], 1 \leq j' < j}, \mathcal{G}_{k-1}} \\
    &= \sum_{p=1}^{\flr{t}} \Prob{\xi_{k,i,j,p} = 1, \xi_{k,i,j,p - 1} = 0, \dots, \xi_{k,i,j,1} = 0} \leq \sum_{p=1}^{\flr{t}} \Prob{\xi_{k,i,j,p} = 1} \leq \flr{t} p_{\sigma}
  \end{aligned}
  \end{equation}
  for all $k \geq 1,$ $i \in [n],$ and $j \geq 1,$
  where $\{\xi_{k,i,j,p}\}_{k,i,j,p \geq 1}$ are i.i.d. Bernoulli random variables.
  
  Let us fix any $\bar{S} \in \bar{V}$ and $e \in \bar{E}(\bar{S}).$ Recall that $R_{T}$ is a uniformly random subset without repetitions; thus,
  \begin{align*}
    \ProbCond{\mu_{k,\bar{S}, e,1} = j}{\mathcal{G}_{k-1}} 
    &= \ProbCond{\nu_{k,\bar{S}, e, j} \in B_k, \nu_{k,\bar{S}, e, j-1} \not\in B_k, \dots, \nu_{k,\bar{S}, e, 1} \not\in B_k}{\mathcal{G}_{k-1}} \\
    &\leq \ProbCond{\nu_{k,\bar{S}, e, j} \in B_k}{\mathcal{G}_{k-1}} = \ExpCond{\mathbf{1}[\nu_{k,\bar{S}, e, j} \in B_k]}{\mathcal{G}_{k-1}}.
  \end{align*}
  Let us define $\mathcal{B}_{k-1}$ as the sigma-algebra generated by $\{\eta_{k',i,j}\}_{1 \leq k' \leq k - 1, i \in [n],j \geq 1},$ $\{B_{k'}\}_{1 \leq k' \leq k - 1},$ and $\{\nu_{k,\bar{S}, e, j}\}_{j \geq 1}.$ Notice that $\mathcal{G}_{k-1} \subseteq \mathcal{B}_{k-1}$ since $\{\mu_{k',S, e,p}\}_{p \geq 1, 1 \leq k' \leq k - 1, S \in \bar{V}, e \in \bar{E}(S)}$ are deterministic, knowing $\mathcal{B}_{k-1}.$ Thus,
  \begin{align*}
    \ProbCond{\mu_{k,\bar{S}, e,1} = j}{\mathcal{G}_{k-1}} 
    &= \ExpCond{\ExpCond{\mathbf{1}[\nu_{k,\bar{S}, e, j} \in B_k]}{\mathcal{B}_{k-1}}}{\mathcal{G}_{k-1}} \\
    &= \ExpCond{\ExpCond{\mathbf{1}[\nu_{k,\bar{S}, e, j} \in B_k]}{\{B_{k'}\}_{1 \leq k' \leq k - 1}, \{\nu_{k,\bar{S}, e, j}\}_{j \geq 1}}}{\mathcal{G}_{k-1}} \\
    &= \ExpCond{\ProbCond{\nu_{k,\bar{S}, e, j} \in B_k}{\{B_{k'}\}_{1 \leq k' \leq k - 1}, \{\nu_{k,\bar{S}, e, j}\}_{j \geq 1}}}{\mathcal{G}_{k-1}} \\
    &\leq \frac{K}{d - (k - 1) K}.
  \end{align*}
  The last inequality follows from the fact that $\{B_{k}\}_{k \in [B]}$ is independent of $\{\nu_{k,\bar{S}, e, j}\}$, and from evaluating the probability that a deterministic value belongs to a random subset $B_k$, given $\{B_{k'}\}_{1 \leq k' \leq k - 1}$. The fact that $\{B_{k}\}_{k \in [B]}$ is independent of $\{\nu_{k,\bar{S}, e, j}\}$ follows from Assumption~\ref{ass:compressors}.

  Since $T \leq \frac{d}{2},$ 
  \begin{align*}
    \ProbCond{\mu_{k,\bar{S}, e,1} = j}{\mathcal{G}_{k-1}} \leq \frac{K}{d - T} \leq \frac{K}{d / 2}.
  \end{align*}
  
  Let us define $u_{m} \eqdef \sum_{p = 1}^{m}\mu_{k,\bar{S}, e,p}.$ Similarly,
  \begin{align*}
    &\ProbCond{\mu_{k,\bar{S}, e,m} = j}{\mu_{k,\bar{S}, e,m - 1}, \dots, \mu_{k,\bar{S}, e,1}, \mathcal{G}_{k-1}} \\
    &=\mathbb{P}\Big(\nu_{k,\bar{S}, e, u_{m-1} + j} \in B_k, \cap_{p=1}^{j-1} \{\nu_{k,\bar{S}, e, {u_{m-1}} + p} \not\in B_k\}, \cap_{p=1}^{m-1} \{\nu_{k,\bar{S}, e, {u_{m-1}} + j} \neq \nu_{k,\bar{S}, e, {u_{p}}}\} \Big\vert \\
    &\qquad\qquad \nu_{k,\bar{S}, e, {u_{m-1}}} \in B_k, \dots, \nu_{k,\bar{S}, e, {u_{m-2}} + 1} \not\in B_k, \nu_{k,\bar{S}, e, {u_{m-2}}} \in B_k, \dots, \nu_{k,\bar{S}, e, {1}} \not\in B_k, \\
    &\qquad\qquad \cap_{p \neq p'=1}^{m-1} \{\nu_{k,\bar{S}, e, {u_{p'}}} \neq \nu_{k,\bar{S}, e, {u_{p}}}\},  \{u_{p}\}_{p=1}^{m-1},  \mathcal{G}_{k-1}\Big) \\
    &\leq\mathbb{P}\Big(\nu_{k,\bar{S}, e, {u_{m-1}} + j} \in B_k \Big\vert \cap_{p=1}^{m-1} \{\nu_{k,\bar{S}, e, {u_{m-1}} + j} \neq \nu_{k,\bar{S}, e, {u_{p}}}\}, \\
    &\qquad\qquad \nu_{k,\bar{S}, e, {u_{m-1}}} \in B_k, \dots, \nu_{k,\bar{S}, e, {u_{m-2}} + 1} \not\in B_k, \nu_{k,\bar{S}, e, {u_{m-2}}} \in B_k, \dots, \nu_{k,\bar{S}, e, {1}} \not\in B_k, \\
    &\qquad\qquad \cap_{p \neq p'=1}^{m-1} \{\nu_{k,\bar{S}, e, {u_{p'}}} \neq \nu_{k,\bar{S}, e, {u_{p}}}\},  \{u_{p}\}_{p=1}^{m-1},  \mathcal{G}_{k-1}\Big) \\
    &\leq\mathbb{P}\Big(\nu_{k,\bar{S}, e, {u_{m-1}} + j} \in B_k \Big\vert \cap_{p=1}^{u_{m-1}} \{\nu_{k,\bar{S}, e, {u_{m-1}} + j} \neq \nu_{k,\bar{S}, e, {p}}\}, \\
    &\qquad\qquad \nu_{k,\bar{S}, e, {u_{m-1}}} \in B_k, \dots, \nu_{k,\bar{S}, e, {u_{m-2}} + 1} \not\in B_k, \nu_{k,\bar{S}, e, {u_{m-2}}} \in B_k, \dots, \nu_{k,\bar{S}, e, {1}} \not\in B_k, \\
    &\qquad\qquad \cap_{p \neq p'=1}^{m-1} \{\nu_{k,\bar{S}, e, {u_{p'}}} \neq \nu_{k,\bar{S}, e, {u_{p}}}\},  \{u_{p}\}_{p=1}^{m-1},  \mathcal{G}_{k-1}\Big)
  \end{align*}
  for all $j \geq 1,$ where use the standard properties of probability and the last inequality comes from the fact that for all $p \geq 1,$ if $\nu_{k,\bar{S}, e, p} \not\in B_k$ and $\nu_{k,\bar{S}, e, p} = \nu_{k,\bar{S}, e, {u_{m-1}} + j},$ \footnotetext{Pedantically, $\ProbCond{A}{B}$ is not defined if $\Prob{B} = 0.$ When we use the inequality $\Prob{A} \leq \ProbCond{A}{B},$ we take into account the standard convention that $\ProbCond{A}{B} = 0$ whenever $\Prob{B} = 0.$}then the probability is zero.
  Now, the last condition 
  \begin{align*}
    &\nu_{k,\bar{S}, e, {u_{m-1}}} \in B_k, \dots, \nu_{k,\bar{S}, e, {u_{m-2}} + 1} \not\in B_k, \nu_{k,\bar{S}, e, {u_{m-2}}} \in B_k, \dots, \nu_{k,\bar{S}, e, {1}} \not\in B_k, \\
    &\cap_{p=1}^{u_{m-1}} \{\nu_{k,\bar{S}, e, {u_{m-1}} + j} \neq \nu_{k,\bar{S}, e, {p}}\}, \cap_{p \neq p'=1}^{m-1} \{\nu_{k,\bar{S}, e, {u_{p'}}} \neq \nu_{k,\bar{S}, e, {u_{p}}}\},  \{u_{p}\}_{p=1}^{m-1}, \mathcal{G}_{k-1}
  \end{align*}
  says us that $B_k$ includes $\nu_{k,\bar{S}, e, {u_{m-1}}} \neq \dots \neq \nu_{k,\bar{S}, e, {u_1}}$ (pairwise distinct) and does not include $\nu_{k,\bar{S}, e, {u_{m-1}} - 1}, \dots, \nu_{k,\bar{S}, e, {u_{m-2}} + 1}, \nu_{k,\bar{S}, e, {u_{m-2}} - 1}, \dots, \nu_{k,\bar{S}, e, {1}}$ such that none of them equals to $\nu_{k,\bar{S}, e, {u_{m-1}} + j},$ and $\mathcal{G}_{k-1}$ also might ``reveal'' the values of $\{B_{k'}\}_{1 \leq k' \leq k - 1}.$ Conditioned on this information, the probability that $\nu_{k,\bar{S}, e, {u_{m-1}} + j} \in B_k$ 
  is less or equal to $\frac{K - (m - 1)}{\max\{d - (k - 1) K -  u_{m-1}, 0\}} \leq \frac{K}{\max\{d - T - u_{m-1}, 0\}} \leq \frac{K}{\max\{d / 2 - u_{m-1}, 0\}}$ since $B_k$ is an uniformly random subset (there are still $K - (m - 1)$ coordinates in $B_k$ with ``unknown positions'' that can uniformly placed in at least $\max\{d - (k - 1) K - u_{m-1}, 0\}$ positions). Therefore, 
  \begin{align*}
    \ProbCond{\mu_{k,\bar{S}, e,m} = j}{\mu_{k,\bar{S}, e,m - 1}, \dots, \mu_{k,\bar{S}, e,1}, \mathcal{G}_{k-1}} \leq \frac{K}{\max\{\frac{d}{2} - \sum_{p = 1}^{m - 1}\mu_{k,\bar{S}, e,p}, 0\}}
  \end{align*}
  and 
  \begin{align}
    \label{eq:sCPEmyFyFqBQAWOsh}
    \ProbCond{\mu_{k,\bar{S}, e,m} \leq t}{\mu_{k,\bar{S}, e,m - 1}, \dots, \mu_{k,\bar{S}, e,1}, \mathcal{G}_{k-1}} \leq \frac{K t}{\max\{\frac{d}{2} - \sum_{p = 1}^{m - 1}\mu_{k,\bar{S}, e,p}, 0\}}.
  \end{align}
  \end{proof}

  \LEMMAPROBTWO*

  \begin{proof}
    For all $t > 0$ and $j \geq 1,$
  \begin{align*}
    &\ExpCond{\exp\left(- s \min_{i \in S} h_i \eta_{k,i,j}\right)}{\{\eta_{k,i,j'}\}_{1 \leq j' \leq j - 1}, \mathcal{G}_{k - 1}} \\
    &\leq e^{- s t} + \ProbCond{\min_{i \in S} h_i \eta_{k,i,j} \leq t}{\{\eta_{k,i,j'}\}_{1 \leq j' \leq j - 1}, \mathcal{G}_{k - 1}} \\
    &\leq e^{- s t} + \sum_{i \in S} \ProbCond{h_i \eta_{k,i,j} \leq t}{\{\eta_{k,i,j'}\}_{1 \leq j' \leq j - 1}, \mathcal{G}_{k - 1}} \\
    &\overset{\eqref{eq:MUNbEyiRMIiLxhGmNuH}}{\leq} e^{- s t} + \sum_{i \in S} p_{\sigma} \flr{\frac{t}{h_i}}.
  \end{align*}
  Taking $t = \frac{1}{32} \times B_{h}(\nicefrac{1}{p_{\sigma}},S),$ we can use Lemma~\ref{lemma:flr} and the bound on $s$ to get
  \begin{align}
    \label{eq:NvgiUNKOYdZPeXP}
    \ExpCond{\exp\left(- s \min_{i \in S} h_i \eta_{k,i,j}\right)}{\{\eta_{k,i,j'}\}_{1 \leq j' \leq j - 1}, \mathcal{G}_{k - 1}} \leq \frac{1}{8}
  \end{align}
  for all $j \geq 1, k \geq 1, S \in \bar{V}.$ 
  Using the tower rule and \eqref{eq:NvgiUNKOYdZPeXP},
  \begin{align*}
    &\ExpCond{\exp\left(- s \sum_{j=1}^K \min_{i \in S} h_i \eta_{k,i,j}\right)}{\mathcal{G}_{k - 1}} \\
    &=\ExpCond{\ExpCond{\exp\left(- s \min_{i \in S} h_i \eta_{k,i,K}\right)}{\{\eta_{k,i,j'}\}_{1 \leq j' \leq K - 1}, \mathcal{G}_{k - 1}} \exp\left(- s \sum_{j=1}^{K-1} \min_{i \in S} h_i \eta_{k,i,j}\right)}{\mathcal{G}_{k - 1}} \\
    &\leq \frac{1}{8} \ExpCond{\exp\left(- s \sum_{j=1}^{K-1} \min_{i \in S} h_i \eta_{k,i,j}\right)}{\mathcal{G}_{k - 1}}.
  \end{align*}
  Then, repeating these derivations $K - 1$ times, we obtain \eqref{eq:ZJwoGhKhTZNlf}.
  \end{proof}

  \LEMMAPROBTHREE*

  \begin{proof}
    Using the tower rule,
    \begin{align*}
      &I^K_2 \eqdef \ExpCond{\exp\left(- s \sum_{j=1}^{K} \frac{\mu_{k, S, e, j}}{w}\right)}{\mathcal{G}_{k - 1}} \\
      &=\ExpCond{\underbrace{\ExpCond{\exp\left(- s \frac{\mu_{k, S, e, K}}{w}\right)}{\{\mu_{k, S, e, j}\}_{1 \leq j \leq K - 1}, \mathcal{G}_{k - 1}} \exp\left(- s \sum_{j=1}^{K - 1} \frac{\mu_{k, S, e, j}}{w}\right)}_{K_2 \eqdef }}{\mathcal{G}_{k - 1}}.
    \end{align*}
    If $\sum_{j=1}^{K - 1} \mu_{k, S, e, j} \geq \frac{d}{4},$ then
  \begin{align*}
    K_2 \leq \exp\left(- \frac{s d}{4 w}\right).
  \end{align*}
  Otherwise, if $\sum_{j=1}^{K - 1} \mu_{k, S, e, j} < \frac{d}{4},$ then
  \begin{align*}
    K_2 
    &\leq \ExpCond{\exp\left(- s \frac{\mu_{k, S, e, K}}{w}\right)}{\{\mu_{k, S, e, j}\}_{1 \leq j \leq K - 1}, \mathcal{G}_{k - 1}} \exp\left(- s \sum_{j=1}^{K - 1} \frac{\mu_{k, S, e, j}}{w}\right) \\
    &\leq \left(\exp\left(- s t\right) + \ProbCond{\frac{\mu_{k, S, e, K}}{w} \leq t}{\{\mu_{k, S, e, j}\}_{1 \leq j \leq K - 1}, \mathcal{G}_{k - 1}}\right) \exp\left(- s \sum_{j=1}^{K - 1} \frac{\mu_{k, S, e, j}}{w}\right)
  \end{align*}
  for all $t > 0.$ Using \eqref{eq:sCPEmyFyFqBQAWOsh}, if $\sum_{j=1}^{K - 1} \mu_{k, S, e, j} < \frac{d}{4},$ then
  \begin{align*}
    K_2 
    &\leq \left(\exp\left(- s t\right) + \frac{K t w}{\max\{\frac{d}{2} - \sum_{p = 1}^{K - 1}\mu_{k,S, e,p}, 0\}}\right) \exp\left(- s \sum_{j=1}^{K - 1} \frac{\mu_{k, S, e, j}}{w}\right) \\
    &\leq \left(\exp\left(- s t\right) + \frac{4 K t w}{d}\right) \exp\left(- s \sum_{j=1}^{K - 1} \frac{\mu_{k, S, e, j}}{w}\right).
  \end{align*}
  Taking $t = \frac{d}{128 K w}$ and using $s \geq \frac{\log(32)}{t},$ 
  \begin{align*}
    K_2 \leq \left(\frac{1}{32} + \frac{1}{32}\right) \exp\left(- s \sum_{j=1}^{K - 1} \frac{\mu_{k, S, e, j}}{w}\right) = \frac{1}{16} \exp\left(- s \sum_{j=1}^{K - 1} \frac{\mu_{k, S, e, j}}{w}\right).
  \end{align*}
  Combining both cases,
  \begin{align*}
    I_2^{K} 
    &\leq \ExpCond{\max\left\{\frac{1}{16} \exp\left(- s \sum_{j=1}^{K - 1} \frac{\mu_{k, S, e, j}}{w}\right), \exp\left(- \frac{s d}{4 w}\right)\right\}}{\mathcal{G}_{k - 1}} \\
    &\leq \frac{1}{16} \underbrace{\ExpCond{\exp\left(- s \sum_{j=1}^{K - 1} \frac{\mu_{k, S, e, j}}{w}\right)}{\mathcal{G}_{k - 1}}}_{I_2^{K-1}} + \exp\left(- \frac{s d}{4 w}\right).
  \end{align*}
  Repeating the same steps $K - 1$ more times,
  \begin{align*}
    I_2^{K} 
    &\leq \frac{1}{16^{K}} + 2 \exp\left(- \frac{s d}{4 w}\right) \leq \frac{1}{8^{K}},
  \end{align*}
  where we use that $s \geq \frac{256 K w}{d}$ and $\sum_{j=0}^{\infty} \frac{1}{16^j} \leq 2.$
  \end{proof}

  \begin{lemma}
    \label{lemma:flr}
    For all $h_1, \dots, h_n \geq 0,$ $p_{\sigma} \in (0, 1],$ and $S \subseteq [n],$ we have 
    \begin{align*}
      p_{\sigma} \sum_{i \in S} \flr{\frac{t}{h_i}} \leq \frac{1}{16}
    \end{align*}
    for
    \begin{align*}
      t = \frac{1}{32} \times B_{h}(\nicefrac{1}{p_{\sigma}},S),
    \end{align*}
    where 
    \begin{align*}
      B_{h}(\nicefrac{1}{p_{\sigma}},S) \eqdef \min_{m \in [\abs{S}]} \left[\left(\frac{1}{m} \sum_{i=1}^{m} \frac{1}{h_{\pi_{i}(S)}}\right)^{-1} \left(1 + \frac{1}{p_{\sigma} m}\right)\right],
    \end{align*}
    where $\pi(S)$ is a permutation that sorts $\{h_i\}_{i \in S}:$ $h_{\pi_1(S)} \leq \dots \leq h_{\pi_{\abs{S}}(S)}.$
  \end{lemma}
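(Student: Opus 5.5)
The plan is a direct counting argument. Only the workers whose computation time is at most $t$ contribute a nonzero floor. The definition of $B_{h}(\nicefrac{1}{p_{\sigma}},S)$, evaluated at the number of such workers, then bounds both their count and their total contribution. Throughout I assume $h_i>0$; if some $h_i=0$ then $B_h=0$ and $t=0$, and the statement is read with the convention $\flr{0/h_i}=0$.

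\textbf{Step 1 (reduce to the fast workers).} Let $m' \eqdef \abs{\{i\in S : h_i \le t\}}$. Since $h_{\pi_1(S)}\le\dots\le h_{\pi_{\abs{S}}(S)}$, these workers are exactly $\pi_1(S),\dots,\pi_{m'}(S)$. Every other worker has $\flr{t/h_i}=0$. If $m'=0$ the sum vanishes and we are done. Otherwise, using $\flr{x}\le x$,
\begin{align*}
p_{\sigma}\sum_{i\in S}\flr{\frac{t}{h_i}} \le p_{\sigma}\, t \sum_{i=1}^{m'} \frac{1}{h_{\pi_i(S)}}.
\end{align*}

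\textbf{Step 2 (use the minimum at $m=m'$).} Write $H_{m'} \eqdef \sum_{i=1}^{m'} 1/h_{\pi_i(S)}$. Since $B_h$ is a minimum over $m$, taking $m=m'$ gives $t \le \frac{1}{32} H_{m'}^{-1}\left(m' + \frac{1}{p_{\sigma}}\right)$. This inequality will be used twice.

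\textbf{Step 3 (bound the count $m'$).} Each $h_{\pi_i(S)}\le t$ for $i\le m'$, so $H_{m'} \ge m'/t$. Substituting into Step 2 gives $t \le \frac{t}{32 m'}\left(m'+\frac{1}{p_{\sigma}}\right)$, i.e. $32m' \le m' + 1/p_{\sigma}$. Hence $p_{\sigma} m' \le 1/31$.

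\textbf{Step 4 (conclude).} Plugging the Step 2 bound on $t$ into the Step 1 estimate gives
\begin{align*}
p_{\sigma}\, t\, H_{m'} \le \frac{p_{\sigma} m' + 1}{32} \le \frac{1 + 1/31}{32} = \frac{1}{31} \le \frac{1}{16}.
\end{align*}

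\textbf{Main obstacle.} The only nontrivial point is that the minimum defining $B_h$ need not be attained at $m'$. The fix is to use the minimum only as an upper bound at the specific index $m'$. One must also notice that $m'$ itself needs controlling, which Step 3 provides by exploiting $h_{\pi_i(S)}\le t$ for the counted workers. Everything else is routine.
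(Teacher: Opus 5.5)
Your proof is correct and follows essentially the same route as the paper. Both arguments take $m$ to be the number of workers in $S$ with $h_i \le t$, evaluate the minimum defining $B_h$ at that index to get $p_\sigma t \sum_{i=1}^{m} 1/h_{\pi_i(S)} \le \frac{1}{32}(p_\sigma m + 1)$, and control $p_\sigma m$ using $h_{\pi_i(S)} \le t$; the paper shows $p_\sigma m < 1$ by contradiction, whereas you derive $p_\sigma m \le 1/31$ directly and so get the slightly sharper constant $1/31$.
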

  \begin{proof}
  If $\flr{t/h_i}=0$ for all $i\in S$, then the lemma is true. Otherwise, let
  \begin{align}
    \label{eq:sussFrtXWJkdv}
    m \eqdef \max\Bigl\{r\in[\abs{S}]:\; h_{\pi_r(S)}\le t\Bigr\}.
  \end{align}
  Then $\flr{t/h_{\pi_i(S)}}\ge 1$ for $i\le m$ and $\flr{t/h_{\pi_i(S)}}=0$ for $i>m$. Hence,
  \[
  \sum_{i\in S}\flr{\frac{t}{h_i}}
  =\sum_{i=1}^m \flr{\frac{t}{h_{\pi_i(S)}}}
  \le \sum_{i=1}^m \frac{t}{h_{\pi_i(S)}}
  = t \sum_{i=1}^m \frac{1}{h_{\pi_i(S)}}.
  \]
  Let $A_m \eqdef \frac{1}{m}\sum_{i=1}^m \frac{1}{h_{\pi_i(S)}}$. By the definition of
  $B_h(\nicefrac{1}{p_\sigma},S)$ and $t=\frac{1}{32}B_h(\nicefrac{1}{p_\sigma},S)$,
  \[
  t \le \frac{1}{32}\,A_m^{-1} \left(1+\frac{1}{p_\sigma m}\right).
  \]
  Therefore,
  \begin{align*}
  p_\sigma \sum_{i\in S}\flr{\frac{t}{h_i}}
  &\le p_\sigma t \sum_{i=1}^m \frac{1}{h_{\pi_i(S)}}
  = p_\sigma t m A_m \le \frac{1}{32}\,p_\sigma \Bigl(A_m^{-1}\Bigl(1+\frac{1}{p_\sigma m}\Bigr)\Bigr) m A_m
  = \frac{1}{32}\bigl(p_\sigma m + 1\bigr).
  \end{align*}
  We now show that $p_\sigma m < 1$. If $p_\sigma m\ge 1$, then
  $1+\frac{1}{p_\sigma m}\le 2.$ Moreover, $A_m \ge \frac{1}{h_{\pi_m(S)}}$, hence
  $A_m^{-1}\le h_{\pi_m}$. Consequently,
  \[
  t \le \frac{1}{32} A_m^{-1} \times 2 \le \frac{1}{16}h_{\pi_m(S)}<h_{\pi_m(S)},
  \]
  contradicting $h_{\pi_m(S)}\le t$ in \eqref{eq:sussFrtXWJkdv}. Thus $p_\sigma m<1$, and
  \[
  p_\sigma \sum_{i\in S}\flr{\frac{t}{h_i}}
  \le \frac{1}{32}(1+p_\sigma m)<\frac{1}{16}.
  \]
  \end{proof}

\subsection{Proof of Lemma~\ref{lem:leaf_branch_peeling_depth}}
\label{proof:leaf}

\LEMMABOUNDD*

\begin{proof}
For any step $p\in[\bar d]$, consider
\[
\bar T_p \eqdef \bar T \setminus \bigcup_{i < p}\left(\bar{\mathcal{L}}_i \cup \bar{\mathcal{B}}_i\right),
\qquad\text{so that }\bar T_1\equiv \bar T.
\]
Let $\widetilde T_p$ be the tree obtained from $\bar T_p$ by suppressing all degree-$2$ nodes
(recursively removing any degree-$2$ node and merging its two incident edges into one).
Denote $\widetilde m_p \eqdef |V(\widetilde T_p)|$.

In $\widetilde T_p$, every node has degree $1$ or at least $3$. Let $\widetilde{\mathcal{L}}_p \equiv \bar{\mathcal{L}}_p$ be the set
of leaves of $\widetilde T_p$ and $\widetilde{\mathcal{I}}_p \eqdef V(\widetilde T_p)\setminus \widetilde{\mathcal{L}}_p$. Let $\deg_{\widetilde T_p}(v)$ be the number of edges of node $v$ in tree $\widetilde T_p.$ Using the degree-sum identity for trees,
\[
\sum_{v\in V(\widetilde T_p)} \deg_{\widetilde T_p}(v) = 2(\widetilde m_p-1)
\;\ge\; |\widetilde{\mathcal{L}}_p| + 3|\widetilde{\mathcal{I}}_p|,
\]
which implies $|\widetilde{\mathcal{L}}_p|\ge |\widetilde{\mathcal{I}}_p|+2$ due to $\widetilde m_p = |\widetilde{\mathcal{L}}_p| + |\widetilde{\mathcal{I}}_p|$ and hence
\[
|\widetilde{\mathcal{I}}_p|\le \frac{\widetilde m_p-2}{2}.
\]
By construction of the leaf-branch peeling step, removing
$\bar{\mathcal{L}}_p \cup \bar{\mathcal{B}}_p$ from $\bar T_p$ yields $\bar T_{p+1}$.
Note that $\widetilde{\mathcal{I}}_p$ is a set of nodes belonging to $\bar T_{p+1}$ and not suppressed by the degree-2 suppressing procedure in $\bar T_p$ (all nodes have degree $\geq 3$). Consider the set of degree-2 nodes $\widetilde{\mathcal{K}} = V(\bar T_{p+1}) \setminus \widetilde{\mathcal{I}}_p$ such that $\deg_{\bar T_p}(v) = 2$ for all $v \in \widetilde{\mathcal{K}}.$ For any $v \in \widetilde{\mathcal{K}},$ the degree of $v$ does not change when we construct $\bar T_{p+1}$, because if it changes, then $v$ would belong to $\bar{\mathcal{B}}_p$ (contradiction). Thus, $\deg_{\bar T_{p+1}}(v) = 2$ for all $v \in \widetilde{\mathcal{K}}.$ It means that when construct $\bar T_{p+1},$ vertices $\widetilde{\mathcal{K}}$ still have degree 2. Thus, the degree-2 suppressing procedure on $\bar T_{p+1}$ will remove at least $\widetilde{\mathcal{K}}$ and we get
\begin{align*}
  \widetilde m_{p+1} \leq |\widetilde{\mathcal{I}}_p|.
\end{align*}
Therefore,
\[
\widetilde m_{p+1} \le |\widetilde{\mathcal{I}}_p|\le \frac{\widetilde m_p-2}{2}.
\]
Unrolling the recursion, 
\begin{align*}
  \widetilde m_{p}+2 \le \frac{\widetilde m_1+2}{2^{p-1}} \le \frac{n + 2}{2^{p-1}}
\end{align*}
and 
\begin{align*}
  2^{\bar{d}} \le n + 2.
\end{align*}
since $m_{\bar{d}} \geq 0.$ Thus
\begin{align*}
  \bar{d} \le \flr{\log_2(n + 2)}.
\end{align*}
\end{proof}



\subsection{Proof of Theorem~\ref{thm:main_heter}}



\MAINTHEOREMHETER*

\begin{proof}
  The lower bound
  \begin{align}
    \Omega\left(\max\left\{\max_{i \in [n]} h_i, \frac{\sigma^2}{n \varepsilon}\left(\frac{1}{n} \sum_{i=1}^n h_i\right)\right\} \frac{L \Delta}{\varepsilon}\right)
  \end{align}
  follows from Theorem A.2 by \citet{tyurin2023optimal}, where the authors do not take into account communication times. With communication times, the lower bound can only increase. We now prove the first term in the $\max.$

  Unlike the homogeneous setup, it is sufficient to use the deterministic construction from \citep{carmon2020lower} and apply the standard trick of placing the blocks of this function on different nodes. For any $T \in \N,$ \citet{carmon2020lower} define $F_T \,:\,\R^T \to \R$ such that
\begin{align}
    \label{eq:worst_case}
    F_T(x) \eqdef -\Psi(1) \Phi(x_1) + \sum_{i=2}^T \left[\Psi(-x_{i-1})\Phi(-x_i) - \Psi(x_{i-1})\Phi(x_i)\right],
\end{align}
where
\begin{align*}
    \Psi(x) = \begin{cases}
        0, & x \leq 1/2, \\
        \exp\left(1 - \frac{1}{(2x - 1)^2}\right), & x \geq 1/2,
    \end{cases}
    \quad\textnormal{and}\quad
    \Phi(x) = \sqrt{e} \int_{-\infty}^{x}e^{-\frac{1}{2}t^2}dt.
\end{align*}
\begin{lemma}[\cite{carmon2020lower}]
    \label{lemma:worst_function}
    The function $F_T$ satisfies:
    \begin{enumerate}
        \item $F_T(0) - \inf_{x \in \R^T} F_T(x) \leq \Delta^0 T,$ where $\Delta^0 \eqdef 12.$
        \item The function $F_T$ is $l_1$--smooth, where $l_1 \eqdef 152.$
        \item For all $x \in \R^T,$ $\textnormal{prog}(\nabla F_T(x)) \leq \textnormal{prog}(x) + 1.$
        \item For all $x \in \R^T,$ if $\textnormal{prog}(x) < T,$ then $\norm{\nabla F_T(x)} > 1.$
    \end{enumerate}
\end{lemma}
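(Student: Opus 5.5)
The plan is to verify the four properties directly from the definition of $F_T$, following \citet{carmon2020lower}. I would first record elementary facts about the two scalar building blocks:
\begin{itemize}
\item $\Psi\ge 0$, and $\Psi(x)=\Psi'(x)=0$ for $x\le 1/2$;
\item $\Psi(x)\ge 1$ for $x\ge 1$, and $\Psi<e$ everywhere;
\item $0<\Phi<\sqrt{2\pi e}$ and $\Phi'(x)=\sqrt{e}\,e^{-x^2/2}>0$, so $\Phi'(x)>1$ whenever $|x|<1$;
\item $\Psi',\Psi'',\Phi',\Phi''$ are uniformly bounded by explicit constants, for instance $|\Psi'|\le\sqrt{54/e}$ and $|\Phi'|\le\sqrt e$.
\end{itemize}
Everything below reduces to these one-dimensional facts plus the chain structure of $F_T$.

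\textbf{Partial derivatives.} Differentiating the sum, $\partial_i F_T$ depends only on $x_{i-1},x_i,x_{i+1}$ (with the convention $x_0\equiv 1$ for the first term):
\begin{align*}
\partial_i F_T(x) = &-\Psi(-x_{i-1})\Phi'(-x_i)-\Psi(x_{i-1})\Phi'(x_i) \\
&-\Psi'(-x_i)\Phi(-x_{i+1})-\Psi'(x_i)\Phi(x_{i+1}).
\end{align*}
The last two terms are absent for $i=T$.

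\textbf{Item 3 (progress).} If $i>\textnormal{prog}(x)+1$, then $x_{i-1}=x_i=0$. Hence $\Psi(\pm x_{i-1})=0$ and $\Psi'(\pm x_i)=0$, so $\partial_i F_T(x)=0$.

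\textbf{Item 1 (initial gap).} At $x=0$ all terms vanish except $-\Psi(1)\Phi(0)=-\sqrt{\pi e/2}$. On the other hand, every summand is at least $-\sup\Psi\cdot\sup\Phi\ge -e\sqrt{2\pi e}$. Together these give $F_T(0)-\inf F_T\le e\sqrt{2\pi e}\,T<12T$.

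\textbf{Item 2 (smoothness).} The Hessian is tridiagonal. Each diagonal and off-diagonal entry is a sum of at most four products of the form (bounded derivative of $\Psi$ or $\Phi$) times (bounded $\Psi$, $\Phi$ or their derivatives). Bounding every entry and applying Gershgorin's theorem (a row contains at most three nonzero entries) gives $\|\nabla^2F_T\|\le 152$. This step is pure bookkeeping of the constants.

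\textbf{Item 4 (large gradient), the main obstacle.} Since $\textnormal{prog}(x)<T$, we have $x_T=0$. Let $j\in[T]$ be the smallest index with $|x_j|<1$; it exists because $x_T=0$. Then $|x_{j-1}|\ge 1$, using $x_0=1$ if $j=1$. Consider first $x_{j-1}\ge 1$.
\begin{itemize}
\item The first-derivative part gives $\Psi(-x_{j-1})=0$ and $\Psi(x_{j-1})\Phi'(x_j)>1$.
\item The remaining terms involve $\Psi'(\pm x_j)$, which is nonzero only when $|x_j|>1/2$, and for a given $x_j$ only one of $\Psi'(x_j)$, $\Psi'(-x_j)$ is nonzero.
\end{itemize}
The delicate point is the sign analysis showing that these remaining terms cannot cancel the main one. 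I would first try the argument that the term $-\Psi'(x_j)\Phi(x_{j+1})$ has the same sign as $-\Psi(x_{j-1})\Phi'(x_j)$, so it only reinforces it. For the opposite case $x_j<-1/2$, I would use the explicit value of $\Phi'(x_j)$ on $(-1,-1/2)$ together with the bound on $\Psi'$. The case $x_{j-1}\le -1$ is symmetric under $x\mapsto -x$. If the direct sign argument fails, I would fall back to Carmon et al.'s refined comparison between $\Psi'$ and $\Phi'$ near $|x|\in(1/2,1)$. In every case the conclusion is $|\partial_jF_T(x)|>1$.
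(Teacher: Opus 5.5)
Your items 1--3 follow the standard argument of \citet{carmon2020lower}, which the paper simply cites, and they are fine. One caveat on item 2: to reach the constant $152$ you must use that $\Psi$, $\Psi'$ and $\Psi''$ all vanish on $(-\infty,1/2]$. Hence at most one of $\Psi(\pm t)$, and at most one of $\Psi''(\pm t)$, is nonzero. With this, each diagonal Hessian entry is at most $\sup\Psi\cdot\sup|\Phi''|+\sup|\Psi''|\cdot\sup\Phi\le e+32.5\sqrt{2\pi e}$, and each off-diagonal entry is at most $\sup\Psi'\cdot\sup\Phi'\le\sqrt{54}$. If you bound all four products in an entry separately, the constant roughly doubles.

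\textbf{Item 4 has a genuine gap, caused by a sign error.} You treat $x_j<-1/2$ as an ``opposite case'' in which the surviving $\Psi'$-term might cancel the main term. In that case the surviving term is $-\Psi'(-x_j)\Phi(-x_{j+1})$. Since $\Psi'\ge 0$ everywhere and $\Phi>0$, this term is also $\le 0$. In fact all four summands of $\partial_jF_T(x)$ are nonpositive, because $\Psi,\Psi'\ge0$ and $\Phi,\Phi'>0$. Using that $\Phi'$ is even and that exactly one of $\Psi(\pm x_{j-1})$ is nonzero (as $|x_{j-1}|\ge 1$), you get
\[
\partial_jF_T(x)\le -\Psi(|x_{j-1}|)\,\Phi'(x_j)\le -\Psi(1)\,\Phi'(x_j)<-\Psi(1)\,\Phi'(1)=-1 .
\]
No case split on the signs of $x_j$ or $x_{j-1}$ is needed. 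This is exactly Carmon et al.'s argument; there is no ``refined comparison between $\Psi'$ and $\Phi'$'' to fall back on.

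Your proposed fallback would also have failed if the sign really were adverse. The maximum of $\Psi'$, about $\sqrt{54/e}$, is attained near $0.91\in(1/2,1)$. So for $x_j\approx-0.91$ and very negative $x_{j+1}$, the term $\Psi'(-x_j)\Phi(-x_{j+1})$ is about $\sqrt{54/e}\cdot\sqrt{2\pi e}\approx 18$. The main term $\Psi(x_{j-1})\Phi'(x_j)$ is always below $e^{3/2}\approx 4.5$. A magnitude comparison using $\Phi'$ on $(-1,-1/2)$ and the bound on $\Psi'$ therefore cannot work. As written, item 4 is not established; replacing your case analysis with the nonpositivity observation closes it.
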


  Find undirected version of $G:$ $\bar{G}=(V,\bar E,b)$ where $\{i,j\}\in\bar E$ with weight $b_{ij}$ iff $(i,j)\in E$ with weight $b_{ij}.$ For the graph $\bar{G}$, we construct a Gomory-Hu tree $T = (V, F, w).$ Let us take any pair $\{\bar{i},\bar{j}\} \in F$ of workers such that $\min_{\{i,j\} \in F} w_{ij} = w_{\bar{i}\bar{j}}.$ We now split the blocks of the function $F_T(x)$ from \eqref{eq:worst_case} and define two new functions. 
  
  First, sample a uniformly random subset $R_T = [R_{T,1}, \dots, R_{T,T}]$ without repetitions from set $[d]$ ($R_{T,1} < \dots < R_{T,T}$). Let us fix any $\lambda > 0,$ then we take $F_{T,1} \,:\,\R^d \to \R$ and $F_{T,2} \,:\,\R^d \to \R$ such that
  \begin{align}
    \label{eq:CXOAktJs}
    F_{T,1}(x) \eqdef -\Psi(1) \Phi([x]_{R_{T,1}}) + \sum_{i \in \{2, \dots, T\}, i\,|\,2 = 1} \left[\Psi(-[x]_{R_{T,i-1}})\Phi(-[x]_{R_{T,i}}) - \Psi([x]_{R_{T,i-1}})\Phi([x]_{R_{T,i}})\right],
  \end{align}
  and 
  \begin{align*}
    F_{T,2}(x) \eqdef \sum_{i \in \{2, \dots, T\}, i\,|\,2 = 0} \left[\Psi(-[x]_{R_{T,i-1}})\Phi(-[x]_{R_{T,i}}) - \Psi([x]_{R_{T,i-1}})\Phi([x]_{R_{T,i}})\right],
  \end{align*}
  where $[x]_j$ is the $j$\textsuperscript{th} coordinate of $x.$ The idea is that we apply a random permutation of the coordinates. Notice that the $d - T$ coordinates are artificial. We consider the following functions $f_i:$
    \begin{align*}
        f_i(x) \eqdef 
        \begin{cases} 
          \frac{n L \lambda^2}{l_1} F_{T,1}\left(\frac{x}{\lambda}\right), & i = \bar{i}, \\
          \frac{n L \lambda^2}{l_1} F_{T,2}\left(\frac{x}{\lambda}\right), & i = \bar{j}, \\
          0 , & i \neq \bar{i} \textnormal{ and } i \neq \bar{j}.
        \end{cases}
    \end{align*}
    Then, we get
    \begin{align*}
      f(x) = \frac{1}{n} \sum_{i=1}^n f_i(x) = \frac{1}{n} \left(\frac{n L \lambda^2}{l_1} F_{T,1} \left(\frac{x}{\lambda}\right) + \frac{n L \lambda^2}{l_1} F_{T,2}\left(\frac{x}{\lambda}\right) \right) = \frac{L \lambda^2}{l_1} F_{T}\left(\frac{x_{[R_T]}}{\lambda}\right),
    \end{align*}
    where $x_{[R_T]} \in \R^T$ is the subvector of size $T$ of vector $x \in \R^d$ such that $[x_{[R_T]}]_{i} = [x]_{R_{T,i}}$ for all $i \in [T].$
    The function $f$ is $L$-smooth since
    \begin{align*}
        \norm{\nabla f(x) - \nabla f(y)} &= \frac{L \lambda}{l_1} \norm{\nabla F_{T}\left(\frac{x_{[R_T]}}{\lambda}\right) - \nabla F_{T}\left(\frac{y_{[R_T]}}{\lambda}\right)} \leq L \norm{x - y}.
    \end{align*}
    Let us take $$T = \left\lfloor\frac{\Delta l_1}{L \lambda^2 \Delta^0}\right\rfloor,$$ then
    \begin{align*}
        f(0) - \inf_{x \in \R^d} f(x) = \frac{L \lambda^2}{l_1} (F_{T}\left(0\right) - \inf_{x \in \R^{T}} F_{T}(x)) \leq \frac{L \lambda^2 \Delta^0 T}{l_1} \leq \Delta.
    \end{align*}
    We take 
    $$\lambda = \frac{l_1 \sqrt{\varepsilon}}{L}$$
    to ensure that 
    \begin{align*}
        \norm{\nabla f(x)}^2 = \frac{L^2 \lambda^2}{l_1^2} \norm{\nabla F_{T}\left(\frac{x_{[R_T]}}{\lambda}\right)}^2 > \frac{L^2 \lambda^2}{l_1^2} = \varepsilon
    \end{align*}
    for all $x \in \R^T$ such that $\textnormal{prog}(x) < T.$ In the last inequality, we use Lemma~\ref{lemma:worst_function}.

    We assume that the workers have access to non-stochastic mappings $\nabla f_i(x)$ that are unbiased and $0$-variance-bounded, which is sufficient to derive the lower bound.

    Substituting the choice of the parameters,
    $$T = \left\lfloor\frac{\Delta L}{l_1 \varepsilon \Delta^0}\right\rfloor.$$

    Only workers $\bar{i}$ and $\bar{j}$ possess information about the function $f$. The function $f$ is constructed as a zero-chain function, and its components are distributed between workers $\bar{i}$ and $\bar{j}$. Because of this partitioning, these two workers must communicate in order to identify the next non-zero coordinate. Initially, only worker $\bar{i}$ can obtain a non-zero value in the first ``useful'' coordinate $R_{T,1}$ via the gradient of $F_{T,1}$. Subsequently, however, this worker cannot obtain a non-zero value in the second ``useful'' coordinate $R_{T,2}$ due to the construction in \eqref{eq:CXOAktJs}. Thus, it has to pass the first ``useful'' coordinate to worker $\bar{j}$ because only this worker can discover a non-zero value in the second ``useful'' coordinate. Then, once the second worker discovers the second ``useful'' coordinate and has to pass it to the first worker, and so forth.
    
    Let $\{\nu_{1,j}\}_{j \geq 1}$ be the sequence that the second worker receives from the first worker during the optimization process. Due to our construction, the function is random with randomly permuted coordinates. We define $\mu_{1}$ as the number of received coordinates $\{\nu_{1,j}\}_{j \geq 1}$ by the second worker from the first worker until the moment when a received coordinate is $R_{T,1}.$ Thus,
    \begin{align*}
      \Prob{\mu_{1} = j} 
      &= \Prob{\nu_{1, j} = R_{T,1}, \nu_{1, j-1} \neq R_{T,1}, \dots, \nu_{1, 1} \neq R_{T,1}} \leq \Prob{\nu_{1, j} = R_{T,1}} \\
      &= \Exp{\ProbCond{\nu_{1, j} = R_{T,1}}{\{\nu_{1, j}\}_{j \geq 1}}}.
    \end{align*}
    Due to Assumption~\ref{ass:compressors}, $\{\nu_{1, j}\}_{j \geq 1}$ are independent of $R_{T,1}.$ Thus,
    \begin{align*}
      \Prob{\mu_{1} = j} \leq \frac{1}{d}
    \end{align*}
    and 
    \begin{align*}
      \Prob{\mu_{1} \leq t} \leq \frac{t}{d}
    \end{align*}
    for all $t \geq 0,$ since $R_{T,1}$ is a uniformly random index from $[d]$, given $\{\nu_{1, j}\}_{j \geq 1}.$
    Let us define $y^k$ as the first moment of time when any of the workers can discover the $k$\textsuperscript{th} coordinate $R_{T,k}.$ 
    Similarly, we define $\mu_{2}$ as the number of received coordinates $\{\nu_{2,j}\}_{j \geq 1}$ by the first worker from the second worker, after time $y^2,$ until the moment when a received coordinate is $R_{T,2},$ we define $\mu_{3}$ as the number of received coordinates $\{\nu_{3,j}\}_{j \geq 1}$ by the second worker from the first worker, after time $y^3,$ until the moment when a received coordinate is $R_{T,3},$ where $\{\nu_{k,j}\}_{j \geq 1}$ are the coordinate sent after time $y^k.$ Thus,
    \begin{align*}
      &\ProbCond{\mu_{k} = j}{\{\mu_{k'}\}_{1 \leq k' < k}} \leq \ProbCond{\nu_{k, j} = R_{T,k}}{\{\mu_{k'}\}_{1 \leq k' < k}}.
    \end{align*}
    Let us define $\mathcal{B}_{k-1}$ as the sigma-algebra generated by $\{R_{T,k'}\}_{1 \leq k' < k}$ and $\{\nu_{k,j}\}_{j \geq 1, k \geq 1}.$ Since $\{\mu_{k'}\}_{1 \leq k' < k}$ are deterministic knowing $\mathcal{B}_{k-1}.$ Using the tower rule,
    \begin{align*}
      &\ProbCond{\mu_{k} = j}{\{\mu_{k'}\}_{1 \leq k' < k}} \leq \ProbCond{\nu_{k, j} = R_{T,k}}{\{\mu_{k'}\}_{1 \leq k' < k}} \\
      &= \ExpCond{\ProbCond{\nu_{k, j} = R_{T,k}}{\{R_{T,k'}\}_{1 \leq k' < k}, \{\nu_{k,j}\}_{j \geq 1, k \geq 1}}}{\{\mu_{k'}\}_{1 \leq k' < k}}.
    \end{align*}
    Conditioned on $\{R_{T,k'}\}_{1 \leq k' < k}$, there are at least $d - (k - 1)$ positions where $R_{T,k}$ can be placed uniformly. Therefore,
    \begin{align*}
      &\ProbCond{\mu_{k} = j}{\{\mu_{k'}\}_{1 \leq k' < k}} \leq \ProbCond{\nu_{k, j} = R_{T,k}}{\{\mu_{k'}\}_{1 \leq k' < k}} \leq \frac{1}{d - (k - 1)} \leq \frac{2}{d}
    \end{align*}
    and 
    \begin{align*}
      &\ProbCond{\mu_{k} \leq t}{\{\mu_{k'}\}_{1 \leq k' < k}} \leq \frac{2 t}{d}
    \end{align*}
    for all $t \geq 0$ and $1 \leq k \leq T - 1$ since $T \leq d / 2.$

    By our construction, the two workers are separated by the edge with weight $\min_{\{i,j\} \in F} w_{ij}.$ Hence, the maximum flow, or in terms of our problem, the maximum number of coordinates per second that nodes $\bar{i}$ and $\bar{j}$ can send to each other is bounded by $\min_{\{i,j\} \in F} w_{ij}$. 
    
    Combining all together, the time discover the last $T$\textsuperscript{th} ``useful'' is lower bounded by the sum
    \begin{align*}
      \sum_{j=1}^{T - 1} \frac{\mu_{j}}{\min\limits_{\{i,j\} \in F} w_{ij}}.
    \end{align*}
    for an algorithm to find a vector $x \in \R^T$ such that $\textnormal{prog}(x) = T.$ We define $w_{\min} \eqdef \min\limits_{\{i,j\} \in F} w_{ij}.$

    Hence, for any $\bar t>0$ and $s>0$, by Chernoff's method,
\begin{align*}
  \Prob{\sum_{j=1}^{T-1}\frac{\mu_j}{w_{\min}}\le \bar t}
  &=\Prob{\exp\!\left(-s\sum_{j=1}^{T-1}\frac{\mu_j}{w_{\min}}\right)\ge e^{-s\bar t}}\\
  &\le e^{s\bar t}\Exp{\exp\!\left(-s\sum_{j=1}^{T-1}\frac{\mu_j}{w_{\min}}\right)}.
\end{align*}
Using the tower rule and the bound
$\ProbCond{\mu_k\le t}{\{\mu_{k'}\}_{k'<k}}\le \frac{2t}{d}$, for any $t\ge 0$,
\begin{align*}
  \ExpCond{\exp\!\left(-s\frac{\mu_k}{w_{\min}}\right)}{\{\mu_{k'}\}_{k'<k}}
  &\le \exp\!\left(-s t\right)
    + \ProbCond{\mu_k\le w_{\min} t}{\{\mu_{k'}\}_{k'<k}}\\
  &\le \exp\!\left(-s t\right)+\frac{2 w_{\min} t}{d}.
\end{align*}
Using the tower rule,
\begin{align*}
  \Exp{\exp\!\left(-s\sum_{j=1}^{T-1}\frac{\mu_j}{w_{\min}}\right)}
  \le \left(\exp\!\left(-s t \right)+\frac{2 w_{\min} t}{d}\right)^{T-1},
\end{align*}
and
\begin{align*}
  \Prob{\sum_{j=1}^{T-1}\frac{\mu_j}{w_{\min}}\le \bar t}
  \le \exp(s\bar t)\left(\exp\!\left(-s t\right)+\frac{2 w_{\min} t}{d}\right)^{T-1}.
\end{align*}
Choosing $t= \frac{1}{s},$ gives
\begin{align*}
  \Prob{\sum_{j=1}^{T-1}\frac{\mu_j}{w_{\min}}\le \bar t}
  \le \exp(s\bar t)\left(e^{-1}+\frac{2 w_{\min} }{sd}\right)^{T-1}.
\end{align*}
Now we choose
\begin{align*}
  s = \frac{2 e w_{\min}}{d}
\end{align*}
to get
\begin{align*}
  \Prob{\sum_{j=1}^{T-1}\frac{\mu_j}{w_{\min}}\le \bar t}
  \le \exp\!\left(\frac{2 e w_{\min}}{d}\,\bar t - \frac{1}{4}(T-1)\right)
\end{align*}
For any $\delta\in(0,1]$, if
\begin{align*}
  \bar t \;\le\; \frac{d}{2 e w_{\min}}
  \left(\frac{1}{4} (T-1)-\log\!\left(\frac{1}{\delta}\right)\right),
\end{align*}
then $\Prob{\sum_{j=1}^{T-1}\frac{\mu_j}{w_{\min}}\le \bar t}\le \delta.$

For $\delta = \frac{1}{2},$
\begin{align*}
  &\frac{d}{2 e w_{\min}} \left(\frac{1}{4} (T-1)-\log\!\left(\frac{1}{\delta}\right)\right) \geq \frac{1}{2 e} \times \frac{d}{w_{\min}} \left(\frac{1}{4} \left\lfloor\frac{\Delta L}{l_1 \varepsilon \Delta^0}\right\rfloor - 2\right).
\end{align*}
In the theorem, we assume that $\frac{L \Delta}{\varepsilon} \geq \bar{c}_1$ for some universal constant $\bar{c}_1$ (we take $\bar{c}_1$ large enough to ensure that $\frac{1}{4} \left\lfloor\frac{\Delta L}{l_1 \varepsilon \Delta^0}\right\rfloor - 2 \geq \frac{\Delta L}{8 l_1 \varepsilon \Delta^0}$). Therefore, 
\begin{align*}
  &\frac{d}{2 e w_{\min}} \left(\frac{1}{4} (T-1)-\log\!\left(\frac{1}{\delta}\right)\right) \geq \frac{1}{16 e l_1 \Delta^0} \times \frac{d}{w_{\min}} \cdot \frac{\Delta L}{\varepsilon}.
\end{align*}
      
Thus, we get
\begin{align*}
  \Exp{\inf_{x \in G_t} \norm{\nabla f(x)}^2} > 2 \varepsilon \Prob{\sum_{j=1}^{T-1}\frac{\mu_j}{w_{\min}} > t} \geq \varepsilon
\end{align*}
for $$t = \bar{c}_1 \times \frac{d}{w_{\min}} \cdot \frac{\Delta L}{\varepsilon},$$
where $\bar{c}_1$ is a universal constant. 

It is left to recall that $f$ is random. Nevertheless, since 
\begin{align*}
  \Exp{\ExpCond{\inf_{x \in G_t} \norm{\nabla f(x)}^2}{f}} > \varepsilon,
\end{align*}
there exists a deterministic function $\bar{f}$ such that 
\begin{align*}
  \Exp{\inf_{x \in G_t(\bar{f})} \norm{\nabla \bar{f}(x)}^2} > \varepsilon,
\end{align*}
where $G_{t}(\bar{f})$ are outputs of the algorithm given $\bar{f}$ (in the statement of the theorem, we rename $\bar{f}$ to $f$).
\end{proof}

\section{One Step in Synchronous SGD}
\label{sec:sync}
In this section, we provide more details on how to implement Synchronous SGD and obtain \eqref{eq:mFiTJ}, as discussed in Section~\ref{sec:known_methods}. One way to do it is to fix any pivot worker (server) $v$ that aggregates the stochastic gradients. Then, consider any other worker $i.$ Choose any path $p$ between workers $i$ and $v$ in $G.$ Worker $i$ sends $\nabla f(x^k;\xi^k_i)$ along this path $p$, which takes at most $\nicefrac{d}{\min_{(i,j) \in p} b_{ij}} \leq \nicefrac{d}{\min_{(i,j) \in E} b_{ij}}$ seconds. There is an important caveat: all workers send their vectors to worker $v$ in parallel, potentially occupying the same edges and causing congestion (e.g., if two workers send their vectors through the same edge, it leads to a $2\times$ slowdown because the edge would process $2 \times d$ coordinates). One known way to fix this is to perform \emph{online in-network aggregation}: each worker streams its vector coordinate-by-coordinate, while intermediate nodes wait for the first coordinate from their children, aggregate these values, and immediately forward the result further. Thus, every edge carries only a single aggregated stream instead of multiple separate vectors, avoiding additional congestion factors, and at most $d$ coordinates pass through each edge. The broadcast operation can be performed in a similar way: every node, upon receiving the new vector $x^{k+1}$, immediately broadcasts its first coordinates further, before receiving the last ones.

\newpage
\section{Examples of Graphs}
In this section, we present examples of graphs that represent the communication topologies of the workers.

\subsection{Star Graph}
\label{sec:star_graph}
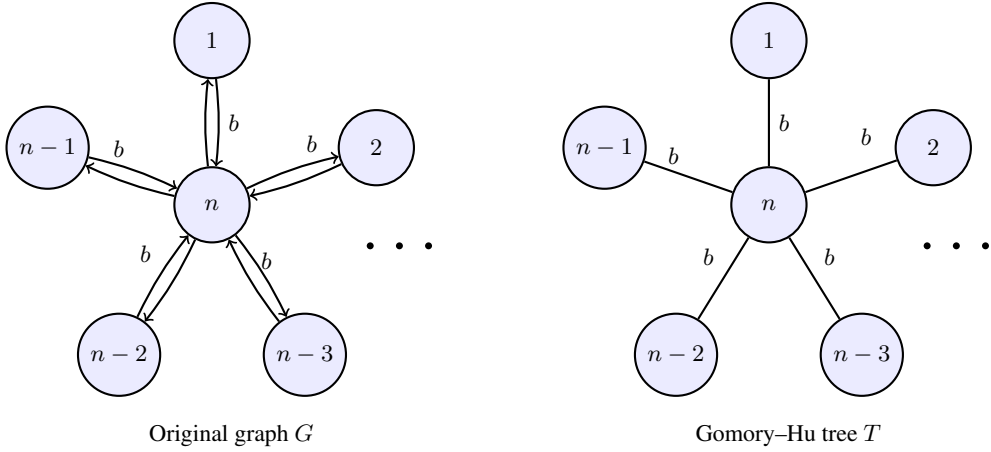
\begin{figure}[h]
\centering
\begin{minipage}{0.47\textwidth}
\centering
\begin{tikzpicture}[
    scale=0.95,
    every node/.style={font=\small},
    worker/.style={
        circle,
        draw=black,
        thick,
        minimum size=1.0cm,
        fill=blue!8
    },
    link/.style={
        ->,
        thick,
        bend left=6
    }
]

\node[worker] (n) at (0,0) {$n$};

\node[worker] (1) at (0,2.3) {$1$};
\node[worker] (2) at (2.3,0.8) {$2$};
\node at (2.7,-0.6) {\Huge $\cdots$};
\node[worker] (k) at (1.3,-2.1) {$n-3$};
\node[worker] (4) at (-1.3,-2.1) {$n-2$};
\node[worker] (s) at (-2.3,0.8) {$n-1$};

\draw[link] (1) to node[midway, right] {$b$} (n);
\draw[link] (n) to (1);

\draw[link] (2) to node[midway, above right, yshift=8pt] {$b$} (n);
\draw[link] (n) to (2);

\draw[link] (k) to node[midway, right, yshift=8pt] {$b$} (n);
\draw[link] (n) to (k);

\draw[link] (4) to node[midway, left, yshift=8pt] {$b$} (n);
\draw[link] (n) to (4);

\draw[link] (s) to node[midway, left, yshift=8pt] {$b$} (n);
\draw[link] (n) to (s);

\end{tikzpicture}

\vspace{2mm}
{\small Original graph \(G\)}
\end{minipage}
\hfill
\begin{minipage}{0.47\textwidth}
\centering
\begin{tikzpicture}[
    scale=0.95,
    every node/.style={font=\small},
    worker/.style={
        circle,
        draw=black,
        thick,
        minimum size=1.0cm,
        fill=blue!8
    },
    treelink/.style={
        draw,
        thick
    }
]

\node[worker] (n) at (0,0) {$n$};

\node[worker] (1) at (0,2.3) {$1$};
\node[worker] (2) at (2.3,0.8) {$2$};
\node at (2.7,-0.6) {\Huge $\cdots$};
\node[worker] (k) at (1.3,-2.1) {$n-3$};
\node[worker] (4) at (-1.3,-2.1) {$n-2$};
\node[worker] (s) at (-2.3,0.8) {$n-1$};

\draw[treelink] (1) -- node[midway, right] {$b$} (n);
\draw[treelink] (2) -- node[midway, above right, yshift=8pt] {$b$} (n);
\draw[treelink] (k) -- node[midway, right, yshift=8pt] {$b$} (n);
\draw[treelink] (4) -- node[midway, left, yshift=8pt] {$b$} (n);
\draw[treelink] (s) -- node[midway, left, yshift=8pt] {$b$} (n);

\end{tikzpicture}

\vspace{2mm}
{\small Gomory--Hu tree \(T\)}
\end{minipage}

\caption{
Star graph $G$ in the centralized setting and a Gomory--Hu tree of the corresponding undirected graph $\bar{G}$.
}
\label{fig:star_graph_gomory_hu}
\end{figure}

Consider Figure~\ref{fig:star_graph_gomory_hu} where workers communicate through a server (another worker). One can show that a Gomory-Hu tree $T$ of $G$ is shown in Figure~\ref{fig:star_graph_gomory_hu}. In the tree, sorting the weights, we get $\bar{w}_1 = b,$ \dots, $\bar{w}_{n-1} = b,$ $\bar{w}_n = \infty.$ For $k = 1$ in Algorithm~\ref{alg:preprocess}, $S_{1,1} = [n]$ and $\min_{S \in \mathcal{S}_1} t_1(S) = \left(\nicefrac{d}{b} + \nicefrac{h \sigma^2}{n \varepsilon}\right) + h.$ For $k = 2,$ $S_{2,1} = \{1\}$ and $S_{2,2} = \{2, \dots, n\}$ because we remove the edge corresponding to the first worker (w.l.o.g., we could have chosen any other worker), and $\min_{S \in \mathcal{S}_2} t_2(S) = \left(\nicefrac{d}{b} + \nicefrac{h \sigma^2}{{\color{red} (n-1)} \varepsilon}\right) + h.$ Notice that $\min_{S \in \mathcal{S}_2} t_2(S) \leq \min_{S \in \mathcal{S}_1} t_1(S).$ Repeating the same procedure and removing the edges, we get $\min_{S \in \mathcal{S}_1} t_1(S) \leq \min_{S \in \mathcal{S}_k} t_k(S)$ for all $1 \leq k \leq n - 1.$ However, for $k = n,$ we get $\bar{w}_n = \infty$ and $S_{n,1} = \{1\}, S_{n,2} = \{2\}, \dots, S_{n,n} = \{n\},$ and $\min_{S \in \mathcal{S}_n} t_n(S) = \nicefrac{h \sigma^2}{\varepsilon} + h,$ which can be smaller than $\min_{S \in \mathcal{S}_1} t_1(S).$ Thus, the total time complexity is
\begin{align*}
  \textstyle \cO\left(\min\left\{\left(\frac{d}{b} + \frac{h \sigma^2}{n \varepsilon}\right) \frac{L \Delta}{\varepsilon} + \frac{h  L \Delta}{\varepsilon}, \frac{h \sigma^2 L \Delta}{\varepsilon^2} + \frac{h  L \Delta}{\varepsilon}\right\}\right).
\end{align*}

\newpage
\subsection{$K$ clusters}

\begin{figure}[h]
\centering
\begin{tikzpicture}[
    scale=1,
    every node/.style={font=\small},
    cluster/.style={
        circle,
        draw=black,
        thick,
        minimum size=1.2cm,
        fill=blue!8
    },
    link/.style={
        ->,
        thick,
        bend left=6
    }
]

\node[cluster] (c1) at (0,3) {$\frac{n}{K}$ GPUs};
\node[cluster] (c2) at (2.8,1.4) {$\frac{n}{K}$ GPUs};
\node (cdots) at (3.7,0) {\Large $\cdots$};
\node[cluster] (c3) at (2.8,-1.4) {$\frac{n}{K}$ GPUs};
\node[cluster] (c4) at (0,-3) {$\frac{n}{K}$ GPUs};
\node[cluster] (c5) at (-2.8,-1.4) {$\frac{n}{K}$ GPUs};
\node[cluster] (c6) at (-2.8,1.4) {$\frac{n}{K}$ GPUs};


\draw[link] (c1) to node[midway,right, yshift=10pt] {$b_{\textnormal{slow}}$} (c2);
\draw[link] (c2) to (c1);

\draw[link] (c2) to node[midway,right] {$b_{\textnormal{slow}}$} (cdots);
\draw[link] (cdots) to (c2);

\draw[link] (cdots) to node[midway,right] {$b_{\textnormal{slow}}$} (c3);
\draw[link] (c3) to (cdots);

\draw[link] (c3) to node[midway,right, yshift=-10pt] {$b_{\textnormal{slow}}$} (c4);
\draw[link] (c4) to (c3);

\draw[link] (c4) to node[midway,left, yshift=-10pt] {$b_{\textnormal{slow}}$} (c5);
\draw[link] (c5) to (c4);

\draw[link] (c5) to node[midway,left] {$b_{\textnormal{slow}}$} (c6);
\draw[link] (c6) to (c5);

\draw[link] (c6) to node[midway,left, yshift=10pt] {$b_{\textnormal{slow}}$} (c1);
\draw[link] (c1) to (c6);

\end{tikzpicture}

\caption{
Example of $K$ clusters arranged in a ring. Each cluster contains $n/K$ workers with fast intra-cluster communication (all-to-all with bandwidth $\infty$). Communication between neighboring clusters occurs with bandwidth $b_{\textnormal{slow}}$.
}
\label{fig:k_clusters_ring}
\end{figure}
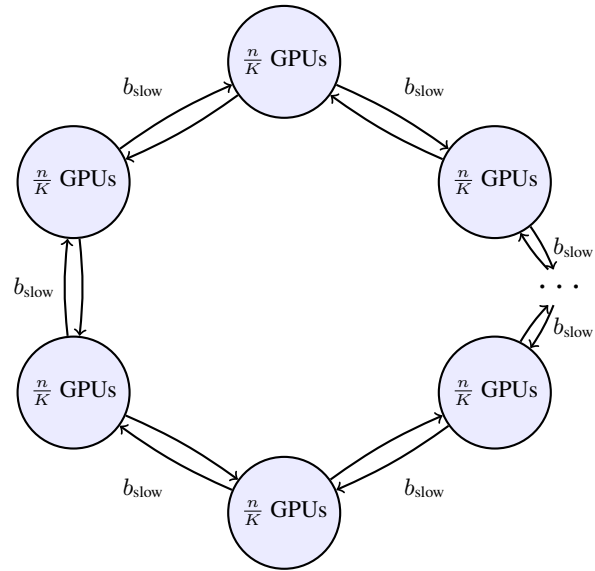

\newpage
\subsection{$p$-Torus}
Let $p \ge 1$ and $k \ge 2$. The $p$-Torus is the directed graph 
$G=(V,E,b)$ defined as follows.
\[
V = [k]^p = \{(v_1,\dots,v_p) : v_i \in [k]\}.
\] For every vertex $v=(v_1,\dots,v_p)$ and every coordinate $i\in[p]$,
there are directed edges
\[
v \to (v_1,\dots,v_i+1 \!\!\!\!\pmod{k},\dots,v_p)
\]
and
\[
v \to (v_1,\dots,v_i-1 \!\!\!\!\pmod{k},\dots,v_p).
\] 
Each edge $(u,v)\in E$ has weight $b_{uv}=b>0$ (for simplicity).

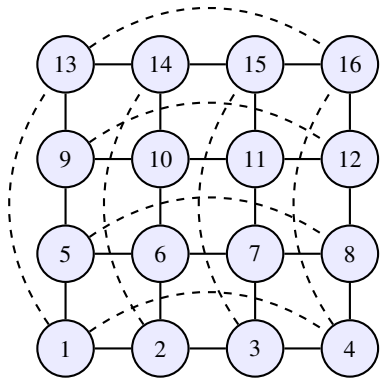
\begin{figure}[h]
\centering
\begin{tikzpicture}[
    scale=0.7,
    every node/.style={font=\small},
    worker/.style={
        circle,
        draw=black,
        thick,
        minimum size=0.75cm,
        fill=blue!8
    },
    link/.style={
        thick
    },
    wrap/.style={
        thick,
        dashed
    }
]

\foreach \x in {0,1,2,3} {
    \foreach \y in {0,1,2,3} {
        \pgfmathtruncatemacro{\lab}{\x+4*\y+1}
        \node[worker] (v\x\y) at (1.8*\x,1.8*\y) {\lab};
    }
}

\foreach \y in {0,1,2,3} {
    \draw[link] (v0\y) -- (v1\y);
    \draw[link] (v1\y) -- (v2\y);
    \draw[link] (v2\y) -- (v3\y);
}

\foreach \x in {0,1,2,3} {
    \draw[link] (v\x0) -- (v\x1);
    \draw[link] (v\x1) -- (v\x2);
    \draw[link] (v\x2) -- (v\x3);
}

\foreach \y in {0,1,2,3} {
    \draw[wrap, bend left=35] (v0\y) to (v3\y);
}
\foreach \x in {0,1,2,3} {
    \draw[wrap, bend left=35] (v\x0) to (v\x3);
}

\end{tikzpicture}
\caption{
Example of a $2$-Torus with $k = 4$. Each node communicates with its four neighbors, and the dashed edges indicate the wrap-around connections. All links have bandwidth $b$. In the visualization, two arcs merged and visualized with one undirected edge.
}
\label{fig:torus}
\end{figure}

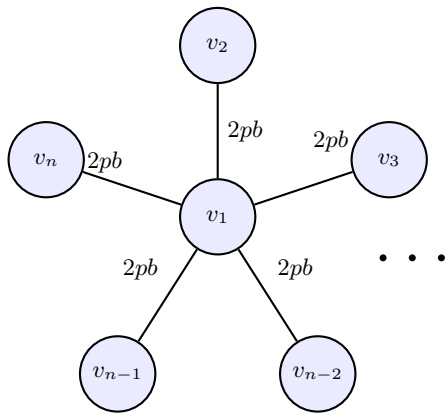
\begin{figure}[h]
\centering
\begin{tikzpicture}[
    scale=0.95,
    every node/.style={font=\small},
    worker/.style={
        circle,
        draw=black,
        thick,
        minimum size=1.0cm,
        fill=blue!8
    },
    treelink/.style={
        draw,
        thick
    }
]

\node[worker] (1) at (0,0) {$v_1$};

\node[worker] (2) at (0,2.4) {$v_2$};
\node[worker] (3) at (2.4,0.8) {$v_3$};
\node at (2.8,-0.6) {\Huge $\cdots$};
\node[worker] (k1) at (1.4,-2.2) {$v_{n-2}$};
\node[worker] (k2) at (-1.4,-2.2) {$v_{n-1}$};
\node[worker] (n) at (-2.4,0.8) {$v_n$};

\draw[treelink] (2) -- node[midway, right] {$2pb$} (1);
\draw[treelink] (3) -- node[midway, above right, yshift=10pt] {$2pb$} (1);
\draw[treelink] (k1) -- node[midway, right, yshift=10pt] {$2pb$} (1);
\draw[treelink] (k2) -- node[midway, left, yshift=10pt] {$2pb$} (1);
\draw[treelink] (n) -- node[midway, left, yshift=10pt] {$2pb$} (1);

\end{tikzpicture}

\caption{
A Gomory--Hu tree $T$ of the undirected graph $\bar{G}$ corresponding to a $p$-Torus with bandwidths $b$. The fact that $T$ is a tree with weights $2pb$ follows from the fact that every value of a min-cut in $\bar{G}$ is $2pb.$
}
\label{fig:torus_gomory_hu}
\end{figure}

\clearpage
\newpage
\section{Practical Guidelines and Numerical Experiments}
\label{sec:exp}
In this section, we discuss practical guidelines and conduct numerical experiments of the new algorithm, Grace SGD. As we explain in Section~\ref{sec:grace}, Grace SGD is simple. It is just a stochastic gradient method with preprocessing (Algorithm~\ref{alg:preprocess}) and the optimal-bandwidth AllReduce (Algorithm~\ref{alg:allreduce}). Implementing Algorithm~\ref{alg:preprocess} is straightforward. Finding a Gomory-Hu tree is a standard graph problem, and there are many open-source libraries with implementations (for instance, use \texttt{gomory\_hu\_tree} from \texttt{NetworkX} \citep{hagberg2007exploring}). Besides that, Algorithm~\ref{alg:preprocess} is a standard one loop function where we find all steps can be implemented in Python.

At the same time, in general, implementing the optimal-bandwidth AllReduce algorithm described in Algorithm~\ref{alg:allreduce} is arguably trickier. The first non-trivial step is to solve the \emph{Steiner Tree Packing} problem. While it is a classical problem in computer science, \citet{lau2004approximate} only recently proposed a polynomial-time algorithm that finds at least $\flr{\alpha_{\bar{G}}(S) / 26}$ edge-disjoint trees that contain every node in $S \subseteq V,$ where $\alpha_{\bar{G}}(S)$ is the minimum value of an $S$-cut in $\bar{G}$ (Definition~\ref{def:min_cut} and Theorem~\ref{thm:allreduce_time}). Once the Steiner Tree Packing is solved (up to a constant factor), the reduce and broadcast steps can be implemented in practice using the standard communication libraries.

The description of the polynomial-time algorithm in \citet{lau2004approximate} is somewhat non-trivial. Fortunately, for standard graph structures, implementing an optimal-bandwidth AllReduce is more straightforward. In particular, consider the $2$-Torus $G$ from Figure~\ref{fig:torus}. Without loss of generality, assume that $b = 1$ and $n = (2k + 1)^2$ for some $k \geq 1.$ We fix the node with coordinate $(k, k)$ and refer to it as the pivot node (see Figure~\ref{fig:graph_torus}). In this graph, every node has $4$ outgoing edges. The idea of this optimal-bandwidth AllReduce is that every node $i$ splits its local vector into four blocks $(a_{i,1}, a_{i,2}, a_{i,3}, a_{i,4})$ and sends them in different directions. The goal is now to choose the directions in such a way that each block eventually reaches the pivot worker without cycles, while intermediate nodes use online in-network aggregation to combine blocks with the same index and avoid congestion. 

Formally, for all $(i,j)$ with $i \neq k$ and $j \neq k$, the node sends the first block to $(i+1 \bmod (2k+1),\, j)$, the second block to $(i,\, j+1 \bmod (2k+1))$, the third block to $(i-1 \bmod (2k+1),\, j)$, and the fourth block to $(i,\, j-1 \bmod (2k+1))$. For nodes with $i = k$ or $j = k$, the node sends the first block to $(i,\, j+1 \bmod (2k+1))$, the second block to $(i+1 \bmod (2k+1),\, j)$, the third block to $(i,\, j-1 \bmod (2k+1))$, and the fourth block to $(i-1 \bmod (2k+1),\, j)$.

See visualization for $k = 2$ in Figure~\ref{fig:graph_torus}. For instance, node $(3,0)$ sends the first block to $(4,0)$, which aggregates its own block with it, and sends the sum to $(0,0),$ which also adds its first block. And it happens until the total sum arrives to $(2,2).$ Notice that $(2,1)$ aggregates the sum from nodes $(2,0)$ and $(1,1).$

Compared to the naive strategy, where every worker sends the full vectors to the pivot worker, using this routing algorithm and online in-network aggregation, we can speed up the reduce operation by $4\times$, since each block arrives at the central node via different and independent paths. The improvement by $4\times$ is expected since the value of a min-cut in the graph is $4.$ The broadcast can be implemented similarly by reversing the edges. We could have obtained a $2$--$4\times$ speedup using Algorithm~\ref{alg:allreduce}, but the strategy specialized for the 2-Torus is arguably simpler. For the ring graph, the algorithm is similar and one get a $2\times$ communication speed up. In general, for a $p$-Torus, the speedup should be $\Theta(p)$ (at least with Algorithm~\ref{alg:allreduce} due to Theorem~\ref{thm:allreduce_time}).
\begin{figure}[h]
\centering
\includegraphics[width=0.7\textwidth]{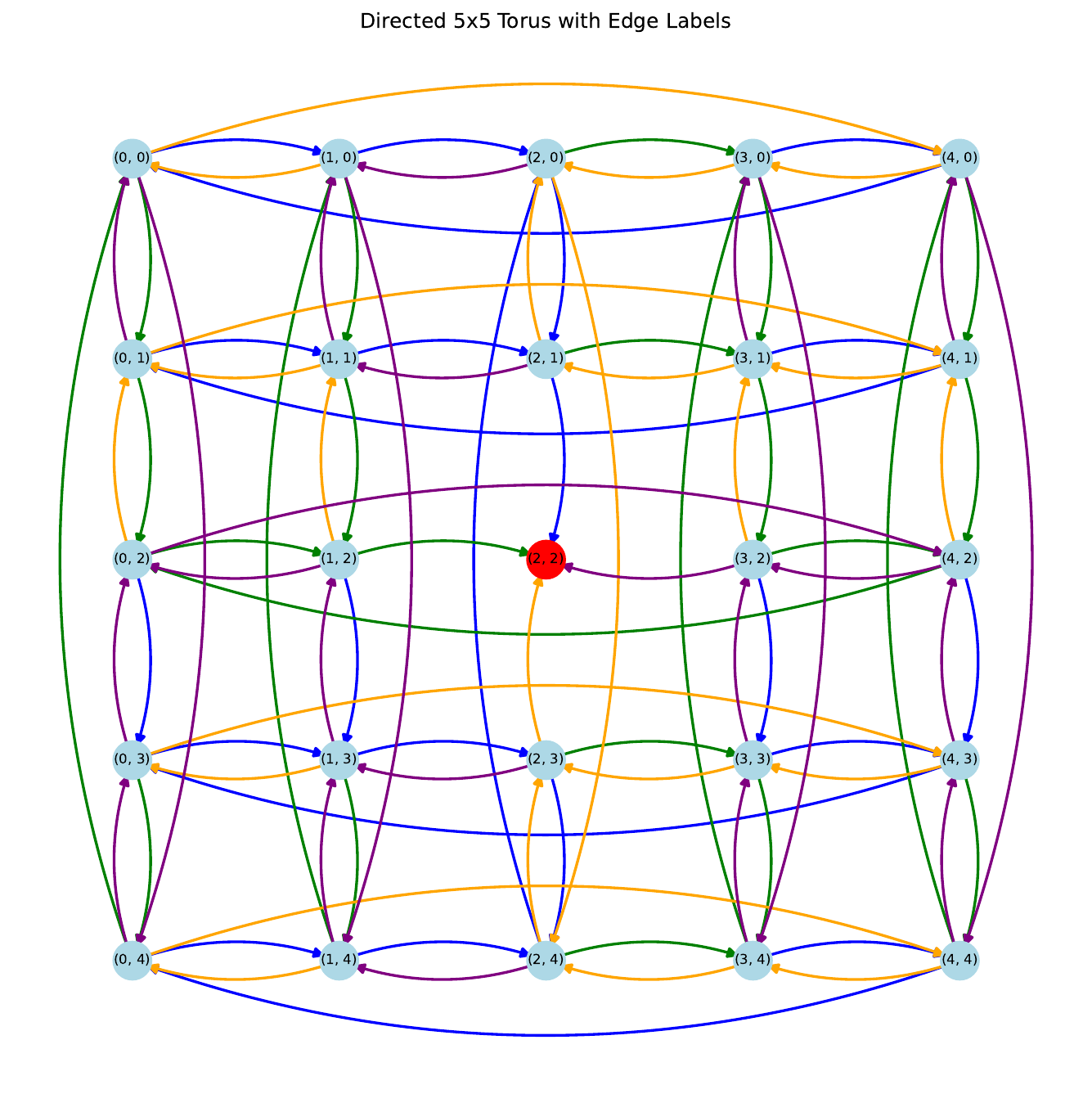}
\caption{2-Torus and Reduce routing. Each color corresponds to one of the four blocks.}
\label{fig:graph_torus}
\end{figure}
\subsection{Numerical experiments}
\label{sec:experiments}

We now consider Grace SGD and compare it to Synchronous SGD. The environment was emulated in Python 3 with one Intel(R) Xeon(R) Platinum 8168 CPU @ 2.70GHz. We assume that $i$\textsuperscript{th} worker requires $h_i = 1$ second to calculate one stochastic gradient. We assume that the graph $G$ is 2-Torus (see Figure~\ref{fig:torus}) with bandwidth $b = 0.1$ for all the edges. We consider the setup with $n = 100.$ In all methods, we tune the step sizes from the set $\{2^i\,|\,i \in [-20, 20]\}$. In \eqref{eq:main_problem}, we consider the standard logistic regression problem with \textit{MNIST} dataset \citep{lecun2010mnist}, where every worker samples one sample from the dataset and calculates a stochastic gradient.

\textbf{Naive synchronization vs. optimal-bandwidth AllReduce.} In this part, we compare Synchronous SGD with naive synchronization, where full vectors are sent to neighbors (a similar problem in gossip methods), and Grace SGD with optimal-bandwidth AllReduce. In Figure~\ref{fig:one}, we set $\nicefrac{\sigma^2}{\varepsilon} = 100$ for Algorithm~\ref{alg:main} to ensure that, in both Grace SGD and Synchronous SGD, all workers compute one stochastic gradient per iteration, providing a fair comparison of the communication strategies. In practice, we observe that Grace SGD converges faster.

\begin{figure}[h]
\centering
\includegraphics[width=0.49\textwidth]{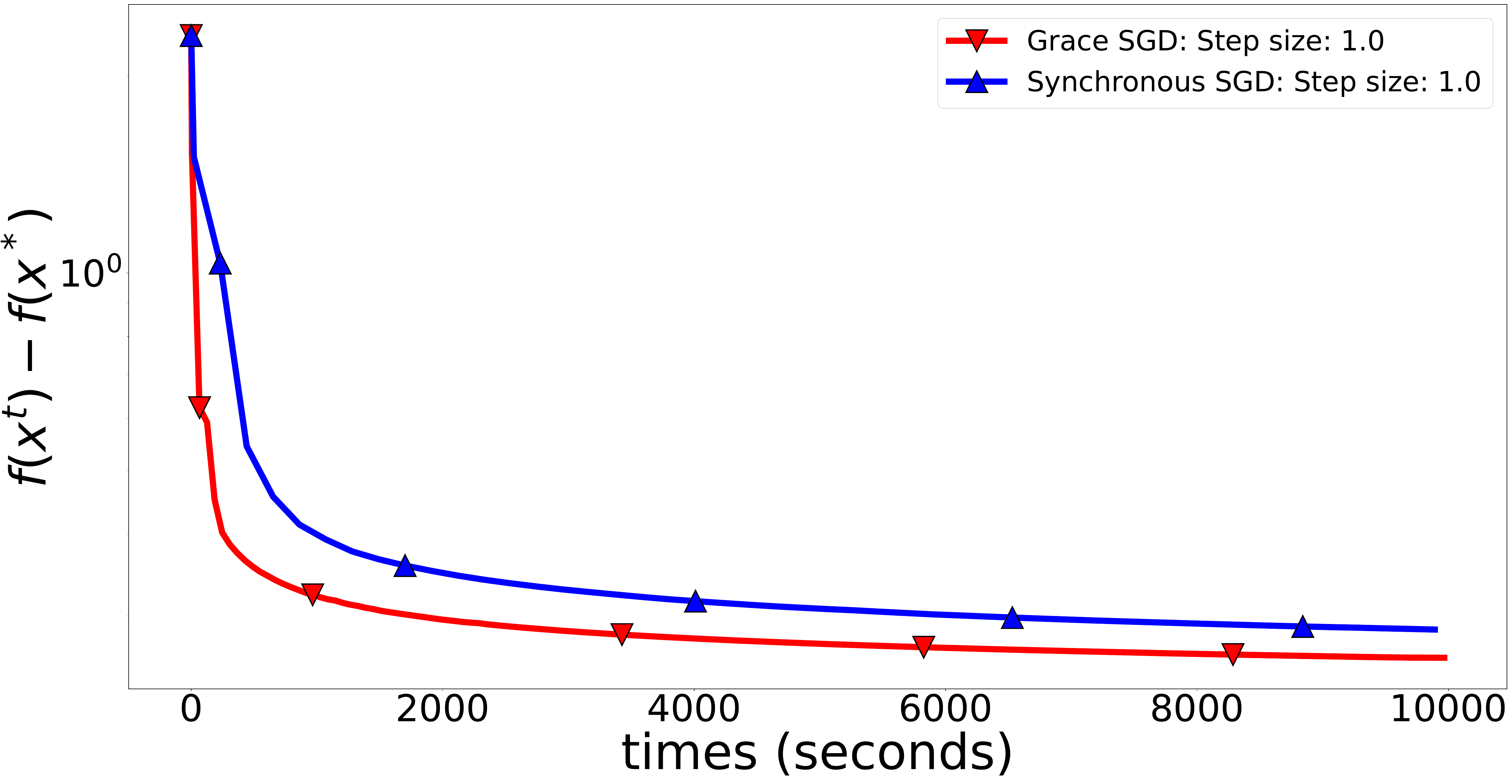}
\hfill
\includegraphics[width=0.49\textwidth]{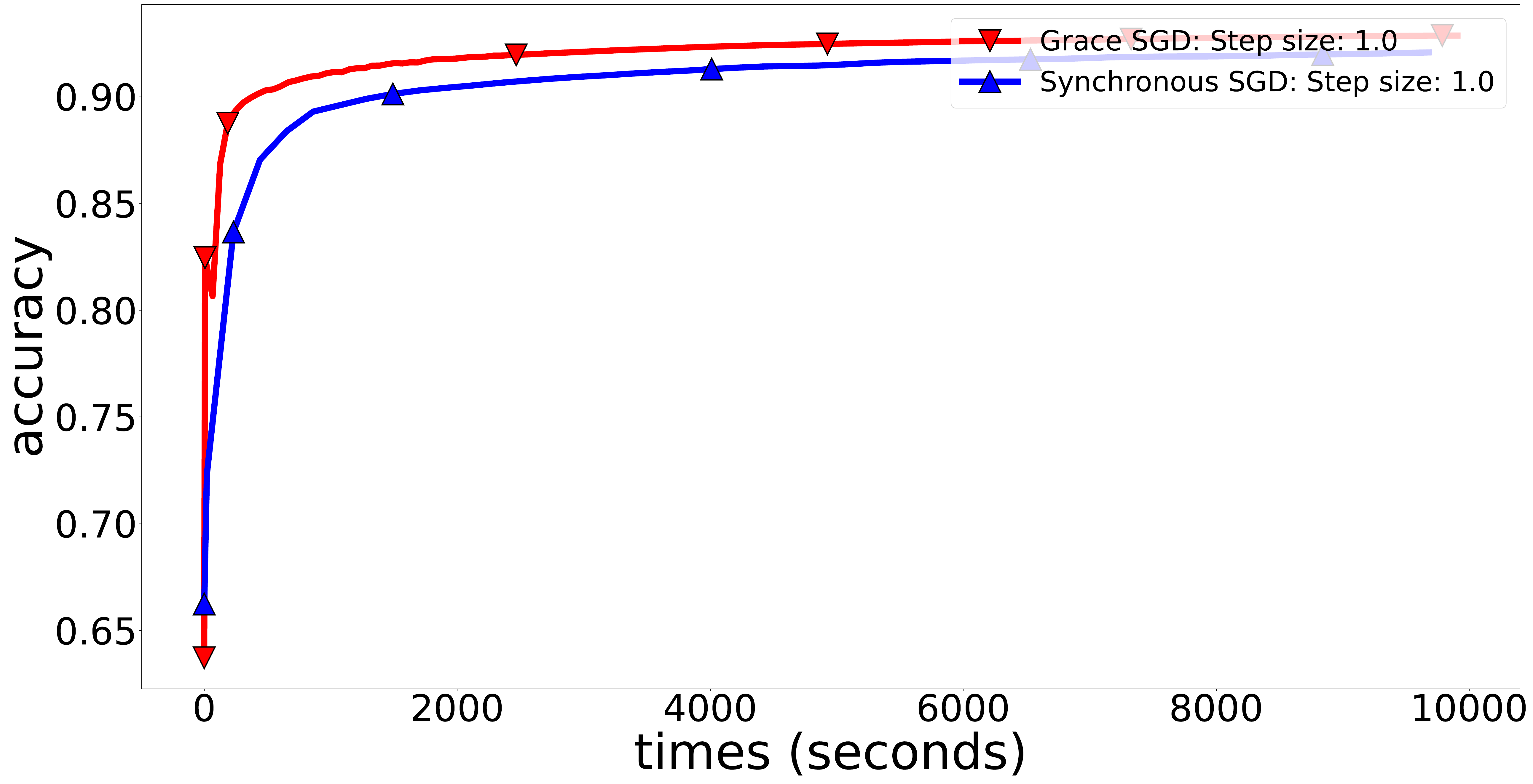}
\caption{Experiments with Synchronous SGD and Grace SGD}
\label{fig:one}
\end{figure}

\textbf{$K$ clusters.} We now consider the practical setup from Figure~\ref{fig:k_clusters_ring} to test the effectiveness of Algorithm~\ref{alg:preprocess} and the usefulness of local training within a single cluster when $b_{\textnormal{slow}}$ is small. We take $n = 100$ and $K = 10.$ For this setup, Algorithm~\ref{alg:preprocess} can either return $S^* = [n]$ (all workers) or $S^* = [n / K]$ (one cluster), depending on the value of $b_{\textnormal{slow}}$ (see the discussion in Section~\ref{sec:examples}). In Figure~\ref{fig:two}, we compare the performance of these two options and observe that $S^* = [n]$ has faster convergence, which concurs with the discussion in Section~\ref{sec:examples}. However, as we start decreasing the bandwidth in Figures~\ref{fig:three} and~\ref{fig:four}, we observe that local training in one cluster is faster in Figure~\ref{fig:four}, which supports the fact that $S^* = [n / K]$, as discussed in Section~\ref{sec:examples}.

\begin{figure}[h]
\centering
\includegraphics[width=0.49\textwidth]{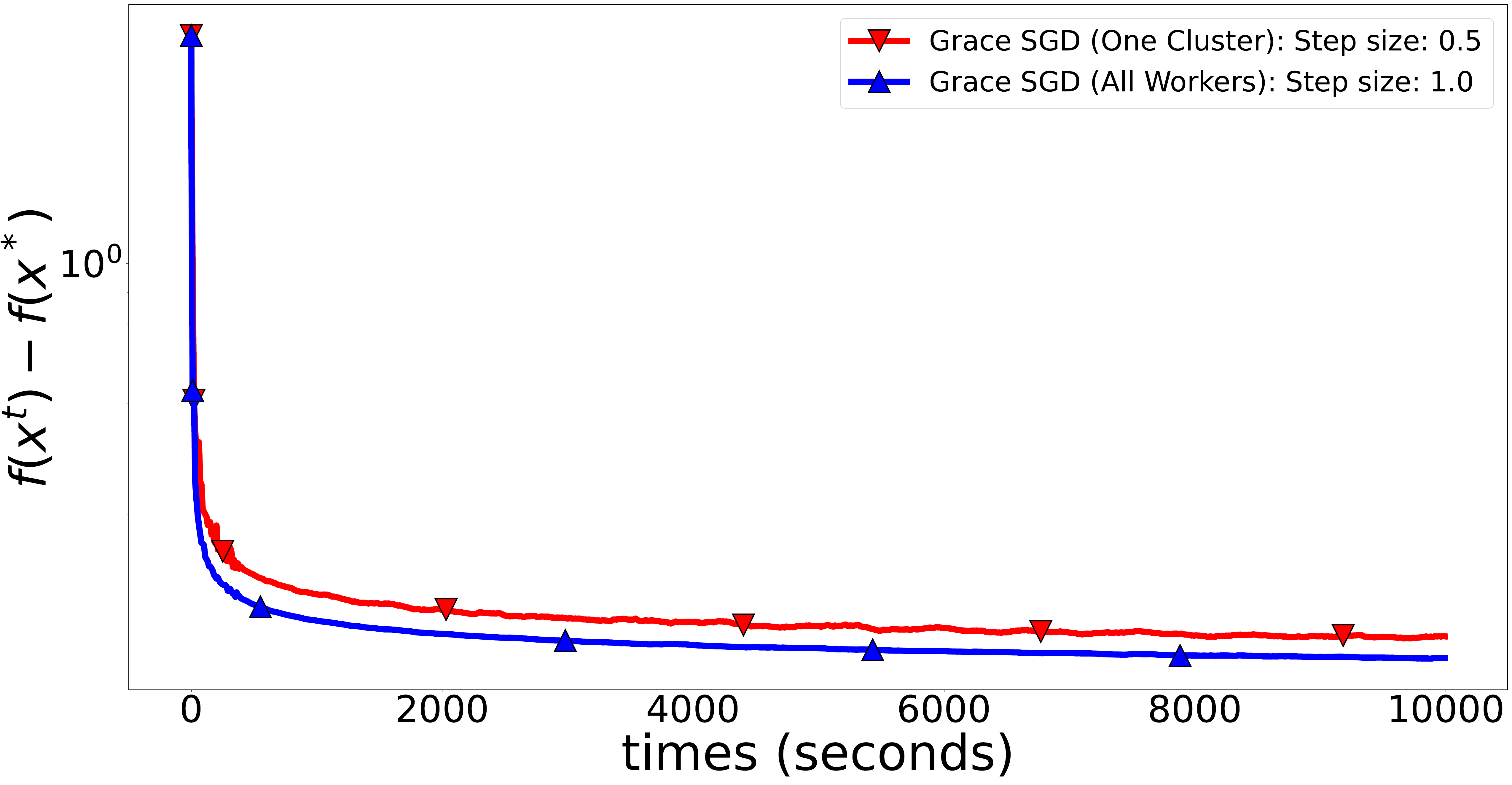}
\hfill
\includegraphics[width=0.49\textwidth]{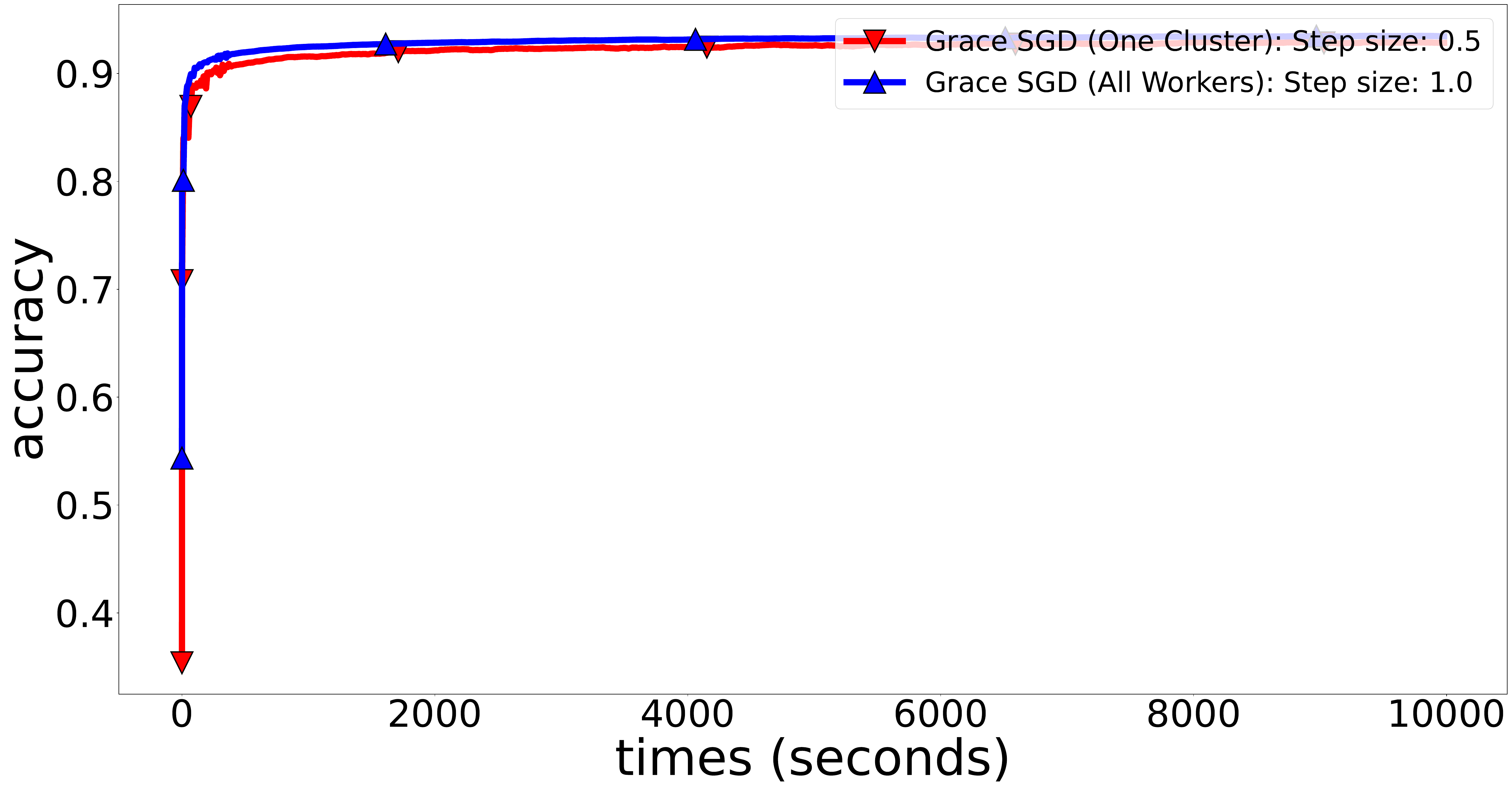}
\caption{Experiments with Grace SGD and $b_{\textnormal{slow}} = \infty$}
\label{fig:two}
\end{figure}

\begin{figure}[h]
\centering
\includegraphics[width=0.49\textwidth]{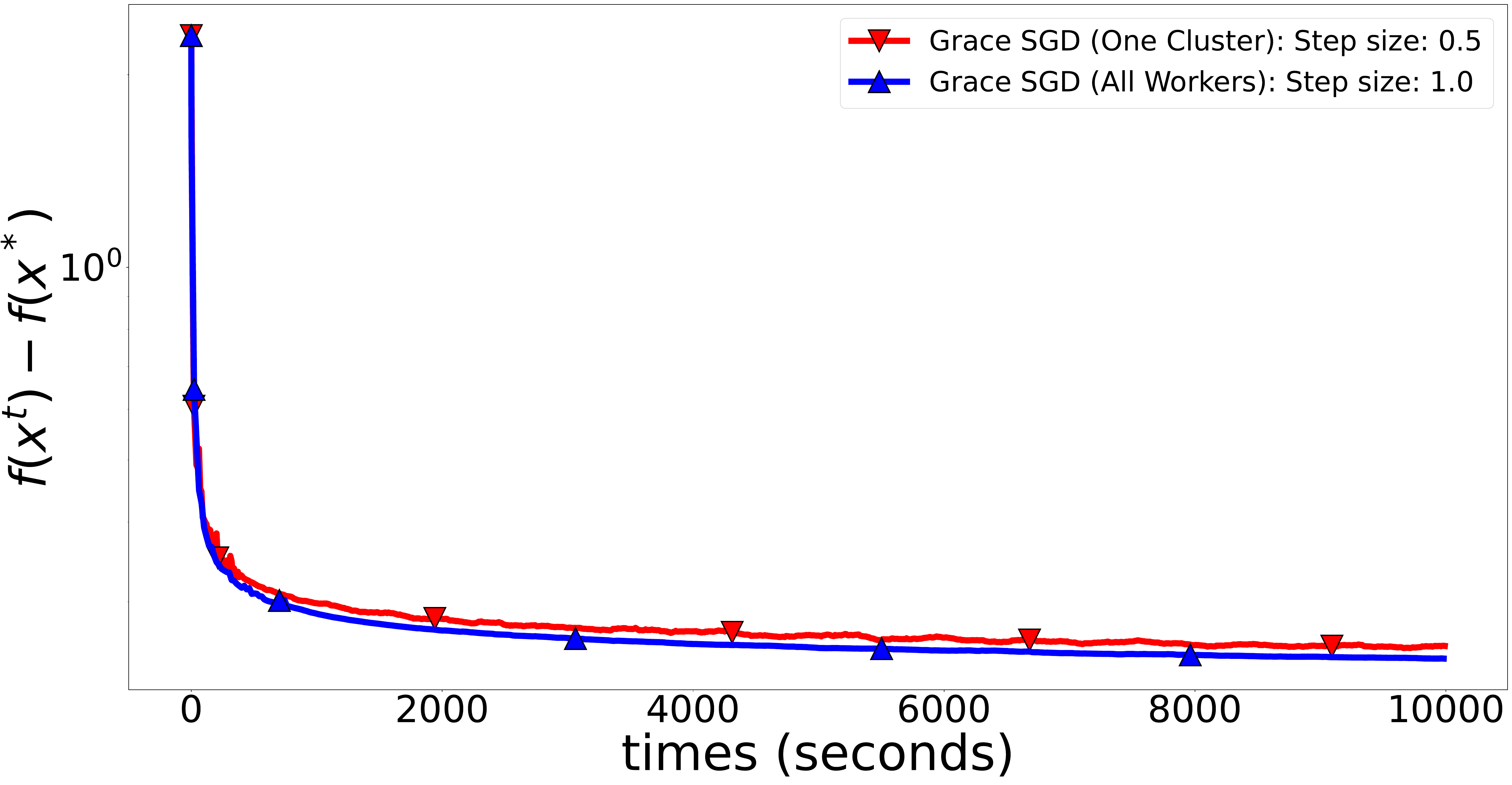}
\hfill
\includegraphics[width=0.49\textwidth]{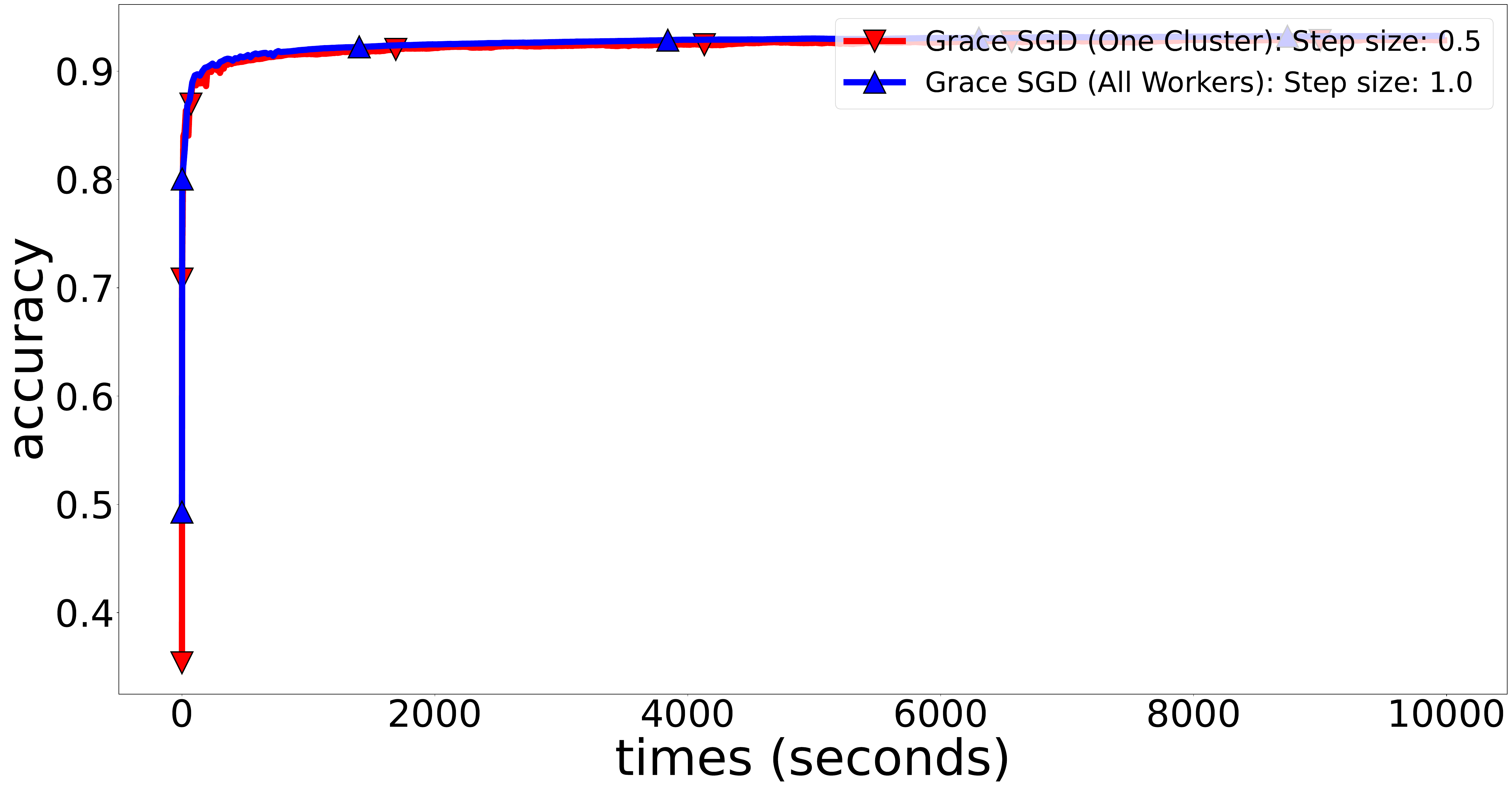}
\caption{Experiments with Grace SGD and $b_{\textnormal{slow}} = 1$}
\label{fig:three}
\end{figure}

\begin{figure}[h]
\centering
\includegraphics[width=0.49\textwidth]{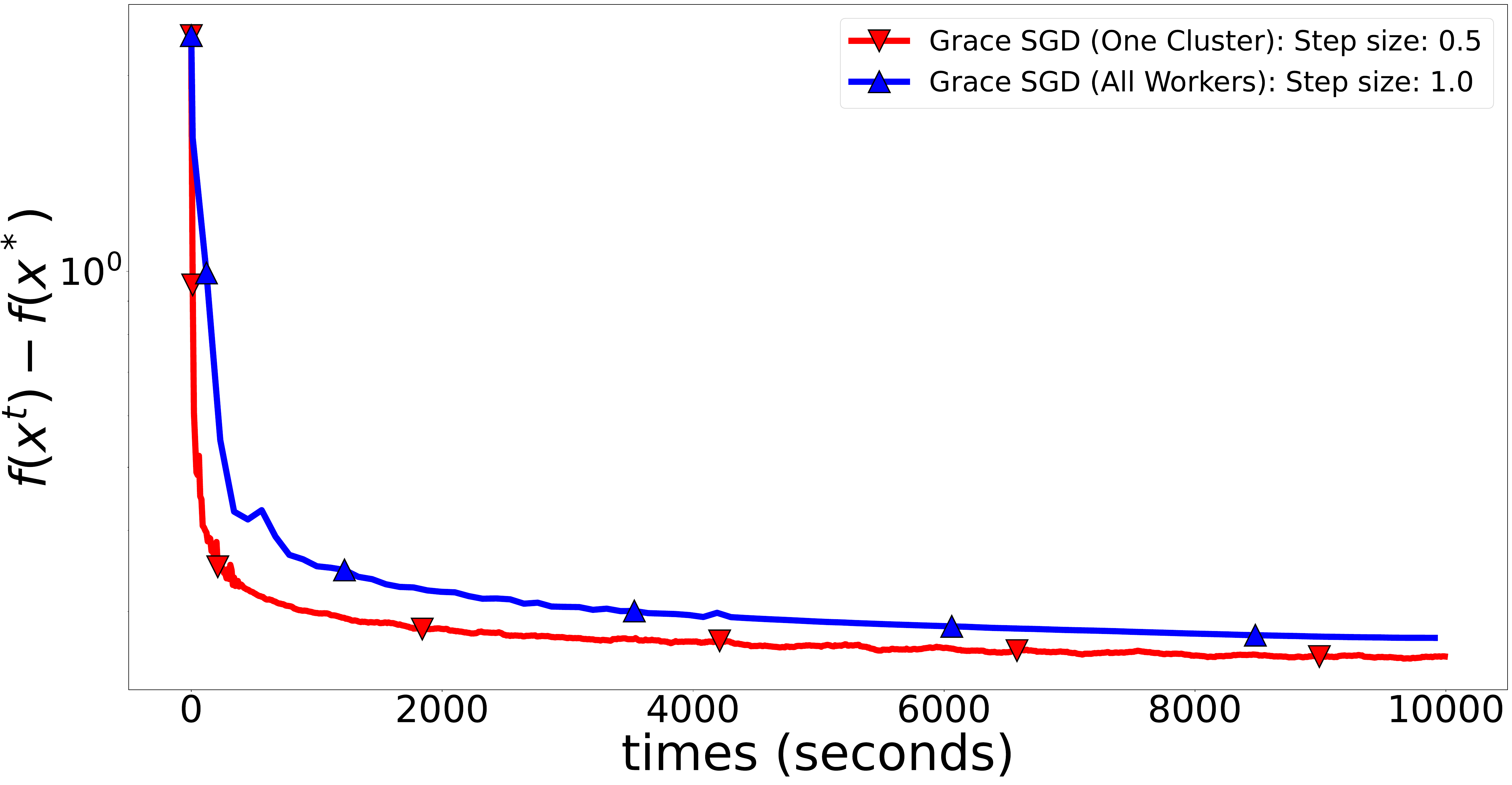}
\hfill
\includegraphics[width=0.49\textwidth]{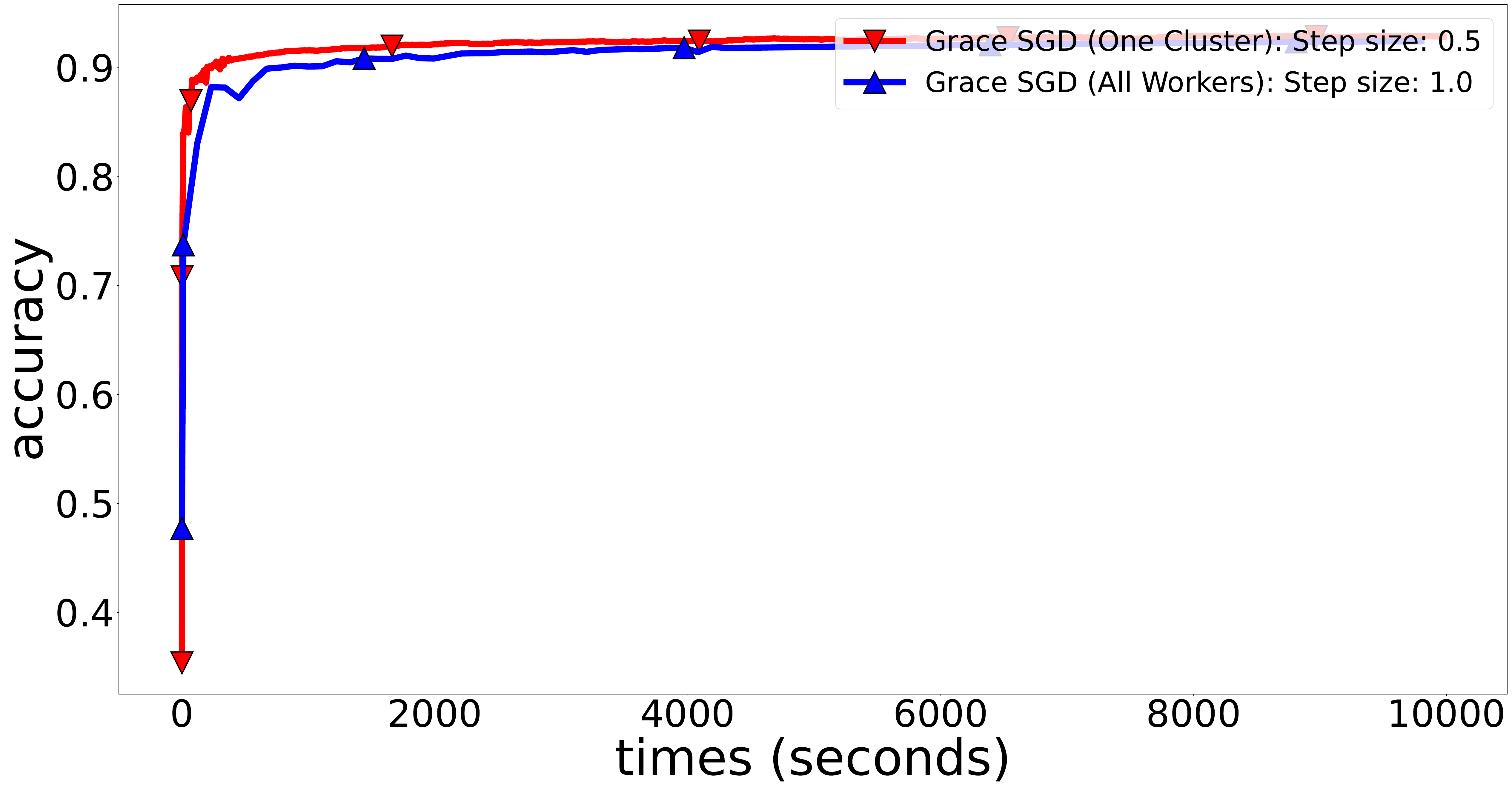}
\caption{Experiments with Grace SGD and $b_{\textnormal{slow}} = 0.1$}
\label{fig:four}
\end{figure}

\clearpage
\section{Graph-Bandwidth Communication Model with Latencies}
\label{sec:latency}
We can extend \hyperref[box:comm_model]{\textcolor{gray}{Graph-Bandwidth Communication Model}} by assuming that each edge has a latency $\ell_{ij} \geq 0,$ and use the standard $\alpha$--$\beta$ model, where transferring $s$ coordinates through edge $(i,j)$ takes $\ell_{ij} + \nicefrac{s}{b_{ij}}$ seconds. Notice that the first term does not depend on the amount of transmitted information $s,$ and since workers typically send large volumes of data, the latency term does not dominate.

Formally, under this model, instead of the main upper bound results in Theorems~\ref{thm:sgd_homog} and \ref{thm:sgd_heter}, we obtain $\eqref{eq:GvDIFIzv} + \cO\left(\nicefrac{\ell_{\max} L \Delta}{\varepsilon}\right)$ and $\eqref{eq:GvDIFIzvheter} + \cO\left(\nicefrac{\ell_{\max} L \Delta}{\varepsilon}\right)$, respectively, where $\ell_{\max} = \max_{(i,j) \in E} \ell_{ij}.$ The only result that needs to be adjusted is Theorem~\ref{thm:allreduce_time}, where the reduce and broadcast operations take $\cO\left(\ell_{\max} + d / p\right)$ seconds instead of $\cO\left(d / p\right)$ (the asymptotic rate remains $\cO\left(d / p\right)$ if $d$ is large). For instance, if $h_i = h$ for all $i \in [n],$ then the time complexity of Grace SGD in the homogeneous case is
\begin{align*}
  \cO\left(\min\left\{\min\limits_{k \in [n - 1]} \left(\ell_{\max} + \frac{d}{\bar{w}_{k}} + \min\limits_{p \in [k]} \frac{h \sigma^2}{\abs{S_{k,p}} \varepsilon} + h\right) \frac{L \Delta}{\varepsilon}\right\}, \frac{h \sigma^2 L \Delta}{\varepsilon^2} + \frac{h L \Delta}{\varepsilon}\right).
\end{align*}
Thus, if $d$ is large, which is the case in modern training, the term involving $\ell_{\max}$ does not dominate, and all our results and conclusions remain valid. For clarity, and because $d$ is large in practice, we ignore latencies in the main part. Nevertheless, deriving tight lower bounds under the model with large latencies is an important direction for future work.

\end{document}